\documentclass{acta_2011}
\usepackage{harvard_acta}
\usepackage{amsmath}
\pagerange{\pageref{firstpage}--\pageref{lastpage}}
\setcounter{page}{1}
\allowdisplaybreaks
\usepackage[twoside,
	paperheight=247mm,
	paperwidth=174mm,
	marginratio={3:5,2:3},
	left=18mm,
	top=20mm]{geometry}
\numberwithin{figure}{section}
\numberwithin{table}{section}
\usepackage{graphicx}
\usepackage{epsf}
\usepackage{amsfonts,amsmath,amssymb}
\newcommand{\ie}{{\it i.e.}}
\newcommand{\Reals}{{\mathbb{R}}}   
\newcommand{\beq}{\begin{equation}}
\newcommand{\eeq}{\end{equation}}
\def\cov{\operatorname{cov}}
\def\E{\mathbb{E}}
\def\PiBG{\Pi_{\operatorname{BG}}}
\def\Exp{\operatorname{Exp}}
\def\tr{\operatorname{trace}}
%
%
\newtheorem{theorem}{Theorem}[section]
\newtheorem{proposition}{Proposition}[section]
\newtheorem{remark}{Remark}[section]
\newtheorem{example}{Example}[section]
\newtheorem{algorithm}{Algorithm}[section]
\sloppy                                                                                                          %
\begin{document}

\title[Geometric integrators and  Hamiltonian Monte Carlo]
{Geometric integrators and the Hamiltonian Monte Carlo method}

\author[N. Bou-Rabee and J. M. Sanz-Serna]
{Nawaf Bou-Rabee\\
Department of Mathematical Sciences \\
Rutgers University Camden \\ 311 N 5th Street \\ Camden, NJ 08102 USA \\
E-mail: {\sf nawaf.bourabee@rutgers.edu}\\
\and
J. M. Sanz-Serna \\
Departamento de  Matem\'aticas \\
Universidad Carlos III de Madrid \\
Avenida de la Universidad 30 \\
E--28911 Legan\'es (Madrid), Spain \\
E-mail: {\sf jmsanzserna@gmail.com}
}

\maketitle
\label{firstpage}

\vspace*{1.5cm}
\begin{abstract}
\end{abstract}

\tableofcontents

\vspace{2mm}
\section{Introduction}
This paper surveys the relations between numerical integration and the Hamiltonian (or Hybrid) Monte Carlo
Method (HMC), an important and widely used Markov Chain Monte Carlo algorithm \cite{Di2009} for sampling from
probability distributions. It is written for a general audience and requires no background on numerical
algorithms for solving differential equations. We hope that it will be useful to mathematicians, statisticians
and scientists, especially because the efficiency of HMC is largely dependent on the performance of the
numerical integrator used in the algorithm.

Named one of the top ten algorithms of the twentieth century \cite{Ci2000}, MCMC originated in statistical
mechanics \cite{AlTi1987,FrSm2002,Kr2006,LeRoSt2010,tuckerman2010statistical,LaBi2014} and is now a
cornerstone in statistics \cite{geman1984stochastic,gelfand1990sampling,geyer1992practical,tierney1994markov};
in fact Bayesian approaches only became widespread once MCMC made it possible to sample from completely
arbitrary distributions. In conjunction with Bayesian methodologies, MCMC has enabled applications of
statistical inference to biostatistics \cite{jensen2004computational}, population modelling
\cite{link2009bayesian}, reliability/risk assessment/uncertainty quantification
\cite{sullivan2015introduction}, machine learning \cite{andrieu2003introduction}, inverse problems
\cite{stuart2010inverse}, data assimilation \cite{chen2003bayesian,evensen2009data}, pattern recognition
\cite{webb2003statistical,bishop2006pattern}, artificial intelligence \cite{ghahramani2015probabilistic}, and
probabilistic robots \cite{thrun2005probabilistic}.   In these applications, MCMC is used to evaluate the
expected values necessary for Bayesian statistical inference, in situations where other methods like numerical
quadrature, Laplace approximation, and Monte Carlo importance sampling are impractical or inaccurate.
Additionally, MCMC is used as a tool to set the invariant distribution of numerical methods for first and
second-order Langevin stochastic differential equations
\cite{KiYoMaWa1991,RoTw1996A,RoTw1996B,BoVa2010,BoVa2012,BoHa2013,BoDoVa2014,Bo2014,Fa2014,FaHoSt2015} and
stochastic partial differential equations \cite{BeRoStVo2008,Bo2017}.  Even though MCMC algorithms are often
straightforward to program, there are numerous user-friendly, general-purpose software packages available to
carry out statistical analysis including BUGS \cite{lunn2000winbugs,lunn2009bugs,lunn2012bugs}, STAN
\cite{carpenter2016stan}, MCMCPack \cite{martin2011mcmcpack},  MCMCglmm \cite{hadfield2010mcmc} and PyMC
\cite{patil2010pymc}.

HMC itself was invented in 1987 \cite{DuKePeRo1987} to study lattice models of quantum field theory, and about
a decade later popularized in data science \cite{Li2008,Ne2011}. A simple introduction to this algorithm may
be found in \cite{Sa2014}. A key feature of HMC is that it offers the possibility of generating proposal moves
that, while being far away from the current state of the Markov chain, may be accepted with high probability,
thus avoiding random walk behaviour and reducing the correlation between samples. Such a possibility exists
because proposals are generated by numerically integrating a system of Hamiltonian differential equations. The
distance between the proposal and the current state may in principle be large if the differential equations
are integrated over a suitably long time interval; the acceptance probability of the proposals may be made
arbitrarily close to \(100\%\) by carrying out the integration with sufficient accuracy. Of particular
significance for us is the fact that HMC requires that the numerical integration be performed with a
\emph{volume-preserving, reversible method}.

Since the computational cost of HMC mainly lies in the numerical integrations, it is of much interest to
perform these as efficiently as possible. At present, the well-known velocity Verlet algorithm is the method
of choice, but, as it will be apparent, Verlet may not be the most efficient integrator one could use. What
does it take to design a good integrator for HMC? A key point of this paper is that, due to the specificities
of the situation, a number of concepts traditionally used to analyze numerical integrators (including the
notions of order of consistency/converengence, error constants, and others) are of limited value in our
context. On the one hand and as we have already mentioned, HMC requires methods that have the \emph{geometric
properties} of being volume-preserving and reversible and this limits the number of integrators that may be
applied. It is fortunate that, in the last twenty-five years, the literature on the numerical solution of
differential equations has given much attention to the construction of integrators with relevant geometric
properties, to the point that \emph{geometric integration} (a term introduced in \cite{geometric}) is by now a
well-established subfield of numerical analysis \cite{arieh}. On the other hand, the properties of
preservation of volume and reversibility have important quantitative implications on the integration error
(Theorem~\ref{thm:mean_energy_error}), which in turn have an impact on the acceptance rate of proposals. As a
consequence, it turns out that, for HMC purposes, the order of the integrator is effectively \emph{twice} its
nominal order; for instance the Verlet algorithm behaves, within HMC, as a \emph{fourth order} integrator. In
addition, in HMC, integrators are likely to be operated with large values of the step size, with the
implication that analyses that are only valid in the limit of vanishing step size may not be very informative.
One has rather to turn to studying the performance of the integrator in well-chosen model problems.

Sections~\ref{sec:deandtheirflows}--\ref{sec:montecarlomethods} provide the necessary background on
differential equations, numerical methods, geometric integration and Monte Carlo methods respectively. The
heart of the paper is in Section~\ref{secc:numintHMC}. Among the topics considered there, we mention the
investigation of the impact on the energy error of the properties of preservation of volume and reversibility
(Theorem~\ref{thm:mean_energy_error}) and a detailed study of the behaviour of the integrators in the Gaussian
model. Also presented in Section~\ref{secc:numintHMC} is the construction of integrators more efficient than
the Verlet algorithm. Sections~\ref{sec:highD_hmc} and \ref{sec:path} consider, in two different scenarios,
the behaviour of HMC as the dimensionality of the target distribution increases.  Section~\ref{sec:highD_hmc},
based on \cite{BePiRoSaSt2013} studies the model problem where the target is a product of many independent,
identical copies. The case where the target arises from discretization of an infinite-dimensional distribution
is addressed in Section~\ref{sec:path}; our treatment, while related to the material in \cite{BePiSaSt2011},
has some novel features because we have avoided the functional analytic language employed in that reference.
The final Section~\ref{sec:supplemenary} contains supplementary material.

\vspace{2mm}

\section{Differential equations and their flows}
\label{sec:deandtheirflows}

In this section we introduce some notation and review  background material on differential equations, with
special emphasis on the Hamiltonian and reversible systems  at the basis of HMC algorithms. The section ends
with a description of the Lie bracket which appears in the analysis of the integrators to be used later.

\subsection{Preliminaries}

We are concerned with autonomous systems of differential equations in \( \Reals^D\)
\beq\label{eq:ode}
 \frac{d}{dt} x = f(x);
\eeq
the function (vector field) \(f\) is assumed throughout to be defined in the whole of \(\Reals^D\) and to be
sufficiently smooth. An important role is played  by the particular case
\beq\label{eq:newton} \frac{d}{dt} q = M^{-1}p,\qquad \frac{d}{dt} p = F(q), \eeq
where \(x= (q,p)\in\Reals^D\), \(D=2d\), \( q\in\Reals^d\), \( p\in\Reals^d\) and \(M\) is a constant,
invertible matrix, so that \(f= (M^{-1}p, F(q))\). By eliminating \(p\), \eqref{eq:newton} is seen to be equivalent to
\[
M\frac{d^2}{dt^2} q = F(q);
\]
this is not the most general autonomous system of second order differential equations in \(\Reals^d\) because
the derivative \((d/dt)q\) does not appear in the right-hand side. When the forces depend only on the
positions, {\em Newton's second law} for a mechanical system gives rise to differential equations of the form
\eqref{eq:newton}; then \(q\), \((d/dt)q\),  \(p\), \(F\) are respectively the vectors of coordinates,
velocities, momenta, and forces, and \(M\) is the matrix of masses.

 We denote by \( \varphi_t\) the \(t\)-{\em flow} of the system \eqref{eq:ode} under consideration. By definition,
for fixed real \(t\), \(\varphi_t:\Reals^D \to\Reals^D\) is the map that associates with each
\(\alpha\in\Reals^D\) the value at time \(t\) of the solution  of \eqref{eq:ode} that at the initial time 0
takes the initial value \(\alpha\).

\begin{example}
As a very simple but important example, we consider  the  standard \emph{harmonic oscillator}, the system in
\(\Reals^2\) of the special form \eqref{eq:newton} given by
\beq \label{eq:harmonic}
\frac{dq}{dt} = p,\qquad \frac{dp}{dt} = -q.
\eeq
For future reference, we note that, with matrix notation, the solutions satisfy
 \beq\label{eq:rotation}
\left[
\begin{matrix}q(t)\\p(t)\end{matrix}\right] = M_t\left[
\begin{matrix}q(0)\\p(0)\end{matrix}\right],\qquad M_t = \left[ \begin{matrix}\phantom{-}\cos t & \sin t\\
-\sin t & \cos t\end{matrix}\right].
\eeq
Thus the flow has the expression
\beq\label{eq:harmonicflow} \varphi_t(\xi,\:\eta) = (\xi \cos t +\eta \sin t,\: -\xi
\sin t + \eta \cos t). %
\eeq
When  \( \alpha =(\xi,\eta)\in\Reals^2 \) is fixed and   \(t\) varies, the right-hand side of
\eqref{eq:harmonicflow} yields the solution that at \(t=0\) takes the initial value \((\xi,\eta)\). The
notation \(\varphi_t(\xi,\:\eta)\) emphasizes that, in the flow, it is the parameter \(t\) that  is seen as
fixed, while \((\xi,\eta)\in\Reals^2\) is regarded as a variable. Geometrically, \(\varphi_t\) is the
clockwise rotation of angle \(t\) around the origin of the \((q,p)\)-plane.
\end{example}

For a given system \eqref{eq:ode}, it is well possible that for some choices of  \( \alpha\) and \(t\), the
vector \( \varphi_t(\alpha)\in\Reals^D\) is not defined; this will happen if \(t\) is outside the interval in
which the solution of \eqref{eq:ode} with initial value \(\alpha\) exists. For simplicity in the statements,
we shall  assume hereafter that \( \varphi_t(\alpha)\) is always defined.

Flows possess the {\em group property:} \(\varphi_0\) is the indentity map in \(\Reals^D\) and, for arbitrary
real \( s\) and \(t\),
\beq\label{eq:flow} \varphi_t \circ \varphi_s = \varphi_{s+t}. \eeq
 In particular, for each \(t\),
\beq
 \label{eq:inverseflow} (\varphi_t)^{-1} = \varphi_{-t}, \eeq
\ie\  \(\varphi_{-t}\) is the inverse of the map  \(\varphi_t\). For the harmonic oscillator example, the
group property simply states that a rotation of angle \(s\) followed by a rotation of angle \(t\) has the same
effect as a rotation of angle \(s+t\).

\subsection{Hamiltonian systems}
The Hamiltonian formalism  \cite{MaRa1999,arnold} is essential to understand HMC algorithms.
\subsubsection{Hamiltonian vector fields}
Assume that the dimension \(D\) of   \eqref{eq:ode} is even, \(D = 2d\), and write \(x = (q,p)\) with
\(q,p\in\Reals^d\). Then the system \eqref{eq:ode} is said to be \emph{Hamiltonian} if there is a function
\(H:\Reals^{2d}\rightarrow \Reals\) such that, for \(i= 1,\dots,d\), the scalar components \(f^i\) of \(f\)
are given by
\[
f^i(q,p) = +\frac{\partial H}{\partial q^i}(q,p),\qquad
f^{d+i}(q,p) = -\frac{\partial H}{\partial p^i}(q,p).
\]
Thus, the system is
\[
\frac{dq^i}{dt} = +\frac{\partial H}{\partial q^i}(q,p),\qquad
\frac{dp^i}{dt} = -\frac{\partial H}{\partial p^i}(q,p),
\]
or, in vector notation \cite{SaCa1994},
\beq\label{eq:hamsys}
\frac{d}{dt} \left[\begin{matrix}q\\p\end{matrix}\right] = J^{-1} \nabla H(q,p),
\eeq
where
\[
\nabla H = \left[\frac{\partial H}{\partial q^1},\dots, \frac{\partial H}{\partial q^d},
\frac{\partial H}{\partial p^1},\dots, \frac{\partial H}{\partial p^d}\right]^T
\]
and
\[
J = \left[
\begin{matrix}
0_{d\times d}& -I_{d\times d}\\
I_{d\times d}&\phantom{-} 0_{d\times d}
\end{matrix}
\right].
\]
The function \(H\) is called the \emph{Hamiltonian}, \(\Reals^{2d}\) is the \emph{phase space}, and \(d\) is
the \emph{number of degrees of freedom}.

A  system of the special form \eqref{eq:newton} is Hamiltonian if and only if \(F= -\nabla\mathcal{U}(q)\) for
a suitable real-valued function \(\mathcal{U}\), \ie\
\beq\label{eq:newton2}
\frac{d}{dt} q = M^{-1}p,\qquad \frac{d}{dt} p = -\nabla\mathcal{U}(q);
\eeq
when that is the case,
\beq\label{eq:sepham} H(q,p) = \mathcal{T}(p)+\mathcal{U}(q),\qquad {\rm with} \qquad \mathcal{T}(p) =
\frac{1}{2} p^TM^{-1}p. \eeq
In applications to mechanics, \(\mathcal T\) and \(\mathcal U\) are respectively the \emph{potential} and
\emph{kinetic} energy and \(H\) represents the \emph{total energy} in the system. The harmonic oscillator
\eqref{eq:harmonic} provides the simplest example; there \( \mathcal{T} = (1/2)p^2\), \( \mathcal{U} =
(1/2)q^2\).

\subsubsection{Symplecticness and preservation of oriented volume}
A mapping \(\Phi: \Reals^{2d} \rightarrow \Reals^{2d}\) is said to be \emph{symplectic} or \emph{canonical} if, at each point
\((q,p)\in\Reals^{2d}\),
\beq\label{eq:symplectic} \Phi^\prime(q,p)^T J \Phi^\prime(q,p) = J \eeq
(\(\Phi^\prime(q,p)\) denotes the \(2d\times 2d\) Jacobian matrix of \(\Phi\)). The (analytic) condition
\eqref{eq:symplectic} has a geometric interpretation in terms of preservation of two-dimensional areas \cite{arnold}; such
interpretation is not required to understand the rest of the paper.

When \(d=1\), if we set \(\Phi(q,p) = (q^*,p^*)\), the condition \eqref{eq:symplectic}, after multiplying the
matrices in the left-hand side, is seen to be equivalent to
\[
\frac{\partial q^*}{\partial q} \frac{\partial p^*}{\partial p}
- \frac{\partial q^*}{\partial p} \frac{\partial p^*}{\partial q} = 1.
\]
The left-hand side is the Jacobian determinant of \(\Phi\) and therefore the transformation \(\Phi\) is symplectic
if and only if the mapping \((q,p)\mapsto (q^*,p^*)\) preserves  oriented area in the \((q,p)\) plane, \ie\
for any domain \(D\) the oriented area of the image \(\Phi(D)\subset\Reals^2\) coincides with the oriented area of
\(D\).\footnote{Preservation of the oriented area means that \(D\) and \(\Phi(D)\) have the same orientation
and (two-dimensional Lebesque) measure. The transformation (symmetry) \((q,p)\mapsto (q,-p)\) preserves
measure but not oriented area.} For instance, for each \(t\), the rotation in \eqref{eq:harmonicflow} is a
symplectic transformation in \(\Reals^2\).

For general \(d\) the following result holds \cite[Section 38]{arnold}:

\begin{proposition}\label{eq:con_volume} For a symplectic transformation the determinant of  \(\Phi^\prime\)
 equals 1.
 Therefore symplectic transformations preserve the oriented volume in
  \(\Reals^{2d}\), \ie\ \(\Phi(D)\)  and \(D\) have the
same oriented volume for each domain \(D\subset\Reals^{2d}\).
\end{proposition}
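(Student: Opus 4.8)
The plan is to read off the statement from the defining identity \eqref{eq:symplectic} in two stages: a determinant computation that gives $\det\Phi'=\pm1$, and then a finer argument that removes the minus sign. The oriented-volume assertion is then simply the change-of-variables formula.

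For the first stage I would take determinants on both sides of \eqref{eq:symplectic}. Using multiplicativity of the determinant and $\det(\Phi'^{T})=\det\Phi'$, this yields $(\det\Phi'(q,p))^{2}\det J=\det J$ at every $(q,p)$; since $J$ is invertible (indeed $J^{2}=-I_{2d\times 2d}$, so $(\det J)^{2}=1\neq0$), cancelling $\det J$ gives $\det\Phi'(q,p)=\pm1$ at each point.

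The only real obstacle is the second stage, excluding the value $-1$: the bare identity \eqref{eq:symplectic} is invariant under $\Phi'\mapsto-\Phi'$, so no argument using only the scalar $\det(\Phi'^{T}J\Phi')$ can distinguish $+1$ from $-1$, and one must exploit more of the structure of \eqref{eq:symplectic}. The cleanest route I know is via the Pfaffian: for any $2d\times2d$ matrix $A$ and any antisymmetric matrix $B$ one has $\operatorname{Pf}(A^{T}BA)=(\det A)\operatorname{Pf}(B)$; applying this with $B=J$, $A=\Phi'(q,p)$ and using \eqref{eq:symplectic} gives $\operatorname{Pf}(J)=(\det\Phi'(q,p))\operatorname{Pf}(J)$, and since $\operatorname{Pf}(J)\neq0$ we conclude $\det\Phi'(q,p)=1$. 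A route closer to \cite[Section~38]{arnold} is the exterior-algebra one: if $\omega=\sum_{i=1}^{d}dq^{i}\wedge dp^{i}$ denotes the two-form whose coordinate matrix is $J$, then \eqref{eq:symplectic} is precisely the statement $\Phi^{*}\omega=\omega$, hence $\Phi^{*}(\omega^{\wedge d})=(\Phi^{*}\omega)^{\wedge d}=\omega^{\wedge d}$; since $\omega^{\wedge d}$ is a nonzero constant multiple of the standard volume form $dq^{1}\wedge\cdots\wedge dp^{d}$, and pulling back a top-degree form multiplies it by $\det\Phi'$, this again forces $\det\Phi'\equiv1$. If one wanted to avoid both Pfaffians and exterior algebra, one could instead invoke connectedness of the set of symplectic matrices together with continuity of the determinant, but that relies on the nontrivial connectedness fact, so I would not take that path.

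Finally, with $\det\Phi'(q,p)\equiv1$ in hand, the change-of-variables formula gives, for any domain $D$, that the oriented volume of $\Phi(D)$ equals $\int_{D}\det\Phi'(q,p)\,dq\,dp=\int_{D}1\,dq\,dp$, which is the oriented volume of $D$. Steps one and three are routine; only the removal of the sign in step two requires any genuine idea.
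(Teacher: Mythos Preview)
Your proof is correct. Note, however, that the paper does not actually supply its own proof of this proposition: it simply states the result and cites \cite[Section~38]{arnold}. So there is no ``paper's proof'' to compare against beyond that reference.

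That said, your exterior-algebra argument (\(\Phi^{*}\omega=\omega\Rightarrow\Phi^{*}\omega^{\wedge d}=\omega^{\wedge d}\)) is precisely Arnold's approach in the cited section, so in that sense you have reconstructed the intended proof. Your Pfaffian alternative is also clean and is the more algebraic/combinatorial counterpart of the same idea (the Pfaffian of \(J\) being, up to sign conventions, the coefficient that makes \(\omega^{\wedge d}\) a multiple of the volume form). Your diagnosis that the sign issue is the only nontrivial step is exactly right, and your remark that connectedness of \(\mathrm{Sp}(2d,\Reals)\) would work but imports a nontrivial topological fact is well judged.
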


For \(d>1\) preservation of oriented volume is a strictly weaker property than symplecticness.

The proof of the following two results is easy using \eqref{eq:symplectic}.
\begin{proposition}\label{prop:compsymp}
The composition \(\Phi_1\circ \Phi_2\) of two symplectic mappings is itself symplectic.
\end{proposition}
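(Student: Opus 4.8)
The plan is to reduce everything to the defining identity \eqref{eq:symplectic} together with the chain rule for Jacobians. Fix an arbitrary point \((q,p)\in\Reals^{2d}\) and abbreviate \(A = \Phi_2^\prime(q,p)\) and \(B = \Phi_1^\prime\big(\Phi_2(q,p)\big)\); note that \(B\) is the Jacobian of \(\Phi_1\) evaluated not at \((q,p)\) but at the image point \(\Phi_2(q,p)\), and keeping track of this evaluation point is the only thing one has to be careful about.

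First I would record that, by the chain rule, \((\Phi_1\circ\Phi_2)^\prime(q,p) = B A\). Next, since \(\Phi_1\) is symplectic, \eqref{eq:symplectic} applied at the point \(\Phi_2(q,p)\) gives \(B^T J B = J\); and since \(\Phi_2\) is symplectic, \eqref{eq:symplectic} applied at \((q,p)\) gives \(A^T J A = J\). Then the short computation
\[
(BA)^T J (BA) = A^T (B^T J B) A = A^T J A = J
\]
shows that \(\Phi_1\circ\Phi_2\) satisfies \eqref{eq:symplectic} at \((q,p)\). Since \((q,p)\) was arbitrary, the composition is symplectic.

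There is essentially no obstacle here; the only point that requires a moment's attention is that the two instances of \eqref{eq:symplectic} are invoked at two different points of phase space, which is precisely what makes the chain-rule bookkeeping work out. One could equally phrase the argument coordinate-free, by saying that the pullback under a composition is the composition of the pullbacks, but the matrix computation above is the most elementary route and matches the level of the surrounding discussion.
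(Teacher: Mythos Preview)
Your proof is correct and is precisely the standard argument the paper has in mind: the text does not spell out a proof but simply remarks that the result ``is easy using \eqref{eq:symplectic},'' and your chain-rule computation is exactly that. There is nothing to add or amend.
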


\begin{proposition}\label{prop:canonical change} The change of variables \((q,p) = \Phi(\bar q,\bar p)\)
with \(\Phi\) symplectic transforms the Hamiltonian system of differential equations \eqref{eq:hamsys} into a
system for \((\bar q,\bar p)\) that is also Hamiltonian. Moreover, the Hamiltonian function \(\bar H\) of the
transformed system is the result of changing variables in \(H\), \ie\ \(\bar H = H\circ \Phi\).
\end{proposition}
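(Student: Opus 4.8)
The plan is to write the Hamiltonian system \eqref{eq:hamsys} in the compact form $(d/dt)x = J^{-1}\nabla H(x)$ with $x=(q,p)$, substitute $x = \Phi(\bar x)$ with $\bar x = (\bar q,\bar p)$, and apply the chain rule twice: once on the left-hand side to express $(d/dt)x$ through $(d/dt)\bar x$, and once on the right-hand side to express $\nabla_x H$ through the gradient of $H\circ\Phi$. The symplectic condition \eqref{eq:symplectic} will then be precisely the identity needed to make the two Jacobian factors cancel correctly, leaving a system in Hamiltonian form.

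First I would note that, by Proposition~\ref{eq:con_volume}, $\det\Phi'(\bar q,\bar p)=1$ at every point, so $\Phi'$ is everywhere nonsingular and the substitution is a legitimate (at least local) change of variables. Differentiating $x(t) = \Phi(\bar x(t))$ gives $(d/dt)x = \Phi'(\bar x)\,(d/dt)\bar x$, so \eqref{eq:hamsys} becomes
\[
\Phi'(\bar x)\,\frac{d}{dt}\bar x = J^{-1}\nabla_x H(\Phi(\bar x)),\qquad\text{so that}\qquad \frac{d}{dt}\bar x = \Phi'(\bar x)^{-1} J^{-1}\nabla_x H(\Phi(\bar x)).
\]
On the other hand, the chain rule applied to $\bar H := H\circ\Phi$ gives $\nabla_{\bar x}\bar H(\bar x) = \Phi'(\bar x)^T\nabla_x H(\Phi(\bar x))$, hence
\[
J^{-1}\nabla_{\bar x}\bar H(\bar x) = J^{-1}\Phi'(\bar x)^T\nabla_x H(\Phi(\bar x)).
\]
Thus the transformed system will be $(d/dt)\bar x = J^{-1}\nabla_{\bar x}\bar H(\bar x)$ provided the matrix identity $\Phi'^{-1}J^{-1} = J^{-1}\Phi'^T$ holds at every point.

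To verify this last identity I would take inverses in \eqref{eq:symplectic}: from $\Phi'^T J\Phi' = J$ one gets $\Phi'^{-1}J^{-1}(\Phi'^{-1})^T = J^{-1}$, and multiplying on the right by $\Phi'^T$ yields exactly $\Phi'^{-1}J^{-1} = J^{-1}\Phi'^T$. Inserting this into the expression obtained above for $(d/dt)\bar x$ shows that the transformed equation is Hamiltonian with Hamiltonian $\bar H = H\circ\Phi$, as claimed. I do not anticipate any genuine obstacle: the argument is just two uses of the chain rule glued together by \eqref{eq:symplectic}. The only points that require a little care are the bookkeeping of transposes and inverses when invoking the symplectic relation, and — if one wants a global statement rather than a purely local one — the tacit assumption that $\Phi$ is a diffeomorphism of $\Reals^{2d}$.
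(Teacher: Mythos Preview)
Your proof is correct and is exactly the computation the paper has in mind: the text does not spell out a proof but simply states that the result ``is easy using \eqref{eq:symplectic},'' and your two chain-rule steps followed by the matrix identity $\Phi'^{-1}J^{-1}=J^{-1}\Phi'^T$ derived from \eqref{eq:symplectic} constitute precisely that easy verification.
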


In view of the following important general result  \cite[Proposition 2.6.2]{MaRa1999} the symplecticness of the rotation
\eqref{eq:harmonicflow} noted above is a manifestation of the Hamiltonian character of the harmonic
oscillator.
\begin{theorem}\label{th:symplectic}
Let \(D =2d\). The  system \eqref{eq:ode} with flow \(\varphi_t\) is Hamiltonian if and only if, for each real
\(t\), \(\varphi_t\) is a symplectic mapping.
\end{theorem}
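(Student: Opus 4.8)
The plan is to prove both implications by relating the time derivative of the Jacobian $\varphi_t'(\alpha)$ to the Hessian of $H$. First I would fix an initial point $\alpha$ and write $M(t) = \varphi_t'(\alpha)$ for the Jacobian matrix of the flow map at $\alpha$. Differentiating the defining relation $(d/dt)\varphi_t(\alpha) = f(\varphi_t(\alpha))$ with respect to $\alpha$ and interchanging the $t$- and $\alpha$-derivatives shows that $M(t)$ satisfies the \emph{variational equation}
\[
\frac{d}{dt} M(t) = f'(\varphi_t(\alpha))\, M(t), \qquad M(0) = I,
\]
where $f'$ is the Jacobian of the vector field. In the Hamiltonian case \eqref{eq:hamsys} we have $f = J^{-1}\nabla H$, so $f' = J^{-1} \nabla^2 H$ with $\nabla^2 H$ the (symmetric) Hessian of $H$; writing $S(t) = \nabla^2 H(\varphi_t(\alpha))$, which is symmetric, the variational equation becomes $\dot M = J^{-1} S(t) M$.

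For the forward implication ($\varphi_t$ Hamiltonian $\Rightarrow$ symplectic), the key step is to show that the quantity $W(t) := M(t)^T J M(t)$ is constant in $t$. Differentiating and using $\dot M = J^{-1}SM$ gives
\[
\frac{d}{dt} W = \dot M^T J M + M^T J \dot M
 = M^T S^T (J^{-1})^T J M + M^T J J^{-1} S M.
\]
Now one uses the elementary algebraic facts $J^T = -J = J^{-1}$, hence $(J^{-1})^T = -J^{-1} = J$ and $(J^{-1})^T J = J\cdot J = -I$, together with $J J^{-1} = I$ and $S^T = S$; substituting, the two terms become $-M^T S M + M^T S M = 0$. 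Therefore $W(t) \equiv W(0) = I^T J I = J$, which is precisely the symplecticness condition \eqref{eq:symplectic} for $\Phi = \varphi_t$ at the point $\alpha$. Since $\alpha$ and $t$ were arbitrary, every $\varphi_t$ is symplectic.

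For the converse, I would run this computation in reverse. Assume $\varphi_t$ is symplectic for all $t$, i.e. $M(t)^T J M(t) = J$ identically in $t$ and $\alpha$; differentiating at $t=0$ and using $M(0)=I$, $\dot M(0) = f'(\alpha)$ yields $f'(\alpha)^T J + J f'(\alpha) = 0$ for every $\alpha$, i.e. the matrix $J f'(\alpha)$ is symmetric. Setting $g(x) = J f(x)$, symmetry of its Jacobian $J f'(x) = g'(x)$ is exactly the integrability (Poincar\'e) condition that guarantees, on the simply connected domain $\Reals^{2d}$, the existence of a scalar potential $H$ with $g = \nabla H$, that is $J f = \nabla H$, equivalently $f = J^{-1}\nabla H$; hence the system is Hamiltonian. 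I expect the main obstacle to be presented at the right level of detail: the forward direction is a short matrix computation once the variational equation is in hand, so the delicate part is the converse — specifically, being careful that symmetry of $g'(x)$ for all $x$ really does produce a \emph{globally} defined $H$ (this is where simple connectedness of $\Reals^{2d}$, already built into our standing assumptions on $f$, is used), and noting that the cross-derivative interchange used to derive the variational equation is justified by the assumed smoothness of $f$.
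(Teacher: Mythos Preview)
Your argument is correct and is the standard proof of this classical result. Note, however, that the paper does not actually supply its own proof of this theorem: it is stated with a reference to \cite[Proposition~2.6.2]{MaRa1999} and no argument is given in the text. Your write-up is essentially the argument one finds in that reference (variational equation plus the computation that $M^T J M$ is conserved, and the Poincar\'e-lemma converse), so there is nothing to compare against in the paper itself.
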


Thus symplecticness  is a characteristic property that allows us to decide whether a differential system is
Hamiltonian or otherwise in terms of its \emph{ flow}, without knowing  the \emph{vector field} (right-hand
side) \(f\) of the equation.

We recall that a flow preserves oriented volume if and only if the corresponding vector field \(f\) is
divergence-free (\(\nabla\cdot f = \sum_i (\partial/\partial x^i)f^i =0\)). If \(d>1\), there are
divergence-free differential systems in \(\Reals^{2d}\) that are not Hamiltonian; their flows preserve
oriented volume but are not symplectic.

The behavior of the solutions of Hamiltonian problems is very different from that encountered in \lq
general\rq\ systems; some features that are \lq the rule\rq\ in Hamiltonian systems are exceptional in
non-Hamiltonian systems. Such special behaviour of Hamiltonian solutions may always be traced back to the
symplecticness of the flow. As a very simple example, we consider once more the harmonic oscillator
\eqref{eq:harmonic}. The origin is a center: a neutrally stable equilibrium surrounded by periodic
trajectories. Small perturbations of the right-hand side of \eqref{eq:harmonic} generically destroy the
center; after perturbation the trajectories become spirals and the origin becomes either an asymptotically
stable node (trajectories spiral in) or an unstable node (trajectories spiral out). However, if the
perturbation is such that the perturbed system is also Hamiltonian, then the center will not disappear under
small perturbations.

\subsubsection{Preservation of energy}
If \((q(t),p(t))\) is a solution of \eqref{eq:hamsys}, then
\[
\frac{d}{dt} H(q(t),p(t)) = \nabla H(q(t),p(t))^T J^{-1}\nabla H(q(t),p(t)) = 0,
\]
because \(J^{-1}\) is skew-symmetric. Therefore we may state:
\begin{theorem} \label{th:consenergy}
 The value of the Hamiltonian function is preserved by the flow of the corresponding Hamiltonian system, \ie\
\( H\circ \varphi_t = H\) for each real \(t\).
\end{theorem}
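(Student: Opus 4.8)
The plan is to prove the stronger statement that $H$ is constant along every trajectory of \eqref{eq:hamsys}; this yields $H\circ\varphi_t=H$ at once, since $\varphi_t$ carries an initial value to the point reached by its trajectory at time $t$. First I would fix an arbitrary $\alpha\in\Reals^{2d}$ and let $(q(t),p(t))$ be the solution of \eqref{eq:hamsys} with $(q(0),p(0))=\alpha$, so that $\varphi_t(\alpha)=(q(t),p(t))$ by the very definition of the flow.

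Next I would differentiate the scalar function $h(t):=H(q(t),p(t))$. By the chain rule and then the equations of motion \eqref{eq:hamsys},
\[
h'(t)=\nabla H^T\,\frac{d}{dt}\left[\begin{matrix}q(t)\\p(t)\end{matrix}\right]
      =\nabla H^T J^{-1}\nabla H,
\]
where $\nabla H$ is evaluated along the trajectory. The key point is that $J^{-1}$ is skew-symmetric: from the block form of $J$ one computes $J^2=-I$, hence $J^{-1}=-J=J^T$ and $(J^{-1})^T=-J^{-1}$. Therefore, for every $v\in\Reals^{2d}$, the scalar $v^TJ^{-1}v$ equals its own transpose $v^T(J^{-1})^Tv=-v^TJ^{-1}v$, which forces $v^TJ^{-1}v=0$. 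Applying this with $v=\nabla H(q(t),p(t))$ gives $h'(t)\equiv 0$.

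Consequently $h$ is constant, and therefore $H(\varphi_t(\alpha))=h(t)=h(0)=H(\alpha)$ for every real $t$; since $\alpha$ was arbitrary, $H\circ\varphi_t=H$. I do not expect any genuine obstacle: the argument reduces to the one-line computation already displayed before the statement, whose only substantive ingredient is that a skew-symmetric quadratic form vanishes identically. The sole hypothesis implicitly needed is enough smoothness of $H$ to justify the chain rule, which is guaranteed by the standing assumption that the vector field $f=J^{-1}\nabla H$ is sufficiently smooth.
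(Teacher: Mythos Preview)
Your proposal is correct and follows exactly the paper's own argument: the one-line chain-rule computation displayed just before the theorem, concluding from the skew-symmetry of $J^{-1}$ that $\nabla H^T J^{-1}\nabla H=0$. You have simply spelled out the details (verifying $J^{-1}=-J$ and that skew-symmetric quadratic forms vanish) that the paper leaves implicit.
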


 In applications to the physical sciences, this result is usually   the mathematical expression of the
principle of conservation of energy. Unlike symplecticness, which is a characteristic property, conservation
of energy on its own does not ensure that the underlying system is Hamiltonian. There are many examples of
non-Hamiltonian systems whose solutions preserve the value of a suitable energy function.
\subsubsection{Preservation of the canonical probability measure}
Let \(\beta\) denote a positive constant and assume that \(H\) is such that
\[
Z = \int_{\Reals^{2d}} \exp(-\beta H(q,p))\,dqdp < \infty.
\]
Then we have the following result, which is  a direct consequence of the fact that \(\varphi_t\) preserves
both the volume element \(dqdp\) (Proposition~\ref{eq:con_volume}) and the value of \(\exp(-\beta H)\)
(because, according to Theorem~\ref{th:consenergy}, it preserves the value of \(H\)).
\begin{theorem}\label{th:bg}
The probability measure \(\mu\) in \(\Reals^{2d}\) with density  (with respect
to the ordinary Lebesgue measure) \(Z^{-1} \exp(-\beta H(q,p))\) is preserved by the  flow of the Hamiltonian system \eqref{eq:hamsys}, \ie\
\(\mu(\varphi_t(D)) =\mu(D)\) for each domain \(D\subset\Reals^{2d}\) and each real \(t\).
 \end{theorem}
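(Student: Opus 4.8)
The plan is to deduce the result directly from the change-of-variables formula for multiple integrals, feeding into it the two facts already established in the excerpt: that $\varphi_t$ has Jacobian determinant identically equal to $1$, and that $H$ is constant along the flow.

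First I would fix a real $t$ and a domain $D\subset\Reals^{2d}$ and write
\[
\mu(\varphi_t(D)) = \frac{1}{Z}\int_{\varphi_t(D)} \exp\bigl(-\beta H(x)\bigr)\,dx .
\]
Because $\varphi_t$ is, by our standing assumption, defined on all of $\Reals^{2d}$, and because the group property \eqref{eq:flow}--\eqref{eq:inverseflow} together with smooth dependence on initial data makes $\varphi_t$ a diffeomorphism of $\Reals^{2d}$ onto itself (with inverse $\varphi_{-t}$), I may carry out the substitution $x=\varphi_t(y)$, $y\in D$, obtaining
\[
\mu(\varphi_t(D)) = \frac{1}{Z}\int_{D} \exp\bigl(-\beta H(\varphi_t(y))\bigr)\,\bigl|\det \varphi_t^\prime(y)\bigr|\,dy .
\]

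Next I would invoke the structure of the problem. Since \eqref{eq:hamsys} is Hamiltonian, Theorem~\ref{th:symplectic} says $\varphi_t$ is symplectic for every $t$, so Proposition~\ref{eq:con_volume} gives $\det\varphi_t^\prime(y)=1$ at every point; and Theorem~\ref{th:consenergy} gives $H(\varphi_t(y))=H(y)$. Substituting both into the last display collapses the integrand back to $\exp(-\beta H(y))$, whence $\mu(\varphi_t(D)) = Z^{-1}\int_D \exp(-\beta H(y))\,dy = \mu(D)$, which is the assertion.

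There is no genuine obstacle here; the only point deserving a word of justification is that $\varphi_t$ really is a diffeomorphism --- needed both to legitimize the change of variables and to guarantee that $\varphi_t(D)$ is again a domain (in particular Borel) --- and this is a standard consequence of existence--uniqueness theory together with \eqref{eq:inverseflow}. One may also note that the same computation, carried out with an arbitrary $\mu$-integrable function $g$ in place of the indicator of $D$, yields $\int (g\circ\varphi_t)\,d\mu = \int g\,d\mu$, \ie\ $\varphi_t$ preserves $\mu$ in the full measure-theoretic sense; the stated version follows by taking $g=\mathbf{1}_{D}$.
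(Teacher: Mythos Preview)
Your proof is correct and follows exactly the approach the paper takes: the paper states the result as ``a direct consequence'' of preservation of volume (Proposition~\ref{eq:con_volume}) and preservation of $H$ (Theorem~\ref{th:consenergy}), and you have simply written out that consequence explicitly via the change-of-variables formula. There is nothing to add.
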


In statistical mechanics \cite{AlTi1987,FrSm2002,tuckerman2010statistical}, if \eqref{eq:hamsys} describes the
dynamics of a physical system and \(\beta\) is the inverse of the absolute temperature, then  \(\mu\) is the
\emph{canonical} measure that governs the distribution of \((q,p)\) over an ensemble of many copies of the
given system when the system is in contact with a heat bath at constant temperature, \ie\ \(Z^{-1} \exp(-\beta
H(q,p))dqdp\) represents the fraction of copies with momenta between \(p\) and \(p+dp\), and configuration
between \(q\) and \(q+dq\). Note that under the canonical distribution, (local) minima of the energy \(H\)
correspond to (local) maxima of the probability density function, \ie\ to \emph{modes} of the distribution.
Also, if the temperature decreases (\(\beta\) increases), it is less likely to find the system at a location
\((q,p)\) with high energy.

For Hamiltonian functions of the particular form in \eqref{eq:sepham}, we may factorize
\[
\exp(-\beta H(q,p)) = \exp(-\frac{1}{2}\beta\, p^TM^{-1}p)\times \exp(-\beta\, \mathcal{U}(q))
\]
and therefore, under the canonical distribution, \(q\) and \(p\) are stochastically independent. The
(marginal) distribution of the configuration variables \(q\)  has probability density function proportional to
\(\exp(-\beta\, \mathcal{U}(q))\). The momenta \(p\) possess a Gaussian distribution with zero mean and
covariance matrix equal to \(M\). These distributions are associated with the names of Boltzmann, Gibbs and
Maxwell. Hereafter we refer to the canonical measure as the Boltzmann-Gibbs distribution.

\subsection{Reversible systems}

Assume now that \(S\) is a linear \emph{involution} in \(\Reals^D\), \ie\ a linear map such that \(S(S(x)) =
x\) for each \(x\). A mapping \(\Phi:\Reals^D\rightarrow\Reals^D\) is said to be \emph{reversible} (with
respect to \(S\)) if, for each \(x\), \(S(\Phi(x)) = \Phi^{-1}(S(x))\) or, more compactly,
\beq\label{eq:reversibility}
 S\circ \Phi = \Phi^{-1}\circ S.
\eeq

The following results have  easy proofs.
\begin{proposition}\label{prop:jacob} If \(\Phi\) is \(S\) reversible, then
\[
\left| \det \Phi'\big(S ( \Phi(x)) \big) \right| = \left| \det \Phi'(x) \right|^{-1},
\]
for each \(x\).
\end{proposition}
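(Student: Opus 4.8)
The plan is to convert the functional identity \eqref{eq:reversibility} into a pointwise identity among Jacobian matrices and then take determinants. First I would rewrite the reversibility relation \(S\circ\Phi = \Phi^{-1}\circ S\) by composing with \(\Phi\) on the left, obtaining the equivalent form \(\Phi\circ S\circ\Phi = S\) as maps \(\Reals^D\to\Reals^D\). This version is convenient because both sides are now everywhere-defined smooth maps in which no inverse appears.

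Next I would differentiate this identity at an arbitrary point \(x\) via the chain rule. Tracking the intermediate images — \(\Phi\) sends \(x\) to \(\Phi(x)\), then \(S\) sends \(\Phi(x)\) to \(S(\Phi(x))\), then the outer \(\Phi\) is evaluated there — and using that the derivative of the linear map \(S\) is \(S\) itself at every point, the left-hand side has Jacobian \(\Phi'\big(S(\Phi(x))\big)\, S\, \Phi'(x)\), while the right-hand side has Jacobian \(S\). Hence
\[
\Phi'\big(S(\Phi(x))\big)\, S\, \Phi'(x) = S .
\]

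Then I would take determinants of both sides, using multiplicativity: \(\det\Phi'\big(S(\Phi(x))\big)\cdot \det S\cdot \det\Phi'(x) = \det S\). The one small observation worth recording is that \(S\) is an involution, so \(S\circ S\) being the identity forces \((\det S)^2 = 1\) and in particular \(\det S\neq 0\); cancelling this common nonzero factor yields \(\det\Phi'\big(S(\Phi(x))\big)\cdot \det\Phi'(x) = 1\). Passing to absolute values gives the claimed identity, and as a byproduct one sees that \(\Phi'(x)\) is invertible everywhere, so the inverse on the right-hand side is legitimate. There is no genuine obstacle here — apart from noting \(\det S=\pm1\) so that it may be cancelled, the argument is just the chain rule together with multiplicativity of the determinant.
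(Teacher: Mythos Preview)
Your argument is correct: rewriting \eqref{eq:reversibility} as \(\Phi\circ S\circ\Phi=S\), applying the chain rule, and cancelling the nonzero factor \(\det S=\pm 1\) is exactly the intended route. The paper in fact omits the proof, merely noting that it is easy, so there is nothing to compare against.
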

\begin{proposition}\label{prop:comprever}
If \(\Phi_1\) is \(S\)-reversible, then \(\Phi_1\circ\Phi_1\) is \(S\)-reversible. If \(\Phi_1\) and
\(\Phi_2\) are \(S\)-reversible, then the symmetric composition \(\Phi_1\circ \Phi_2\circ \Phi_1\) is
\(S\)-reversible.
\end{proposition}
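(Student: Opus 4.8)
The plan is to verify the two claims in Proposition~\ref{prop:comprever} by direct manipulation of the reversibility identity \eqref{eq:reversibility}, $S\circ\Phi=\Phi^{-1}\circ S$, together with the involution property $S\circ S=\mathrm{id}$ and the elementary fact that $(\Phi\circ\Psi)^{-1}=\Psi^{-1}\circ\Phi^{-1}$. For the first assertion, I would start from $S\circ(\Phi_1\circ\Phi_1) = (S\circ\Phi_1)\circ\Phi_1$, apply \eqref{eq:reversibility} to the left factor to get $(\Phi_1^{-1}\circ S)\circ\Phi_1$, then apply \eqref{eq:reversibility} once more to the inner $S\circ\Phi_1$ to obtain $\Phi_1^{-1}\circ(\Phi_1^{-1}\circ S)=(\Phi_1\circ\Phi_1)^{-1}\circ S$, which is exactly the $S$-reversibility of $\Phi_1\circ\Phi_1$.

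For the symmetric composition $\Psi=\Phi_1\circ\Phi_2\circ\Phi_1$, the argument is the same pattern applied three times. Compute $S\circ\Psi = (S\circ\Phi_1)\circ\Phi_2\circ\Phi_1 = \Phi_1^{-1}\circ(S\circ\Phi_2)\circ\Phi_1 = \Phi_1^{-1}\circ\Phi_2^{-1}\circ(S\circ\Phi_1) = \Phi_1^{-1}\circ\Phi_2^{-1}\circ\Phi_1^{-1}\circ S$. Since $\Phi_1^{-1}\circ\Phi_2^{-1}\circ\Phi_1^{-1} = (\Phi_1\circ\Phi_2\circ\Phi_1)^{-1} = \Psi^{-1}$, this is $S\circ\Psi = \Psi^{-1}\circ S$, as desired. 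Note that the symmetry of the composition is what makes the bookkeeping work out: the inverse of $\Phi_1\circ\Phi_2\circ\Phi_1$ reverses the order to $\Phi_1^{-1}\circ\Phi_2^{-1}\circ\Phi_1^{-1}$, which has the same shape, so the three applications of \eqref{eq:reversibility} produce precisely the factors needed.

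There is essentially no obstacle here — this is a routine algebraic check, and the only thing to be careful about is the order reversal under inversion of a composition and keeping track of which $S$ is being rewritten at each step (always the one immediately to the left of a $\Phi_i$). One could equivalently phrase both proofs as: the set of $S$-reversible maps is closed under the operation $\Phi\mapsto\Phi_1\circ\Phi\circ\Phi_1$ whenever $\Phi_1$ is itself $S$-reversible, since $S(\Phi_1\Phi\Phi_1) = \Phi_1^{-1}(S\Phi\Phi_1) = \Phi_1^{-1}\Phi^{-1}(S\Phi_1) = \Phi_1^{-1}\Phi^{-1}\Phi_1^{-1}S = (\Phi_1\Phi\Phi_1)^{-1}S$; the first claim is then the case $\Phi=\mathrm{id}$ (which is trivially $S$-reversible) and the second is the case $\Phi=\Phi_2$. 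I would most likely present it in this compact unified form, or simply give the two short displayed chains of equalities above.
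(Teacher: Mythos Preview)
Your proof is correct and is precisely the routine verification the paper has in mind; the paper itself does not spell out a proof but merely remarks that the result has an easy proof. One small note: you list the involution property $S\circ S=\mathrm{id}$ among the tools you will use, but your argument (correctly) never invokes it.
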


\begin{theorem}\label{th:reverflow}
Consider the system \eqref{eq:ode} with flow \(\varphi_t\). The following statements are
equivalent:
\begin{itemize}
\item For each \(t\), \(\varphi_t\) is an \(S\)-reversible mapping.
\item For each \(x\in\Reals^D\), \( S(f(x)) = -f(S(x)) \), \ie\ \(S\circ f = -f\circ S\).
\end{itemize}
\end{theorem}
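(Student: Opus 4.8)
The plan is to prove the two implications separately; in each direction I pass between the map-level statement and the vector-field statement by differentiating, respectively integrating, the flow in the time variable. The one technical fact I will use repeatedly is that a linear map is its own Jacobian, so that $\frac{d}{dt}S(z(t)) = S(\dot z(t))$ whenever $z$ is a differentiable curve.

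For the implication ``vector field $\Rightarrow$ flow'', I would fix $x\in\Reals^D$, assume $S\circ f = -f\circ S$, and introduce the curve $y(t) := S\big(\varphi_{-t}(S(x))\big)$. Writing $z(t) := \varphi_{-t}(S(x))$, the defining property of the flow gives $\dot z(t) = -f(z(t))$ with $z(0) = S(x)$, hence $\dot y(t) = S(\dot z(t)) = -S\big(f(z(t))\big) = f\big(S(z(t))\big) = f(y(t))$, the third equality being the hypothesis applied at the point $z(t)$. Since $S$ is an involution, $y(0) = S(S(x)) = x$, so $y$ and $t\mapsto\varphi_t(x)$ solve the same initial value problem for \eqref{eq:ode}; by uniqueness of solutions $y(t) = \varphi_t(x)$ for every $t$, i.e.\ $S\circ\varphi_{-t}\circ S = \varphi_t$. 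Composing on the right with $S$ (and using $S\circ S = \mathrm{id}$) this becomes $S\circ\varphi_{-t} = \varphi_t\circ S$; replacing $t$ by $-t$ and invoking \eqref{eq:inverseflow} yields $S\circ\varphi_t = (\varphi_t)^{-1}\circ S$, which is exactly the $S$-reversibility \eqref{eq:reversibility} of $\varphi_t$.

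For the converse ``flow $\Rightarrow$ vector field'', I would assume $\varphi_t$ is $S$-reversible for every real $t$, that is $S\circ\varphi_t = \varphi_{-t}\circ S$ by \eqref{eq:inverseflow}, fix $x$, and differentiate the identity $S(\varphi_t(x)) = \varphi_{-t}(S(x))$ at $t=0$. Using $\frac{d}{dt}\varphi_t(\alpha)\big|_{t=0} = f(\alpha)$ and the linearity of $S$, the left-hand side has derivative $S(f(x))$ and the right-hand side has derivative $-f(S(x))$; since $x$ is arbitrary this gives $S\circ f = -f\circ S$.

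I do not expect a genuine obstacle here: the whole argument is bookkeeping with the chain rule and the group property \eqref{eq:flow}--\eqref{eq:inverseflow}. The two points that deserve explicit mention are the appeal to the uniqueness theorem for \eqref{eq:ode} in the first implication --- legitimate under the standing smoothness assumption on $f$ and the convention that $\varphi_t$ is everywhere defined --- and the fact that, $S$ being linear, differentiating $t\mapsto S(\varphi_{\pm t}(\,\cdot\,))$ produces no extra Jacobian factor.
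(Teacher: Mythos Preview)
Your argument is correct and is the standard route: pass from the vector-field identity to the flow identity by checking that \(t\mapsto S(\varphi_{-t}(S(x)))\) solves the same initial value problem as \(t\mapsto\varphi_t(x)\), and go back by differentiating at \(t=0\). The paper itself does not give a proof of this theorem; it merely states that ``the following results have easy proofs'' and moves on, so there is nothing to compare against beyond noting that your write-up supplies exactly the kind of routine verification the authors had in mind.
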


Systems of differential equations that satisfy the conditions in the theorem are said to be \emph{reversible}
(with respect to \(S\)). Systems of the particular form \eqref{eq:newton} are reversible with respect to the
\emph{momentum flip} involution
\beq\label{eq:mflip}
S(q,p) = (q,-p).
\eeq
 If \eqref{eq:newton} describes a mechanical system, then \eqref{eq:reversibility}
expresses the well-known time-reversibility of mechanics: if \((q_{in},p_{in})\) is the initial state of a
system and \((q_{f},p_{f})\) the final state after \(t\) units of time have elapsed, then the  state
\((q_{f},-p_{f})\) evolves in \(t\) units of time to the state \((q_{in},-p_{in})\).

\begin{proposition}\label{prop:mflip}
The Hamiltonian system \eqref{eq:hamsys} is reversible with respect to the momentum flip involution
\eqref{eq:mflip}
 if and only if \(H\) is an even function of \(p\), \ie\ \(H(q,-p) = H(q,p)\) for all \(q\) and
\(p\).
\end{proposition}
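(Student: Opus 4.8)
The plan is to apply Theorem~\ref{th:reverflow}, which says that the flow of \eqref{eq:hamsys} is $S$-reversible for every $t$ exactly when the vector field obeys the pointwise identity $S\circ f=-f\circ S$; everything then reduces to translating this identity into a condition on $H$.

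First I would make the right-hand side of \eqref{eq:hamsys} explicit. Writing $f=J^{-1}\nabla H$ and computing $J^{-1}$ gives $f(q,p)=\bigl(\nabla_p H(q,p),\,-\nabla_q H(q,p)\bigr)$, where $\nabla_q H$ and $\nabla_p H$ denote the partial gradients in the two groups of variables. The momentum-flip involution \eqref{eq:mflip} is the linear map $S(u,v)=(u,-v)$, so $S\bigl(f(q,p)\bigr)=\bigl(\nabla_p H(q,p),\,\nabla_q H(q,p)\bigr)$ while $-f\bigl(S(q,p)\bigr)=-f(q,-p)=\bigl(-\nabla_p H(q,-p),\,\nabla_q H(q,-p)\bigr)$. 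Comparing the two blocks, $S\circ f=-f\circ S$ is equivalent to the pair of identities
\[
\nabla_q H(q,-p)=\nabla_q H(q,p),\qquad \nabla_p H(q,-p)=-\nabla_p H(q,p),
\]
valid for all $(q,p)\in\Reals^{2d}$; i.e.\ $\nabla_q H$ is even and $\nabla_p H$ is odd in $p$.

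It remains to see that this pair of identities is equivalent to $H$ being even in $p$. The forward direction is immediate: differentiating $H(q,-p)=H(q,p)$ with respect to $q^i$ gives the first identity and with respect to $p^i$ gives the second, the minus sign coming from the chain rule. For the converse it suffices to use the second identity: fixing $q$ and setting $\phi(p)=H(q,p)-H(q,-p)$, the chain rule together with $\nabla_p H(q,-p)=-\nabla_p H(q,p)$ gives $\nabla_p\phi\equiv 0$, so $\phi$ is independent of $p$, and since $\phi(0)=0$ we conclude $\phi\equiv 0$. (Alternatively, avoiding the split into blocks: from $SJ^{-1}=-J^{-1}S$ and the chain-rule identity $\nabla(H\circ S)=S\,(\nabla H\circ S)$, the condition $S\circ f=-f\circ S$ collapses to $\nabla H=\nabla(H\circ S)$, so $H-H\circ S$ is constant on the connected set $\Reals^{2d}$, and its value at $p=0$ is $0$.)

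I do not expect a genuine obstacle here: the whole argument is a few lines once Theorem~\ref{th:reverflow} is available. The only steps needing a little care are the sign bookkeeping produced by the chain rule when differentiating $p\mapsto H(q,-p)$, and, in the converse, the reminder that a vanishing partial gradient in $p$ only yields a function of $q$, so one still has to evaluate at $p=0$ to see that this function is zero.
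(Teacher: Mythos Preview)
Your proof is correct. The paper does not supply a proof of this proposition; it is stated immediately after Theorem~\ref{th:reverflow} and left to the reader as an easy consequence. Your argument is exactly the natural one the paper's setup invites: apply the vector-field criterion $S\circ f=-f\circ S$ from Theorem~\ref{th:reverflow} to $f=J^{-1}\nabla H$ with $S(q,p)=(q,-p)$, reduce to the parity conditions on $\nabla_q H$ and $\nabla_p H$, and then check that these are equivalent to $H(q,-p)=H(q,p)$. The care you take in the converse direction (integrating the odd condition on $\nabla_p H$ and evaluating at $p=0$) is appropriate and complete.
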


Figure~\ref{fig:rever_energy} illustrates the reversibility of the Hamiltonian flow corresponding to a
one-degree-of freedom double-well potential.

As it is the  case for Hamiltonian systems, reversible reversible systems have flows with special geometric
properties, not shared by \lq general\rq\ systems \cite{LaRo1998}.

\subsection{The Lie bracket}
If \(\varphi_t^{(f)}\) and \(\varphi_t^{(g)}\) denote respectively the flows of the \(D\)-dimensional systems
\[
\frac{d}{dt} x = f(x), \qquad \frac{d}{dt} x = g(x),
\]
in general \(\varphi_s^{(g)}\circ\varphi_t^{(f)}\neq \varphi_t^{(f)}\circ\varphi_s^{(g)}\); in fact, a Taylor expansion shows that, as \(t\), \(s\) approach \(0\),
\[
\varphi_s^{(g)}\big(\varphi_t^{(f)}(x)\big)- \varphi_t^{(f)}\big(\varphi_s^{(g)}(x)\big) = st\, [f,g](x) + \mathcal{O}(t^3+s^3),
\]
where the \emph{Lie bracket} (or \emph{Lie-Jacobi} bracket or commutator) \([f,g]\) of \(f\) and \(g\)
\cite[Section 39]{arnold} is the mapping \(\Reals^D\rightarrow \Reals^D\) that at \(x\in\Reals^D\) takes the
value
\beq\label{eq:liebracket}
[f,g](x) = g^\prime(x)f(x)-f^\prime(x)g(x)
\eeq
Thus the magnitude of \([f,g]\) measures the lack of commutativity of the corresponding flows. The following result holds:
\begin{theorem}\label{th:lie}
With the preceding notation, \(\varphi_s^{(g)}\circ\varphi_t^{(f)}=\varphi_t^{(f)}\circ\varphi_s^{(g)}\) for arbitrary
\(t\) and \(s\) if and only if the Lie bracket \([f,g](x)\) vanishes at each \(x\in\Reals^D\). When these conditions hold, we say that \(f\) and \(g\) commute.

For commuting \(f\) and \(g\),
\(\varphi_t^{(g)}\circ\varphi_t^{(f)} = \varphi_t^{(f)}\circ\varphi_s^{(g)}\) provides the \(t\) flow of the system
\[
\frac{d}{dt} x = f(x)+g(x).
\]
\end{theorem}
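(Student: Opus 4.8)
The statement combines an equivalence with a consequence, and I would treat the two parts separately. For the implication \lq if the flows commute for all $t,s$ then $[f,g]\equiv0$\rq\ almost nothing is needed: if $\varphi_s^{(g)}\circ\varphi_t^{(f)}=\varphi_t^{(f)}\circ\varphi_s^{(g)}$ identically, then the left-hand side of the Taylor expansion displayed just before the theorem is identically zero, so applying the mixed derivative $\partial^2/(\partial t\,\partial s)$ at $t=s=0$ (equivalently, setting $s=t$, dividing by $t^2$ and letting $t\to0$) leaves $[f,g](x)=0$; as $x$ is arbitrary, $[f,g]$ vanishes everywhere.

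The converse is the substance of the theorem, and here the infinitesimal information in the Taylor expansion must be upgraded to an identity valid for all $t,s$; my plan is to do this through an ODE-uniqueness argument. Concretely, I would first show that, when $[f,g]\equiv0$, the flow $\varphi_s^{(g)}$ carries the field $f$ onto itself, in the sense that
\[
(\varphi_s^{(g)})'(z)\,f(z)=f\big(\varphi_s^{(g)}(z)\big)\qquad\text{for all }s\text{ and }z.
\]
Granting this, fix $s$ and $x$ and set $y(t)=\varphi_s^{(g)}\big(\varphi_t^{(f)}(x)\big)$; differentiating in $t$ and applying the displayed identity with $z=\varphi_t^{(f)}(x)$ gives $\dot y(t)=f\big(y(t)\big)$, while $y(0)=\varphi_s^{(g)}(x)$, so uniqueness of solutions of \eqref{eq:ode} yields $y(t)=\varphi_t^{(f)}\big(\varphi_s^{(g)}(x)\big)$ --- exactly the commutativity sought.

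To prove the displayed identity I would introduce $w(s,z)=(\varphi_s^{(g)})'(z)\,f(z)-f\big(\varphi_s^{(g)}(z)\big)$, with $z$ a parameter, so that $w(0,z)=0$. Differentiating the flow equation $\partial_s\varphi_s^{(g)}(z)=g\big(\varphi_s^{(g)}(z)\big)$ with respect to $z$ produces the variational equation $\partial_s(\varphi_s^{(g)})'(z)=g'\big(\varphi_s^{(g)}(z)\big)\,(\varphi_s^{(g)})'(z)$; feeding this in, differentiating the second term of $w$ by the chain rule, substituting $(\varphi_s^{(g)})'(z)f(z)=w(s,z)+f(\varphi_s^{(g)}(z))$, and recognising the definition \eqref{eq:liebracket}, one arrives at
\[
\partial_s w(s,z)=g'\big(\varphi_s^{(g)}(z)\big)\,w(s,z)+[f,g]\big(\varphi_s^{(g)}(z)\big).
\]
If $[f,g]\equiv0$ this is a homogeneous linear ODE in $s$ with zero initial value, hence $w\equiv0$, which is the displayed identity; read instead at $s=0$, the same relation re-derives the first implication. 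I expect this step --- setting up the auxiliary linear equation for $w$ from the variational equation of the $g$-flow, rather than any of the surrounding calculus --- to be the main obstacle, since it is what converts the infinitesimal cancellation into the global statement.

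Finally, assuming $f$ and $g$ commute, I would put $\psi_t=\varphi_t^{(f)}\circ\varphi_t^{(g)}$ (the order being immaterial, as $f$ and $g$ commute). Then $\psi_0=\mathrm{id}$, and using the group property \eqref{eq:flow} of each flow together with the commutativity of $\varphi_s^{(f)}$ and $\varphi_t^{(g)}$ one checks $\psi_{t+s}=\psi_t\circ\psi_s$. Differentiating the composition $\varphi_t^{(f)}\big(\varphi_t^{(g)}(x)\big)$ at $t=0$, with $\varphi_0^{(g)}(x)=x$, shows the generator of $\{\psi_t\}$ is $x\mapsto f(x)+g(x)$; and from $\psi_{t+\varepsilon}=\psi_\varepsilon\circ\psi_t$ we get $\tfrac{d}{dt}\psi_t(x)=\tfrac{d}{d\varepsilon}\big|_{\varepsilon=0}\psi_\varepsilon\big(\psi_t(x)\big)=f\big(\psi_t(x)\big)+g\big(\psi_t(x)\big)$. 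Thus $\psi_t$ satisfies the initial value problem defining the $t$-flow of $\dot x=f(x)+g(x)$, which proves the last assertion.
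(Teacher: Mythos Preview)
The paper states this theorem without proof (it is presented as a known result from the classical theory of ordinary differential equations, with a reference to Arnold's textbook), so there is no proof in the paper to compare against. Your argument is correct and follows one of the standard routes: the push-forward identity $(\varphi_s^{(g)})'(z)\,f(z)=f(\varphi_s^{(g)}(z))$ derived via the variational equation, then uniqueness for the $f$-flow to obtain commutativity, and finally the one-parameter group property plus differentiation at $t=0$ for the sum-flow statement. The computation of $\partial_s w$ is right and matches the paper's sign convention for the Lie bracket in \eqref{eq:liebracket}.
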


The Lie bracket is \emph{skew symmetric:} \([f,g] = - [g,f]\) for arbitrary \(f\) and \(g\). In addition it satisfies the
\emph{Jacobi identity},
\[
[f_1,[f_2,f_3]]+[f_2,[f_3,f_1]]+[f_3,[f_1,f_2]]= 0,
\]
for any \(f_1\), \(f_2\), \(f_3\). In this way the vector space of all vector fields together with the operation \([\cdot,\cdot]\) is a \emph{Lie algebra}.

For \emph{Hamiltonian} systems it is possible to work in terms of the so-called Poisson bracket of the
Hamiltonian functions \cite[Section 40]{arnold}, rather than in terms of the Lie bracket of the fields:

\begin{theorem}\label{th:poisson}
If the fields \(f\) and \(g\) are Hamiltonian with Hamiltonian functions \(H\) and \(K\) respectively, \ie\ \(f = J^{-1}\nabla H\), \(g= J^{-1}\nabla K\), then \([f,g]\) is also a Hamiltonian vector field. Moreover the Hamiltonian function of \([f,g]\) is given by \(-\{H,K\}\),\footnote{The minus sign here could be avoided by reversing the sign in the definition of the Poisson bracket. The definition of \(\{\cdot,\cdot\}\) used here is the one traditionally used in mechanics.} where \(\{H,K\}\) is the \emph{Poisson bracket} of the functions
\(H\) and \(K\), defined as
\[
\{H,K\} = (\nabla H)^T J^{-1} \nabla K.
\]
\end{theorem}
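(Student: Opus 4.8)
The plan is to prove the explicit pointwise identity
\[
[f,g] = -J^{-1}\nabla\{H,K\} \qquad \text{on all of } \Reals^{2d};
\]
since, by definition, a vector field of the form \(J^{-1}\nabla\Phi\) is Hamiltonian with Hamiltonian function \(\Phi\), this single identity shows both that \([f,g]\) is Hamiltonian and that its Hamiltonian function is \(-\{H,K\}\). Because the identity will be established as a genuine matrix--vector equality holding at every point, no local considerations or appeal to the Poincar\'e lemma are needed to conclude that \([f,g]\) is globally Hamiltonian.

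First I would rewrite the left-hand side. Writing \(\nabla^2 H\) for the Hessian matrix of \(H\) (symmetric, being the Jacobian of the map \(\nabla H\)), and using that \(J^{-1}\) is constant, differentiation of \(f = J^{-1}\nabla H\) gives \(f' = J^{-1}\nabla^2 H\), and likewise \(g' = J^{-1}\nabla^2 K\). Substituting into the definition \eqref{eq:liebracket} of the Lie bracket yields
\[
[f,g] = J^{-1}(\nabla^2 K)\,J^{-1}\nabla H \;-\; J^{-1}(\nabla^2 H)\,J^{-1}\nabla K .
\]
Next I would compute the gradient of the scalar function \(\{H,K\} = (\nabla H)^T J^{-1}\nabla K\). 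Differentiating this bilinear form entrywise by the product rule produces two terms; in each of them one transposes appropriately and uses the symmetry of the Hessians together with the skew-symmetry of \(J^{-1}\) (that is, \((J^{-1})^T = -J^{-1}\), which follows from \(J^T = -J\)) to recognize the result as a component of a matrix times a vector, obtaining
\[
\nabla\{H,K\} = (\nabla^2 H)\,J^{-1}\nabla K \;-\; (\nabla^2 K)\,J^{-1}\nabla H .
\]
Multiplying this on the left by \(-J^{-1}\) reproduces exactly the expression for \([f,g]\) found above, which finishes the proof.

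The computation is entirely routine; the only step demanding a little care is the bookkeeping of transposes when differentiating \((\nabla H)^T J^{-1}\nabla K\). The two terms coming from the product rule are not manifestly gradients of scalars, and it is precisely the interplay between the symmetry of \(\nabla^2 H\), \(\nabla^2 K\) and the skew-symmetry of \(J^{-1}\) that lets them be rearranged into the displayed closed form with the correct signs — the same mechanism that accounts for the minus sign in the Hamiltonian \(-\{H,K\}\) of \([f,g]\).
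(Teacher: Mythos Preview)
Your proof is correct. The paper states this theorem without proof (it is cited as a standard fact, with a reference to Arnold), so there is no argument in the paper to compare against; your direct computation---expressing \(f'\) and \(g'\) via the Hessians, computing \(\nabla\{H,K\}\) by the product rule, and using the symmetry of the Hessians together with the skew-symmetry of \(J^{-1}\)---is exactly the standard verification and is complete as written.
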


For real-valued functions  in \(\Reals^{2d}\), the Poisson bracket operation is skew symmetric
\(\{H,K\} = - \{K,H\}\) and satisfies the Jacobi identity:
\[
\{H_1,\{H_2,H_3\}\}+ \{H_2,\{H_3,H_1\}\}+\{H_3,\{H_1,H_2\}\} = 0.
\]

Let  \(f\) and \(g\) be \emph{reversible} vector fields.  Differentiation in \(g(S(x)) = -S(g(x))\) implies
for the Jacobian that \(g^\prime(S(x))S = -S(g^\prime(x))\) and it follows that \(g^\prime(S(x))f(S(x))) =
S(g^\prime(x)f(x))\) (no minus sign!). Then the Lie-bracket of two reversible fields is \emph{not} reversible,
but rather satisfies the following property.

\begin{proposition}\label{prop:revercomm}
If two vector fields \(f\) and \(g\) are \(S\)-reversible then
\[
[f,g](S(x)) = S\big([f,g](x)\big).
\]
\end{proposition}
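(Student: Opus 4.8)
The plan is to expand $[f,g](S(x))$ via the definition \eqref{eq:liebracket} and then push the involution $S$ through each factor using the reversibility of $f$, $g$ and of their Jacobians, following the hint given in the paragraph preceding the statement. Concretely, the two facts I would use are: first, the reversibility relations $f(S(x)) = -S(f(x))$ and $g(S(x)) = -S(g(x))$ (the characterization of reversible vector fields in Theorem~\ref{th:reverflow}); and second, their differentiated forms. Differentiating $g(S(x)) = -S(g(x))$ in $x$ and recalling that $S$ is linear, so that its Jacobian equals $S$ itself, yields $g'(S(x))\,S = -S\,g'(x)$, and the identical manipulation for $f$ gives $f'(S(x))\,S = -S\,f'(x)$.

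With these in hand the computation is short. Evaluating \eqref{eq:liebracket} at $S(x)$ gives
\[
[f,g](S(x)) = g'(S(x))\,f(S(x)) - f'(S(x))\,g(S(x)).
\]
I would substitute $f(S(x)) = -S f(x)$ and $g(S(x)) = -S g(x)$, obtaining $-g'(S(x))\,S\,f(x) + f'(S(x))\,S\,g(x)$, and then apply the Jacobian relations $g'(S(x))S = -S g'(x)$ and $f'(S(x))S = -S f'(x)$; the two sign changes in each term cancel, leaving
\[
S\,g'(x)\,f(x) - S\,f'(x)\,g(x) = S\big(g'(x)f(x) - f'(x)g(x)\big) = S\big([f,g](x)\big),
\]
which is exactly the asserted identity.

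There is no genuine obstacle here; the only thing to watch is the bookkeeping of signs, and in particular the fact --- already flagged in the text --- that each application of $S$-reversibility to a \emph{product} of a Jacobian with a field produces \emph{no} net sign, so that $[f,g]$ turns out to be equivariant under $S$ rather than anti-equivariant. (Conceptually, this reflects that pushforward by the diffeomorphism $S$ is a Lie-algebra homomorphism and sends every $S$-reversible field to its negative, so two sign reversals combine to give the stated relation; but the direct calculation above is the cleanest way to present it.)
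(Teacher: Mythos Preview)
Your proof is correct and follows exactly the approach the paper sketches in the paragraph immediately preceding the proposition: differentiate the reversibility relation to get $g'(S(x))S=-Sg'(x)$, combine it with $f(S(x))=-Sf(x)$ so that the two minus signs cancel in each term of the bracket, and conclude. There is nothing to add.
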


For three \(S\)-reversible vector fields, the iterated commutator \( [f_1,[f_2,f_3]] \) is \(S\)-reversible.

\vspace{2mm}
\section{Integrators}
\label{sec:integrators}

 In the sampling algorithms studied later, differential systems like \eqref{eq:ode} or
\eqref{eq:newton} have to be numerically integrated. In this section we review the required material. The
works \cite{HaNoWa1993,HaWa2010,Bu2016} provide extensive, authoritative treatises on the subject. A more
concise introduction is given by \cite{GrHi2010}. We begin by recalling some basic definitions
 and later focus on splitting integrators and fixed \(h\) stability,
 as both play important roles in the implementation of HMC algorithms.

\subsection{Preliminaries}
Each {\em one-step numerical method or one-step integrator} for \eqref{eq:ode} is described by a map
\(\psi_h:\Reals^D\to \Reals^D\) that depends on a real parameter \( h\), the {\em step size}. Given the
initial value \(\alpha\), and a value of \(h\) (\(h\neq 0\)), the integrator generates a numerical trajectory,
\(x_0\), \(x_1\), \(x_2\), \dots, defined by \(x_0=\alpha\) and, iteratively,
\beq\label{eq:numiteration}
x_{n+1} = \psi_h(x_n), \qquad n = 0, 1, 2,\dots
\eeq
To compute  \(x_{n+1}\) when \(x_n\) has already been found  is to perform a (time) {\em step}. For each
\(n\), the vector \(x_n\) is an approximation to the value at time \(t_n =nh\) of the solution \(x(t)\) of
\eqref{eq:ode} with initial condition \(x(0) = \alpha\), \ie\ to \(\varphi_{t_n}(\alpha)\). Typically \(h\) is
positive and then the integration is forward in time \(0=t_0<t_1<t_2< \dots\), but in some applications it may
be of interest to use \(h<0\) so as to get \(0=t_0>t_1>t_2> \dots\)

The simplest and best known integrator, {\em Euler's rule}, with
\beq\label{eq:euler}
 x_{n+1} = x_n+hf(x_n),
 \eeq
corresponds to the
mapping \(\psi_h(x) = x+hf(x)\). It uses one evaluation of \(f\) per step. \emph{Explicit \(s\)-stage Runge-Kutta}
formulas use \(s\) evaluations of \(f\) per step, \(s=1,2,\dots\), and are therefore \(s\) times more
 expensive \emph{per step} than Euler's rule; examples include  Runge's method
\[
\psi_h(x) = x + h f\big(x+\frac{h}{2}f(x)\big)
\]
(with two stages), several well-known formulas of Kutta with four stages\footnote{One of these formulas is
known in some circles as \emph{the} Runge-Kutta method; this terminology should be avoided as there are
infinitely many Runge-Kutta methods.} and the
 formulas within  the popular MATLAB function ode45. A method with \(s\) stages will be
competitive with Euler's rule only if it gives more accurate approximations than Euler's  rule when this is
operated with a step size \(s\) times shorter, so as to equalize computational costs.

\emph{Implicit} Runge-Kutta integrators are also  used in practice; in them \(\psi_h\) is defined by means of  algebraic equations. For
instance,  the \emph{midpoint} rule has
\[
\psi_h(x) = x +h f\big( \frac{1}{2} (x+\psi_h(x))\big),
\]
\ie
\beq\label{eq:midpoint}
x_{n+1} = x_n +h f\big( \frac{1}{2} (x_n+x_{n+1}) \big).
\eeq
Here, computing \(x_{n+1}\) when \(x_n\) is already known
 requires to solve a system of algebraic equations in \(\Reals^D\). There are many more useful examples of implicit Runge-Kutta methods,
  including the so-called \emph{Gauss} methods.

\begin{remark}\label{rem:timescale}
Note that in the formulas displayed above \(h\) and \(f\) do not appear separately, but always in the
combination \(hf\). This clearly implies that, if, for a given method,  \(x_n\), \(n= 0,1,2\dots\) is a
numerical trajectory with step size \(h\) corresponding to the system \eqref{eq:ode}, it is also a numerical
trajectory with step size \(\lambda h\) for the system \((d/dt)x = \lambda^{-1}f(x)\) (\(\lambda \neq 0\)
denotes a  constant). \emph{We shall always assume that we deal with integrators having this property.}
\end{remark}

A one-step integrator is called \emph{symmetric} or \emph{self-adjoint}\footnote{Even though monographs like
\cite{SaCa1994} or \cite{HaLuWa2010} use the term self-adjoint, there are reasons against that terminology
\cite{sirev}.} if
\beq\label{eq:inversepsi}
(\psi_h)^{-1} = \psi_{-h},
\eeq
so as to mimic the property \eqref{eq:inverseflow} of the exact solution flow. The midpoint rule
\eqref{eq:midpoint} provides an example. Explicit Runge-Kutta methods are never symmetric.

Multistep integrators, including the well-known Adams formulas, where the computation of \(x_{n+1}\) requires
the knowledge  of \(k\geq 2\) past values \(x_{n}\), \(x_{n-1}\), \dots, \(x_{n-k+1}\),  may be very
efficient, but will not be considered in this paper; they have seldom been applied within sampling algorithms.

\subsection{Order}
Since   for the exact solution, the sequence \(x(0)=\alpha\), \(x(t_1)\), \(x(t_2)\),\dots\ satisfies
\(x(t_{n+1}) =\varphi_h(x(t_n))\), \( n= 0,1,\dots\), rather than \eqref{eq:numiteration}, for the numerical
integrator to make sense it is necessary that \(\psi_h\) be an approximation to \(\varphi_h\). The integrator
is said to be \emph{consistent} if, at each fixed \(x\in\Reals^D\), \(\psi_h(x) -\varphi_h(x) =
\mathcal{O}(h^{2})\) as \(h\rightarrow 0\). If \(\psi_h(x) -\varphi_h(x) = \mathcal{O}(h^{\nu+1})\), \(\nu\) a
positive integer, then the integrator is (consistent) of \emph{order} \(\geq\nu\). A method of order \(\geq
\nu\) that is not of order \(\geq \nu+1\) is said to have order \(\nu\).  Euler's rule \eqref{eq:euler} has
order 1, the four-stage formulas of Kutta have order 4 and the midpoint rule \eqref{eq:midpoint} has order 2.
We shall see later (Theorem~\ref{th:ordercond}) that the order of a symmetric integrator is an \emph{even}
integer. All integrators to be considered hereafter are assumed to be consistent.

The vector \(\psi_h(x) -\varphi_h(x)\) is called the {\em local error} at \(x\): it is the difference between
the result of a single time-step of the numerical method starting from \( x\) and the result of the
application of the \(h\)-flow to the \emph{same} point \(x\). For future reference, we note that the expansion
of the local error for Euler's method is given by
\begin{eqnarray}
\nonumber
\psi_h(x)-\varphi_h(x) &=& \Big(x+hf(x)\Big)-\Big(x+ hf(x)+\frac{h^2}{2} f^\prime(x)f(x)+\mathcal{O}(h^3)\Big)\\
& = & -\frac{h^2}{2} f^\prime(x)f(x)+\mathcal{O}(h^3).
\label{eq:eulerexpansion}
\end{eqnarray}
In the expansion of the true solution flow we have used that \((d/dt) x = f(x)\) and \((d^2/dt^2) x =
f^\prime(x)(d/dt) x = f^\prime(x)f(x)\).

The local error does not give {\em per se} information on the {\em global error} at \(t_n\), \ie\ on the
difference \(x_n-x(t_n)\). This is because \(x_n=\psi_h(x_{n-1})\) while \(x(t_n)\) is the result of the
application of the \(h\)-flow to \(x(t_{n-1})\): \(\psi_h\) and \(\varphi_h\) are \emph{not} applied to the
\emph{same} point.\footnote{
In order to bound the global error in terms of the local error, \ie\ to obtain
convergence from consistency, a \emph{stability} property is needed. Hence the well-known slogan \lq\lq
 stability + consistency imply convergence.\rq\rq\ The one-step integrators considered here always have the
required stability and therefore, for them, consistency implies convergence as in
Theorem~\ref{th:convergence}. The concept of stability that features in the slogan is \emph{different} from
the concept of fixed \(h\) stability studied in Section~\ref{ss:fixedh} \cite{lancaster}.}
 However the following result holds:

\begin{theorem}\label{th:convergence}
 If the (one-step) integrator is consistent of order \(\nu\), then,
for each fixed initial value  \(x_0=x(0)\) and  \(T>0\),
\beq\label{eq:convergence}
\max_{0\leq t_n\leq T}| x_n-x(t_n)| = \mathcal{O}(h^{\nu}),\qquad h\rightarrow 0+.
\eeq
\end{theorem}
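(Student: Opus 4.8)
The plan is to establish the classical implication \emph{consistency of order $\nu$ $+$ (automatic) stability of one-step methods $\Rightarrow$ global convergence of order $\nu$}, by controlling the global error $e_n := x_n - x(t_n)$ through a one-step error-propagation argument (the telescoping sometimes called \emph{Lady Windermere's fan}) fed into a discrete Gr\"onwall inequality. Since we only care about $h\rightarrow 0+$, we take $h>0$ throughout.

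First I would localize. Because $f$ is smooth on all of $\Reals^D$ and, by our standing assumption, the exact flow is globally defined, the solution arc $K := \{x(t):0\le t\le T\}$ is a compact subset of $\Reals^D$; enlarge it to its closed unit neighbourhood $K_1$, still compact. On $K_1$ two uniform bounds hold for all sufficiently small $h>0$:
\[
|\psi_h(x)-\varphi_h(x)|\le C\,h^{\nu+1}\quad(x\in K_1),\qquad |\psi_h(x)-\psi_h(y)|\le (1+Lh)\,|x-y|\quad(x,y\in K_1).
\]
The first is the \emph{uniform} form of the order hypothesis: the pointwise estimate $\psi_h(x)-\varphi_h(x)=\mathcal{O}(h^{\nu+1})$ arises from a local-error expansion whose leading coefficient is a smooth function of $x$, hence bounded on the compact $K_1$. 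The second is the stability property of one-step integrators recalled above: for the methods considered $\psi_h$ is $C^1$ in $x$ with Jacobian $I+\mathcal{O}(h)$ uniformly on compact sets, which yields the stated Lipschitz constant $1+Lh$.

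Next, the error recursion. With $e_0=0$ and, inserting $\pm\,\psi_h(x(t_n))$,
\[
e_{n+1}=x_{n+1}-x(t_{n+1})=\bigl(\psi_h(x_n)-\psi_h(x(t_n))\bigr)+\bigl(\psi_h(x(t_n))-\varphi_h(x(t_n))\bigr),
\]
so that, \emph{provided $x_n\in K_1$} (then $x(t_n)\in K\subset K_1$ as well), the two displayed bounds give $|e_{n+1}|\le (1+Lh)\,|e_n|+C\,h^{\nu+1}$. Solving this linear recurrence and using $(1+Lh)^n\le e^{Lhn}\le e^{LT}$ whenever $t_n=nh\le T$ yields
\[
|e_n|\le C\,h^{\nu+1}\sum_{j=0}^{n-1}(1+Lh)^j\le \frac{C}{L}\bigl(e^{LT}-1\bigr)\,h^{\nu},
\]
which is the asserted $\mathcal{O}(h^\nu)$ estimate, uniform over $t_n\in[0,T]$.

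The one delicate point — and what I would treat as the main obstacle — is the hypothesis $x_n\in K_1$ used at each step: the numerical trajectory must be \emph{confined} to the compact set on which the uniform bounds were obtained. This is handled by a bootstrap argument: one shows by induction on $n$ that as long as $|e_k|\le 1$ for $k\le n$ (so that $x_k\in K_1$), the Gr\"onwall estimate above is valid through step $n+1$ and gives $|e_{n+1}|\le \frac{C}{L}(e^{LT}-1)h^{\nu}$, a quantity that is itself $\le 1$ once $h$ is small enough, with the smallness threshold independent of $n$ subject to $t_n\le T$. Hence for all sufficiently small $h>0$ the bound — and with it the confinement — propagates to every $t_n\in[0,T]$, which completes the proof.
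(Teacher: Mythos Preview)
The paper does not actually give a proof of this theorem: it is stated as a classical result, with only a footnote remarking that for one-step integrators the required stability is automatic, so that the slogan ``stability $+$ consistency $\Rightarrow$ convergence'' applies. Your argument is precisely the standard proof behind that slogan---Lady Windermere's fan leading to the recursion $|e_{n+1}|\le(1+Lh)|e_n|+Ch^{\nu+1}$, then a discrete Gr\"onwall bound---and it is correct, including the bootstrap step that keeps the numerical trajectory inside the compact set $K_1$ on which the uniform local-error and Lipschitz bounds hold. There is nothing to contrast; you have supplied exactly the textbook proof that the paper takes for granted.
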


This expresses the fact that the integrator is {\em convergent} of order \(\nu\). Of course, the corresponding
result holds if the integration is carried out backward in time over \(-T\leq t\leq 0\).

\begin{remark}\label{rem:asymptotic}
The global errors \(x_n-x(t_n)\) in addition to possessing an \(\mathcal{O}(h^{\nu})\) bound as above, have an
\emph{asymptotic expansion} in powers of \(h\). For instance, if we restrict the attention to the leading
\(h^\nu\) term in the expansion, we have
\[
x_n - x(t_n) = h^\nu a(x(0),t_n)+h^\nu r(x(0),t_n;h),
\]
where the function \(a\) is independent of \(h\) and, for each fixed initial value \(x(0)\), \(\max_{0\leq
t\leq T}|r(x(0),t,h)|\) tends to \(0\) as \(h\rightarrow 0+\). Thus, for \(h\) sufficiently small, the global
error is approximately equal to \(h^\nu a(x(0),t_n)\): halving \(h\) divides the error by a factor \(2^\nu\).
\end{remark}

\begin{remark}\label{rem:variable_steps}In the description above, the step points \(t_n\) are uniformly spaced.
General purpose software uses \emph{variable time steps:} \(t_{n+1} = t_n+h_n\) where \(h_n\) changes with
\(n\). For each \(n\), the value of \(h_n\) is chosen by the algorithm to ensure  that the local error at
\(x_n\) is below a user-prescribed tolerance. Since, for reasons to be explained in
Section~\ref{ss:geometricintegration}, HMC integrations are carried out with constant step size, we shall not
concern ourselves with variable step sizes.
\end{remark}

\subsection{Splitting methods}
\label{ss:splitting}  Euler's method and the other integrators mentioned above may be applied to any given
system \eqref{eq:ode}.
 Other techniques only make sense for \emph{particular} classes of systems and cannot therefore be
incorporated into general purpose software; however they have gained much popularity
 in the last decades due, among other things, to their role in \emph{geometric integration}
(see Section~\ref{ss:geometricintegration}). Of special importance to us is the class of {\em splitting}
integrators that we consider next. The monograph \cite{BlaCa2016} is a very good source of information.
\subsubsection{The Lie-Trotter formula}
Splitting methods are applicable to cases where the right hand-side of \eqref{eq:ode} may be split into two
parts
\beq\label{eq:splitode}
\frac{d}{dt} x = f(x) = f^{(A)}(x)+f^{(B)}(x),
\eeq
in such a way that the flows \(\varphi_t^{(A)}\) and \(\varphi_t^{(B)}\) of the {\em split systems}
\beq\label{eq:splitodeAB}
 \frac{d}{dt} x = f^{(A)}(x),\qquad \frac{d}{dt} x = f^{(B)}(x),
\eeq
are available analytically. To avoid trivial cases, we shall hereafter assume that the Lie bracket \([
f^{(A)},  f^{(B)}]\) does not vanish identically, because otherwise Theorem~\ref{th:lie} shows that
\eqref{eq:splitode} may also be solved analytically without resorting to numerical approximations.

Systems of the particular form \eqref{eq:newton} provide an important example. When they
are  split by taking
\beq\label{eq:A}
 (A):\qquad\frac{d}{dt} q = M^{-1}p,\qquad \frac{d}{dt} p = 0.
\eeq
and
\beq\label{eq:B}
  (B):\qquad\frac{d}{dt} q = 0,\qquad \frac{d}{dt} p = F(q),
\eeq
the flows are explicitly given by
\[\varphi_t^{(A)}(q,p) = (q+tM^{-1}p, p)
\]
and
\[\varphi_t^{(B)}(q,p) = (q, p+tF(q)).
\]
In molecular dynamics \cite{schlick} these mappings are respectively called a {\em drift} (\(q\) advances with
constant
 speed and the momentum
\(p\) remains constant) and a {\em kick} (the system stays in its current configuration, and the momentum is
incremented by the action of the force). A direct computation of the Lie bracket shows that, except in the
trivial case where \(F\) vanishes identically,  the vector fields in \eqref{eq:A}--\eqref{eq:B} do not
commute.

A simple Taylor expansion proves that the Lie-Trotter formula
\beq\label{eq:lt}
\psi_h = \varphi_h^{(B)}\circ \varphi_h^{(A)}
\eeq
defines a first-order integrator for \eqref{eq:splitode}:
\beq\label{eq:ltlocal}
\psi_h(x) -\varphi_h(x) = \frac{h^2}{2}[f^{(A)},f^{(B)}](x)+\mathcal{O}(h^3).
\eeq
(Observe that, not unexpectedly, the coefficient of the leading power of \(h\) is proportional to the Lie
bracket.)
 While \( f^{(A)}\) and \( f^{(B)}\) contribute simultaneously to the change of \(x\) in
\eqref{eq:splitode}, they do so successively  in the Lie-Trotter integrator. By swapping the roles of \(A\)
and \(B\), we have the alternative integrator
\[
\psi_h = \varphi_h^{(A)}\circ \varphi_h^{(B)}.
\]

\subsubsection{Strang's formula}
The most popular splitting integrator for \eqref{eq:splitode} corresponds to Strang's  formula \cite{strang}
\beq\label{eq:vv}
\psi_h = \varphi_{(1/2)h}^{(B)}\circ \varphi_{h}^{(A)}\circ\varphi_{(1/2)h}^{(B)}
\eeq
and, as a Taylor expansion shows, has second order accuracy, \(\psi_h(x) -\varphi_h(x) = \mathcal{O}(h^2)\) as \(h\rightarrow 0\). More precisely,
\begin{eqnarray}\label{eq:vvlocal}
\psi_h(x) -\varphi_h(x) &= &\frac{h^3}{12}[f^{(A)}, [f^{(A)},f^{(B)}]](x)\nonumber\\
&&\qquad+
\frac{h^3}{24}[f^{(B)}, [f^{(A)},f^{(B)}]](x)+\mathcal{O}(h^4),
\end{eqnarray}
so that the leading term of the local error is  a linear combination of two so-called \emph{iterated Lie
brackets}.

 When applied to the
particular case  \eqref{eq:newton}, the formula  \eqref{eq:vv} yields the well-known \emph{velocity Verlet}
integrator, the method of choice in molecular dynamics; the step \(n\rightarrow n+1\) comprises two kicks of
duration \(h/2\) separated by a drift of duration \(h\):
\begin{eqnarray*}
p_{n+1/2} & = & p_n+\frac{h}{2} F(q_n),\\
q_{n+1} & = & q_n+hM^{-1}p_{n+1/2},\\
p_{n+1} & =& p_{n+1/2} + \frac{h}{2} F(q_{n+1}).
\end{eqnarray*}
The evaluations of \(F\)  represent the bulk of the computational cost of the algorithm. The value
\(F(q_{n+1})\) to be used in the first kick of the next step, \(n+1\rightarrow n+2\), is the same used in the
second kick of the current step. In this way, while the very first step requires two evaluations of \(F\),
subsequent steps only need one.

When \( N\) steps of the method \eqref{eq:vv} are taken, the map that advances the numerical solution
from \(x_0\) to \(x_N\), \ie\
\[ \psi_h^N = \overbrace{
\Big(\varphi_{(1/2)h}^{(B)}\circ \varphi_{h}^{(A)}\circ\varphi_{(1/2)h}^{(B)}\Big)
\circ \cdots\circ
\Big(\varphi_{(1/2)h}^{(B)}\circ \varphi_{h}^{(A)}\circ\varphi_{(1/2)h}^{(B)}\Big)
}^{N \:\rm times}
\]
may be rewritten with the help of the group property \eqref{eq:flow} in the \emph{leapfrog} form
\[
\psi_h^N = \varphi_{(1/2)h}^{(B)}\circ \overbrace{\Big( \varphi_{h}^{(A)}\circ\varphi_{h}^{(B)}\Big) \circ \cdots\circ
\Big( \varphi_{h}^{(A)}\circ\varphi_{h}^{(B)}\Big)
}^{N-1\: \rm times}\circ\,\varphi_{h}^{(A)}\circ
\varphi_{(1/2)h}^{(B)};
\]
now the right hand-side only uses \(N+1\) times the flow \(\varphi_t^{(B)}\). In the particular case
\eqref{eq:A}--\eqref{eq:B}, the combination \(\varphi_{h}^{(A)}\circ\varphi_{h}^{(B)}\) corresponds to the
following formulas to advance the numerical solution, \(n = 1,\dots, N-1\),
\begin{eqnarray*}
p_{n+1/2} & = & p_{n-1/2}+h F(q_n),\\
q_{n+1} & = & q_n+hM^{-1}p_{n+1/2};
\end{eqnarray*}
\(p\) jumps over \(q\) and then \(q\) jumps over \(p\) as children playing leapfrog.
The leapfrog implementation makes apparent the truth of an earlier observation: \(N\) steps of the velocity Verlet integrator
 may be implemented with \(N+1\) evaluations of \(F\).

Strang's method is symmetric:
\[
\begin{aligned}
(\psi_h)^{-1} &= \big(\varphi_{(1/2)h}^{(B)}\big)^{-1}\!\circ
\big(\varphi_{h}^{(A)}\big)^{-1}\!\circ\big(\varphi_{(1/2)h}^{(B)}\big)^{-1}\\
& = \varphi_{-(1/2)h}^{(B)}\circ \varphi_{-h}^{(A)}\circ\varphi_{-(1/2)h}^{(B)}\\
& = \psi_{-h}.
\end{aligned}
\]
It is clear that the symmetry is a consequence of the  \emph{palindromic} structure of \eqref{eq:vv} \ie\ the
formula reads the same from left to right as from right to left.

The roles  of \(A\) and \(B\) in \eqref{eq:vv}, may be interchanged:
\beq\label{eq:pv}
\psi_h =
\varphi_{(1/2)h}^{(A)}\circ \varphi_{h}^{(B)}\circ\varphi_{(1/2)h}^{(A)}.
\eeq
For \eqref{eq:A}--\eqref{eq:B} one then obtains the \emph{position Verlet} integrator: one step comprises two
drifts of duration \(h/2\) and one kick of duration \(h\).

\subsubsection{Splitting formulas with more stages}
It is of course possible to use splitting formulas more sophisticated than Strang's. For instance, for any
choice of the real parameter  \(b\),   we may consider the integrator
\beq \label{eq:comp5} \psi_{h} = \varphi_{bh}^{(B)}\circ\varphi_{(1/2)h}^{(A)} \circ
\varphi_{(1-2b)h}^{(B)}\circ\varphi_{(1/2)h}^{(A)} \circ \varphi_{bh}^{(B)},
\eeq
where we observe that, in one step, the \(A\) and \(B\) flows of the systems \eqref{eq:splitodeAB} act for a
total duration of \(h\) units of time each to ensure consistency. Due to its palindromic structure, this
integrator is symmetric and from Theorem~\ref{th:ordercond} its order is at least  two. It turns out that the
order is exactly two for all choices of \(b\) \cite{BlCaSa2014}. The order of splitting integrators is
discussed later in relation with the concept of modified equations.

Even though three \(B\) flows and two \(A\) flows feature in \eqref{eq:comp5}, \(N\) steps of the integrator
only require the computation of \(2N+1\) \(B\) flows and \(2N\) \(A\) flows; this is seen by combining flows
as we did above for the Strang case. We say that \eqref{eq:comp5}  is a \emph{palindromic two stage}
integrator.\footnote{But \eqref{eq:comp5} is still a \emph{one-step} integrator, because \(x_{n+1}\) is
determined by \(x_n\); for two-step schemes the computation of \(x_{n+1}\) requires the knowledge of both
\(x_n\) \emph{and} \(x_{n-1}\). The term \emph{stage} is borrowed from the Runge-Kutta literature.} The method
\eqref{eq:comp5} may be denoted by
\begin{equation}
\label{eq:twostagefamily}
\big(b,1/2, (1-2b), 1/2, b\big).
\end{equation}

Similarly, one may consider the two-parameter family of \emph{palindromic three-stage} splittings
\begin{equation}
\label{eq:threestagefamily}
\big( b, a, 1/2-b, 1-2a, 1/2-b, a, b
\big).
\end{equation}
A full description of this family is given by \cite{CaSa2017}; this reference suggests parameter choices for
various applications. There is a unique choice of \(a\) and \(b\) resulting in a fourth-order method often
associated with Yoshida's name \cite{yoshida}; for all other choices, the order is \(\nu = 2\).

The family of \emph{palindromic \(s\)-stage}  splitting formulas is given by
\beq\label{eq:even}
 \big(b_1,a_1,b_2,a_2, \dots, a_{s^\prime},b_{s^\prime+1}, a_{s^\prime},\dots,a_2,b_2,a_1,b_1\big),
\eeq
if \(s=2s^\prime\) is even, and by
\beq\label{eq:odd}
\big(b_1,a_1,b_2,a_2, \dots ,b_{s^\prime}, a_{s^\prime},b_{s^\prime},\dots,a_2,b_2,a_1,b_1\big),
\eeq
if \(s=2s^\prime-1\). After imposing the consistency requirement that at each step the \(A\) and \(B\) flows
act during \(h\) units of time each, the family has \(s-1\) parameters left. By taking \(s\) sufficiently high
it is possible to achieve any desired order \cite[Section 13.1]{SaCa1994}. Clearly, increasing the number of
stages does \emph{not} lead to integrators with a more complicated implementation; regardless of the value of
\(s\) numerical integrations just consist of a sequence of flows of the split systems.

It is not necessary to add that to each of the integrators just described there corresponds a second
integrator found by swapping the roles of the systems \(A\) and \(B\), just as  \eqref{eq:pv} corresponds to
\eqref{eq:vv}.

So far our attention has focused on palindromic formulas as these are important later in connection with the
property of reversibility (Theorem~\ref{th:revintegrator}). It is also possible to consider splittings of the
form:
\[
\psi_{h} = \varphi_{b_rh}^{(B)}\circ\varphi_{a_rh}^{(A)} \circ
\cdots
\circ\varphi_{b_2h}^{(B)} \circ \varphi_{a_2h}^{(A)}
\circ\varphi_{b_1h}^{(B)} \circ \varphi_{a_1h}^{(A)},
\]
that we denote as
 \beq\label{eq:generalsplit}
 \big(b_r, a_r, \dots, b_2,a_2,b_1,a_1\big).
 \eeq
(\(\sum_i a_i= \sum_i b_i =1\)). The palindromic formulas in \eqref{eq:even}--\eqref{eq:odd} are particular
instances of this general format because it is always possible to set \(a_1= 0\) in \eqref{eq:generalsplit}.
\begin{remark}
When a splitting integrator of the general form \eqref{eq:generalsplit} is applied to the splitting
\eqref{eq:A}--\eqref{eq:B} of the system \eqref{eq:newton}, the result coincides with the application of a
so-called symplectic, explicit Runge-Kutta-Nystr\"{o}m (RKN) method. The properties (order, stability, etc.)
of such an integrator may therefore be studied either by using techniques pertaining to  splitting integrators
(as done in this paper) or  by employing an RKN approach. The second methodology was favoured in the early
years of geometric integration \cite{acta,SaCa1994}.
\end{remark}

\subsection{Fixed \(h\) stability}
\label{ss:fixedh}
 If two numerical schemes are candidates to integrate an initial value problem for a given
system \eqref{eq:ode}, then the scheme that leads to smaller global errors for a given computational cost may
seem more desirable (but, in the context of geometric integration, the geometric properties may play a role
when choosing the integrator, see Section~\ref{ss:geometricintegration} below). Even though global errors may
be bounded as in \eqref{eq:convergence}, in practice,
 it is almost always impossible, for the problem at hand, to estimate realistically  the error constant implied
in the \(\mathcal{O}(h^\nu)\) notation in the bound. For this reason, the literature on numerical integrators
has traditionally resorted to well-chosen \emph{model problems} where both the numerical  and true solutions,
\(x_n\) and \(x(t_n)\), may be written down in closed form; the performance of the various integrators on the
model problem may then be investigated analytically and is taken as an indication of their performance when
applied to realistic problems. Note that
 the actual numerical solution of the
model problem cannot be expected to be of real practical interest, since in that problem the true solution is
available analytically.

\subsubsection{One degree of freedom}
For our purposes, the model problem of choice is the harmonic oscillator \eqref{eq:harmonic}.  For  Runge-Kutta methods, for the
splitting algorithms in Section~\ref{ss:splitting} (and in fact for all one-step integrators of practical
interest) a time-step $(q_{n+1},p_{n+1})= \psi_h(q_n,p_n)$ may be expressed as
\beq\label{eq:harmonicintegrator}
\left[ \begin{matrix}q_{n+1}\\p_{n+1}\end{matrix}\right] = \tilde{M}_h\left[
\begin{matrix}q_n\\p_n\end{matrix}\right],\qquad \tilde{M}_h= \left[ \begin{matrix}A_h& B_h\\ C_h &
D_h\end{matrix}\right]
\eeq
for suitable method-dependent coefficients $A_h$, $B_h$, $C_h$, $D_h$. The evolution over $n$ time-steps is then given by
\beq\label{eq:harmonicintegrator2}
\left[ \begin{matrix}q_{n}\\p_{n}\end{matrix}\right] = \tilde{M}_h^n\left[
\begin{matrix}q_0\\p_0\end{matrix}\right],
\eeq
an expression to be compared with \eqref{eq:rotation}.

If a given \(h>0\) is such that  \(| \tilde{M}_h^n| \rightarrow \infty\) as \(n\rightarrow \infty\), the
magnitude of numerical solution \((q_n,p_n)\) will grow unboundedly, while the true solution of course remains
bounded as \(t\rightarrow \infty\). Necessarily, global errors will be large for large \(n\). Then the
integrator is said to be \emph{unstable} for that particular choice of \(h\).

\begin{example}\label{ex:stabeuler}
For Euler's rule \eqref{eq:euler} we find
\[
A_h = D_h =1,\quad B_h = -C_h =h.
\]
 The
eigenvalues of \(\tilde{M}_h\) are \(1\pm i h\) with modulus \((1+h^2)^{1/2} \) and, therefore, for \emph{any}
fixed \(h> 0\), the powers \(\tilde{M}_h^n\) grow exponentially as \(n\) increases. Due to this numerical
instability, Euler's rule is completely unsuitable to integrate the harmonic oscillator, and this rules it out
as a method to integrate more complicated oscillatory problems.

More precisely, in the step \(n\rightarrow n+1\), the radius \(r = (q^2+p^2)^{1/2}\), which remains constant
for the true solution, grows like
\beq\label{eq:eulergrowth}
 r_{n+1} = (1+h^2)^{1/2} r_n = \left(1+\frac{h^2}{2}+\mathcal{O}(h^4)\right) r_n
\eeq
for the Euler solution, so that \(r_n = (1+h^2)^{n/2}r_0\). Note that, taking limits as \(n\rightarrow
\infty\), \(h\rightarrow 0\), with fixed \(nh\), we find \(r_n \rightarrow r_0=r(0) = r(nh)\), as it
corresponds to a convergent method.
\end{example}

\begin{example}\label{ex:midpoint}The midpoint rule \eqref{eq:midpoint} has
\[
A_h = D_h = \frac{1-\frac{h^2}{4}}{1+\frac{h^2}{4}},\quad
B_h = -C_h = \frac{h}{1+\frac{h^2}{4}}.
\]
The characteristic equation of \(\tilde M_h\) is
\beq\label{eq:characteristic}
\lambda^2-2A_h\lambda +1
\eeq
and therefore the product of the eigenvalues is 1. For \(h\neq 0\), \(|A_h| <1\) and the matrix has a pair of
complex conjugate eigenvalues of unit modulus. Then the powers \(\tilde M_h^n\) remain bounded as \(n\)
increases and the method is stable for any \(h\).
\end{example}

\begin{example}\label{ex:vv}
Let us next consider Strang's splitting \eqref{eq:vv}, applied with the splitting \eqref{eq:A}--\eqref{eq:B},
which yields the velocity Verlet algorithm. This has
 \[
A_h = D_h =   1-h^2/2,\quad
B_h = h,\quad
C_h = -h+h^3/4;
\]
the characteristic equation is again of the form \eqref{eq:characteristic}. For  \(h>2\), \(A_h<-1\) and the
eigenvalues are real and distinct, so that one of them has modulus \(>1\), and therefore the powers
\(\tilde{M}_h^n\) grow exponentially. For \(h<2\) the eigenvalues are complex of unit modulus and the powers
\(\tilde{M}_h^n\) remain bounded. For \(h=2\), \(\tilde{M}_h\) is a nontrivial Jordan block whose powers grow
linearly (weak instability). To summarize, the integrator is unstable for \(h\geq 2\) and has the
\emph{stability interval} \(0<h<2\). Integrations with \(h>2\) lead to extremely large global errors as we
shall see below.
\end{example}

\begin{example}\label{ex:pv}
For the alternative  Strang formula \eqref{eq:pv},  which yields the position Verlet algorithm, the
coefficients are
 \[
A_h = D_h =   1-h^2/2,\quad
B_h = h-h^3/4,\quad
C_h = -h.
\]
 The discussion is almost identical to
the one in the preceding example. The stability interval is also \(0<h<2\), as one may have guessed from the
equal role that \(q\) and \(-p\) play in the harmonic oscillator \eqref{eq:harmonic}.
\end{example}

\begin{table}[t]
\begin{center}
\begin{tabular}{c c c}
\(h\) & \(t = T_{per}\)   &   \(t = 10\,T_{ per}\) \\
\hline
\(T_{per}/4\) & 6.49e-1 & 2.00e0 \\
\(T_{per}/8\) & 1.60e-1 & 1.48e0 \\
\(T_{per}/16\) & 4.03e-2 & 4.00e-1\\
\(T_{per}/32\) & 1.01e-2 & 1.01e-1
\end{tabular}
\end{center}
\caption{Velocity Verlet integration of the harmonic oscillator.
Relative errors after one or ten oscillation periods}
\label{tab:errorsvv}
\end{table}

\begin{example}\label{ex:v}
 In the situation of Example~\ref{ex:vv}, are
values of \(h\) below the upper limit 2 satisfactory? The answer of course depends on the accuracy required.
Table~\ref{tab:errorsvv} gives, for the initial condition \(q=1\), \(p=0\), and different stable values of
\(h\), the relative error in the Euclidean norm
 \[
\frac{|(q_n-q(t_n), p_n-p(t_n))|} {|(q(t_n),p(t_n))|}
\]
 at the final integration time, when the integration is carried out over an interval of length either one
oscillation period (second column) or ten oscillation periods (third column). Columns are consistent with the
order of convergence as in \eqref{eq:convergence}. The last rows reveal that the error increases linearly with
\(t\) (this will not be true when integrating other differential systems). In the first row the error grows
more slowly than \(t\): the numerical solution \((q_n,p_n)\) remains close to the unit circle for all values
of \(n\) and therefore errors cannot be substantially larger than the diameter of the circle. From the table
we see that if we are interested in  errors  below \(10\%\) over an interval of length equal to ten
oscillation periods, then \(h\) has to be taken below \(2\pi/32\approx 0.20\), \ie\ well below the upper end,
\(h=2\), of the stability interval. To provide an indication of the effect of using unstable values of \(h\),
we mention that with \(h = \pi\) the error after one period is \(\approx 46.4\) and after ten periods
\(\approx 4.68\times 10^{17}\).
\end{example}

Before we move to more complicated models, let us make two observations, valid for  Runge-Kutta, splitting integrators and any other method of practical interest.

\begin{remark}
Because the problem is linear and rotationally invariant, the magnitude of the {\em
relative} errors is independent of the initial condition \((q_0,p_0)\neq(0,0)\).
\end{remark}

\begin{remark}\label{rem:omega}
Replacing the model \eqref{eq:harmonic} with the apparently more general system
\beq \label{eq:harmonicomega}
\frac{dq}{dt} =\omega p,\qquad \frac{dp}{dt} = -\omega q,\qquad \omega >0,
\eeq
with oscillation period \(2\pi/\omega\), does not really change things: in view of Remark~\ref{rem:timescale}
integrating \eqref{eq:harmonicomega} with step size \(h\) is equivalent to integrating \eqref{eq:harmonic}
with step size \(h/\omega\). Because the length of the integration interval and the step size in
Table~\ref{tab:errorsvv} are given in terms of the oscillation period, the results displayed are valid for
\eqref{eq:harmonicomega} for any choice of the value of \(\omega\). Regardless of the initial condition, if we
are interested in relative errors below \(10\%\) over an interval of length \(20\pi/\omega\) (equal to ten
oscillation periods), then \(h\) has to be taken below \(2\pi/(32\omega)\). Of course, for
\eqref{eq:harmonicomega}, stability requires that \(h < 2/\omega\).
\end{remark}

\subsubsection{Several degrees of freedom. Stability restrictions on \(h\)}
Let us now move to the model  with \(d\) degrees of freedom
\[
\frac{d}{dt} q = M^{-1}p,\qquad \frac{d}{dt} p = -Kq,
\]
where \(M\) and \(K\) are \(d\times d\), symmetric, positive-definite matrices. This is the Hamiltonian
system with
\beq\label{eq:MKmodel}
H = \frac{1}{2}p^TM^{-1}p+\frac{1}{2}q^TKq.
\eeq
In mechanics, \(M\) and \(K\) are the mass and stiffness matrices respectively.

This model may transformed into \(d\) uncoupled one-degree-of-freedom oscillators.
 In fact,  factor \(M = LL^{T}\) (which may be done in infinitely many ways)
and diagonalize the symmetric, positive-definite matrix \(L^{-1}KL^{-T}\) as
\(U^{T}L^{-1}KL^{-T}U =\Omega^2\), with \(U\) orthogonal and \(\Omega\) diagonal with diagonal entries
\(\omega_i\), \(i= 1,\dots, d\). A simple computation yields the next result.

\begin{proposition}\label{prop:KM}
With the notation as above, the (non-canonical) change of dependent variables, \(q= L^{-T}U\bar q\), \(p
=LU\Omega\bar p\), decouples the system into a collection of \(d\) harmonic oscillators (superscripts denote
components):
\[
\frac{d\bar q^i}{dt} = \omega_i\bar p^i,\qquad
\frac{d\bar p^i}{dt} = -\omega_i \bar q^i, \qquad i = 1,\dots, d.
\]

\end{proposition}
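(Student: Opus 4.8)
The plan is to substitute the proposed change of variables directly into the two equations $\dot q = M^{-1}p$ and $\dot p = -Kq$ and check that each collapses to the claimed decoupled form. Since $L$, $U$ and $\Omega$ are constant matrices, differentiation commutes with the substitution, so that $\dot q = L^{-T}U\dot{\bar q}$ and $\dot p = LU\Omega\dot{\bar p}$. The two identities that do the work are $M = LL^{T}$ and, rearranging $U^{T}L^{-1}KL^{-T}U = \Omega^{2}$ using the orthogonality of $U$, $K = LU\Omega^{2}U^{T}L^{T}$.

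For the first equation, I would compute $M^{-1}p = (LL^{T})^{-1}LU\Omega\bar p = L^{-T}L^{-1}LU\Omega\bar p = L^{-T}U\Omega\bar p$; comparing with $\dot q = L^{-T}U\dot{\bar q}$ and cancelling the invertible factor $L^{-T}U$ gives $\dot{\bar q} = \Omega\bar p$, i.e. $\dot{\bar q}^{i} = \omega_{i}\bar p^{i}$. For the second equation, $Kq = LU\Omega^{2}U^{T}L^{T}L^{-T}U\bar q = LU\Omega^{2}\bar q$ (using $L^{T}L^{-T} = I$ and $U^{T}U = I$); comparing $\dot p = LU\Omega\dot{\bar p}$ with $-Kq = -LU\Omega^{2}\bar q$ and cancelling the invertible factor $LU\Omega$ yields $\dot{\bar p} = -\Omega\bar q$, i.e. $\dot{\bar p}^{i} = -\omega_{i}\bar q^{i}$.

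The only point that needs a sentence of justification beyond the algebra is that $\Omega$ is genuinely invertible, that is, that every $\omega_{i}$ is strictly positive: this holds because $L^{-1}KL^{-T}$ is congruent to the positive-definite matrix $K$, hence itself symmetric positive-definite, so its eigenvalues $\omega_{i}^{2}$ are positive. This same remark shows that $q = L^{-T}U\bar q$ and $p = LU\Omega\bar p$ are honest linear isomorphisms, so the change of variables is legitimate (although, as indicated in the statement, it need not be canonical, since the block-diagonal Jacobian built from $L^{-T}U$ and $LU\Omega$ is not symplectic in general). I do not expect any real obstacle here; once the two matrix identities above are written down, the verification is entirely routine.
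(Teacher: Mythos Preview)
Your proof is correct and is precisely the ``simple computation'' the paper alludes to without spelling out: direct substitution of the change of variables into the equations of motion, using $M = LL^{T}$ and $K = LU\Omega^{2}U^{T}L^{T}$, followed by cancellation of invertible factors. The additional remark on the invertibility of $\Omega$ is a nice touch that the paper leaves implicit.
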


Now, for all integrators of practical interest, \emph{decoupling and numerical integration commute:} carrying
out the integration  in the old variables \((q,p)\) yields the same result as successively (i) changing
variables in the system, (ii) integrating each of the uncoupled oscillators, (iii) translating the result to
the old variables. \emph{Therefore stability may be analyzed under the assumption that the integration is
performed in the uncoupled version.}

\begin{example}\label{ex:stabilty2df}Consider a particular case with  \(d=2\), \(M= L = I\), \(\omega_1 = 1\) and \(\omega_2 = 100\)
and the initial condition
\[
\bar q^1(0) = 1, \quad \bar p^1(0) = 0, \quad \bar q^2(0) = 0.01,\quad \bar p^2(0) = 0.
\]
We wish to integrate with the velocity Verlet algorithm over \(0\leq t \leq 20\pi\) (ten periods of the slower
oscillation) and aim at absolute errors of magnitude \(\approx 0.1\) in the Euclidean norm in the \(\Reals^4\)
space of the variables \((q^1,q^2,p^1,p^2)\) or, equivalently, because here \(q=U\bar{q}\) with \(U\)
orthogonal, in the Euclidean norm of the variables \((\bar{q}^1,\bar{q}^2,\bar{p}^1,\bar{p}^2)\). On
\emph{accuracy} grounds it would be sufficient to take \(h\approx 0.2\): from Example~\ref{ex:v} we know this
is enough to integrate the first uncoupled oscillator with the desired accuracy, and the second oscillator
should not contribute significantly to the error, due to the smallness of \(\bar q^2(t)\) and \(\bar p^2(t)\)
for all \(t\). However, unless we take \(h\omega_2 < 2\), \ie\ \(h< 0.02\), the errors in \(\bar q^2(t)\) and
\(\bar p^2(t)\) will grow exponentially due to \emph{instability}. In this example, and in many situations
arising in practice, to avoid instabilities, the value of \(h\) has to be chosen much smaller than accuracy
would require. As a result the computational effort to span the time interval of interest would be much higher
than it may be expected on accuracy grounds. These situations are called \emph{stiff}. To deal with stiffness
one may resort to suitable implicit integrators such as the midpoint rule or to explicit integrators with
large stability intervals.
\end{example}
\begin{remark} It may seem that the initial condition in the
pre\-ceding example is somehow contrived as the sizes of two components of \(\bar{q}\) are so unbalanced. This
is not so: it is easily checked that, in terms of the original energy in \eqref{eq:MKmodel},   the
contributions to \(H\) of the \(\omega_1\) and \(\omega_2\) oscillations are both equal to \(1/2\). Those
oscillations are called \emph{normal modes} in mechanics.
\end{remark}

The present discussion is continued in Section~\ref{ss:hmp}.

 We emphasize that, while the preceding material refers to the quadratic Hamiltonian
\eqref{eq:MKmodel}, one may expect that it has some relevance for Hamiltonians close to that model. However
there are cases where the behaviour of a numerical integrators departs considerably from its behaviour when
applied to \eqref{eq:MKmodel}. This point is discussed in Section~\ref{sec:qcubed}. \vspace{2mm}

\section{Geometric integration} \label{ss:geometricintegration}
Classically, the development of numerical integrators for ordinary differential equations focused on
general-purpose methods (such as linear multistep or Runge-Kutta formulas) that were selected after analyzing
their local error and fixed \(h\) stability properties. Geometric integration \cite{geometric} is a newer
paradigm of numerical integration, where the interest lies in meth\-ods tailored to the problem at hand with a
view to preserving some of its geometric features. The development of geometric integration started in the
1980's with the study of symplectic integrators for Hamiltonian systems by Feng Kang and others
\cite{acta,SaCa1994}. Useful monographs are \cite{LeRe2004,HaLuWa2010,FengQin,BlaCa2016}. We review the
geometric integration of Hamiltonian and reversible systems and study the use of modified equations, a key
tool for our purposes. We also examine the behaviour of geometric integrators in the harmonic model problem.
The section concludes showing the optimality of the stability interval of the Strang/Verlet integrator.

\subsection{Hamiltonian problems} Splitting integrators are \emph{symplectic} in the following sense.
Assume that the system \eqref{eq:splitode} is Hamiltonian and that is split in such a way that both  split
systems \eqref{eq:splitodeAB} are also Hamiltonian. Then the splitting integrator mapping \(\psi_h\) is
symplectic, as a composition (Proposition~\ref{prop:compsymp}) of
 flows
 that are individually symplectic (Theorem~\ref{th:symplectic}). This ensures
that the numerical solution shares the specific properties of Hamiltonian flows that derive from
symplecticness. Note that to have a symplectic \(\psi_h\) it is not enough that the system being integrated be
Hamiltonian; if the split vector fields \(f^{(A)}\) and \(f^{(B)}\) are not Hamiltonian themselves, then
\(\psi_h\) cannot be expected to be symplectic. Of course, all possible splittings of a Hamiltonian vector
field as a sum of Hamiltonian vector fields, \(f=f^{(A)}+f^{(B)}\), may be obtained by splitting the
Hamiltonian \(H=H^{(A)}+H^{(B)}\) in an arbitrary way and then setting \(f^{(A)}=J^{-1} \nabla H^{(A)}\),
\(f^{(B)}=J^{-1} \nabla H^{(B)}\). To sum up:

\begin{theorem}\label{th:sympintegrator}
Assume that the Hamiltonian of the system \eqref{eq:hamsys} is written as \(H=H^{(A)}+H^{(B)}\) and  split
correspondingly \(f = J^{-1}\nabla H\) into \(f^{(A)}=J^{-1} \nabla H^{(A)}\), \(f^{(B)}=J^{-1} \nabla
H^{(B)}\). For any splitting integrator \eqref{eq:generalsplit} and any \(h\), the mapping \(\psi_h\) is
symplectic. In particular, \(\psi_h\) conserves  oriented volume.
\end{theorem}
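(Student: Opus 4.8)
The plan is to unwind the definitions and assemble the proof from three ingredients already available in the excerpt: the characterization of Hamiltonian flows as symplectic maps (Theorem~\ref{th:symplectic}), the closure of symplecticness under composition (Proposition~\ref{prop:compsymp}), and the fact that symplectic maps have unit Jacobian determinant (Proposition~\ref{eq:con_volume}).

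First I would observe that, by hypothesis, $f = J^{-1}\nabla H$ with $H = H^{(A)} + H^{(B)}$, and the split fields are $f^{(A)} = J^{-1}\nabla H^{(A)}$, $f^{(B)} = J^{-1}\nabla H^{(B)}$. Each of these is a Hamiltonian vector field (with Hamiltonian $H^{(A)}$, respectively $H^{(B)}$), so by Theorem~\ref{th:symplectic} their $t$-flows $\varphi_t^{(A)}$ and $\varphi_t^{(B)}$ are symplectic mappings for every real $t$. In particular each of the factors $\varphi_{a_ih}^{(A)}$ and $\varphi_{b_ih}^{(B)}$ appearing in the general splitting formula \eqref{eq:generalsplit} is symplectic, for any value of $h$ and any of the real coefficients $a_i$, $b_i$.

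Next I would invoke Proposition~\ref{prop:compsymp} repeatedly (formally, by induction on the number of factors): the composition of finitely many symplectic mappings is symplectic. Since $\psi_h = \varphi_{b_rh}^{(B)}\circ\varphi_{a_rh}^{(A)}\circ\cdots\circ\varphi_{b_1h}^{(B)}\circ\varphi_{a_1h}^{(A)}$ is exactly such a finite composition, it follows that $\psi_h$ is symplectic. Finally, applying Proposition~\ref{eq:con_volume} to the symplectic map $\psi_h$ gives that $\det\psi_h' \equiv 1$, hence $\psi_h$ preserves oriented volume in $\Reals^{2d}$, which is the last assertion.

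There is essentially no obstacle here; the statement is a direct corollary of material already established, and the only thing to be careful about is the logical order (one must note that the split fields are genuinely Hamiltonian before invoking Theorem~\ref{th:symplectic}, which is precisely what the hypothesis $H = H^{(A)}+H^{(B)}$ secures). If anything, the "hard part" is purely expository: making explicit that the palindromic formulas \eqref{eq:even}--\eqref{eq:odd} are special cases of \eqref{eq:generalsplit}, so that the single argument for \eqref{eq:generalsplit} covers every splitting integrator discussed in the paper, including Strang/Verlet. I would close by remarking that this is why splitting integrators built from a Hamiltonian splitting are automatically volume-preserving, the first of the two geometric properties HMC requires.
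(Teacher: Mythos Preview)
Your proof is correct and matches the paper's own argument essentially verbatim: the paper justifies the theorem in the paragraph immediately preceding it by noting that $\psi_h$ is a composition (Proposition~\ref{prop:compsymp}) of flows that are individually symplectic (Theorem~\ref{th:symplectic}), with volume preservation following from Proposition~\ref{eq:con_volume}. Your additional remarks on the logical order and on the palindromic formulas being special cases of \eqref{eq:generalsplit} are accurate and in the same spirit as the surrounding text.
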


Note that the \(n\)-fold composition \(\psi_h^n\) that advances the numerical solution over \(n\) time-steps
is then also symplectic (Proposition~\ref{prop:compsymp}).

Some \emph{implicit} Runge-Kutta methods, including the midpoint rule and Gauss methods, are also symplectic:
\(\psi_h\) is a symplectic map whenever the system \eqref{eq:ode} being integrated is Hamiltonian as in
\eqref{eq:hamsys}. No explicit Runge-Kutta method is symplectic. In particular, Euler's rule is not
symplectic; according to Example~\ref{ex:stabeuler} the Euler discretization of the harmonic oscillator
\emph{increases} area, as
 \(\tilde{M}_h\) has determinant \(1+h^2\). This increase in area is related to
 the estimate \eqref{eq:eulergrowth} that shows that the Euler solution spirals outward, a
non-Hamiltonian behaviour.

In the particular case of the Hamiltonian in \eqref{eq:sepham}, taking \(\mathcal{T}\) and \(\mathcal{U}\) to
play the roles of \(H^{(A)}\) and \(H^{(B)}\) respectively leads to the splitting of the differential system
given in \eqref{eq:A}--\eqref{eq:B} with   \( F = -\nabla \mathcal{U}\).

 It would also be desirable to have integrators that preserved energy when applied to the Hamiltonian system
\eqref{eq:hamsys}, \ie\ \(H\circ\psi_h = H\). Unfortunately, for realistic problems such a requirement is
incompatible with \(\psi_h\) being symplectic \cite[Section 10.3]{SaCa1994}. It is then standard practice to
insist on symplecticness and sacrifice conservation of energy. There are several reasons for this.
Symplecticness plays a key role in the Hamiltonian formalism (cf.~Theorem~\ref{th:symplectic}). In addition,
while, as we have seen, it is not difficult to find symplectic formulas, standard classes of integrators do
not include energy-preserving schemes except if the energy is assumed to have  particular forms. Finally, as
we shall discuss below, symplectic schemes have small energy errors even when the integration interval is very
long.

\subsection{Reversible problems}

For reversible systems we have, from Proposition~\ref{prop:comprever} and Theorem~\ref{th:reverflow}:

\begin{theorem}\label{th:revintegrator}
Assume that \eqref{eq:splitode}, and the split systems \eqref{eq:splitodeAB} are reversible with respect to
the same involution \(S\). If the system is integrated by means of a \emph{palindromic} splitting integrator
\eqref{eq:even} or \eqref{eq:odd}, then, for any \(h\), the mapping \(\psi_h\) will also be reversible.
\end{theorem}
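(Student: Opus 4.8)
The plan is to reduce the reversibility of the palindromic splitting integrator to the two composition rules recorded in Proposition~\ref{prop:comprever}, using the fact that each individual split flow is already reversible. First I would observe that, by Theorem~\ref{th:reverflow}, the hypothesis that the split systems \eqref{eq:splitodeAB} are \(S\)-reversible means precisely that \(S\circ f^{(A)} = -f^{(A)}\circ S\) and \(S\circ f^{(B)} = -f^{(B)}\circ S\); applying Theorem~\ref{th:reverflow} in the other direction, this is equivalent to saying that every flow map \(\varphi_t^{(A)}\) and \(\varphi_t^{(B)}\) is \(S\)-reversible, for every real \(t\). In particular each factor appearing in \eqref{eq:even} or \eqref{eq:odd}, namely \(\varphi_{a_ih}^{(A)}\) and \(\varphi_{b_ih}^{(B)}\), is an \(S\)-reversible mapping.

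Next I would assemble the composition. A palindromic formula is, by construction, a composition \(\psi_h = \Phi_1\circ\Phi_2\circ\cdots\circ\Phi_2\circ\Phi_1\) that reads the same from left to right as from right to left, where each \(\Phi_j\) is one of the split flows above and hence \(S\)-reversible. The cleanest route is to induct on the number of stages. For the innermost factor (or pair of factors), Proposition~\ref{prop:comprever} gives directly that \(\Phi_k\circ\Phi_k\) is \(S\)-reversible (even case \eqref{eq:even}, where the center has a single flow composed with nothing, one simply has the reversible \(\Phi_{s'+1}\); odd case \eqref{eq:odd}, the center is \(\varphi_{b_{s'}h}^{(B)}\circ\varphi_{a_{s'}h}^{(A)}\circ\varphi_{a_{s'}h}^{(A)}\circ\varphi_{b_{s'}h}^{(B)}\)—here one first notes \(\varphi_{a_{s'}h}^{(A)}\circ\varphi_{a_{s'}h}^{(A)}=\varphi_{2a_{s'}h}^{(A)}\) is reversible, or just applies the symmetric-composition rule twice). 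Then, given that the inner palindromic block \(\Psi\) is already known to be \(S\)-reversible and \(\Phi\) is an \(S\)-reversible outer flow, the second assertion of Proposition~\ref{prop:comprever}—\(\Phi\circ\Psi\circ\Phi\) is \(S\)-reversible when both \(\Phi\) and \(\Psi\) are—lets me peel off one matching outer pair at a time until the whole palindrome is consumed. Hence \(\psi_h\) is \(S\)-reversible for every \(h\), which is the claim.

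One small bookkeeping point deserves care: the statement of Proposition~\ref{prop:comprever} is phrased for the patterns \(\Phi_1\circ\Phi_1\) and \(\Phi_1\circ\Phi_2\circ\Phi_1\), so to apply it in the inductive step I must know that the inner block \(\Psi\) is itself \(S\)-reversible \emph{as a single map}, not merely that it is a palindromic composition of reversible pieces; but that is exactly what the induction hypothesis supplies. I would also note, for completeness, that the group property \eqref{eq:flow} is what guarantees the simplifications like \(\varphi_{a_{s'}h}^{(A)}\circ\varphi_{a_{s'}h}^{(A)}=\varphi_{2a_{s'}h}^{(A)}\) used above, though strictly one can avoid even that by just invoking Proposition~\ref{prop:comprever} mechanically.

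I do not expect a genuine obstacle here: the whole content is packaged in Proposition~\ref{prop:comprever} and Theorem~\ref{th:reverflow}, and the only mild subtlety is organizing the palindrome so that the peeling-off argument applies cleanly in both the even-length \eqref{eq:even} and odd-length \eqref{eq:odd} cases. If anything is delicate, it is making sure the center of the palindrome is handled correctly in each parity case before starting the induction on outer pairs.
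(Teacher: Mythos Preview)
Your proposal is correct and follows essentially the same route as the paper, which simply cites Proposition~\ref{prop:comprever} and Theorem~\ref{th:reverflow} without spelling out the induction. Your bookkeeping of the center in the odd case \eqref{eq:odd} is slightly off (the center there is the single triple \(b_{s'},a_{s'},b_{s'}\), not a four-term block), but this does not affect the argument.
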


The midpoint rule and Gauss Runge-Kutta formulas also generate reversible mappings \(\psi_h\) whenever they
are applied to a reversible system. No explicit Runge-Kutta method is reversible.

The \(n\)-fold composition \(\psi_h^n\) that advances the numerical solution over \(n\) time-steps is then
also reversible (Proposition~\ref{prop:comprever}). However note that, if variable time steps were taken, then
 the mapping \(\psi_{h_n}\circ\cdots\circ\psi_{h_1}\), that advances the solution from \(t_0\) to \(t_{n+1}\),
 would not be reversible. This is one of the reasons for not considering here variable time steps; see
 Remark~\ref{rem:variable_steps}.

The use of reversible integrators (with constant step sizes) ensures that the numerical solution inherits
relevant geometric properties of the true solution of the differential system \cite{CaSa1997,CaSa1998}.

\subsection{Modified equations}
 Modified equations are rather old (see references in \cite{GrSa1986}); however their use as a means to analyse
numerical integrators has only become popular  in the last twenty years, after the emergence of geometric
integration \cite{Sanz1996}.

\subsubsection{Motivation}
Let us first look at some examples:
\begin{example}Suppose that the system \eqref{eq:ode} is solved with Euler's rule \eqref{eq:euler} and
denote by \(\psi_h\) the corresponding map \(x+hf(x)\). Consider the new system, parameterized by \(h\),
\beq\label{eq:modeuler}
 \frac{d}{dt} x = \tilde{f}_h(x),\qquad \tilde{f}_h(x)  = f(x)-\frac{h}{2}f^\prime(x) f(x),
\eeq
 with flow \(\tilde{\varphi}_t\) (for simplicity, the dependence of this flow on the parameter \(h\) is not
incorporated in the notation). By proceeding as in the derivation of \eqref{eq:eulerexpansion}, we find that
the Taylor expansion of \(\tilde{\varphi}_h\) in powers of \(h\) is
\begin{eqnarray*}
\tilde{\varphi}_h(x)& = &
x + h \tilde{f}_h(x) +\frac{h^2}{2} \tilde{f}^\prime_h(x)\tilde{f}_h(x)+\mathcal{O}(h^3)\\
&=& x + h \left(f(x)-\frac{h}{2}f^\prime(x) f(x)\right)+\frac{h^2}{2} {f}^\prime_h(x){f}_h(x)+\mathcal{O}(h^3)\\
&=& x+hf(x) + \mathcal{O}(h^3).
\end{eqnarray*}
Thus, the Euler mapping \(\psi_h(x)\), which differs from the flow of the system \eqref{eq:ode} being solved
in \(\mathcal{O}(h^2)\) terms (first-order consistency),
 differs from the flow \(\tilde{\varphi}_h\) of the so-called
\emph{modified or shadow system} \eqref{eq:modeuler} in \(\mathcal{O}(h^3)\) terms (second-order consistency).
According to Theorem~\ref{th:convergence} (see \cite{GrSa1986} for details), over bounded time intervals, the
Euler solution differs from the corresponding solution of the modified system  in \(\mathcal{O}(h^2)\) terms.
Therefore we may expect that the properties of the Euler solutions for \eqref{eq:ode} will resemble the
properties of the solutions of \eqref{eq:modeuler} more than they resemble the properties of solutions of
\eqref{eq:ode} itself.
 When studying the properties of Euler's rule, working with
\eqref{eq:modeuler} rather than with \(\psi_h\) should be advantageous, because differential equations are
simpler to analyze than maps. An illustration is provided next.
\end{example}
\begin{example}Let us particularize the preceding example to the case of the harmonic oscillator
\eqref{eq:harmonic}. The modified system \eqref{eq:modeuler} is found to be
\[
\frac{dq}{dt} = p+\frac{h}{2} q,\qquad \frac{dp}{dt} = -q+\frac{h}{2} p,
\]
and from here a simple computation yields for the radius \(r = (q^2+p^2)^{1/2}\)
\[
\frac{dr}{dt} = \frac{h}{2} r,
\]
so that, over a time interval of length \(h\), \(r\) is multiplied by the factor \(\exp(h^2/2) =
1+h^2/2+\mathcal{O}(h^3)\).  This is precisely what we found in \eqref{eq:eulergrowth} for the Euler solution
of the harmonic oscillator (but  the \(\mathcal{O}(h^3)\) remainder here will not coincide with the one in
\eqref{eq:eulergrowth}, because there is an \(\mathcal{O}(h^3)\) difference between \(\psi_h\) and
\(\tilde{\varphi}_h\)).
\end{example}

\begin{example} In \eqref{eq:modeuler}, \(\tilde f_h(x)\) is a first-degree polynomial in \(h\). If \(\tilde f_h(x)\)  is
chosen to be quadratic in \(h\), \ie\ \(\tilde{f}_h = f-(h/2) f^\prime f+h^2 f_2\), it is possible to
determine \(f_2\) so as to achieve \(\psi_h-\tilde{\varphi}_h=\mathcal{O}(h^4)\) for the Euler map \(\psi_h(x)
= x+hf(x)\). Similarly, taking \(\tilde f_h(x)\)  as a suitable chosen polynomial in \(h\) with degree
\(\mu=3,4\dots\) it is possible to achieve \(\psi_h-\tilde{\varphi}_h=\mathcal{O}(h^{\mu+2})\).
\end{example}
\subsubsection{Definition}

Given a (consistent) integrator \(\psi_h\) for the system \eqref{eq:ode}, there exists a (unique) \emph{formal
series in powers of \(h\)}
\beq\label{eq:inf}
 \tilde{f}_h^\infty(x) = f(x) + h g^{[1]}(x)+ h^2g^{[2]}(x)+\cdots
\eeq
(the \(g^{[\mu]}\) map the space \(\Reals^D\) into itself) with the property that, for each \(\mu= 0,
1,\dots\), the flow \(\tilde{\varphi}_h^{[\mu]}\) of the modified system
\beq\label{eq:modsystrunc}
\frac{d}{dt} x = \tilde{f}_h^{[\mu]}(x),\quad\tilde{f}_h^{[\mu]}(x) = f(x) + h g^{[1]}(x)+\cdots+h^\mu g^{[\mu]}(x),
\eeq
satisfies
\beq\label{eq:modbound}
\psi_h(x) - \tilde{\varphi}_h^{[\mu]}(x)= \mathcal{O}(h^{\mu+2}).
\eeq
Furthermore, for a symmetric method, the odd-numbered \(g^{[\mu]}(x)\) are identically zero:
\beq\label{eq:gsymmetic}
g^{[1]}(x) = g^{[3]}(x) = g^{[5]}(x)= \cdots = 0.
\eeq

As \(\mu\) increases, the solutions of the modified system \eqref{eq:modsystrunc} provide better and better
approximations to \(\psi_h\). For some vector fields \(f\) and some integrators (see  an example in
Proposition~\ref{prop:modham} below), it may happen that the formal series \eqref{eq:inf} converges for each
\(x\) (so that  \(\tilde{f}_h^\infty\) is a well-defined mapping  \(\Reals^D:\rightarrow\Reals^D\)) and that
furthermore the flow \(\tilde{\varphi}_h^{[\infty]}\) of
\beq\label{eq:modsysinf}
\frac{d}{dt} x = \tilde{f}_h^\infty(x)
\eeq
coincides with \(\psi_h\). In those cases, \eqref{eq:modsysinf} provides an  \emph{exact} modified system to
study \(\psi_h\). However such situations are exceptional, because, in general, discrete dynamical systems
(such as the one generated by \(\psi_h\)) possess features that cannot appear in flows of autonomous
differential systems. It is the lack of convergence of   \eqref{eq:inf} that makes it necessary  to consider
the truncations  in \eqref{eq:modsystrunc} which do not exactly reproduce \(\psi_h\) but approximate it with
an error as in \eqref{eq:modbound}. It is possible, by increasing \(\mu\) as \(h\) becomes smaller and
smaller, to render the discrepancy \(\psi_h(x) - \tilde{\varphi}_h^{[\mu(h)]}(x)\) exponentially small with
respect to \(h\), as first proved by Neishtadt \cite[Section 10.1]{SaCa1994}.

\subsubsection{Finding explicitly the modified equations}

For splitting integrators, the terms of the series \eqref{eq:inf} may be found explicitly by using the
Baker-Hausdorff-Campbell formula. The next theorem provides a summary of some key points; a more detailed
description is given in Section~\ref{sec:bch}.

\begin{theorem}\label{th:modinfsplit}
Assume that system \eqref{eq:splitode} is integrated by means of a (consistent) splitting algorithm of the
general format \eqref{eq:generalsplit}. The series \eqref{eq:inf} is of the form
\begin{eqnarray*}
&&\big(f^{(A)}+f^{(B)}\big) + h C_{1,1} [f^{(A)},f^{(B)}]\\
&&
\qquad
{}+ h^2 \Big(C_{2,1}[f^{(A)},[f^{(A)},f^{(B)}]]+
 C_{2,2}[f^{(B)},[f^{(A)},f^{(B)}]]\Big) + \cdots
\end{eqnarray*}
where, for each \(\mu=1,2\dots\), the coefficient of \(h^\mu\) is a linear combination of linearly independent
 iterated commutators involving \(\mu+1\) fields \(f^{(A)}\), \(f^{(B)}\).
 The coefficients \(C_{i,j}\) that appear in the linear combinations are known polynomials on the coefficients
\(a_k\) and \(b_\ell\) that appear in the formula \eqref{eq:generalsplit}.

If the splitting is palindromic then all the coefficients \(C_{2i+1,j}\) corresponding to
 odd powers of \(h\) vanish.
\end{theorem}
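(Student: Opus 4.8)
The plan is to build the formal series \eqref{eq:inf} by composing, in the formal sense, the flows appearing in \eqref{eq:generalsplit} and using the Baker--Campbell--Hausdorff (BCH) formula. The key algebraic device is the following classical fact: if $\varphi_t^{(f)}$ denotes the $t$-flow of $dx/dt = f(x)$, then acting on functions (or, equivalently, working in the associated Lie algebra of vector fields), the composition $\varphi_{s}^{(g)}\circ\varphi_{t}^{(f)}$ corresponds to the single vector field whose ``logarithm'' is given by the BCH series
\[
t f + s g + \tfrac12\, t s\, [f,g] + \tfrac{1}{12}\bigl(t^2 s\,[f,[f,g]] + t s^2\,[g,[g,f]]\bigr) + \cdots,
\]
a formal series all of whose terms (beyond the linear ones) are iterated Lie brackets of $f$ and $g$. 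First I would record this BCH expansion and its grading: the degree-$(k{+}1)$ bracket terms carry a total of $k{+}1$ copies of the fields $f$ and $g$ and come multiplied by a monomial of total degree $k{+}1$ in the time parameters.

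Next I would apply this iteratively to the $r$-fold composition \eqref{eq:generalsplit}, where each elementary flow has time parameter $a_k h$ or $b_\ell h$. Composing two flows at a time and collecting, one obtains a formal vector field
\[
\Psi_h = \bigl(\textstyle\sum_k a_k\bigr) h\, f^{(A)} + \bigl(\textstyle\sum_\ell b_\ell\bigr) h\, f^{(B)} + \sum_{k\ge 2} h^k\, P_k,
\]
where each $P_k$ is a Lie-algebra element that is a linear combination of iterated commutators in $f^{(A)}$, $f^{(B)}$ involving exactly $k$ of these fields, and whose scalar coefficients are polynomials in the $a_k$, $b_\ell$ (they arise purely from multiplying and adding the time monomials produced by BCH). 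The consistency hypothesis forces $\sum_k a_k = \sum_\ell b_\ell = 1$, so the degree-one part is $h\,(f^{(A)}+f^{(B)})$; dividing $\Psi_h$ by $h$ (equivalently, rescaling time as in Remark~\ref{rem:timescale}) gives exactly the series \eqref{eq:inf} with $g^{[\mu-1]} = P_\mu$, and the displayed low-order terms $C_{1,1}[f^{(A)},f^{(B)}]$, $C_{2,1}[f^{(A)},[f^{(A)},f^{(B)}]]$, $C_{2,2}[f^{(B)},[f^{(A)},f^{(B)}]]$ are read off from the first two BCH corrections. That the iterated commutators at each order can be taken linearly independent follows from the standard basis theory of the free Lie algebra on two generators (Hall or Lyndon bases); one simply expresses $P_k$ in such a basis. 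The uniqueness of the series is the uniqueness already asserted around \eqref{eq:inf}: matching the $\mathcal{O}(h^{\mu+2})$ condition \eqref{eq:modbound} order by order determines each $g^{[\mu]}$ recursively.

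Finally, for the palindromic case I would argue by symmetry rather than by inspecting coefficients. A palindromic splitting is symmetric, $(\psi_h)^{-1} = \psi_{-h}$, by the palindromic-structure argument already used for Strang's formula; by \eqref{eq:gsymmetic} the odd-indexed $g^{[\mu]}$ vanish, i.e.\ all $C_{2i+1,j} = 0$. Alternatively, and more in keeping with the BCH bookkeeping: reversing the order of the factors in \eqref{eq:generalsplit} and sending $h\mapsto -h$ maps $\Psi_h$ to $-\Psi_{-h}$ computed from the same data, while the palindromic symmetry identifies this reversed product with the original; comparing the two series forces the even powers of $h$ in $\Psi_h$ (the odd-indexed $g^{[\mu]}$) to vanish. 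I expect the main obstacle to be purely organizational rather than conceptual: controlling the combinatorics of the iterated BCH compositions well enough to see that every correction term is indeed a bracket polynomial with polynomial-in-$(a,b)$ coefficients, and that nothing outside the free Lie algebra on $\{f^{(A)},f^{(B)}\}$ appears. I would handle this by induction on $r$ (the number of factors), using that the BCH series of a product of Lie-algebra exponentials is again the exponential of a Lie-algebra element, so the bracket structure is preserved at every stage, and by deferring the explicit values of the first few $C_{i,j}$ to the detailed treatment promised in Section~\ref{sec:bch}.
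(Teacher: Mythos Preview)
Your proposal is correct and follows essentially the same route as the paper's treatment in Section~\ref{sec:bch}: represent each elementary flow as an exponential, combine the exponentials by repeated application of the BCH formula, and read off the modified vector field as a series of iterated brackets whose coefficients are polynomials in the~$a_k,b_\ell$; the palindromic claim is then reduced to symmetry via \eqref{eq:gsymmetic}. The only presentational difference is that the paper makes the exponential representation explicit through Lie derivatives acting on observables, $\chi\circ\varphi_t=\exp(tD_f)\chi$, and invokes Proposition~\ref{prop:commutatorvslie} to translate operator commutators back to vector-field brackets (noting the order reversal between composition of flows and multiplication of operator exponentials), whereas you work directly in the Lie algebra of vector fields; the substance is the same.
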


For Runge-Kutta methods it is also possible to give expression for the terms in the series \eqref{eq:inf}, see
\cite{CaMuSa,Sanz1996}.

\subsubsection{Modified equations and order}
If in \eqref{eq:inf}, the functions \(g^{[1]}\), \dots , \(g^{[\mu-1]}\), \(\mu\geq 2\), vanish  so that
\[
\tilde{f}_h^{[\mu]}(x)-f(x) = h^\mu g^{[\mu]}(x)+ \mathcal{O}(h^{\mu+1})
\]
then,
the flow
\(\tilde{\varphi}_h^{[\mu]}\) of \eqref{eq:modsystrunc} and the flow \(\varphi_h\) of \eqref{eq:ode} will satisfy
\[
\tilde{\varphi}_h^{[\mu]}(x)- \varphi_h(x) = h^{\mu+1} g^{[\mu]}(x)+ \mathcal{O}(h^{\mu+2})
\]
 which, in view of \eqref{eq:modbound}
implies that
 \[\psi_h(x)-\varphi_h(x) = h^{\mu+1} g^{[\mu]}(x)+ \mathcal{O}(h^{\mu+2})\] \ie\ that the integrator
 \(\psi_h(x)\)
has order \(\mu\) (or higher). The converse is also true, because the argument may be reversed: order \(\geq
\mu\) implies that \(g^{[1]}\), \dots , \(g^{[\mu-1]}\) must vanish. From \eqref{eq:gsymmetic}, the order of a
symmetric methods must be an even integer. To sum up, we have the following result, which makes it possible to
use the series \eqref{eq:inf} rather than the mapping \(\psi_h\) to study the order of a given integrator.
\begin{theorem}\label{th:ordercond}
A (consistent) integrator has  order \(\geq \nu\), \(\nu\geq 2\), if and only if the functions
    \(g^{[1]}\), \dots , \(g^{[\nu-1]}\) appearing in \eqref{eq:inf} are identically zero.

When the order is exactly \(\nu\), \(\nu= 1, 2, \dots\), the leading term of the truncation error is
\(h^{\nu+1}
    g^{[\nu]}(x)\).

A (consistent) symmetric integrator has even order.
\end{theorem}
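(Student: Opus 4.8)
The plan is to prove everything from a single quantitative fact: \emph{if $g^{[1]}=\cdots=g^{[\mu-1]}=0$ in the expansion \eqref{eq:inf}, then at each fixed $x\in\Reals^D$}
\[
\psi_h(x)-\varphi_h(x)=h^{\mu+1}g^{[\mu]}(x)+\mathcal{O}(h^{\mu+2}).
\]
To establish this I would route the comparison of $\psi_h$ with $\varphi_h$ through the truncated modified flow $\tilde\varphi_h^{[\mu]}$. The defining property \eqref{eq:modbound} already gives $\psi_h(x)-\tilde\varphi_h^{[\mu]}(x)=\mathcal{O}(h^{\mu+2})$, so it suffices to expand $\tilde\varphi_h^{[\mu]}(x)-\varphi_h(x)$. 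Under the hypothesis the modified field is $\tilde f_h^{[\mu]}=f+h^\mu g^{[\mu]}$, differing from $f$ only at order $h^\mu$. Taylor expanding its time-$h$ flow exactly as in the derivation of \eqref{eq:eulerexpansion} — using that the flow of a smooth vector field depends smoothly on the initial point and on the parameter $h$, hence admits such an expansion with a genuine $\mathcal{O}$-remainder — one finds that every purely $f$-dependent term cancels against the corresponding term of $\varphi_h(x)$, and that the only contribution of $g^{[\mu]}$ at total order $h^{\mu+1}$ is the linear one $h\cdot h^\mu g^{[\mu]}(x)$ coming from $h\,\tilde f_h^{[\mu]}(x)$ (all other appearances of $g^{[\mu]}$ sit at order $\ge h^{\mu+2}$). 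Adding the two estimates gives the displayed formula.

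From that formula the three assertions drop out. Taking $\mu=\nu$ shows $g^{[1]}=\cdots=g^{[\nu-1]}=0$ forces $\psi_h(x)-\varphi_h(x)=\mathcal{O}(h^{\nu+1})$, i.e.\ order $\ge\nu$; and if in addition $g^{[\nu]}\not\equiv0$, choosing $x_0$ with $g^{[\nu]}(x_0)\neq0$ shows $\psi_h(x_0)-\varphi_h(x_0)$ is not $\mathcal{O}(h^{\nu+2})$, so the order is exactly $\nu$ and the leading truncation-error term is $h^{\nu+1}g^{[\nu]}(x)$. For the converse, assume order $\ge\nu$ and suppose, for contradiction, that some $g^{[j]}$ with $1\le j\le\nu-1$ is nonzero; taking the least such $j$ we have $g^{[1]}=\cdots=g^{[j-1]}=0$, so the formula yields $\psi_h(x)-\varphi_h(x)=h^{j+1}g^{[j]}(x)+\mathcal{O}(h^{j+2})$, which at a point where $g^{[j]}$ does not vanish is not $\mathcal{O}(h^{\nu+1})$ since $j+1\le\nu$ — contradicting order $\ge\nu$. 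Hence $g^{[1]}=\cdots=g^{[\nu-1]}=0$, which proves the equivalence and, at the same time, that the order is precisely the index of the first non-vanishing $g^{[\mu]}$.

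The statement about symmetric integrators is then immediate: by \eqref{eq:gsymmetic} all odd-indexed $g^{[\mu]}$ vanish, so for any non-exact integrator the first non-vanishing $g^{[\mu]}$ has even index, and by the previous paragraph that even index is the order.

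I do not expect a serious obstacle. The one step that must be carried out carefully rather than formally is the flow-comparison expansion $\tilde\varphi_h^{[\mu]}(x)-\varphi_h(x)=h^{\mu+1}g^{[\mu]}(x)+\mathcal{O}(h^{\mu+2})$: identifying the leading coefficient as exactly $g^{[\mu]}(x)$ and controlling the remainder relies on the standard smooth dependence of ODE solutions on data and parameters — essentially the same computation already done for Euler's rule in \eqref{eq:eulerexpansion}. A secondary, more pedantic point is that the $g^{[\mu]}$ are functionals of $f$ (for splitting methods, the universal combinations of iterated commutators of Theorem~\ref{th:modinfsplit}), so ``identically zero'' is meant as an identity in $f$; the argument above is run for a fixed system, and one observes that a non-vanishing universal $g^{[\mu]}$ is non-vanishing for some admissible vector field, which is all that the order definition requires.
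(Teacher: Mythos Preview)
Your proposal is correct and follows essentially the same route as the paper: the key identity $\psi_h(x)-\varphi_h(x)=h^{\mu+1}g^{[\mu]}(x)+\mathcal{O}(h^{\mu+2})$ is obtained, exactly as in the text preceding the theorem, by combining \eqref{eq:modbound} with the flow comparison $\tilde\varphi_h^{[\mu]}(x)-\varphi_h(x)=h^{\mu+1}g^{[\mu]}(x)+\mathcal{O}(h^{\mu+2})$, and the symmetric case follows from \eqref{eq:gsymmetic}. The only difference is cosmetic: where the paper handles the converse by the single phrase ``the argument may be reversed'', you spell it out via a least-index contradiction, which is the natural way to make that reversal precise.
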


This theorem, in tandem with Theorem~\ref{th:modinfsplit}, provides the standard way to write down the order
conditions for splitting methods, \ie\ the  relations on the coefficients \(a_k\), \(b_\ell\) that are
necessary and sufficient for an integrator to have order \(\nu\). Specifically, order \(\geq \nu\) is
equivalent to \(C_{i,j}= 0\) whenever \(i< \nu\). Here are the simplest illustrations:
\begin{itemize}
\item The Lie-Trotter formula \eqref{eq:lt} has \(\nu=1\), and, according to Theorems~\ref{th:modinfsplit}
    and \ref{th:ordercond}, the leading term of the local error is a constant multiple of
    \(h^2[f^{(A)},f^{(B)}]\), which matches our earlier finding in \eqref{eq:ltlocal}.
\item The condition \(C_{1,1} = 0\) is necessary and  sufficient for an integrator to have order \(\nu\geq
    2\). It is automatically satisfied for palindromic formulas.
\item Strang's formula \eqref{eq:vv} has order exactly 2. The coefficient of \(h^3\) in the expansion of the
    local error is a combination of iterated commutators with three fields. This was already found in
    \eqref{eq:vvlocal}.
\item To have order \(\nu\geq 3\) we have
    to impose the order
conditions \(C_{1,1} = 0\), \(C_{2,1} = 0\), \(C_{2,2} = 0\).
\end{itemize}

\subsubsection{Modified equations and geometric integrators}

The geometric properties of integrators have a clear impact on their modified systems.

Let us begin with the Hamiltonian case. If the split fields in \eqref{eq:splitodeAB} are Hamiltonian with
Hamiltonian functions \(H^{(A)}\) and \(H^{(B)}\), then, by invoking Theorem~\ref{th:poisson}, it is feasible
to work with iterated Poisson brackets of \(H^{(A)}\) and \(H^{(B)}\) rather than with the iterated Lie
brackets of \(f^{(A)}\) and \(f^{(B)}\). Then the expansion in Theorem~\ref{th:modinfsplit} is replaced by
\begin{eqnarray*}
&&\big(H^{(A)}+H^{(B)}\big) - h C_{1,1} \{H^{(A)},H^{(B)}\}\\
&&
\quad
{}+ h^2 \Big(C_{2,1}\{H^{(A)},\{H^{(A)},H^{(B)}\}\}+
 C_{2,2}\{H^{(B)},\{H^{(A)},H^{(B)}\}\}\Big) + \cdots
\end{eqnarray*}
For each \(\mu\), \emph{the modified system \eqref{eq:modsystrunc} is a Hamiltonian system}. This is  a
reflection of the fact that, according to Theorem~\ref{th:sympintegrator}, splitting integrators give rise to
mappings \(\psi_h\) that are symplectic. In the class of Runge-Kutta methods it is also true that symplectic
integrators have modified systems that are Hamiltonian.

Speaking informally, we may say that all integrators change the system being integrated into a modified
system; in nonsymplectic methods the (perhaps small) change is such that the modified system is no longer
Hamiltonian. Symplectic methods are those that change Hamiltonian systems into Hamiltonian systems. This
heuristic description cannot be made entirely rigorous because, as pointed out above, the exact modified
system \eqref{eq:modsysinf} only exists in a formal sense due to the lack of convergence of the series in
\eqref{eq:inf}. The existence of Hamiltonian modified system is at the basis of many  favourable properties of
symplectic integrators.

\begin{remark}The considerations above only make sense if the step size \(h\) is held constant along the
integration interval. Since the modified system changes with \(h\), variable step size implementations of
symplectic integrators do not have  well-defined modified systems and in fact their behaviour is closer to
that of non-symplectic integrators than to that of symplectic integrators used with constant step sizes
\cite{CaSa1993}.
\end{remark}

For the reversible case, consider  the situation of Theorem~\ref{th:revintegrator}. From
Proposition~\ref{prop:revercomm} all iterated commutators involving an odd number of fields are themselves
\(S\)-reversible. Since, according to Theorem~\ref{th:modinfsplit}, for \emph{palindromic splitting
integrators} the iterated commutators with an even number of fields enter the expansion \eqref{eq:inf} with
null coefficients, then \emph{the modified systems \eqref{eq:modsystrunc} are reversible}.

\subsubsection{Conservation of energy
by symplectic integrators} As noted above, Neishtadt proved that, under suitable regularity assumptions, the
modified system may be chosen so as to ensure that its \(h\)-flow is exponentially close to the mapping
\(\psi_h\). For symplectic integrators, the modified system is Hamiltonian and therefore exactly preserves its
own Hamiltonian function that we denote by \(\tilde H_h\). It follows that, except for exponentially small
errors, \(\psi_h\) preserves \(\tilde H_h\). On the other hand for a symplectic  integrator of order \(\nu\),
the difference between \(\tilde H_h\) and \(H\) is \(\mathcal{O}(h^\nu)\) (Theorem~\ref{th:ordercond}). These
considerations make it possible to prove that symplectic integrators preserve the value of the Hamiltonian
\(H\) of the system being integrated with error \(\mathcal{O}(h^\nu)\) over time intervals \(0\leq t\leq T_h\)
whose length \(T_h\) increases exponentially as \(h\rightarrow 0\) \cite{HaLuWa2010}.

For linear problems, an exact modified system exists and using the same argument, we may conclude that the
error in energy of a symplectic integrator has an \(\mathcal{O}(h^\nu)\) bound over the infinite interval
\(0\leq t < \infty\), or in other words the energy error may be bounded independently of the number of step
taken. For the case of the harmonic oscillator, this will be illustrated presently.

\subsection{Geometric integrators and the harmonic model problem}
\label{ss:hmp}
 We take  again the integration of the harmonic oscillator as a model problem (see
\eqref{eq:harmonicintegrator}--\eqref{eq:harmonicintegrator2}); now our interest is in studying in detail the
behaviour of geometric integrators.

We focus on (consistent) integrators that are both  \emph{symplectic  and reversible.} In terms of the matrix
\(\tilde M_h\), the first of these properties corresponds to $A_hD_h-B_hC_h = 1$ and, when this condition
holds, reversibility is equivalent to \(A_h=D_h\). Our treatment follows \cite{BlCaSa2014}.

The characteristic polynomial of \(\tilde M_h\)
 is of the form \eqref{eq:characteristic} and there are four possibilities, the first two
correspond to \emph{unstable} simulations and the other two to \emph{stable} simulations:
\begin{itemize}
\item $h$ is such that $|A_h| > 1$. In that case $\tilde{M}_h$ has spectral radius $>1$ and therefore the
    powers $\tilde{M}_h^n$ grow exponentially with $n$.
\item $A_h = \pm 1$ and $|B_h|+|C_h| >0$. The powers  $\tilde{M}_h^n$ grow linearly with $n$.
\item  $A_h = \pm 1$, $B_h=C_h=0$, \ie\ $\tilde{M}_h =\pm I$, \(\tilde{M}_h^n=(\pm I)^n\).
\item  $h$ is such that $|A_h|< 1$. In that case, $\tilde{M}_h$  has complex conjugate eigenvalues of unit
    modulus and the powers $\tilde{M}_h^n$, $n = 0, 1,\dots$ remain bounded.
\end{itemize}

Comparing \eqref{eq:harmonicintegrator} with the result of setting \(t=h\) in \eqref{eq:rotation}, we see
that, by consistency, \(B_h = h+\mathcal{O}(h^2)\) and \(C_h = -h+\mathcal{O}(h^2)\), and therefore \(A_h =
(1+B_hC_h)^{1/2} = 1-h^2/2+\mathcal{O}(h^3)\). Thus, for \(h>0\) sufficiently small, \(A_h<1\) and the
integration will be stable. The \emph{stability interval} of the integrator is the longest interval
\((0,h_{max})\) such that integrations with \(h\in (0,h_{max})\) are stable. For reasons discussed in
Example~\ref{ex:stabilty2df} methods with long stability intervals are often to be favoured. From
Example~\ref{ex:midpoint}, the midpoint rule has stability interval \((0,\infty)\). Explicit integrators have
stability intervals of finite length.

For each $h$ such that $|A_h|\leq 1$, it is expedient to introduce $\theta_h\in\mathbb{R}$ such that $A_h =
D_h = \cos \theta_h$. For $|A_h|< 1$, we have $\sin\theta_h\neq 0$ and we may define
\begin{equation}\label{eq:chi}
 \chi_h = B_h/\sin \theta_h.
\end{equation}
In terms of $\theta_h$ and $\chi_h$, the matrices in (\ref{eq:harmonicintegrator}) and (\ref{eq:harmonicintegrator2}) are then
\begin{equation}\label{eq:tildemh}
\tilde{M}_h
=
\left[ \begin{matrix}\cos \theta_h & \chi_h\sin \theta_h\\ -\chi_h^{-1}\sin \theta_h & \cos \theta_h\end{matrix}\right]
\end{equation}
and
\begin{equation}\label{eq:tildemhdos}
\tilde{M}_h^n
=
\left[ \begin{matrix}\cos (n\theta_h) & \chi_h\sin (n\theta_h)\\ -\chi_h^{-1}\sin (n\theta_h) & \cos (n\theta_h)\end{matrix}\right]
.
\end{equation}
For a value of \(h\) in the (stable) case $A_h = \pm 1$, $B_h=C_h=0$, one has $\sin\theta_h=0$, so that
\eqref{eq:chi} does not make sense. However
 the matrix $\tilde M_h$ is of the form (\ref{eq:tildemh}) for any choice of $\chi_h$. (Typically,
 for  such a value of \(h\), one may avoid the indeterminacy in the value of $\chi_{h}$
by taking limits as $\epsilon\rightarrow 0$ in $\chi_{h+\epsilon} = B_{h+\epsilon}/\sin \theta_{h+\epsilon}$.)

For a method of order $\nu$, $\chi_h = 1+\mathcal{O}(h^\nu)$, $\theta_h = h + \mathcal{O}(h^{\nu+1})$ as
$h\rightarrow 0$. By comparing the numerical $\tilde{M}_h^n$ in (\ref{eq:tildemhdos}) with the true $M_{nh}$
in (\ref{eq:rotation}), one sees that a method with $\theta_h = h$ would have
 no phase error: the angular frequency of the rotation of the numerical solution would coincide
with the true angular frequency of the harmonic oscillator. More generally, the difference \(\theta_h-h\)
governs the phase error. According to \eqref{eq:tildemhdos}, this  phase error grows \emph{linearly} with
\(n\) (recall Table~\ref{tab:errorsvv}). On the other hand, a method with $\chi_h = 1$ would have no energy
error: the numerical solution would remain on the correct level curve of the Hamiltonian i.e.\ on the circle
$p^2+q^2 = p_0^2+q_0^2$. The discrepancy between \(\chi_h\) and \(1\) governs the energy errors. In
\eqref{eq:tildemhdos} we see that these are \emph{bounded} as \(n\) grows.

The preceding considerations may alternatively be understood by considering the modified Hamiltonian given in
the next result.

\begin{proposition}\label{prop:modham}
Consider the application to  the harmonic oscillator \eqref{eq:harmonic} of a (consistent) reversible,
volume-preserving integrator (\ref{eq:harmonicintegrator})   and assume that the step size $h$ is stable, so
that
 $\tilde M_h$ may be written in the form (\ref{eq:tildemh}). Then $\psi_h$ exactly coincides with the
\(h\) flow of the modified Hamiltonian
$$
\tilde{H}_h = \frac{\theta_h}{2h}\left(\chi_h p^2+ \frac{1}{\chi_h}q^2\right).
$$
\end{proposition}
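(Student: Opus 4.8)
The plan is to compute the time-$h$ flow of the modified Hamiltonian $\tilde H_h$ explicitly and to check that it coincides with the linear one-step map $\tilde M_h$ of \eqref{eq:tildemh}. Recall first that, as explained in the paragraphs preceding the statement, the hypotheses already force $\tilde M_h$ into the form \eqref{eq:tildemh}: volume preservation gives $\det\tilde M_h = A_hD_h-B_hC_h=1$, reversibility gives $A_h=D_h=\cos\theta_h$, and then $B_hC_h=-\sin^2\theta_h$, so that $\chi_h=B_h/\sin\theta_h$ yields $C_h=-\chi_h^{-1}\sin\theta_h$ whenever $\sin\theta_h\neq0$. Thus I may start from the explicit $\tilde M_h$ in \eqref{eq:tildemh}.

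Next I would write down Hamilton's equations $\dot x=J^{-1}\nabla\tilde H_h$, \ie\ $\dot q=\partial\tilde H_h/\partial p$, $\dot p=-\partial\tilde H_h/\partial q$. Since $\partial_p(\chi_h p^2)=2\chi_h p$ and $\partial_q(\chi_h^{-1}q^2)=2\chi_h^{-1}q$, the factor $1/(2h)$ cancels the $2$'s and one obtains the \emph{linear} autonomous system
\[
\frac{d}{dt}\left[\begin{matrix}q\\p\end{matrix}\right]=\frac{\theta_h}{h}\,N\left[\begin{matrix}q\\p\end{matrix}\right],\qquad
N=\left[\begin{matrix}0&\chi_h\\-\chi_h^{-1}&0\end{matrix}\right],
\]
whose $t$-flow is the matrix exponential $\exp\!\bigl(t(\theta_h/h)N\bigr)$.

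The decisive observation is that $N^2=-I$, so $N$ behaves like an imaginary unit: rearranging the exponential series into even and odd powers gives $\exp(sN)=(\cos s)\,I+(\sin s)\,N$ for every real $s$. Setting $s=t\theta_h/h$ and then $t=h$ yields
\[
\exp(\theta_h N)=(\cos\theta_h)\,I+(\sin\theta_h)\,N=\left[\begin{matrix}\cos\theta_h&\chi_h\sin\theta_h\\-\chi_h^{-1}\sin\theta_h&\cos\theta_h\end{matrix}\right]=\tilde M_h,
\]
which is precisely the one-step map $\psi_h$. In the degenerate stable case $\tilde M_h=\pm I$ one has $\sin\theta_h=0$, so the (then indeterminate) value of $\chi_h$ drops out and $\exp(\theta_h N)=(\cos\theta_h)I=\pm I$ still equals $\psi_h$.

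I do not anticipate any real obstacle: the computation is short. The only points requiring care are the sign convention in $J^{-1}\nabla\tilde H_h$ (so that $N$ comes out with the correct off-diagonal signs) and the harmless indeterminacy of $\chi_h$ when $\sin\theta_h=0$. As a consistency check, $\chi_h=1$ makes $\tilde H_h$ a positive multiple of the true energy $\tfrac12(p^2+q^2)$, so its level curves coincide with the true ones (no energy error), while $\theta_h=h$ makes the angular speed of the modified flow equal to $1$ (no phase error), in agreement with the discussion preceding the proposition.
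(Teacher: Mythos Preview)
The paper states this proposition without proof; your argument is correct and is precisely the natural direct verification one would supply. The computation of Hamilton's equations, the observation $N^2=-I$, and the resulting closed form $\exp(\theta_h N)=(\cos\theta_h)I+(\sin\theta_h)N=\tilde M_h$ are all accurate, and your handling of the degenerate case $\sin\theta_h=0$ is appropriate.
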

\begin{figure}[t]
\begin{center}
\includegraphics[width=0.45\textwidth]{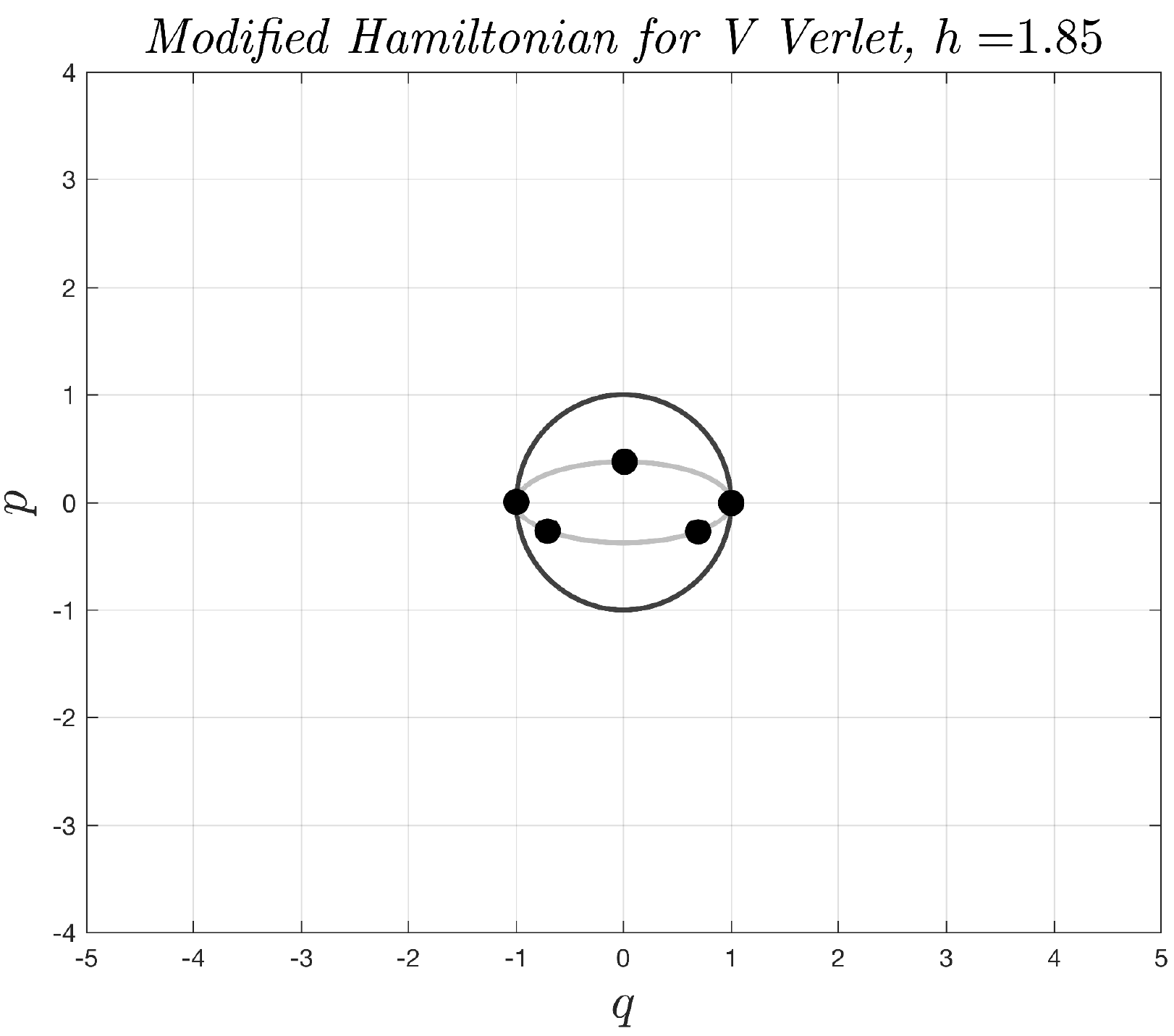}
\includegraphics[width=0.45\textwidth]{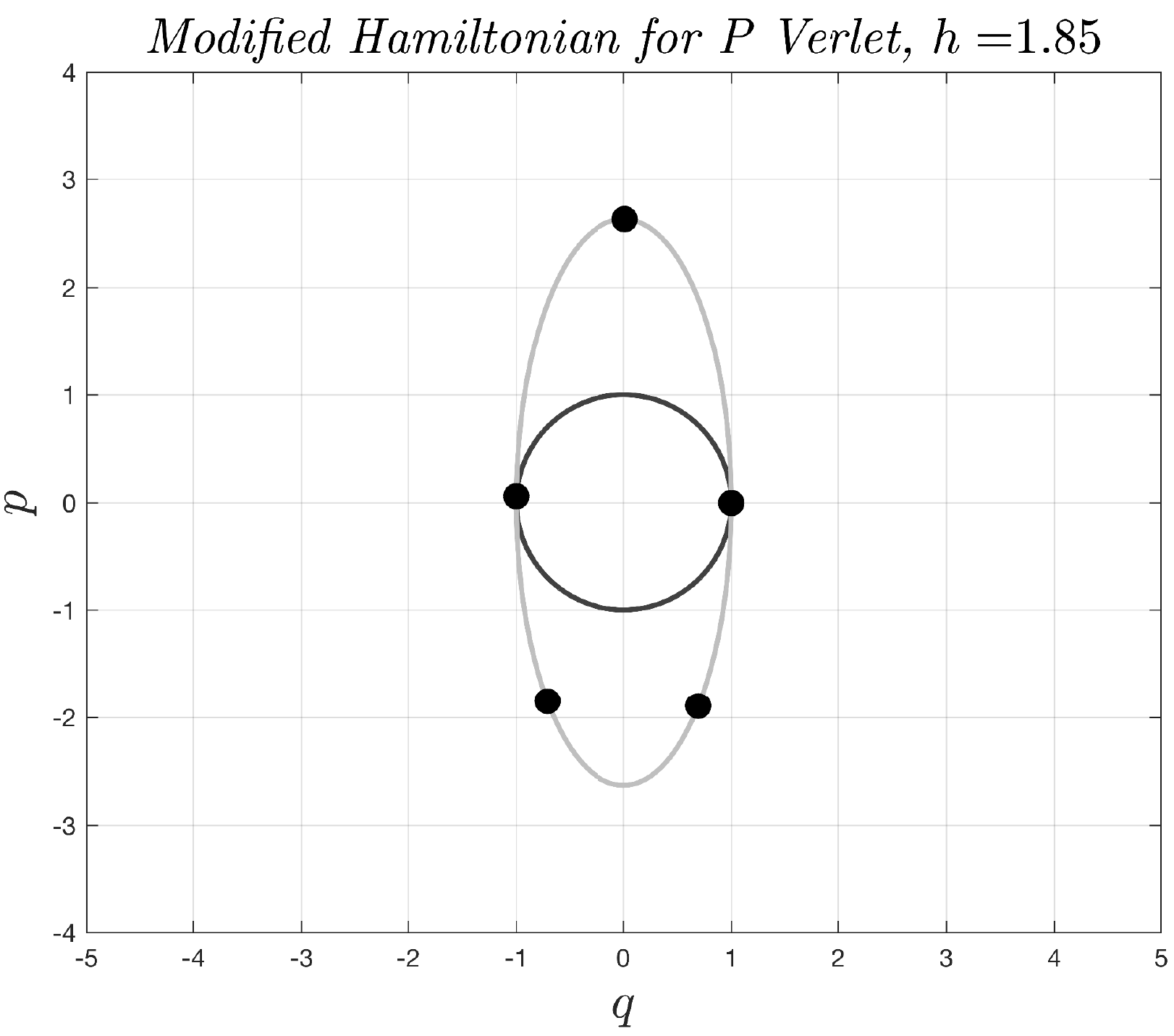}
\end{center}
\caption{ \small  Modified Hamiltonian for velocity (left panel)
 and position (right panel) Verlet.  The dots are the first five points along a
discrete orbit of the velocity/position Verlet integrator initiated at $(q,p) = (1,0)$ and
 the gray line provides the associated level set of the modified Hamiltonian.  For comparison,
 the black contour line shows the corresponding level set of the true Hamiltonian.
 }
 \label{fig:verlet_modified_hamiltonian}
\end{figure}

In particular numerical trajectories are contained in ellipses
 \beq\label{eq:ellipse}
\chi_hp^2+\frac{1}{\chi_h}q^2= \chi_hp_0^2+\frac{1}{\chi_h}q_0^2
\eeq
rather than in circles (Figure~\ref{fig:verlet_modified_hamiltonian}).

\begin{remark}\label{rem:swap}
A comparison of a given integrator (\ref{eq:harmonicintegrator}) with (\ref{eq:rotation}) shows that
\begin{equation}\label{eq:harmonicintegratorswap}
\left[ \begin{matrix}q_{n+1}\\p_{n+1}\end{matrix}\right] = \tilde{M}_h\left[ \begin{matrix}q_n\\p_n\end{matrix}\right],\qquad
\tilde{M}_h=
\left[ \begin{matrix}A_h& -C_h\\ -B_h & D_h\end{matrix}\right]
\end{equation}
is a second integrator of the same order of accuracy. We may think that \eqref{eq:harmonicintegratorswap}
arises from \eqref{eq:harmonicintegrator} by changing the roles of the variables \(q\) and \(p\). In the
particular case of splitting integrators, \eqref{eq:harmonicintegratorswap} arises from
\eqref{eq:harmonicintegrator} after swapping the roles of the split systems \(A\) and \(B\). The integrators
(\ref{eq:harmonicintegrator}) and (\ref{eq:harmonicintegratorswap}) share the same interval of stability and
the same $\theta_h$. The function $\chi_h$ of (\ref{eq:harmonicintegratorswap}) is obtained by changing the
sign of the reciprocal of the function  $\chi_h$ of (\ref{eq:harmonicintegrator}). The important function
$\rho(h)$ to be introduced in Proposition~\ref{prop:average} is also the same for
(\ref{eq:harmonicintegrator}) and (\ref{eq:harmonicintegratorswap}). The velocity Verlet algorithm and the
position Verlet algorithm provide an example of this kind of pair of integrators (see Examples~\ref{ex:vv} and
\ref{ex:pv}).
\end{remark}

The results we have just presented may be extended  to quadratic Hamiltonians with \(d\) degrees of freedom
\eqref{eq:MKmodel}: it is sufficient to use diagonalization as in Proposition~\ref{prop:KM}. In particular,
for stability we require that the stability interval of the integrator contains  all products \(h\omega_i\),
where the frequencies \(\omega_i\) are the square roots of the eigenvalues of \(L^{-1}KL^{-T}\), with
\(LL^T=M\).

\subsection{Optimal stability of Strang's method}
\label{ss:optimal}
 Let us fix an integer \(N\) and consider consistent
palindromic splitting integrators \eqref{eq:even}--\eqref{eq:odd} with \(s=N\); these use \(N\) evaluations of
\(F\) per step when applied to problems of the form \eqref{eq:A}--\eqref{eq:B}. The corresponding coefficient
\(A_h\) in \eqref{eq:harmonicintegrator} is a polynomial of degree \(N\) in the variable \(z = h^2\) (for obvious reasons \(A_h\) is often called the \emph{stability polynomial} of the integrator).
We pointed out above that consistency imposes the relation \(A_h = 1-z/2+\mathcal{O}(h^2)\). Our aim
is to identify, among the class just described, the polynomial \(A_h(z)\) that satisfies \(|A_h(z)|<1\) for
\(0<h<h_{\rm max}\) with \(h_{\rm max}\) as large as possible.
 As we shall show presently, the
\(A_h(z)\) sought corresponds to the integrator
\beq\label{eq:opt}
 \psi_h = \overbrace{\psi^{V}_{h/N} \circ \cdots\circ \psi^{V}_{h/N}}^{N{\rm times}},
\eeq
where \(\psi_h^{V}\) is the mapping associated with the Strang/Verlet formula \eqref{eq:vv}.\footnote{Of
course if, rather than in integrators of the format \eqref{eq:even}--\eqref{eq:odd}, one is interested in the
corresponding palindromic integrators that start and end with an A flow, then, in the right-hand side of
\eqref{eq:opt}, one has to use \eqref{eq:pv} rather than \eqref{eq:vv}. } Note that to carry out a step of length \(h\) with the method in \eqref{eq:opt} one just has to take \(N\) consecutive steps of length \(h/N\) of standard velocity Verlet.
In other words, \emph{subject to
stability, if one wishes to take as long a step as possible with a budget of \(N\) evaluations of the force
per step, the best choice is to concatenate \(N\) steps of Strang/Verlet.}\footnote{More precisely, if one is
interested in methods that start with a kick (resp.\ drift) one has to concatenate the velocity (resp.\
position) version of Verlet.}

To see the optimality of \(\psi_h\) in \eqref{eq:opt}, we first note that, after expressing \(\psi_{h/N}^V\)
in terms of the \(A\) and \(B\) flows and merging consecutive \(B\) flows, the mapping \eqref{eq:opt}
corresponds indeed to a palindromic splitting with \(N\) stages. Then, from Example~\ref{ex:vv} we know that
Verlet is stable for \(0<h<2\) and this implies that \eqref{eq:opt} is stable for \(0<h/N<2\) (for
\eqref{eq:opt} the powers of \(\tilde{M}_h\) are powers of the Verlet matrix \(\tilde{M}^V_{h/N}\)). In this
way \eqref{eq:opt} has stability interval \((0,2N)\) and we shall prove next that this is the longest
possible. From Example~\eqref{ex:vv}, Verlet with step size \(h/N\)  has  \(A_{h/N}^V= 1-h^2/(2N^2)\), which,
in view of \eqref{eq:tildemh}--\eqref{eq:tildemhdos}, implies that for \eqref{eq:opt} the coefficient \(A_h\)
has the expression
\[
 \cos \theta_h = \cos(N\theta_{h/N}^V).
\]
Recalling the definition \(T_N(\cos \alpha) = \cos(N\alpha)\) of the Chebyshev polynomial \(T_N\), we observe
that for \eqref{eq:opt}
\[A_h(z)=
T_N\big(1-\frac{z}{2N^2}\big).
\]
 Well-known properties of \(T_N\), imply that no other  polynomial \(A_h(z)\) of degree \(\leq N\) with
\(A_h = 1-z/2+\mathcal{O}(h^2)\) has modulus \(\leq 1\) in the interval \(-1 < 1-z/(2N^2)<1\), \ie\ when
 \(0<h<2N\).
\vspace{2mm}

\section{Monte Carlo methods}
\label{sec:montecarlomethods}

In this section we review some basic concepts and principles of Monte Carlo methods aimed at computing
integrals with respect to a given probability distribution \cite{So1997,AsGl2007,Li2008,Di2009}. We also
describe
 the
Hamiltonian or Hybrid Monte Carlo (HMC) method and some of its variants.

\subsection{Simple Monte
Carlo Methods}

Given a probability distribution $\mu$ in $\mathbb{R}^D$ (the \textit{target distribution}) and a function $F:
\mathbb{R}^D \to \mathbb{R}$, the  problem addressed by the algorithms  considered here is to numerically
estimate the following $D$-dimensional integral with respect to $\mu$,
\begin{equation}\label{eq:mu_of_F}
\mu(F) = \int_{\mathbb{R}^D} F(x) \mu(d x) \;.
\end{equation}
In general,  $\mu(F)$ cannot be determined analytically.   Moreover, since the dimension $D$ might not be
small, conventional numerical quadrature is likely not to be practical or even feasible.

The simple Monte Carlo method approximately computes $\mu(F)$ in \eqref{eq:mu_of_F} by generating $N$
independent and identically distributed (i.i.d.) samples $X_1, \dots, X_N$ from $\mu$, evaluating the function
$F$ at these samples and using the \textit{estimator}
\beq\label{eq:estimator}
 \bar F_N = \frac{1}{N} \sum_{i=1}^N F(X_i) \;.
\eeq
Assuming that $\mu(F) < \infty$, the \emph{law of large numbers} states that
\[
\lim_{N \to \infty} \bar F_N = \mu(F) \quad \text{as $N \to \infty$},
\]
almost surely. If, in addition, the standard deviation \(\sigma_0(F)\) of the random variable \(F(X)\),
\(X\sim \mu\), defined by
\begin{equation}\label{eq:sigma0_of_F}
\sigma_0(F)^2 = \int_{\mathbb{R}^D} (F(x) - \mu(F))^2 \mu(dx) \;
\end{equation}
is finite, the \emph{central limit theorem} ensures the following distributional limit
 \[
N^{1/2} ( \bar F_N - \mu(F) ) \overset{d}{\to} \mathcal{N}(0, \sigma_0(F)^2 )
 \quad \text{as} \quad N \to \infty.
\]
 Loosely speaking, this may be interpreted as stating
that the distribution of $\bar F_N$ is approximately \(\mathcal{N}(0, \sigma_0(F)^2/N) \).  Hence, the
standard deviation of the error $ \bar F_N - \mu(F)$ decreases like the inverse square root of the number of
samples. Often this standard deviation is referred to as \emph{Monte Carlo error}. Thus, to halve the Monte
Carlo error the number of i.i.d. samples needs to be quadrupled.

In most cases of practical interest, one cannot directly generate i.i.d. samples \(X_i\) from $\mu$ and
resorts to Markov Chain Monte Carlo methods.

\subsection{Markov Chain Monte Carlo Methods}
\label{ss:markov}
 Recall that a \textit{Markov chain} with state space $\mathbb{R}^D$ is a sequence of random
$D$-vectors
 $\{ X_i \}_{i \in \mathbb{N}}$ that satisfies the Markov property
 \[
\mathbb{P}( X_{i+1} \in A \mid X_1, \dots, X_i ) = \mathbb{P}( X_{i+1} \in A \mid X_i)
\]
for all measurable sets $A$.   In other words, given the past history of the chain \(X_1,\dots, X_i\), the
only information required to update the state of the chain is the current state \(X_i\). Here our interest is
restricted to (time--)homogenuous chains, \ie\ to cases where \(\mathbb{P}( X_{i+1} \in A \mid X_i)\) is
independent of \(i\).

Typically one constructs a homogeneous Markov chain in terms of its \textit{transition probabilities} $\Pi_x$,
$x \in \mathbb{R}^D$. These are the probabilities
\[ \Pi_{x}(A)
= \mathbb{P}(X_{i+1} \in A \mid X_i = x) \;,
\]
with $i \in \mathbb{N}$ and  $A$ measurable. Often \(\Pi_x\) may be computed as
\[
\Pi_x(A) = \int_A \Pi_x(dx^\prime),
\]
for a suitable kernel \(\Pi_x(dx')\).
 Clearly the chain is determined once the transition probabilities
and the distribution of \(X_1\) are known. In practice, the term chain is used in a wide sense to refer to the
transition probabilities without specifying the distribution of \(X_1\).

A probability distribution $\nu$ is an \textit{invariant or stationary distribution} of a Markov chain with
transition probabilities $\Pi_x$ if \[ \int_{\mathbb{R}^D} \Pi_{x}(A) \nu(dx) = \int_{A} \nu(dx)
\]
holds for all measurable sets $A$. We also say that $\Pi_x$ preserves \(\nu\). In the situations we are
interested in, a Markov chain will have a unique invariant distribution. If \(\nu\) is the invariant
distribution of the chain and
 in addition \(X_1\sim\nu\), one says that the chain is \emph{at
stationarity}.

Markov Chain Monte Carlo (MCMC) methods generate a Markov chain $\{ X_i \}_{i \in \mathbb{N}}$ that has the
target $\mu$ as an invariant distribution and estimate \(\mu(F)\) by the average \eqref{eq:estimator}. By
analogy to the simple i.i.d.~situation described above, one would like to have MCMC methods that meet two
basic requirements.
\begin{itemize}
\item For each $F: \mathbb{R}^D \to \mathbb{R}$ such that $\mu(F) < \infty$,
\begin{equation}
    \label{eq:mcmc_lln} \bar F_N \overset{a.s.}{\to} \mu(F) \quad \text{as $N \to \infty$} \;.
\end{equation} This is the MCMC analog of the law of large numbers.

\medskip

\item For each $F: \mathbb{R}^D \to \mathbb{R}$ such that $\mu(F) < \infty$ and $\sigma_0(F)<\infty$,
    \begin{equation} \label{eq:mcmc_clt} N^{1/2} ( \bar F_N - \mu(F) ) \overset{d}{\to} \mathcal{N}(0,
    \sigma(F)^2 ) \quad \text{as $N \to \infty$},
\end{equation}
for some \(\sigma(F)\).
 This is the MCMC analog of the central limit theorem.
\end{itemize}

 For each fixed function $F$ and Markov chain $\{ X_i \}_{i \in \mathbb{N}}$,
 the constant $\sigma(F)^2$ appearing in \eqref{eq:mcmc_clt} is called the
\textit{asymptotic variance} of the MCMC estimator $\bar{F}_N$.
 A straightforward calculation shows that this asymptotic variance satisfies
 \[
 \sigma(F)^2 = \sigma_0(F)^2
 + 2 \sum_{i > 1}
 \cov_{\mu}( F(X_1), F(X_i) )
 \]
 where $\sigma_0(F)$ is defined in \eqref{eq:sigma0_of_F} and the covariances are computed assuming
that the chain is at stationarity. If the \(X_i\) were independent, all the covariances \(\cov_{\mu}( F(X_1),
F(X_i))\) would vanish and we would recover the standard central limit theorem.
 Since in most interesting cases the $X_i$'s in the Markov chain
 are not mutually independent, often $\sigma(F)$ is larger than $\sigma_0(F)$.
Generally speaking it is desirable to have low values of \(\cov_{\mu}( X_1, X_i)\) so that \(\sigma(F)\) is
not far away from $\sigma_0(F)$ for each \(F\).

In practice, the inputs that the user has to supply to an MCMC algorithm include, at least:
\begin{itemize}
\item A sample of the initial state \(X_1\). Ideally,  this sample should be taken in a domain of state
    space of high probability. Otherwise the chain may need many steps to start generating useful samples. A
    discussion of this issue is out of the scope of this paper.
\item A, not necessarily normalized, density function \(\rho\) of the target \(\mu\) (\ie\ the probability
    density function is \(Z^{-1}\rho(x)\), where \(Z = \int_{\Reals^D}\rho(x)dx\) is not assumed to be
    \(1\);   the value of \(Z\) is not required to run the algorithms).
\end{itemize}

\subsection{Metropolis Method for Reversible Maps}

The replacement of the i.i.d.\ variables that simple Monte Carlo uses in the estimator \eqref{eq:estimator}
with variables of a Markov chain is of interest because it is not difficult to construct a chain that has a
given target \(\mu\) as an invariant distribution. The key of this construction is the Metropolis-Hastings
accept/reject mechanism, that turns a given \emph{proposal} chain (for which \(\mu\) is not invariant) into a
\emph{Metropolized chain}, which leaves \(\mu\) invariant.  The simplest Metropolis rule was introduced in
1953 in a landmark paper \cite{MeRoRoTeTe1953};  later Hastings provided an important generalization
    \cite{Ha1970}.

A review of the Metropolis-Hastings rule is not required for our purposes here. However we shall
 present a trimmed down variant of Metropolis-Hastings that we will use to define HMC.
This variant works in the special case where the target \(\mu\) is invariant with respect to a linear
involution, takes as an input a reversible deterministic map and manufactures a Markov chain that preserves
\(\mu\).
 While the chain that we construct is not expected to satisfy a law of
large numbers and therefore has no practical merit, Proposition~\ref{prop:metropolized_reversible_map} will be
used later to analyse HMC methods.

 The technique, patterned after \cite{FaSaSk2014}, requires a non-normalized density function
$\rho: \mathbb{R}^D \to \mathbb{R}$ of the target, and, as pointed out above, a map $\Phi: \mathbb{R}^D \to
\mathbb{R}^D$ that is reversible with respect to a linear involution $S$ that preserves probability, \ie\
\(\rho\circ S = \rho\). The accept/reject mechanism is based on the \emph{acceptance probability} $\alpha:
\mathbb{R}^D \to [0,1]$ defined as
\begin{equation}  \label{eq:acceptance_probability}
\alpha(x) = \min\left\{ 1, \frac{\rho(\Phi(x))}{\rho(x)}  \left| \det \Phi'(x) \right| \right\}.
\end{equation}
(\(\Phi^\prime\) is the Jacobian matrix of \(\Phi\).)

We  consider the following algorithm:
\begin{algorithm}[Metropolized Reversible Map] \label{algo:metropolized_reversible_map}
Given $X_0 \in \mathbb{R}^D$ (the input state),  the method outputs a state $X_1$ as follows.
\begin{description}
\item[Step 1] Generate a proposal move $\tilde X_1 = \Phi(X_0)$.
\item[Step 2] Output $X_1 = \gamma \tilde X_1 + (1-\gamma) S(X_0)$ where $\gamma$ is a Bernoulli random
    variable with parameter $\alpha(X_0)$ (\ie\ \(\gamma\) is \(1\) with probabilty $\alpha(X_0)$ and \(0\)
    with probability $1-\alpha(X_0)$).
\end{description}
\end{algorithm}
\medskip
Step 2 contains the accept/reject mechanism. In case of \emph{acceptance} the updated state coincides with the
state proposed $\tilde X_1$ from Step 1; in case of \emph{rejection} the updated state is $S(X_0)$.
 Note that, in case of rejection,   conventional Metropolis mechanisms set the updated state of the chain
 to be $X_0$.

\begin{proposition} \label{prop:metropolized_reversible_map}
In the situation described above,
 let $\{X_i
\}_{i \in \mathbb{N}}$ be the Markov chain  defined by iterating
Algorithm~\ref{algo:metropolized_reversible_map}. Then the target distribution is an
 invariant distribution of
this chain.
\end{proposition}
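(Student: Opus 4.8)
The plan is to verify directly that the transition kernel $\Pi_x$ of Algorithm~\ref{algo:metropolized_reversible_map} preserves $\mu$, i.e.\ that $\int \Pi_x(A)\,\mu(dx) = \mu(A)$ for every measurable $A$. First I would write the kernel explicitly. For a starting point $x$, the chain moves to $\Phi(x)$ with probability $\alpha(x)$ and to $S(x)$ with probability $1-\alpha(x)$; hence for a test function $G$,
\[
(\Pi G)(x) = \alpha(x)\, G(\Phi(x)) + \bigl(1-\alpha(x)\bigr) G(S(x)).
\]
It suffices to show $\int (\Pi G)(x)\,\mu(dx) = \int G(x)\,\mu(dx)$ for all bounded measurable $G$. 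Writing $\mu(dx) = Z^{-1}\rho(x)\,dx$ and splitting the integral into the ``accepted'' and ``rejected'' pieces, the rejected piece is $\int (1-\alpha(x)) G(S(x))\rho(x)\,dx$, which after the change of variables $y = S(x)$ (note $S$ is a linear involution, so $|\det S| = 1$ and $S^{-1}=S$) and using $\rho\circ S = \rho$ becomes $\int (1-\alpha(S(y))) G(y)\rho(y)\,dy$. So the claim reduces to the identity
\[
\int \alpha(x)\, G(\Phi(x))\,\rho(x)\,dx \;=\; \int \alpha(S(x))\, G(x)\,\rho(x)\,dx.
\]

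The heart of the matter is this last identity, and the key step is a change of variables $x \mapsto \Phi(x)$ in the left-hand integral combined with the reversibility relation $S\circ\Phi = \Phi^{-1}\circ S$. Setting $u = \Phi(x)$, so $x = \Phi^{-1}(u)$ and $dx = |\det(\Phi^{-1})'(u)|\,du = |\det\Phi'(\Phi^{-1}(u))|^{-1}\,du$, the left side becomes
\[
\int \alpha\bigl(\Phi^{-1}(u)\bigr)\, G(u)\, \rho\bigl(\Phi^{-1}(u)\bigr)\, \bigl|\det\Phi'(\Phi^{-1}(u))\bigr|^{-1}\,du.
\]
Now I must show that the weight $\alpha(\Phi^{-1}(u))\,\rho(\Phi^{-1}(u))\,|\det\Phi'(\Phi^{-1}(u))|^{-1}$ equals $\alpha(S(u))\,\rho(u)$ pointwise. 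Using the explicit form \eqref{eq:acceptance_probability} of $\alpha$, one has $\alpha(x)\rho(x) = \min\{\rho(x),\ \rho(\Phi(x))|\det\Phi'(x)|\}$; applying this at $x = \Phi^{-1}(u)$ and multiplying through by $|\det\Phi'(\Phi^{-1}(u))|^{-1}$ gives
\[
\min\Bigl\{\rho\bigl(\Phi^{-1}(u)\bigr)\bigl|\det\Phi'(\Phi^{-1}(u))\bigr|^{-1},\ \rho(u)\Bigr\}.
\]
On the other side, $\alpha(S(u))\rho(u) = \min\{\rho(u),\ \rho(\Phi(S(u)))|\det\Phi'(S(u))|\}$. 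To match the two expressions I need $\rho(\Phi(S(u)))|\det\Phi'(S(u))| = \rho(\Phi^{-1}(u))|\det\Phi'(\Phi^{-1}(u))|^{-1}$. Here is where the geometric hypotheses enter: $\Phi(S(u)) = S(\Phi^{-1}(u))$ by reversibility, and $\rho\circ S = \rho$, so $\rho(\Phi(S(u))) = \rho(\Phi^{-1}(u))$; and Proposition~\ref{prop:jacob} (applied with $x = \Phi^{-1}(u)$, so that $S(\Phi(x)) = S(u)$) gives exactly $|\det\Phi'(S(u))| = |\det\Phi'(\Phi^{-1}(u))|^{-1}$. The two minima therefore coincide, and the identity follows.

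The main obstacle is purely bookkeeping: keeping the arguments of $\Phi$, $\Phi^{-1}$, $S$ and the Jacobian factors consistent through the change of variables, and invoking Proposition~\ref{prop:jacob} at the correct point. No further analytic input is needed beyond $|\det S|=1$, the invariance $\rho\circ S=\rho$, the reversibility relation $S\circ\Phi=\Phi^{-1}\circ S$, and the Jacobian identity of Proposition~\ref{prop:jacob}. Once the displayed identity is established, combining it with the decomposition of $\int(\Pi G)\,d\mu$ into accepted and rejected parts yields $\int(\Pi G)\,d\mu = \int G\,d\mu$, and taking $G = \mathbf{1}_A$ completes the proof.
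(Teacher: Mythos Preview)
Your proof is correct and uses the same essential ingredients as the paper: the invariance $\rho\circ S=\rho$, the reversibility relation $S\circ\Phi=\Phi^{-1}\circ S$, and the Jacobian identity of Proposition~\ref{prop:jacob}. The organization differs slightly---the paper performs a single change of variables $x\mapsto S(\Phi(x))$ in the rejection term and shows the accepted and rejected contributions cancel via the identity $\alpha(S(\Phi(x)))\,\rho(\Phi(x))\,|\det\Phi'(x)|=\alpha(x)\,\rho(x)$, whereas you change variables separately in each piece and verify the pointwise equality of the resulting weights using the $\min$ representation of $\alpha$---but the two arguments are equivalent rearrangements of one another.
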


\begin{proof}
The  transition kernel of $\{ X_i \}_{i \in \mathbb{N}}$ is given by
\[
\Pi_x(d x') = \alpha(x) \delta (x' - \Phi(x)) dx' +  (1-\alpha(x)) \delta (x' - S(x)) dx'
\]
where $\delta(\cdot)$ is the  Dirac-delta function.   Hence, for any measurable set $A$, if  $1_A(x)$ denotes
the corresponding indicator function,
 \begin{align*}
\int_{\mathbb{R}^D} & \Pi_x(A) \rho(x) dx = \int_A \rho(x) dx  \\
& + \int_{\mathbb{R}^D} \alpha(x)  \rho(x) 1_{A} (\Phi(x)) dx
 - \int_{\mathbb{R}^D} \alpha(x)  \rho(x) 1_{A}
(S(x)) dx \;.
\end{align*}

 By change of variables in the third integral,
\begin{align*}
\int_{\mathbb{R}^D} & \Pi_x(A) \rho(x) dx = \int_A \rho(x) dx \\
& + \int_{\mathbb{R}^D} \alpha(x)  \rho(x) 1_{A} (\Phi(x)) dx \\
& - \int_{\mathbb{R}^D} \alpha\big(S(\Phi(x))\big)  \left| \det \Phi'(x) \right| \rho(\Phi(x)) 1_{A}
(\Phi(x)) dx,
\end{align*}
where we used the hypothesis that $\rho \circ S = \rho$. The last two terms on the right-hand side of this
equation cancel because
\begin{align*}
\alpha\big(S(\Phi(x))\big) &= \min\left\{ 1, \frac{\rho(x)}{\rho(\Phi(x)) }   \left| \det \Phi'(x) \right|^{-1} \right\} \\
&= \alpha(x) \rho(x)  \frac{1}{\rho(\Phi(x)) }   \left| \det \Phi'(x) \right|^{-1},
\end{align*}
which follows from the reversibility of \(\Phi\), the hypothesis $\rho \circ S = \rho$ and
Proposition~\ref{prop:jacob}.
\end{proof}

\subsection{The HMC method: basic idea}
We consider a target distribution \(\Pi\) in \(\Reals^d\). If $\mathcal{U}: \mathbb{R}^d \to \mathbb{R}$
denotes the negative logarithm of the (not necessarily normalized) probability density function of the target,
then
\[ \Pi(dq) = Z_q^{-1} \exp(-\mathcal{U}(q)) dq \;,
\quad \text{where} \quad Z_q = \int_{\mathbb{R}^{d}} \exp(-\mathcal{U}(q))\, dq \;.
\]
The Monte Carlo algorithms studied here use \(\mathcal{U}\) but do not require the knowledge of the
normalization factor \(Z_q\). In HMC, regardless of the application in mind, \(\mathcal{U}\) is seen as the
potential energy of a mechanical system with coordinates \(q\). Then auxiliary momenta \(p\in\Reals^d\) and a
quadratic kinetic energy  function \(\mathcal{T}(p)=(1/2)p^TM^{-1}p\) are introduced as in \eqref{eq:sepham}
(\(M\) is a positive-definite, symmetric matrix chosen by the user).\footnote{ Often \(M\) is just taken to be
the unit matrix; however \(M\) may be advantageously chosen to precondition the dynamics, see
Remark~\ref{rem:massmatrix}. }
The total energy of this fictitious mechanical system
is \(H=\mathcal{T}+\mathcal{U}\) and the equations of motion are given in \eqref{eq:newton2}.

\begin{example}\label{ex:gaussianHMC}
In the particular case where the target \(\mu\) is Gaussian with un-normalized density \( \exp (-(1/2)q^TKq)\),
\(q\in\Reals^d\), we have \(\mathcal{U}(q) = (1/2)q^TKq\), which leads to the Hamiltonian \eqref{eq:MKmodel}
we discussed before. For a univariate standard normal target, \(H = (1/2)(p^2+q^2)\) and Hamilton's equations
reduce to those of harmonic oscillator \eqref{eq:harmonic}.
\end{example}

The \emph{Boltzmann-Gibbs} distribution in \(\Reals^{2d}\) corresponding to \(H\) was discussed earlier in
connection with Theorem~\ref{th:bg}. This distribution is defined as (for simplicity the inverse temperature
is taken here to be \(\beta=1\)):
\begin{align} \label{eq:boltzmann_gibbs}
\nonumber
\Pi_{BG}(dq,dp) &=
(2 \pi)^{-\frac{1}{2} d} \left| \det{M} \right|^{-\frac{1}{2}}\exp\left(-\frac{1}{2} p^TM^{-1}p\right)\\
&\qquad\qquad\qquad\qquad\qquad \times
 Z_q^{-1} \exp(-\mathcal{U}(q))dq\,dp.
\end{align}
Clearly the target \(\Pi\) is the \(q\)--marginal of \(\Pi_{BG}\). The \(p\)--marginal is Gaussian with zero
mean and covariance matrix \(M\); therefore samples from this marginal are easily available (and will be put
to use in the algorithms below). A key fact for our purposes:  Hamilton's equations of motion
\eqref{eq:newton2} preserve \(\Pi_{BG}\) (Theorem~\ref{th:bg}).

HMC  generates (correlated) samples \((q_i,p_i)\in\Reals^{2d}\)  by means of a Markov chain that leaves
\(\Pi_{BG}\) invariant; the corresponding marginal \(q_i\in\Reals^d\) chain then leaves invariant the target
distribution \(\Pi\). The basic idea of HMC is encapsulated in the following algorithm (the duration
$\lambda>0$ is a
---deterministic--- parameter, whose value is specified by the user).
\begin{algorithm}[Exact HMC] \label{algo:exact_hmc}
Let  $\lambda>0$ denote the duration parameter.

Given  the current state of the chain $(q_0, p_0)\in\mathbb{R}^{2d}$, the method outputs a state
$(q_1,p_1)\in\mathbb{R}^{2d}$ as follows.
\begin{description}
\item[Step 1] Generate a $d$-dimensional random vector  $\xi_0 \sim \mathcal{N}(0,M)$.
\item[Step 2] Evolve over the time interval $[0,\lambda]$ Hamilton's equations \eqref{eq:newton2} with
    initial condition $(q(0), p(0)) = (q_0,\xi_0)$.
\item[Step 3] Output $(q_1, p_1) = (q(\lambda), p(\lambda))$.
\end{description}
\end{algorithm}

Note that \(p_0\) plays no role, since the initial condition starts from \(\xi_0\). Step 1 is referred to as
\emph{momentum refreshment} or \emph{momentum randomization}.

It is easy to see that this algorithm succeeds in preserving the distribution \(\Pi_{BG}\):

\begin{theorem} \label{thm:exact_rhmc}
Consider the Markov chain $\{(q_i,p_i) \}_{i \in \mathbb{N}}$ defined by iterating
Algorithm~\ref{algo:exact_hmc}. The probability distribution $\PiBG$ in \eqref{eq:boltzmann_gibbs} is an
invariant distribution of this chain.
\end{theorem}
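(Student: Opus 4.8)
The plan is to decompose a single step of Algorithm~\ref{algo:exact_hmc} into two sub-steps, to show that each of them separately preserves $\PiBG$, and then to conclude by composition. Concretely, one iteration sends the current state $(q_0,p_0)$ first to $(q_0,\xi_0)$ with $\xi_0\sim\mathcal N(0,M)$ drawn independently of $(q_0,p_0)$ (\emph{momentum refreshment}, Step~1), and then to $\varphi_\lambda(q_0,\xi_0)$ (Steps~2--3), where $\varphi_\lambda$ is the $\lambda$-flow of Hamilton's equations \eqref{eq:newton2}. Thus the transition kernel $\Pi_{(q_0,p_0)}$ of the chain is the composition of the randomized kernel of Step~1 with the deterministic map $\varphi_\lambda$: for a bounded measurable $g$, one has $\int g \, d\Pi_{(q_0,p_0)} = \mathbb{E}\big[g(\varphi_\lambda(q_0,\xi_0))\big]$.

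First I would verify that the momentum-refreshment sub-step preserves $\PiBG$. This is immediate from the product structure exhibited in \eqref{eq:boltzmann_gibbs}: under $\PiBG$ the blocks $q$ and $p$ are independent, $q$ having law $\Pi$ and $p$ having law $\mathcal N(0,M)$. Hence if $(q_0,p_0)\sim\PiBG$, then $q_0\sim\Pi$ and the freshly sampled $\xi_0\sim\mathcal N(0,M)$ is independent of it, so $(q_0,\xi_0)\sim\PiBG$ again.

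Next I would invoke Theorem~\ref{th:bg} with inverse temperature $\beta=1$ and Hamiltonian $H=\mathcal T+\mathcal U$ of the form \eqref{eq:sepham}: its flow preserves the probability measure with density proportional to $\exp(-H)$, and for this $H$ that measure is exactly the $\PiBG$ of \eqref{eq:boltzmann_gibbs} (the normalizing constant factoring as $Z = (2\pi)^{d/2}|\det M|^{1/2} Z_q$). Measure-preservation on all Borel sets is the same as invariance of $\PiBG$ under push-forward by $\varphi_\lambda$, so if $(q_0,\xi_0)\sim\PiBG$ then $\varphi_\lambda(q_0,\xi_0)\sim\PiBG$.

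Combining the two sub-steps: if $(q_0,p_0)\sim\PiBG$ then after Step~1 the state is $\PiBG$-distributed, and after Steps~2--3 it is still $\PiBG$-distributed, which is precisely the assertion that $\PiBG$ is an invariant distribution of the chain. I do not anticipate a genuine obstacle here; the only mild points requiring care are that Step~1 must draw $\xi_0$ \emph{independently} of the current state (which it does by construction, $p_0$ playing no role) and that Theorem~\ref{th:bg}, phrased in terms of domains, should be read as the push-forward identity $\PiBG\circ\varphi_\lambda^{-1} = \PiBG$, equivalently $\int g\,d\PiBG = \int (g\circ\varphi_\lambda)\,d\PiBG$ for integrable $g$.
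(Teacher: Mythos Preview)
Your proposal is correct and follows essentially the same approach as the paper: decompose the transition into the momentum-refreshment step (which preserves $\PiBG$ by the product structure of \eqref{eq:boltzmann_gibbs}) and the Hamiltonian flow (which preserves $\PiBG$ by Theorem~\ref{th:bg}), then conclude by composition. The paper's proof is just a two-sentence version of exactly this argument.
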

\begin{proof}The transformation \((q_0,p_0)\mapsto (q_0,\xi_0)\) obviously preserves the Boltzmann-Gibbs
distribution. The same is true for the transformation \((q_0,\xi_0)\mapsto(q_1,p_1)\) as we saw in
Theorem~\ref{th:bg}.
\end{proof}

The transition kernel of this chain is given by
\[
\Pi_{(q,p)}(dq', dp') = \mathbb{E} \left\{  \delta( (q',p') - \varphi_{\lambda}(q,\xi) ) \right\}  dq' dp',
\]
where the expected value is over $\xi \sim \mathcal{N}(0,M)$.

The most appealing feature of the algorithm is that, if \(\lambda\) is sufficiently large, we may hope that
the Markov transitions \(i\rightarrow i+1\) produce values \(q_{i+1}\) far away from \(q_i\), thus reducing
the correlations in the chain and facilitating the exploration of the target distribution.

\subsection{Numerical HMC}
Algorithm~\ref{algo:exact_hmc} cannot be used in  practice because in the cases of interest the exact solution
flow \(\varphi_\lambda\) of Hamilton's equations is not available. It is then necessary to resort to numerical
approximations to \(\varphi_\lambda\), but, as pointed out in Section~\ref{ss:geometricintegration}, numerical
methods cannot preserve volume in phase space \emph{and} energy and therefore do not preserve exactly the
Boltzman-Gibbs distribution. To correct the bias introduced by the time discretization error, the numerical
solution is Metropolized using Algorithm~\ref{algo:metropolized_reversible_map}. However, this requires  that
the numerical integrator be \emph{reversible}.

Let $\Psi_{\lambda}$ denote a numerical approximation of $\varphi_{\lambda}$ (more precisely, if the step size
is \(h\) and \(n=\lfloor \lambda/h \rfloor\) is the number of steps required to integrate up to \(t=\lambda\),
then \(\Psi_\lambda =\psi_h^n\)). In order to use Algorithm~\ref{algo:metropolized_reversible_map} with
\(\Psi_\lambda\) playing the role of \(\Phi\) and the momentum flip involution \eqref{eq:mflip} playing the
role of \(S\),  we first note (Proposition~\ref{prop:mflip}) that the
 momentum flip involution preserves \(H\) in \eqref{eq:sepham} and, as a consequence, it preserves the Boltzmann-Gibbs
distribution. In addition Theorem~\ref{th:reverflow} ensures that the Hamiltonian flow is reversible with
respect to this involution;  it then makes sense (Theorem~\ref{th:revintegrator}) to assume that the
integrator chosen is such that  $\Psi_{\lambda}$ is also reversible. The acceptance probability in
\eqref{eq:acceptance_probability} now reads
\begin{equation}\label{eq:alphadelta}
\alpha(q,p) = \min\left\{ 1, e^{-\Delta H(q,p)} \left| \det \Psi_{\lambda}'(q,p) \right|
\right\}  \;,
\end{equation}
where
\[
\Delta H(q,p) = H(\Psi_{\lambda}(q,p)) - H(q,p)
\]
is the energy error (recall that if the integrator were exact \(H(\Psi_{\lambda}(q,p))\) would coincide with
\(H(q,p)\) by conservation of energy, Theorem~\ref{th:consenergy}).

\begin{algorithm}[Numerical HMC] \label{algo:numerical_hmc}
 Denote by
\(\lambda>0\) the duration parameter and  let \(\Psi_\lambda\) be a reversible numerical approximation to the
Hamiltonian flow \(\varphi_\lambda\).

Given  the current state of the chain $(q_0, p_0)\in\mathbb{R}^{2d}$; the method outputs a state
$(q_1,p_1)\in\mathbb{R}^{2d}$ as follows.
\begin{description}
\item[Step 1] Generate a $d$-dimensional random vector  $\xi_0 \sim \mathcal{N}(0,M)$.
\item[Step 2] Find \(\Psi_{\lambda}(q_0,\xi_0)\) by  evolving Hamilton's equations \eqref{eq:newton2} with a
    reversible integrator over the time interval $[0,\lambda]$  with initial condition $(q_0,\xi_0)$.
\item[Step 3] Output $(q_1, p_1) = \gamma \Psi_{\lambda}(q_0,\xi_0) + (1-\gamma)  (q_0, - \xi_0) $ where
    $\gamma$ is a Bernoulli random variable with parameter $\alpha(q_0, \xi_0)$ with \(\alpha\) as in
    \eqref{eq:alphadelta}.
\end{description}
\end{algorithm}

\begin{theorem}\label{thm:numerical_hmc}
Consider the Markov chain $\{(q_i,p_i) \}_{i \in \mathbb{N}}$ defined by iterating
Algorithm~\ref{algo:numerical_hmc}. The probability distribution $\PiBG$ in \eqref{eq:boltzmann_gibbs} is an
invariant distribution of this chain.
\end{theorem}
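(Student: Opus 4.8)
The plan is to factor one step of Algorithm~\ref{algo:numerical_hmc} into two sub-steps, each of which separately preserves $\PiBG$, and then invoke the elementary fact that a composition of Markov kernels each preserving a given distribution again preserves it. The two sub-steps are: (i) the momentum refreshment $(q_0,p_0)\mapsto(q_0,\xi_0)$ with $\xi_0\sim\mathcal N(0,M)$ (Step~1); and (ii) the Metropolized reversible map of Algorithm~\ref{algo:metropolized_reversible_map} applied with $\Phi=\Psi_\lambda$, with $S$ the momentum flip involution \eqref{eq:mflip}, and with the non-normalized density $\rho(q,p)=\exp(-H(q,p))$ of $\PiBG$ (Steps~2--3). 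Comparing the two algorithms, Steps~2 and~3 of Algorithm~\ref{algo:numerical_hmc} are precisely Steps~1 and~2 of Algorithm~\ref{algo:metropolized_reversible_map} for this choice of $\Phi$, $S$ and $\rho$, the acceptance probability \eqref{eq:alphadelta} being exactly \eqref{eq:acceptance_probability} once one substitutes $\rho=e^{-H}$ and uses $\rho(\Psi_\lambda(q,p))/\rho(q,p)=e^{-\Delta H(q,p)}$.

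For sub-step (i): under $\PiBG$ the variables $q$ and $p$ are independent (this is the factorization of $\exp(-H)$ displayed just after Theorem~\ref{th:bg}, applied to $H=\mathcal T+\mathcal U$ as in \eqref{eq:sepham}), and the $p$-marginal is exactly $\mathcal N(0,M)$. Hence replacing $p_0$ by an independent draw $\xi_0\sim\mathcal N(0,M)$ leaves the joint law $\PiBG$ unchanged, so the transition kernel of this sub-step preserves $\PiBG$.

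For sub-step (ii) I would check the two hypotheses of Proposition~\ref{prop:metropolized_reversible_map}, namely $\rho\circ S=\rho$ and that $\Phi=\Psi_\lambda$ is $S$-reversible. The first holds because $\mathcal T(p)=\tfrac12 p^TM^{-1}p$ is an even function of $p$, so $H(q,-p)=H(q,p)$ (Proposition~\ref{prop:mflip}) and therefore $\rho(S(q,p))=\exp(-H(q,-p))=\rho(q,p)$. For the second, the integrator $\psi_h$ is, by the standing assumption in Algorithm~\ref{algo:numerical_hmc}, reversible with respect to the momentum flip (this is exactly the situation of Theorem~\ref{th:revintegrator} for palindromic splitting integrators), and then by Proposition~\ref{prop:comprever} the $n$-fold composition $\Psi_\lambda=\psi_h^{\,n}$ is again $S$-reversible. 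Proposition~\ref{prop:metropolized_reversible_map} then applies and shows that sub-step (ii) preserves $\PiBG$.

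Finally, since each step of Algorithm~\ref{algo:numerical_hmc} is the composition of the kernels of sub-steps (i) and (ii), both of which preserve $\PiBG$, the full transition kernel preserves $\PiBG$, i.e.\ $\PiBG$ is an invariant distribution of the chain. I do not expect a genuine obstacle here: all the substantive work is already contained in Proposition~\ref{prop:metropolized_reversible_map}, and the remaining points — identifying Algorithm~\ref{algo:numerical_hmc} with Algorithm~\ref{algo:metropolized_reversible_map}, noting that the momentum flip preserves $H$ because $\mathcal T$ is even in $p$, and noting that reversibility of $\psi_h$ passes to $\psi_h^{\,n}$ — are routine bookkeeping.
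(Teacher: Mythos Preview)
Your proposal is correct and follows exactly the same approach as the paper: decompose one transition into the momentum refreshment and the Metropolized reversible map, observe that each preserves $\PiBG$ (the second by invoking Proposition~\ref{prop:metropolized_reversible_map}), and conclude. Your version is in fact more explicit than the paper's, which simply states these two facts without spelling out the verification of the hypotheses of Proposition~\ref{prop:metropolized_reversible_map}.
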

\begin{proof} As in the preceding theorem, the transformation \((q_0,p_0)\mapsto (q_0,\xi_0)\)  preserves the Boltzmann-Gibbs
distribution. The same is true for the transformation \((q_0,\xi_0)\mapsto(q_1,p_1)\) according to
Proposition~\ref{prop:metropolized_reversible_map}.
\end{proof}

The transition kernel of the chain is given by
\begin{align*}
\Pi_{(q,p)}(dq', dp') &=  \mathbb{E} \left\{ \alpha(q,\xi) \delta( (q',p') - \Psi_{\lambda}(q,\xi) )  \right\} dq' dp'  \\
& \qquad\qquad + \mathbb{E} \left\{  (1-\alpha(q,\xi))  \delta( (q',p') - S(q,\xi) ) \right\} dq' dp'.
\end{align*}

In practice, the acceptance probability \eqref{eq:alphadelta} may not be readily available due to the need to
compute \(\det \Psi_{\lambda}'(q,p)\). If the numerical approximation \(\Psi_\lambda\) in addition to being
assumed reversible is also \emph{volume preserving} (as it would be for splitting integrators according to
Theorem~\ref{th:sympintegrator}), then the determinant drops from the formula, and then the acceptance
probability
\begin{equation}\label{eq:alphadelta2}
\alpha(q,p) = \min\left\{ 1, e^{-\Delta H(q,p)}
\right\}  \;,
\end{equation}
becomes easily computable. Variants where preservation of volume does not take place are studied by
\cite{FaSaSk2014}.

\begin{remark}The states of the Markov chain $\{(q_i,p_i) \}_{i \in \mathbb{N}}$ are not to be confused with
the intermediate values of \(q\) and \(p\) that the numerical integrator generates while transitioning the
chain from one state of the chain to the next. Those intermediate values were denoted by \((q_n,p_n)\) in the
preceding sections and we have preferred not to introduce additional notation to describe the Markov chain.
\end{remark}

\begin{remark}Theorems~\ref{thm:exact_rhmc} and \ref{thm:numerical_hmc} show that \(\Pi_{\rm BG}\) is an
invariant distribution for the chains generated by Algorithms~\ref{algo:exact_hmc} and
\ref{algo:numerical_hmc} respectively. However they do not guarantee that those chains meet the two basic
requirements in \eqref{eq:mcmc_lln} and \eqref{eq:mcmc_clt} and indeed a simple example will be presented
below where the sequence of values of \(q\) generated by those algorithms is \(q_0\), \(-q_0\), \(q_0\),
\(-q_0\), \dots so that the requirements are \emph{not} met. A detailed study of the convergence properties of
HMC is outside the scope of this paper and we limit ourselves to some remarks in
Section~\ref{sec:convergence}.
\end{remark}

\subsection{Exact Randomized HMC}

The Hamiltonian flow in Step 2 of Algorithm~\ref{algo:exact_hmc} is what, in principle, enables HMC to make
large moves in state space that reduce correlations in the Markov chain $\{ q_i \}_{i \in \mathbb{N}}$.
Roughly speaking, one may hope that,  by increasing the duration $\lambda$, $q_1$ moves away from $q_0$, thus
reducing  correlation.  However, simple examples show that this outcome is far from assured.

Indeed, for the univariate standard normal target distribution in Example~\ref{ex:gaussianHMC},
 the Hamiltonian flow is a rotation in the $(q,p)$-plane with period $2\pi$. It is easy to see  that, if $q_0$ is taken from the target distribution, as $\lambda$ increases from $0$ to $\pi/2$, the correlation between $q_1$ and $q_0$ decreases and for $\lambda = \pi/2$, $q_1$ and $q_0$ are independent. However increasing $\lambda$ beyond $\pi/2$ will
 cause an increase in the correlation and for $\lambda = \pi$, $q_1=-q_0$ and the chain is not ergodic.  For general distributions, it is likely that a small $\lambda$ will lead to a highly correlated chain, while choosing $\lambda$ too large  may cause the Hamiltonian trajectory to make a U-turn and fold back on itself, thus increasing correlation \cite{HoGe2014}. Generally speaking the performance of HMC may be very sensitive to changes in \(\lambda\) as first noted by \cite{Ma1989}. In order to increase the robustness of the algorithm,
Mackenzie suggested to vary randomly \(\lambda\) from one Markov transition to the next and for that purpose
he used a uniform distribution in an interval \( [\lambda_{\rm min},\lambda_{\rm max}]\).

Recently \cite{BoSa2016} have studied an algorithm where
the lengths of the time intervals of integration of the Hamiltonian dynamics at the different transitions of the Markov chain are independent and identically distributed exponential random variables with mean $\lambda$; these durations are of course taken to be independent of the state of the chain. The algorithm is then as follows:

\begin{algorithm}[Exact RHMC] \label{algo:exact_rhmc}
Given the current state of the chain $(q_0, p_0) \in \mathbb{R}^{2d}$, the algorithm outputs the state $(q_1, p_1)\in \mathbb{R}^{2d}$ as follows.
\begin{description}
\item[Step 1] Generate a $d$-dimensional random vector $\xi_0 \sim \mathcal{N}(0,M)$.
\item[Step 2] Generate a random duration $t \sim \Exp(1/\lambda)$.
\item[Step 3] Evolve over the time interval $[0, t]$ Hamilton's equations \eqref{eq:newton} with initial
    condition $(q(0), p(0))= (q_0, \xi_0)$.
\item[Step 4] Output $(q_1, p_1) = (q(t), p(t))$.
\end{description}
\end{algorithm}

Analogous to Theorem 5.1, the probability distribution $\PiBG$ in
 \eqref{eq:boltzmann_gibbs} is an invariant distribution of the
  Markov chain $\{ (q_i, p_i) \}_{i \in \mathbb{N}}$ defined by iterating Algorithm~\ref{algo:exact_rhmc}.
   Analytical results and numerical experiments \cite[\S 4-5]{BoSa2016} show that
the dependence of the performance of the RHMC Algorithm~\ref{algo:exact_rhmc}
on the mean duration parameter \(\lambda\) is simpler than the dependence of
the performance of Algorithm~\ref{algo:exact_hmc}  on its constant duration parameter.

\subsection{Numerical Randomized HMC}

Unfortunately, the  complex dependence of correlation on the duration parameter $\lambda$ of Algorithm~\ref{algo:exact_hmc} is not removed by time discretization and is therefore inherited by Algorithm~\ref{algo:numerical_hmc}.
For instance, for the univariate standard normal target, it is easy to check that if $\lambda$ is close to an integer multiple of $\pi$ and $h>0$ is suitably chosen, then a Verlet numerical integration will result, for each $q_0$, in $q_1 = -q_0$ (a move that will be accepted by the Metropolis-Hasting step).

To improve its performance, Algorithm~\ref{algo:numerical_hmc} is typically operated with values of $h$ that
are randomized \cite{Ne2011}.
Since, due to stability restrictions, explicit integrators cannot be used with
arbitrarily large
 values of the time step, \(h\) is typically chosen  from a uniform distribution in an (often narrow)
  interval $(\Delta t_{\rm min}, \Delta t_{\rm max})$. The number of time steps \(N\) in each integration
  leg is kept constant
  and therefore the length of the integration intervals is random with a uniform distribution in
  $(N\Delta t_{\rm min}, N\Delta t_{\rm max})$.
   Even after such a randomization, the fact remains that increasing
  the duration parameter will increase the computational cost of each integration leg and may impair the quality of the sampling.

Several modifications of Algorithm~\ref{algo:exact_rhmc} that use numerical integration are suggested in
\cite{BoSa2016}. The most obvious of them approximates the Hamiltonian flow in Step 3 by a volume-preserving,
reversible integrator (such as Verlet) operated with a fixed step size $h$ and with the number of integration
steps $m$ at the different transitions of the Markov chain being independent and identically distributed
geometric random variables with mean $\lambda /h$.  These random numbers are of course taken to be independent
of the state of the chain.  As in Algorithm~\ref{algo:numerical_hmc}, one needs an accept-reject mechanism to
remove the bias due to the energy errors introduced by this integrator.

\begin{algorithm}[Numerical RHMC] \label{algo:numerical_rhmc}
Denote by $\lambda>0$ the duration parameter and let $\psi_{h}$ be a reversible numerical approximation of the Hamiltonian flow $\varphi_h$.

Given the current state of the chain $(q_0, p_0) \in \mathbb{R}^{2d}$, the algorithm outputs the state $(q_1, p_1)\in \mathbb{R}^{2d}$ as follows.
\begin{description}
\item[Step 1] Generate a $d$-dimensional random vector $\xi_0 \sim \mathcal{N}(0,M)$.
\item[Step 2] Generate a geometric random variable $m$ supported on the set $\{1, 2, 3, ... \}$ and with mean $\lambda/h$.
\item[Step 3] Output $(q_1, p_1) = \gamma \psi_h^m(q_0, \xi_0) + (1-\gamma) (q_0,-\xi_0)$ where $\gamma$ is
    a Bernoulli random variable with parameter $\alpha$ defined as in \eqref{eq:alphadelta2}.
\end{description}
\end{algorithm}

Analogous to Theorem 5.2, the probability distribution $\PiBG$ in \eqref{eq:boltzmann_gibbs} is an invariant distribution of the Markov chain
$\{ (q_i, p_i) \}_{i \in \mathbb{N}}$ defined by iterating Algorithm~\ref{algo:numerical_rhmc}.

In the remainder of this paper the attention is focused on Algorithm~\ref{algo:numerical_hmc}. Experiments
based on Algorithm~\ref{algo:numerical_rhmc} are reported in Section~\ref{sec:preconditioned} and an example
that illustrates the effects of randomization of \(h\) and \(\lambda\) is presented in
Section~\ref{sec:randomizing}. \vspace{2mm}

\section{Numerical integration and HMC}
\label{secc:numintHMC}

The computational work in HMC mainly stems from the cost of the evaluations of the  force $-\nabla
\mathcal{U}$ that are required in the numerical integration  to be carried out at each transition of the
Markov chain (Step 2 of Algorithm~\ref{algo:numerical_hmc}). If the dimension \(d\) of \(q\) is high, those
evaluations are likely to be expensive; for instance, in a molecular dynamics study of a macromolecule with
\(N\gg 1\) atoms, \(d=3N\) and each atom typically interacts with all others, so that the complexity of
evaluating $-\nabla \mathcal{U}$ grows like \(N^2\). As in any other numerical integration, the aim is to
reach a target accuracy with the minimum possible complexity. Specific to the HMC scenario is the fact that,
in the event of rejection in Step~3, the algorithm wastes all of the force evaluations used to compute
\(\Psi_\lambda(q,\xi)\). In addition, when a rejection occurs at a transition \(i\rightarrow i+1\) of the
Markov chain, the new value \(q_{i+1}\)  coincides with the old \(q_i\), and this contributes to an increase
in the correlations along the chain, which degrades the quality of sampling as pointed out when discussing the
central limit theorem in \eqref{eq:mcmc_clt}. Since low acceptance rates are unwelcome and the acceptance
probability \eqref{eq:alphadelta2} is a function of the \emph{energy} error \(\Delta H(q,p) =
H(\Psi_{\lambda}(q,p)) - H(q,p)\), it is important to perform the integration so as to have \emph{small energy
errors}. In this connection we recall from Section~\ref{ss:geometricintegration} that symplectic integrators
conserve energy with errors that are \(\mathcal{O}(h^\nu)\)  over exponentially long time intervals. In this
way, the symplecticness of the integrator plays a dual role in HMC. On the one hand, it ensures conservation
of volume, thereby making it possible to have the simple expression \eqref{eq:alphadelta2} for the acceptance
probability. On the other hand, it ensures favourable energy errors even if the integration legs are very
long.

An additional point: the \emph{sign} of the error matters. Formula \eqref{eq:alphadelta2} shows that, if the
integration starts from a point \((q,p)\) for which \(\Delta<0\), then \(\alpha =1\) leading to acceptance.

This section begins with a key result, Theorem~\ref{thm:mean_energy_error}, that shows that for reversible,
volume preserving integrators the energy error is on average much smaller than one may have anticipated. After
that we study in detail the model case of Gaussian targets and discuss the construction of integrators more
efficient than Verlet.

\subsection{Mean energy error}

Step 2 in Algorithm~\ref{algo:numerical_hmc} requires a volume-preserving, reversible integrator if
   \eqref{eq:alphadelta2} is to be used. As we shall discuss now, those geometric
properties have a direct impact on the mean energy error. We begin with an auxiliary result \cite[Lemma
3.3]{BePiRoSaSt2013} that holds for any volume-preserving, reversible map.
\begin{proposition}\label{prop:natesh}
Let $\Psi: \mathbb{R}^{2d} \to \mathbb{R}^{2d}$ be a bijection that is volume-preserving and reversible with
respect
 to the momentum flip involution \eqref{eq:mflip} and set $\Delta(q,p) = H(\Psi(q,p)) - H(q,p)$. If \(g\)
 is an odd real function of a real variable, then
 \[
 \int_{\mathbb{R}^{2d}} g(\Delta(q,p))\, e^{-H(q,p)} dq
dp =- \int_{\mathbb{R}^{2d}} g(\Delta(q,p))\, e^{- H(\Psi(q,p))} dq dp,
 \]
 provided that one of the integrals exists. If \(g\) is even, then
 \[
 \int_{\mathbb{R}^{2d}} g(\Delta(q,p))\, e^{-H(q,p)} dqdp
 = \int_{\mathbb{R}^{2d}} g(\Delta(q,p))\, e^{- H(\Psi(q,p))} dq dp,
 \]
  provided that one of the integrals exists.
\end{proposition}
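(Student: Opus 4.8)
The plan is to prove both identities simultaneously by a single change of variables, namely the map $\Psi$ itself, and then to exploit the reversibility of $\Psi$ together with the oddness/evenness of $g$ to identify $\Delta \circ \Psi$ in terms of $\Delta$. First I would start from the right-hand integral $\int g(\Delta(q,p))\, e^{-H(\Psi(q,p))}\,dq\,dp$ and substitute $(q',p') = \Psi(q,p)$. Since $\Psi$ is volume-preserving, the Jacobian factor is $1$ (in absolute value), so the integral becomes $\int g(\Delta(\Psi^{-1}(q',p')))\, e^{-H(q',p')}\,dq'\,dp'$, where I have used that $\Psi$ is a bijection of $\mathbb{R}^{2d}$ so the domain is preserved. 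Everything now hinges on computing $\Delta(\Psi^{-1}(q',p'))$.

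The key computation is that $\Delta(\Psi^{-1}(q,p)) = -\Delta(S(q,p))$, where $S$ is the momentum flip \eqref{eq:mflip}. To see this, write $\Delta(\Psi^{-1}(q,p)) = H(\Psi(\Psi^{-1}(q,p))) - H(\Psi^{-1}(q,p)) = H(q,p) - H(\Psi^{-1}(q,p))$. Now reversibility \eqref{eq:reversibility} gives $\Psi^{-1} = S \circ \Psi \circ S$, so $H(\Psi^{-1}(q,p)) = H(S(\Psi(S(q,p))))$, and since $H$ is even in $p$ (Proposition~\ref{prop:mflip} — it has the separable form \eqref{eq:sepham}), $H \circ S = H$, whence $H(\Psi^{-1}(q,p)) = H(\Psi(S(q,p)))$. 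Therefore $\Delta(\Psi^{-1}(q,p)) = H(q,p) - H(\Psi(S(q,p))) = H(S(q,p)) - H(\Psi(S(q,p))) = -\Delta(S(q,p))$, again using $H \circ S = H$ in the middle step.

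With this in hand, the right-hand integral equals $\int g(-\Delta(S(q',p')))\, e^{-H(q',p')}\,dq'\,dp'$. A second change of variables $(q'',p'') = S(q',p')$ — which is linear, an involution, has Jacobian of absolute value $1$, and satisfies $H \circ S = H$ — turns this into $\int g(-\Delta(q'',p''))\, e^{-H(q'',p'')}\,dq''\,dp''$. If $g$ is odd this is $-\int g(\Delta(q,p))\, e^{-H(q,p)}\,dq\,dp$, giving the first identity; if $g$ is even it is $+\int g(\Delta(q,p))\, e^{-H(q,p)}\,dq\,dp$, giving the second. The existence caveat is handled by noting that each change of variables is measure-preserving, so absolute integrability of one side forces it for the other.

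I do not expect a serious obstacle here; the only point requiring care is the bookkeeping of where $H \circ S = H$ is used — it appears both in rewriting $\Psi^{-1}$ via reversibility and in the final $S$-substitution — and making sure the hypothesis that $g$ acts only on $\Delta$ (and not separately on $q,p$) is what lets the $S$-substitution go through cleanly. One should also state explicitly that $H$ being of the separable form \eqref{eq:sepham} with quadratic kinetic energy guarantees $H(q,-p)=H(q,p)$, which is the instance of Proposition~\ref{prop:mflip} being invoked.
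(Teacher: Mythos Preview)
Your proof is correct and follows essentially the same approach as the paper: the paper performs a single change of variables under the map \((q,p)\mapsto S(\Psi(q,p))\) (which is a volume-preserving involution by reversibility) and uses the identity \(\Delta(q,p)=-\Delta(S(\Psi(q,p)))\), whereas you split this into the two successive substitutions \(\Psi\) and then \(S\) and use the equivalent identity \(\Delta(\Psi^{-1}(q,p))=-\Delta(S(q,p))\). The key ingredients---volume preservation, \(H\circ S=H\), and the reversibility relation \(\Psi^{-1}=S\circ\Psi\circ S\)---are deployed identically.
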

\begin{proof}
By a change of variables under the map $(q,p) \mapsto S(\Psi(q,p))$,
\begin{eqnarray*}
\int_{\mathbb{R}^{2d}} g(\Delta(q,p)) e^{-H(q,p)} dqdp &=&
\int_{\mathbb{R}^{2d}} g(\Delta(S(\Psi(q,p))))\, e^{-H(S(\Psi(q,p)))} dqdp\\
&=&\int_{\mathbb{R}^{2d}} g(-\Delta(q,p))\,e^{- H(\Psi(q,p))} dq dp,
\end{eqnarray*}
where in the first step we used that both \(S\) and \(\Psi\) preserve volume and in the second we took into
account that \(S\) leaves \(H\) invariant and that the reversibility of \(\Psi\) implies
\begin{equation}\label{eq:nicefigure}
\Delta(q,p) =- \Delta(S(\Psi(q,p))).
\end{equation}
\end{proof}

\begin{figure}[t]
\begin{center}\includegraphics[scale=0.45]{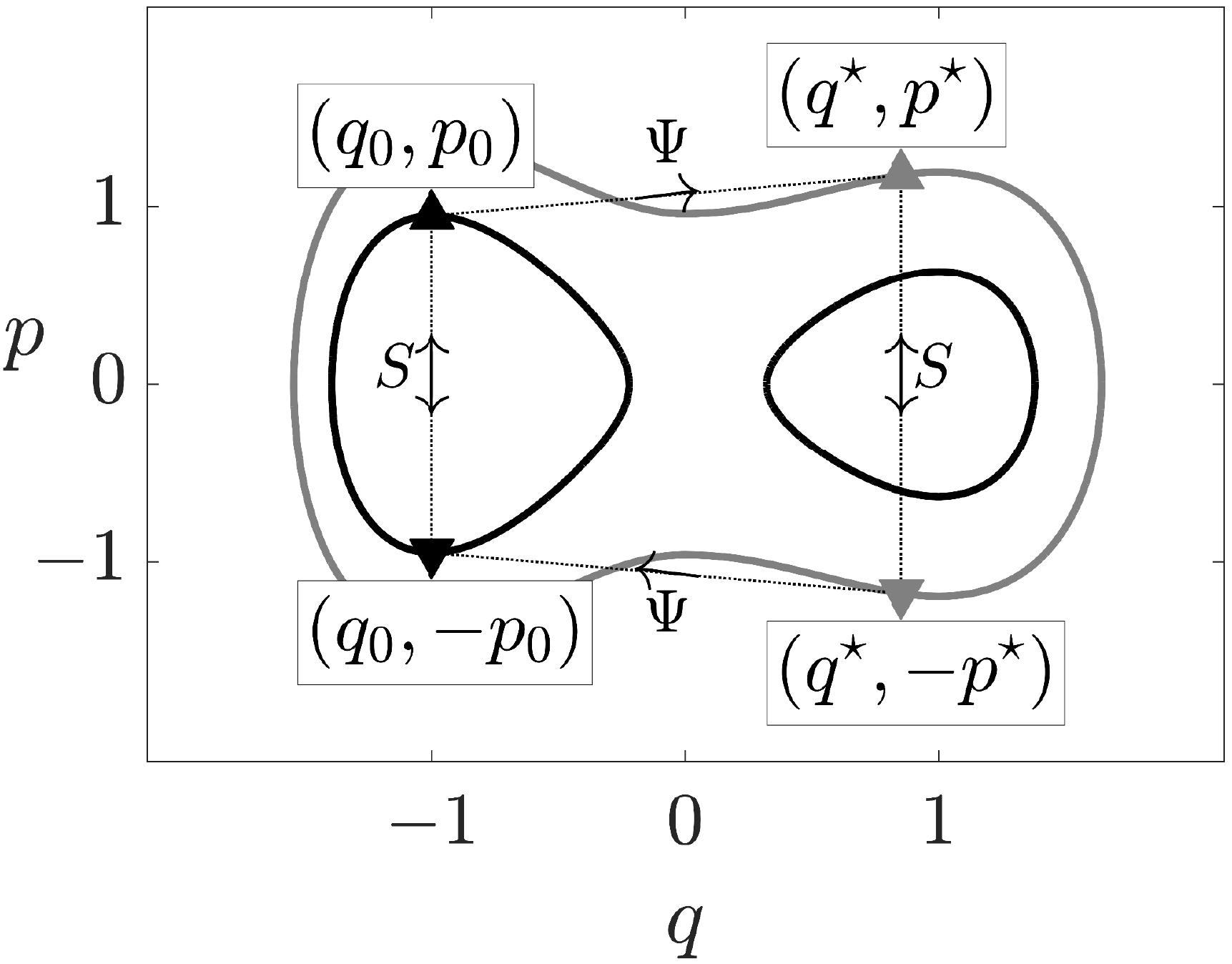}\end{center}
\caption{
A univariate target with probability modes at \(q=\pm 1\), leading to a double-well potential \(\mathcal{U}\).
The continuous lines are contours of constant \(H\). The symmetry of the contours with respect to the  axis \(p=0\)
is a consequence of the reversibility of the Hamiltonian flow. The solutions of Hamilton's equations
move from left to right when \(p>0\) and from right to left when \(p<0\), so when contours are reflected
over the horizontal axis
the arrow of time is reversed. If a reversible \(\Psi\) maps
\((q_0,p_0)\) into  \((q^*,p^*)\),  it has to map \((q^*,-p^*)\) into \((q_0,p_0)\), so as to preserve the
symmetry of the figure. The transition \((q_0,p_0)\mapsto(q^*,p^*)\) has an increase in energy and
\((q^*,-p^*)\mapsto(q_0,p_0)\) decreases energy in \emph{exactly} the same amount; this is the content of
formula \eqref{eq:nicefigure}. } \label{fig:rever_energy}
\end{figure}

We now use the proposition to bound the average of \(\Delta\).

\begin{theorem} \label{thm:mean_energy_error}
Let $\Psi: \mathbb{R}^{2d} \to \mathbb{R}^{2d}$ be a bijection that is volume-preserving and reversible with
respect
 to the momentum flip involution \eqref{eq:mflip}. If  the  integral
\beq\label{eq:mdelta1}
 m_{\Delta} = \int_{\mathbb{R}^{2d}} \Delta(q,p) e^{-H(q,p)} dq dp,
\eeq
 with $\Delta(q,p) = H(\Psi(q,p)) - H(q,p)$, exists, then
\begin{equation} \label{eq:mean_energy_error} 0 \le
m_\Delta \le \int_{\mathbb{R}^{2d}} \Delta(q,p)^2 e^{-H(q,p)} dq dp.
\end{equation}
Furthermore the first inequality is strict except in the trivial case where \(\Delta(q,p)\) vanishes for each \((q,p)\).
\end{theorem}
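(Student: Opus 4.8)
The plan is to symmetrize both sides of the desired inequalities by means of Proposition~\ref{prop:natesh}, after which everything reduces to elementary estimates for functions of one real variable.

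First I would apply Proposition~\ref{prop:natesh} with the odd function $g(t)=t$. Since $H(\Psi(q,p))=H(q,p)+\Delta(q,p)$ we have $e^{-H(\Psi(q,p))}=e^{-\Delta(q,p)}e^{-H(q,p)}$, so the proposition gives
\[
m_\Delta=\int_{\mathbb{R}^{2d}}\Delta\,e^{-H}\,dq\,dp=-\int_{\mathbb{R}^{2d}}\Delta\,e^{-\Delta}\,e^{-H}\,dq\,dp,
\]
and hence
\[
2m_\Delta=\int_{\mathbb{R}^{2d}}\Delta\,(1-e^{-\Delta})\,e^{-H}\,dq\,dp .
\]
The scalar function $t\mapsto t(1-e^{-t})$ is nonnegative on all of $\mathbb{R}$ (its two factors always have the same sign) and vanishes only at $t=0$; therefore the integrand is pointwise nonnegative and $m_\Delta\ge 0$ follows immediately. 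Since $e^{-H}>0$ everywhere, $m_\Delta=0$ forces $\Delta\,(1-e^{-\Delta})\equiv 0$ almost everywhere, i.e.\ $\Delta\equiv 0$ (everywhere, because $\Delta$ is continuous), which is the asserted strictness.

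For the upper bound I would first observe that if $\int_{\mathbb{R}^{2d}}\Delta^2 e^{-H}\,dq\,dp=\infty$ the inequality is trivial, so I may assume this integral is finite. Then Proposition~\ref{prop:natesh} applies with the even function $g(t)=t^2$ and, using again $e^{-H\circ\Psi}=e^{-\Delta}e^{-H}$, yields
\[
2\int_{\mathbb{R}^{2d}}\Delta^2 e^{-H}\,dq\,dp=\int_{\mathbb{R}^{2d}}\Delta^2(1+e^{-\Delta})\,e^{-H}\,dq\,dp .
\]
Subtracting the identity for $2m_\Delta$ above gives
\[
2\Bigl(\int_{\mathbb{R}^{2d}}\Delta^2 e^{-H}\,dq\,dp-m_\Delta\Bigr)=\int_{\mathbb{R}^{2d}}\Delta\,\phi(\Delta)\,e^{-H}\,dq\,dp,\qquad \phi(t):=(t-1)+(t+1)e^{-t}.
\]
It then remains to verify the pointwise inequality $t\,\phi(t)\ge 0$ for every real $t$: one checks $\phi(0)=0$ and $\phi'(t)=1-te^{-t}\ge 1-e^{-1}>0$ because $\max_{t\in\mathbb{R}}te^{-t}=e^{-1}$, so $\phi$ is strictly increasing and has the same sign as $t$. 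Hence the last integrand is nonnegative and $m_\Delta\le\int_{\mathbb{R}^{2d}}\Delta^2 e^{-H}\,dq\,dp$, completing the proof.

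I expect the only genuine obstacle to be identifying this reduction for the upper bound. A naive pointwise comparison of the integrand $\Delta(1-e^{-\Delta})$ of $2m_\Delta$ with $2\Delta^2$ fails, since $(1-e^{-t})/t$ is unbounded as $t\to-\infty$; one is forced to symmetrize $\int\Delta^2 e^{-H}$ through the even case of Proposition~\ref{prop:natesh} before the elementary monotonicity argument for $\phi$ becomes usable. The remaining points are routine: the hypothesis that $m_\Delta$ exists (as an absolutely convergent integral) is precisely what licenses the change of variables underlying Proposition~\ref{prop:natesh} for $g(t)=t$, and continuity of $\Delta$ upgrades ``almost everywhere'' to ``everywhere'' in the equality case.
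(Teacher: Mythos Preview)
Your proof is correct and follows essentially the same route as the paper. Both arguments use Proposition~\ref{prop:natesh} with \(g(t)=t\) to obtain \(2m_\Delta=\int\Delta(1-e^{-\Delta})e^{-H}\) for the lower bound, and both invoke the even case \(g(t)=t^2\) to symmetrize \(\int\Delta^2 e^{-H}\) for the upper bound; the only cosmetic difference is that the paper establishes the pointwise inequality \(\Delta(1-e^{-\Delta})\le\Delta^2(1+e^{-\Delta})\) via the scalar bound \(|e^x-1|\le|x|(e^x+1)\), whereas you verify the equivalent statement \(t\phi(t)\ge 0\) by computing \(\phi'\).
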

\begin{proof}
 By using the preceding proposition with \(g(x)=x\), we get
\begin{equation}\label{eq:aux20jul}
m_\Delta = \frac{1}{2} \int_{\mathbb{R}^{2d}}   \Delta(q,p)
\left( 1- e^{-\Delta(q,p)} \right) e^{-H(q,p)} dq
dp \;.
\end{equation}
The inequality $x (1-e^{-x}) > 0$, valid for all real $x\neq 0$, yields the lower bound in
\eqref{eq:mean_energy_error}.

For the upper bound, we apply the inequality $|e^x - 1| \le |x| (e^x + 1)$,  valid for all $x \in
\mathbb{R}$, to obtain
\begin{align*}
m_{\Delta} &\le \frac{1}{2} \int_{\mathbb{R}^{2d}}   \left| \Delta(q,p) \right| \left| 1- e^{-\Delta(q,p)} \right| e^{-H(q,p)} dq dp \\
&\le \frac{1}{2} \int_{\mathbb{R}^{2d}} \Delta(q,p)^2 \left( 1 + e^{-\Delta(q,p)} \right) e^{-H(q,p)} dq dp \\
&\le \int_{\mathbb{R}^{2d}} \Delta(q,p)^2 e^{-H(q,p)}\,dq dp.
\end{align*}
In the last step we used the Proposition~\ref{prop:natesh} with \(g(x)= x^2\).
\end{proof}

Figure~\ref{fig:rever_energy} illustrates the geometry behind formula \eqref{eq:nicefigure}. The figure makes
clear that to each initial condition \((q_0,p_0)\) with an energy \emph{increase} \(\Delta\geq 0\) there
corresponds an initial condition \((q^*,-p^*)\) with an energy \emph{decrease} of the same magnitude. In the
integrand in \eqref{eq:mdelta1} the energy increase at \((q_0,p_0)\) is weighed by \(\exp(-H(q_0,p_0))\), a
larger factor than the weight \(\exp(-H(q^*,-p^*))\) of the corresponding energy decrease. In addition, by
conservation of volume, if \((q_0,p_0)\) ranges in a small domain, then the corresponding points
\((q^*,-p^*)\) range in a small domain of the same measure. This explains why the integral \eqref{eq:mdelta1}
is positive.

We are of course interested in applying Theorem~\ref{thm:mean_energy_error} to the case where \(\Psi\) is the
map \(\Psi_\lambda\) in Step 2 of the HMC Algorithm~\ref{algo:numerical_hmc}. Assume for simplicity that
\(\lambda/h\) is an integer, so that \(t=\lambda\) coincides with one of the step points of the numerical
integration. Then the  energy error satisfies pointwise, \ie\  at each fixed \((q,p)\),
\[
\Delta(p,q) = H(\Psi_\lambda(q,p))-H(q,p) = \big[H(\varphi_\lambda(q,p))+\mathcal{O}(h^\nu)\big]-H(q,p)
= \mathcal{O}(h^\nu),
\]
where \(\nu\) is the order of the integrator and we have successively used the convergence result in
Theorem~\ref{th:convergence} and conservation of energy (Theorem~\ref{th:consenergy}). The bounds in
Theorem~\ref{thm:mean_energy_error} have important implications.

\begin{itemize}
\item \emph{The upper bound.} Even though pointwise, the energy error \(\Delta\) is of size
    \(\mathcal{O}(h^\nu)\), \emph{on average} (with respect to the Boltzmann-Gibbs distribution)
    \(m_\Delta\) is, at least formally, of size  \(\mathcal{O}(\Delta^2)=\mathcal{O}(h^{2\nu})\). The order
    of the average energy error is automatically \emph{twice} what we would have expected. This is clearly a
    pro of using a volume-reserving, reversible integrator.
\item \emph{The lower bound.} This is a con. Imagine a case where two uncoupled systems with Hamiltonian
    functions \(H_1\) and \(H_2\) are juxtaposed. The aggregate is a new Hamiltonian system with Hamiltonian
\(H_1+H_2\). The mean energy error for the aggregate is  \(\E(\Delta_1+\Delta_2) =
\E(\Delta_1)+\E(\Delta_2)\) and because both terms being added are \(\geq 0\) there is no room for
cancelation. In general,
the value of the energy and therefore the value of the energy error may be expected
to increase as the number of degrees of freedom increases, in agreement with the fact that in Physics
energy is an extensive quantity.
\end{itemize}

This discussion will be continued in Section~\ref{sec:highD_hmc}.

\subsection{Energy error in the standard Gaussian target}
\label{ss:energy_error_standard}

Sections~\ref{ss:fixedh} and \ref{ss:hmp} were devoted to investigating the behaviour of different integrators
when applied to the harmonic oscillator. We take up this theme once more, this time in the HMC context. The
aim is then to study what happens when HMC is used to sample  from the univariate standard normal, hoping that
any findings will be relevant to more complex distributions. (Of course in practice there is no interest in
using a MCMC algorithm to sample from a normal distribution. See the discussion on model problems at the
beginning of Section~\ref{ss:fixedh}.)

\subsubsection{Pointwise energy error bounds}
We return to  the situation in Proposition~\ref{prop:modham} and now study the energy error after \(n\)
time-steps \(\Delta(q_0,p_0)= H(p_n,q_n)-H(q_0,p_0)\), with \(H = (1/2)(p^2+q^2)\). The following result
\cite[Proposition 4.2]{BlCaSa2014} provides an upper bound for \(\Delta(q_0,p_0)\) \emph{uniform} in \(n\).

\begin{proposition}\label{prop:pointwise}
 The energy error, \(\Delta(q_0,p_0)\), may be bounded as
 \[
\Delta(q_0,p_0) \leq \frac{1}{2}(\chi_h^2-1) p_0^2,
\]
 if \(\chi_h^2 \geq 1\) or as
\[
\Delta(q_0,p_0) \leq \frac{1}{2}\left(\frac{1}{\chi_h^2}-1\right) q_0^2,
\]
if \(\chi_h^2 \leq 1\).
\end{proposition}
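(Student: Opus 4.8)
The plan is to use Proposition~\ref{prop:modham}: since \(\psi_h\) is the \(h\)-flow of the modified Hamiltonian \(\tilde H_h=\frac{\theta_h}{2h}\big(\chi_hp^2+\chi_h^{-1}q^2\big)\), the value of \(\tilde H_h\) is constant along the discrete orbit \((q_n,p_n)=\psi_h^n(q_0,p_0)\). After discarding the nonzero prefactor \(\theta_h/(2h)\), this says that, for every \(n\),
\[
\chi_h p_n^2+\frac{1}{\chi_h}q_n^2 = \chi_h p_0^2+\frac{1}{\chi_h}q_0^2 =: C,
\]
which is precisely \eqref{eq:ellipse}. I would first dispose of trivialities: if \(q_0=p_0=0\) then \(\Delta\equiv0\), and since the asserted bound involves only \(\chi_h^2\) there is no loss in assuming \(\chi_h>0\) (otherwise replace \(\chi_h\) by \(|\chi_h|\) everywhere), so that \(C>0\).

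Next, because \(H=\tfrac12(p^2+q^2)\), bounding \(\Delta(q_0,p_0)=H(q_n,p_n)-H(q_0,p_0)\) from above amounts to maximising \(p_n^2+q_n^2\) over all points on the ellipse above. Setting \(u=p_n^2\ge0\) and \(v=q_n^2\ge0\), this becomes an elementary linear program: maximise \(u+v\) subject to \(\chi_h u+\chi_h^{-1}v=C\) with \(u,v\ge0\). The feasible set is the segment with endpoints \((C/\chi_h,0)\) and \((0,C\chi_h)\), and the maximum of the linear objective \(u+v\) is attained at an endpoint, so \(p_n^2+q_n^2\le\max\{C/\chi_h,\,C\chi_h\}\).

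Finally I would split into the two cases of the statement. When \(\chi_h^2\ge1\) one has \(C\chi_h\ge C/\chi_h\), so \(p_n^2+q_n^2\le C\chi_h=\chi_h^2p_0^2+q_0^2\), and therefore
\[
\Delta(q_0,p_0)=\tfrac12(p_n^2+q_n^2)-\tfrac12(p_0^2+q_0^2)\le\tfrac12(\chi_h^2-1)p_0^2 .
\]
When \(\chi_h^2\le1\) one instead gets \(p_n^2+q_n^2\le C/\chi_h=p_0^2+\chi_h^{-2}q_0^2\), whence \(\Delta(q_0,p_0)\le\tfrac12(\chi_h^{-2}-1)q_0^2\). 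There is no real obstacle here: the whole argument rests on Proposition~\ref{prop:modham} together with an elementary optimisation, and the only points needing care are the reduction to \(\chi_h>0\) and the fact that, because \(C>0\), the two coordinate-axis intersections \((C/\chi_h,0)\) and \((0,C\chi_h)\) are genuinely the endpoints of the feasible segment.
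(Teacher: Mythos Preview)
Your proof is correct and follows essentially the same approach as the paper's own argument. Both rely on Proposition~\ref{prop:modham} (equivalently, the ellipse constraint \eqref{eq:ellipse}) and then maximise \(p^2+q^2\) over that ellipse; the paper phrases this geometrically (the maximum of \(H\) on the ellipse is attained on the major axis, which lies along \(p=0\) when \(\chi_h^2\ge1\)), while you recast it as a two-variable linear program in \((p_n^2,q_n^2)\), but the computation and the resulting bound are identical.
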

\begin{proof} We only prove with the first item; the other is similar. The ellipse \eqref{eq:ellipse} has its major
axis along the co-ordinate axis $p=0$ of the $(q,p)$ plane, as in the left panel of
Figure~\ref{fig:verlet_modified_hamiltonian}. Hence $2 H(q,p) = p^2+q^2$ attains its maximum on that ellipse
if $p=0$ which implies $q^2 = q_0^2+\chi_h^2 p_0^2$. If the   \emph{final} point \((q_n,p_n)\) of the
numerical trajectory happens to be at that maximum, $2\Delta(q_0,p_0) = (q_0^2+\chi_h^2p_0^2) -
(q_0^2+p_0^2)$.
\end{proof}

\subsubsection{Average energy error bounds}

We estimate next the \emph{average}  energy error \cite[Proposition 4.3]{BlCaSa2014}. Note that the bound
provided is once more \emph{uniform} in \(n\).
\begin{proposition}\label{prop:average}
In the situation described above, assume that $(q_0,p_0)$ is a Gaussian random vector with non-normalized
probability density function
 \( \exp(-H(q,p))\), with \(H = (1/2)(p^2+q^2)\). Then the expectation of the random variable
\(\Delta(q_0,p_0)\in\Reals^2\) is given by
\[
\E(\Delta) =  \sin^2(n\theta_h)\:\rho(h),
 \]
 where
 \[
\rho(h) =
\frac{1}{2}\left(\chi_h^2+\frac{1}{\chi_h^2}-2\right)
= \frac{1}{2}\left(\chi_h -\frac{1}{\chi_h}\right)^2\geq 0,
\]
and accordingly
\[
 0\leq \mathbb{E}(\Delta)  \leq \rho(h).
\]
.
\end{proposition}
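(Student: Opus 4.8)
The plan is to evaluate $\E(\Delta)$ directly from the closed form of $\tilde M_h^n$ furnished by Proposition~\ref{prop:modham}. Writing $(q_n,p_n)=\tilde M_h^n(q_0,p_0)$ with $\tilde M_h^n$ as in \eqref{eq:tildemhdos}, and abbreviating $c=\cos(n\theta_h)$, $s=\sin(n\theta_h)$, one has $q_n=c\,q_0+\chi_h s\,p_0$ and $p_n=-\chi_h^{-1}s\,q_0+c\,p_0$. Since $2H=q^2+p^2$, squaring and adding gives
\[
q_n^2+p_n^2=\bigl(c^2+\chi_h^{-2}s^2\bigr)q_0^2+\bigl(c^2+\chi_h^2s^2\bigr)p_0^2+2cs\bigl(\chi_h-\chi_h^{-1}\bigr)q_0p_0 .
\]

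Next I would use that the density $\exp(-H(q,p))=\exp(-q^2/2)\exp(-p^2/2)$ factorizes, so that under the stated (normalized) density $q_0$ and $p_0$ are independent standard normals; hence $\E(q_0^2)=\E(p_0^2)=1$ and $\E(q_0p_0)=0$. Taking expectations in the identity above and subtracting $\E(q_0^2+p_0^2)=2$ yields, after using $2c^2=2-2s^2$,
\[
2\E(\Delta)=s^2\bigl(\chi_h^2+\chi_h^{-2}-2\bigr),
\]
that is, $\E(\Delta)=\sin^2(n\theta_h)\,\rho(h)$ with $\rho(h)=\tfrac12\bigl(\chi_h^2+\chi_h^{-2}-2\bigr)$.

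It then remains to observe that $\chi_h^2+\chi_h^{-2}-2=(\chi_h-\chi_h^{-1})^2$, so $\rho(h)=\tfrac12(\chi_h-\chi_h^{-1})^2\ge 0$, and that $0\le\sin^2(n\theta_h)\le 1$ gives the two-sided bound $0\le\E(\Delta)\le\rho(h)$ at once. There is no real obstacle here beyond the bookkeeping in expanding the quadratic form $q_n^2+p_n^2$; the one point to watch is that the $q_0p_0$ cross term must drop out under the expectation, which is precisely where one uses the independence of the two Gaussians (equivalently, the factorization of the Boltzmann--Gibbs density), and one should keep the notation straight because the proposition as stated writes $\Delta(q_0,p_0)\in\Reals^2$ where $\Delta$ is in fact scalar-valued.
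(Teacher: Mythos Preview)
Your proof is correct and follows essentially the same approach as the paper: expand $2\Delta=q_n^2+p_n^2-q_0^2-p_0^2$ using the explicit form of $\tilde M_h^n$ in \eqref{eq:tildemhdos}, then take expectations using $\E(q_0^2)=\E(p_0^2)=1$ and $\E(q_0p_0)=0$ to kill the cross term. Your observation that the statement's ``$\Delta(q_0,p_0)\in\Reals^2$'' is a typo (it should be $\Reals$) is correct as well.
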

\begin{proof}With the shorthand $c = \cos (n\theta_h)$, $s = \sin (n\theta_h)$, we may write
$$
2\Delta(q_0,p_0)  = \left(-\frac{1}{\chi_h}sq_0+cp_0\right)^2+\big(cq_0+\chi_hsp_0\big)^2-\big(p_0^2+q_0^2\big)
$$
or
$$
2\Delta(q_0,p_0)  = s^2\left(\frac{1}{\chi_h^2}-1\right)  q_0^2+ 2cs\left(\chi_h-\frac{1}{\chi_h}\right) q_0p_0
+s^2\big(\chi_h^2-1\big) p_0^2.
$$
Since $\mathbb{E}(q_0^2) = \mathbb{E}(p_0^2) = 1$ and $\mathbb{E}(q_0p_0) = 0$, the proof is ready.
\end{proof}

\subsubsection{Energy error bounds for Verlet}

Let us illustrate the preceding results in the case of the Verlet integrator. For the velocity version, we
find from Example~\ref{ex:vv} and \eqref{eq:chi}, for \(0<h<2\),
$$
\chi_h^2 = \frac{h^2}{1-\left(1-\frac{h^2}{2}\right)^2} = \frac{1}{1-\frac{h^2}{4}}>1.
$$
The bound in Proposition~\ref{prop:pointwise} reads
\begin{equation}\label{eq:boundverlet}
\Delta(q_0,p_0) \leq \frac{h^2}{8 (1-\frac{h^2}{4})}\:p_0^2.
\end{equation}
For $h = 1$, $\Delta(q_0,p_0) \leq p_0^2/6$; therefore, if $-2 < p_0 <2$ (an event that for a standard normal
distribution has probability $>95\%$), then $\Delta(q_0,p_0) < 2/3$ which results in a probability of
acceptance $\geq 51\%$, regardless of the number $n$ of time-steps.

The position Verlet integrator (Example~\ref{ex:pv} and Remark~\ref{rem:swap}) has $\chi_h^2 = 1-h^2/4 < 1$
provided that $0<h<2$. Proposition ~\ref{prop:pointwise} yields
$$
\Delta(q_0,p_0) \leq \frac{h^2}{8 (1-\frac{h^2}{4})}\:q_0^2
$$
(as one may have guessed from \eqref{eq:boundverlet} by symmetry).

From Proposition~\ref{prop:average}, for both the velocity and the position versions,
\begin{equation}\label{eq:rhoverlet}
0\leq \mathbb{E}(\Delta)\leq\rho(h)= \frac{h^4}{32(1-\frac{h^2}{4})}.
\end{equation}
We draw the attention to the exponent of \(h\) in the numerator: \emph{even though, pointwise, for the
harmonic oscillator, energy errors for the Verlet integrator are \(\mathcal{O}(h^2)\), they are
\(\mathcal{O}(h^4)\) on average}, which of course matches our earlier finding in
Theorem~\ref{thm:mean_energy_error}. For $h=1$ the expected energy error is $\leq 1/24$. Halving $h$ to
$h=1/2$, leads to an expected energy error $\leq 1/480$. The conclusion of the examples above is that
\emph{acceptance rates for Verlet} are likely to be high even if the step size \(h\) is not small.

\subsection{Velocity Verlet or position Verlet?}

\begin{figure}[t]
\begin{center}
\includegraphics[width=0.45\textwidth]{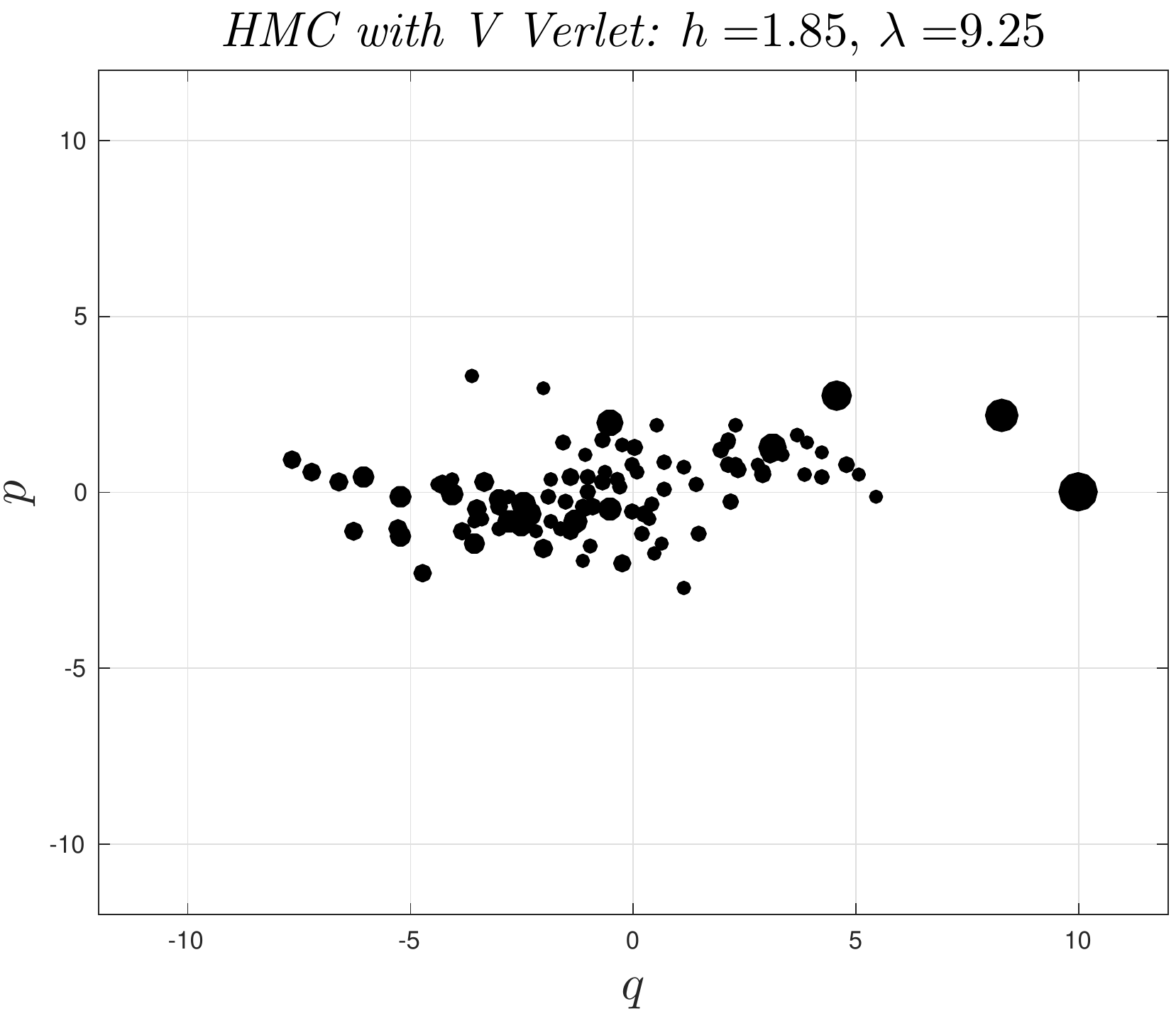}
\includegraphics[width=0.45\textwidth]{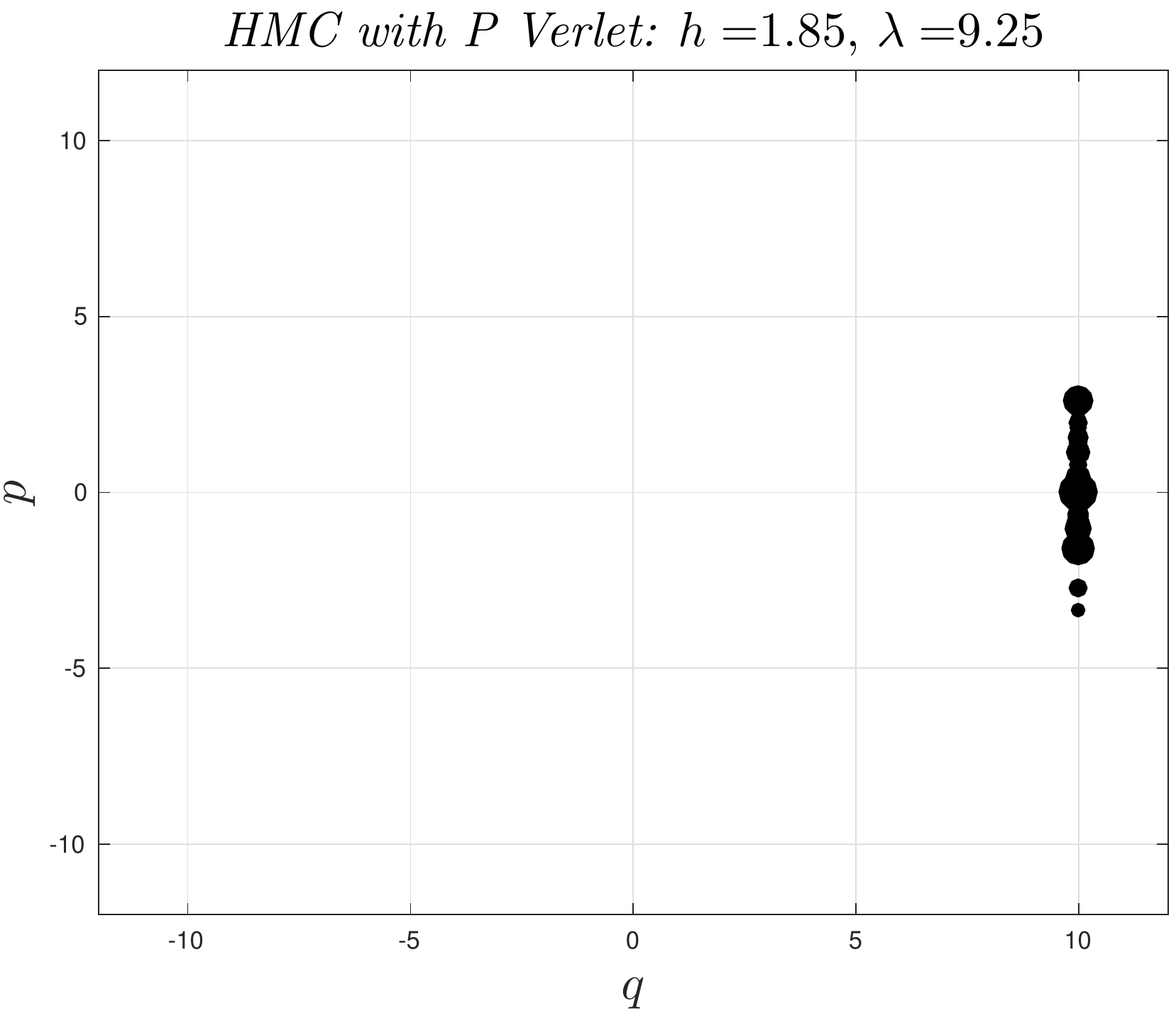}
\end{center}
\caption{ \small
 Discrete trajectories of the Markov chain when HMC for the
 standard Gaussian target is operated with velocity (left panel)
and position (right panel) Verlet.
Each marker is a state of the Markov chain (intermediate values of \((q,p)\) along the numerical
trajectories  are not depicted).
The size of the markers is related to the index $i$ in the chain: states
corresponding to larger values of $i$  have smaller markers.  The chains start at \(q=10\). On the right,
 all chain transitions result in rejection,
so that \(q_i=10\) for all values of \(i\); position Verlet  gets stuck. On the left, the velocity chain
quickly identifies the interval around \(q=0\) where the target distribution is concentrated.
 }
 \label{fig:hmc_numerical_linear_1d}
\end{figure}

The symmetry between the velocity and position Verlet integrators may lead to the conclusion that their
performances in the HMC scenario are equivalent. This is not the case, because in
Algorithm~\ref{algo:numerical_hmc} (for a general target distribution) the \(q\) and \(p\) variables do not
play a symmetric role at all. In particular, samples of \(p\) from the correct marginal distribution (Step 1)
are used as initial conditions for the integration legs in Step 2. On the contrary, when the Markov chain is
initialized, the user has to \emph{guess} a suitable starting value for \(q\) (see the end of
Section~\ref{ss:markov}). In some applications, the chosen  starting value may actually correspond to a
location of low probability, and we now study the difficulties that may arise in that case.

Figure~\ref{fig:hmc_numerical_linear_1d}  corresponds to the standard Gaussian/harmonic oscillator target and
compares the performance of the velocity and position algorithms
 when the chain is initialized with \(q\) at a location of very  low probability, \(q_0=10\), ten standard
deviations away from the mean (the initialization of \(p\) is of no consequence since momenta are discarded in
the momentum--refreshment Step 1 of Algorithm~\ref{algo:numerical_hmc}). The position integrator gets stuck
 where its velocity counterpart succeeds without problems.

Insight into the different behaviors of velocity and position Verlet may be gained from
Figure~\ref{fig:verlet_deltaH}, that corresponds to integrations with the large value \(h=1.85\)  carried out
in the interval \(0\leq t \leq  \lambda=9.25\) (five time-steps). Regardless of the choice of \(|q_0|\),
velocity Verlet will result in an energy decrease (and therefore in acceptance) if the initial sample
\(\xi_0\) of the momentum is such that \((q_0,\xi_0)\) lands in the grey area of the left panel, which will
happen with large probability because small values of \(|\xi_0|\) are likely to occur. On the contrary, in the
position algorithm energy decreases only occur if very large values of \(\xi_0\) are drawn. Certainly, there
is symmetry between the gray areas of both panels; however, in HMC, having energy decreases near the \(q\)
axis is helpful and having them near the \(p\) axis is not. It may  help to go back to the bound
\eqref{eq:boundverlet} that shows that, for velociy Verlet, \(p_0=0\) ensures an energy decrease; compare both
panels in Figure~\ref{fig:verlet_modified_hamiltonian} as well.

\begin{figure}[t]
\begin{center}
\includegraphics[width=0.45\textwidth]{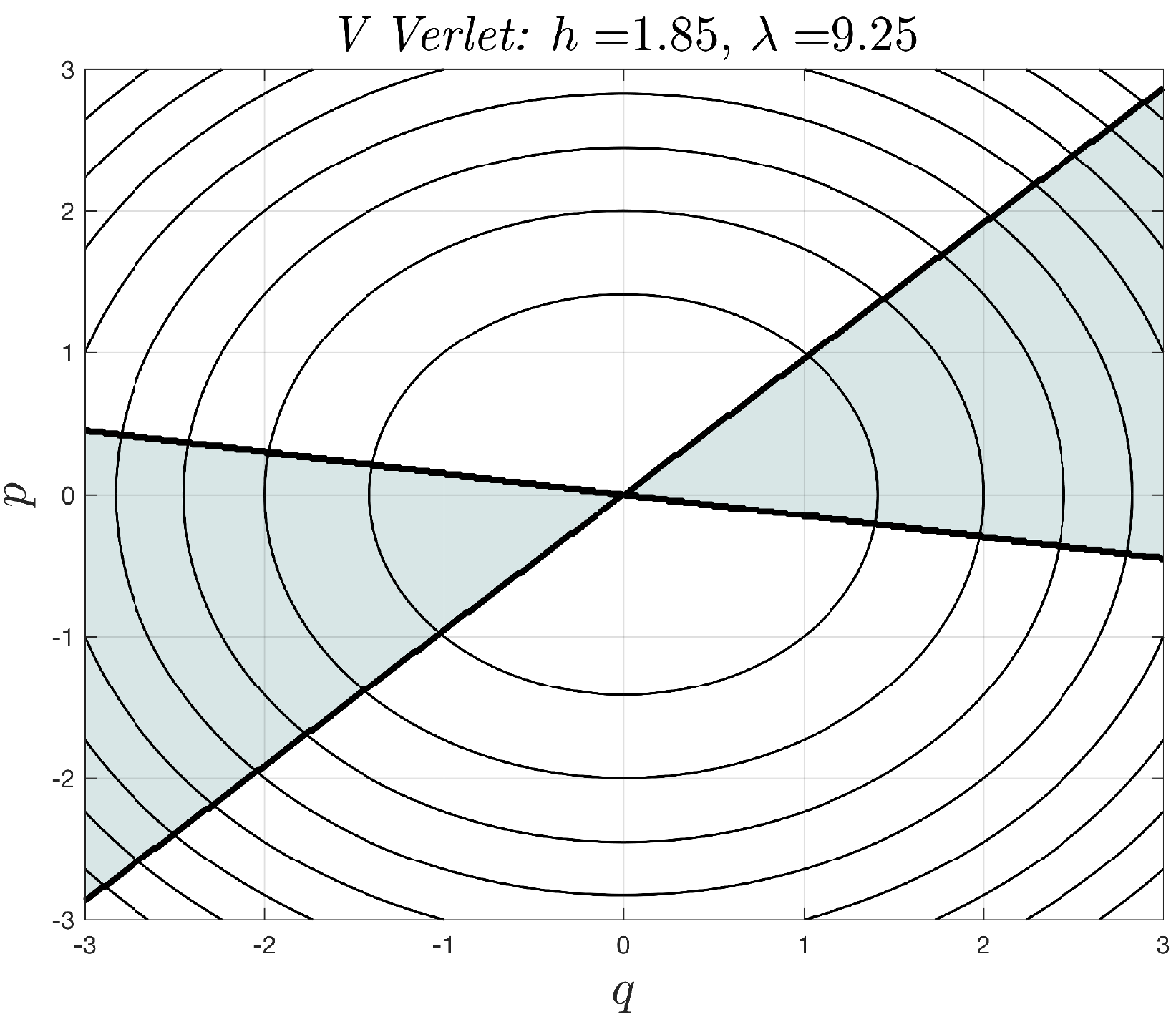}
\includegraphics[width=0.45\textwidth]{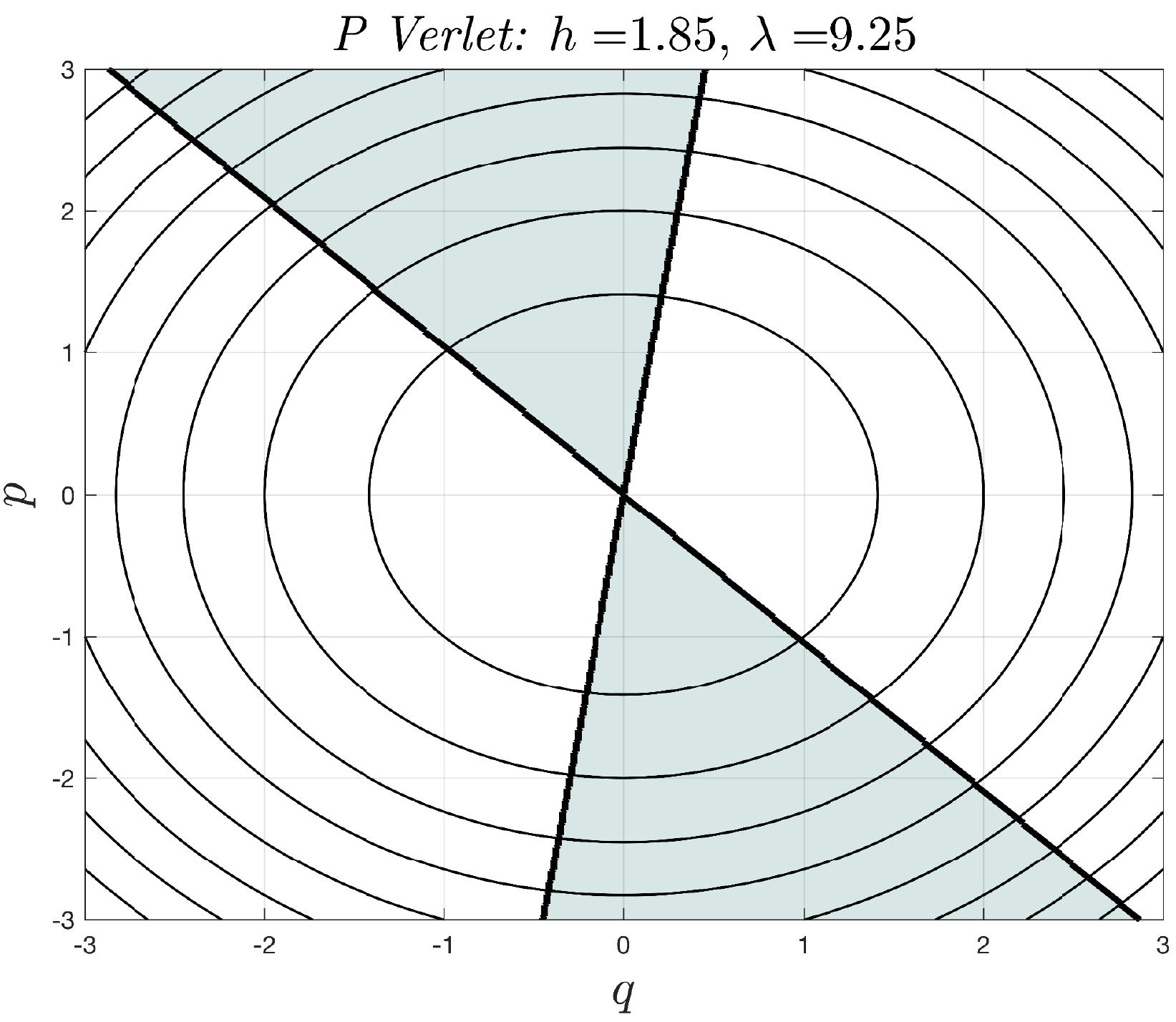}
\end{center}
\caption{ \small  Regions where the energy error $\Delta (q,p)$
is positive/negative for velocity (left panel) and position (right panel) Verlet.
In the gray-shaded regions $\Delta(q,p)$
is negative, ensuring acceptance of the Markov transition.  The contour lines in the
 background shows level sets of $H(q,p)$.
 }
 \label{fig:verlet_deltaH}
\end{figure}

To conclude this discussion we emphasize that if the chains were initialized by drawing samples from the
marginal distribution of \(q\) (\ie\ started at stationarity), then the behaviour of velocity and position
Verlet would be the same due to the \(q/p\) symmetry.

\subsection{Multivariate Gaussian model}
\label{ss:multivariate}
 We study once more the quadratic
Hamiltonian \eqref{eq:MKmodel} and note that the covariance matrix \(\Sigma\) of the target is the inverse of
the stiffness matrix \(K\). As discussed in Proposition~\ref{prop:KM}, in the particular case where \(M=I\),
the square roots of the eigenvalues of \(K\) are the angular frequencies of the dynamics; the eigenvalues of
\(\Sigma\) are of course the variances of the target along the directions of the eigenvectors. Thus
\emph{small variance implies high frequency}.

\subsubsection{Average energy error in the multivariate case}

Proposition~\ref{prop:average} may be extended to the multivariate Gaussian case. We begin by providing a
variant of Proposition~\ref{prop:KM}.

\begin{proposition}\label{prop:KMsecond}
The change of dependent variables, \(q= L^{-T}U\Omega^{-1} Q\), \(p =LU P\), decouples the system into a
collection of \(d\) harmonic oscillators (superscripts denote components):
\[
\frac{d Q^i}{dt} = \omega_i P^i,\qquad
\frac{d P^i}{dt} = -\omega_i  Q^i, \qquad i = 1,\dots, d.
\]

In terms of the new variables, the function in \eqref{eq:MKmodel} is given by
\[
\frac{1}{2}P^TP + \frac{1}{2}Q^TQ = \frac{1}{2}\sum_i \big((P^i)^2+(Q^i)^2\big).
\]
\end{proposition}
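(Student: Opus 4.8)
The plan is to verify the statement by direct substitution, exactly in the spirit of the computation behind Proposition~\ref{prop:KM}; no new idea is needed beyond careful matrix bookkeeping. First I would record that the proposed change of variables is genuinely invertible: since $M$ is positive definite, the factor $L$ in $M=LL^{T}$ is invertible; since $K$ is positive definite, so is $L^{-1}KL^{-T}$, hence every $\omega_{i}>0$ and $\Omega$ is invertible; and $U$ is orthogonal. Thus $q=L^{-T}U\Omega^{-1}Q$, $p=LUP$ may be solved to give $Q=\Omega U^{T}L^{T}q$, $P=U^{T}L^{-1}p$ (writing $L^{-T}$ for $(L^{T})^{-1}=(L^{-1})^{T}$).

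Next I would differentiate these expressions for $Q$ and $P$ along solutions of $\dot q=M^{-1}p$, $\dot p=-Kq$. Using $M^{-1}=L^{-T}L^{-1}$ one gets $\dot Q=\Omega U^{T}L^{T}M^{-1}p=\Omega U^{T}L^{-1}p=\Omega P$, and using the diagonalization $U^{T}L^{-1}KL^{-T}U=\Omega^{2}$ one gets $\dot P=-U^{T}L^{-1}Kq=-U^{T}L^{-1}KL^{-T}U\Omega^{-1}Q=-\Omega Q$. Reading these componentwise yields precisely $\dot Q^{i}=\omega_{i}P^{i}$, $\dot P^{i}=-\omega_{i}Q^{i}$, the asserted collection of uncoupled oscillators. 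Equivalently one may observe that $Q^{i}=\omega_{i}\bar q^{i}$ and $P^{i}=\omega_{i}\bar p^{i}$ in the notation of Proposition~\ref{prop:KM}, so the new system arises from the one there by multiplying the $i$-th pair of equations by $\omega_{i}$.

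Finally I would substitute $p=LUP$ and $q=L^{-T}U\Omega^{-1}Q$ into $H=\tfrac12 p^{T}M^{-1}p+\tfrac12 q^{T}Kq$. The kinetic term becomes $\tfrac12 P^{T}U^{T}L^{T}(LL^{T})^{-1}LUP=\tfrac12 P^{T}U^{T}UP=\tfrac12 P^{T}P$ because $U$ is orthogonal, and the potential term becomes $\tfrac12 Q^{T}\Omega^{-1}\bigl(U^{T}L^{-1}KL^{-T}U\bigr)\Omega^{-1}Q=\tfrac12 Q^{T}\Omega^{-1}\Omega^{2}\Omega^{-1}Q=\tfrac12 Q^{T}Q$. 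Adding the two gives $\tfrac12 P^{T}P+\tfrac12 Q^{T}Q=\tfrac12\sum_{i}\bigl((P^{i})^{2}+(Q^{i})^{2}\bigr)$, as claimed. There is no real obstacle: the only points deserving attention are keeping the orders of the matrix factors straight and remembering that this change of variables is \emph{not} canonical, so that the identity for $H$ is simply the result of inserting the new variables into the quadratic form and is not an instance of Proposition~\ref{prop:canonical change}.
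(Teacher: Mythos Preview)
Your proof is correct and is precisely the kind of direct verification the paper has in mind: the paper gives no explicit proof of this proposition (nor of Proposition~\ref{prop:KM}, where it simply says ``a simple computation yields the next result''), so your careful matrix bookkeeping fills in exactly what is left to the reader. The additional observation that $Q^{i}=\omega_{i}\bar q^{i}$, $P^{i}=\omega_{i}\bar p^{i}$ links the two propositions nicely and the closing remark about the change not being canonical matches the paper's own footnote.
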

Therefore the \(2d\) variables \(Q^i\) and \(P^i\) are independent standard Gaussian.\footnote{Obviously, the
last display  does not give the Hamiltonian function for the Hamiltonian dynamics for \((Q,P)\).
Proposition~\ref{prop:canonical change} does not apply because the change of variables \((q,p) = \Phi(Q,P)\)
is not symplectic.}

As discussed in connection with Proposition~\ref{prop:KM}, we may assume that the numerical method is applied
to each of the uncoupled oscillators. After recalling Remark~\ref{rem:omega},  the following result is easily
proved.

\begin{theorem} Consider a stable integration of \eqref{eq:MKmodel} with a (consistent), reversible, volume
preserving integrator applied with a stable value of \(h\). If \((q_0,p_0)\) are random variables with
probability density function (proportional to) \(\exp(-H)\), then for each \(n\) the expectation of the energy
error \(\Delta(q_0,p_0) = H(q_n,p_n)-H(q_0,p_0)\) may be bounded as
\begin{equation}\label{eq:multivariate}
0\leq  \mathbb{E}(\Delta) \leq \sum_{j=1}^d \rho(\omega_j h),
 \end{equation}
where \(\rho\) is the function defined in Proposition~\ref{prop:average}.
\end{theorem}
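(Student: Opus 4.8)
The plan is to reduce the $d$-degree-of-freedom statement to $d$ independent applications of Proposition~\ref{prop:average}. First I would invoke Proposition~\ref{prop:KMsecond}: the linear change of variables $q = L^{-T}U\Omega^{-1}Q$, $p = LUP$ turns \eqref{eq:MKmodel} into a collection of $d$ uncoupled standard harmonic oscillators, one for each frequency $\omega_j$, and rewrites the energy as $H = \frac{1}{2}\sum_{j=1}^d\big((P^j)^2+(Q^j)^2\big)$. As recalled in Section~\ref{ss:hmp}, for every integrator of practical interest decoupling and numerical integration commute, so computing $(q_n,p_n)$ in the original variables and then passing to $(Q_n,P_n)$ gives exactly the result of running the integrator separately on each scalar oscillator. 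Moreover, since the transformation merely relabels coordinates, the value of $H$ (hence the numerical energy error) splits additively,
\[
\Delta(q_0,p_0) = \sum_{j=1}^d \Delta_j(Q_0^j,P_0^j),
\]
where $\Delta_j$ denotes the energy error of the $j$-th uncoupled oscillator after $n$ steps.

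Next I would handle a single oscillator. Because $(q_0,p_0)$ has density proportional to $\exp(-H)$ and $H$ is the above sum of squares, the pairs $(Q_0^j,P_0^j)$ are mutually independent, each a standard Gaussian vector in $\Reals^2$ (as noted after Proposition~\ref{prop:KMsecond}). By Remark~\ref{rem:omega}, integrating the oscillator of frequency $\omega_j$ with step size $h$ coincides with integrating the standard oscillator \eqref{eq:harmonic} with step size $\omega_j h$; since $h$ is a stable value, $\omega_j h$ lies in the stability interval of the integrator, so Proposition~\ref{prop:average} applies and gives $\mathbb{E}(\Delta_j) = \sin^2(n\theta_{\omega_j h})\,\rho(\omega_j h)$, whence $0\le \mathbb{E}(\Delta_j)\le \rho(\omega_j h)$.

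Finally, taking expectations in the additive decomposition and using linearity of expectation,
\[
\mathbb{E}(\Delta) = \sum_{j=1}^d \mathbb{E}(\Delta_j) = \sum_{j=1}^d \sin^2(n\theta_{\omega_j h})\,\rho(\omega_j h),
\]
and since each $\sin^2$ factor lies in $[0,1]$ and $\rho\ge 0$ by Proposition~\ref{prop:average}, the claimed bound \eqref{eq:multivariate} follows at once. The nearest thing to an obstacle is checking that the restriction of $\psi_h$ to each two-dimensional $(Q^j,P^j)$-plane is again consistent, reversible and volume-preserving, so that the hypotheses of Proposition~\ref{prop:average} (which rest on the modified-Hamiltonian description in Proposition~\ref{prop:modham}) are genuinely met for every block. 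This is immediate from the block-diagonal form of the integrator in the decoupled variables: the momentum-flip involution $(q,p)\mapsto(q,-p)$ becomes $(Q,P)\mapsto(Q,-P)$, which acts blockwise, and each $2\times2$ diagonal block inherits $A_h=D_h$ and unit determinant from the corresponding properties of the full map.
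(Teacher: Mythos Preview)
Your proposal is correct and follows exactly the route the paper intends: decouple via Proposition~\ref{prop:KMsecond}, use that integration and decoupling commute so the energy error splits as a sum over the $d$ oscillators, observe that the $(Q_0^j,P_0^j)$ are independent standard Gaussians, and then apply Proposition~\ref{prop:average} together with Remark~\ref{rem:omega} to each block. The paper merely says the result ``is easily proved'' after recalling Remark~\ref{rem:omega}; you have supplied the details (including the check that each $2\times 2$ block inherits reversibility and unit determinant), so there is nothing to add.
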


\subsubsection{An illustration}
\begin{figure}
\begin{center}
\includegraphics[width=0.45\textwidth]{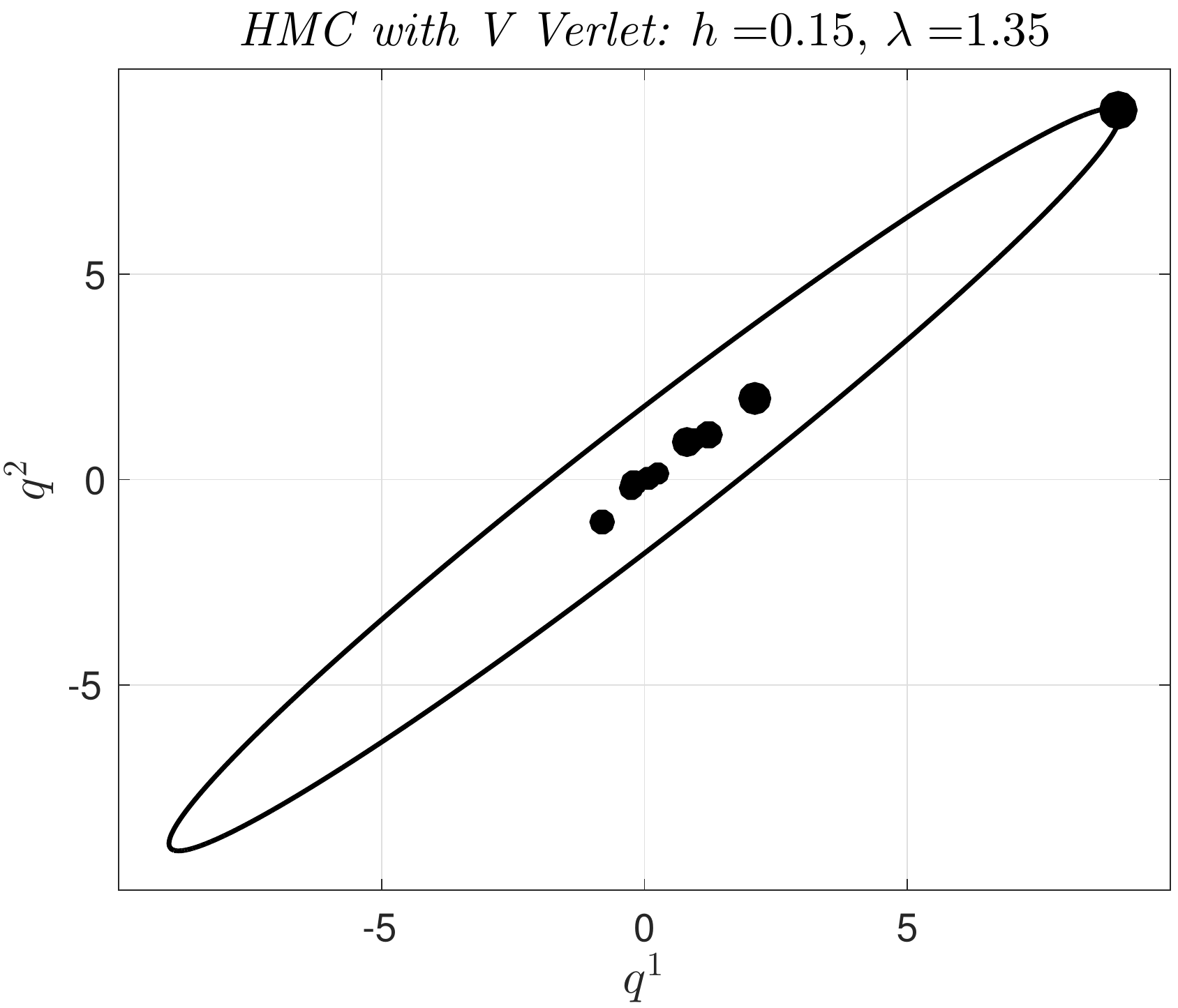}
\includegraphics[width=0.45\textwidth]{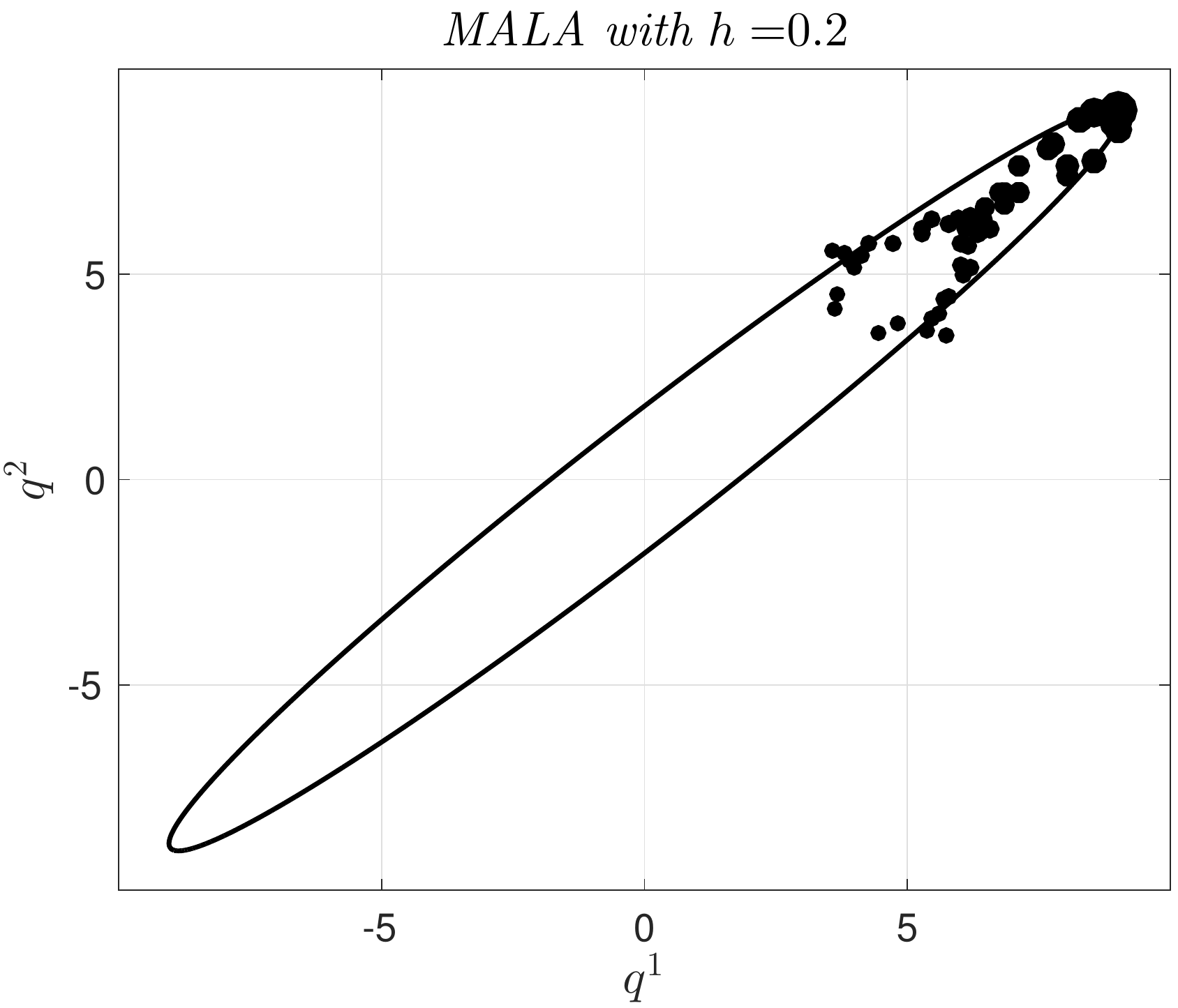}
\end{center}
\caption{
Bivariate Gaussian distribution, \((q^1,q^2)\) plane; the ellipse is a contour of constant probability density. HMC operated with velocity Verlet (left panel) and MALA (right panel).
The size of the markers is related to $i$: points along the Markov trajectory corresponding
to larger values of $i$  have smaller markers.  The computational
budget for both algorithms is fixed at 100 force evaluations.
Along these trajectories, the average acceptance probability for HMC and MALA is 93\% and 82\%,
respectively.  The stability requirement for Verlet is $h < 0.2$, which is set
by the component of the target distribution with the smallest variance.  In relation to MALA,
note that HMC relaxes faster to a region of high probability.
}
\label{fig:hmc_numerical_linear_2d}
\end{figure}

As an example, we consider a two-dimensional target where  \[ K = \Sigma^{-1} = \frac{1}{2}
\begin{bmatrix} 101 & -99 \\ -99 & 101 \end{bmatrix}
\] This matrix has eigenvalues $\omega_1^2 = 1$ (with eigenspace \(q^1=q^2\)) and $\omega_2^2=100$
(with eigenspace \(q^1= -q^2\)). The mass matrix is chosen to be \(M = I\). We use HMC with velocity Verlet;
stability requires $h \omega_2 < 2$ or $h < 0.2$ and
 we set $h=0.15$.  The chain is started at
$(q^1, q^2)=(9,9)$ where $\mathcal{U}(9,9)=81$, \ie\ about 9 standard deviations in the direction of largest
variance. The duration parameter \(\lambda\) is set to be equal to 1.35 (\ie\ there are \(9\) time-steps in
each numerical integration leg) and the number of transitions in the chain was the maximum attainable with a
computational budget of 100 force evaluations. The left panel in Figure~\ref{fig:hmc_numerical_linear_2d}
shows that HMC rapidly relaxes to a region of high probability.

\begin{remark}It may be of interest to relate this experiment to the discussion
of stiffness in Example~\ref{ex:stabilty2df}. Here it is the oscillation with frequency  \(\omega_1\) (\ie\
the evolution along the diagonal \(q^1=q^2\)) that matters most; however \(h\) has to be chosen in terms of
the oscillation with frequency \(\omega^2\).\footnote{Example~\ref{ex:stabilty2df} had \(\omega_2 = 100\);
here we have chosen a smaller value of \(\omega_2\) so as not to blur the figure.} Nevertheless HMC succeeds
because, at each Markov transition, several time-steps are taken.
\end{remark}

As a comparison we have also implemented the well-known algorithm MALA, with the same initial state and same
computational budget.  MALA may be described as the algorithm that results when, in HMC operated with velocity
Verlet, the duration parameter \(\lambda\) is chosen to coincide with \(h\), \ie\ each integration leg only
takes one time-step. In this way the accept/reject mechanism operates after each individual time-step in MALA
(but only after \(\lambda/h\) successive time-steps in HMC). The right panel shows MALA zigzags in a region of
low probability.

\begin{remark}In connection with the preceding remark, we note that for MALA the progress of the Markov
chain along the \(q^1=q^2\) diagonal is slow. Each time-step moves the state by a small amount, because \(h\)
was chosen by taking into account the behaviour of the problem along the \(q^1=-q^2\) diagonal. The momentum
refreshments, which in MALA take place after every individual time-step, change the direction of motion in the
\((q^1,q^2) \) plane, thus inducing a random-walk behaviour.  \emph{HMC avoids this random walk behaviour by
taking sufficiently many time-steps between consecutive accept/reject steps so that integration legs make
substantial strides in the solution components with high variance.}
\end{remark}

\subsection{May Verlet be beaten?}

Currently, velocity Verlet is the integrator of choice within HMC algorithms. Is this the best possibility? In
this subsection we will discuss whether it is possible to construct a palindromic splitting formula with
\(s>1\) stages \eqref{eq:even}--\eqref{eq:odd} that improves on the velocity Verlet algorithm in the sense
that, when operated with a step size \(h\), leads to smaller energy errors ---higher acceptance
probabilities--- than velocity Verlet with step size \(h/s\). Because, as illustrated above, Verlet is often
very successful for large values of the step size, such a construction has to be based on investigating the
behaviour of the integrators for finite \(h\) and therefore cannot be guided by information (such as the
expansion in Theorem~\ref{th:modinfsplit}) that corresponds to the behaviour as \(h\to 0\). We therefore
resort to Gaussian models with Hamiltonian of the form \eqref{eq:MKmodel}. In the remainder of this subsection
we shall use the symbol \(\tau\) to refer to the actual value of the step size implemented in the algorithm
and keep the letter \(h\) for the (nondimensional) product \(\omega \tau\), where \(\omega\) represents one of
the frequencies present in the problem. With this notation, Verlet is stable for \(0<h<2\) or
\(0<\tau<2/\omega_{\rm max}\).

When the number \(d\) of degrees of freedom equals 1, velocity Verlet is indeed the best choice. We know, from
Section~\ref{ss:energy_error_standard}, that Verlet will deliver high acceptance rates for \(h\) just below
its stability limit and, from Section~\ref{ss:optimal}, that Verlet has the longest stability interval. So
Verlet peforms well for HMC for values of \(\tau\) where other integrators are not even stable. Essentially
the same is true for \(d\) small, as one sees by decoupling the problem into \(d\) one-degree-of-freedom
oscillators as we did in Section~\ref{ss:multivariate}. However, as \(d\) increases, Verlet  is not likely to
keep high acceptance rates if \(\tau\) is just below \(2/\omega_{\rm max}\): in fact the additive character of
the energy, noted in the discussion of Theorem~\ref{thm:mean_energy_error}, implies that, even if the energy
error in each of the one-dimensional oscillators in the problem is small, the energy error for the overall
system will likely be large. In that scenario, where, on \emph{accuracy} grounds, Verlet has to be operated
with \(\tau\) significantly smaller than the stability limit \(2/\omega_{\rm max}\), there is room for
improvements in efficiency by resorting to more sophisticated integrators, as we discuss next.

\subsubsection{Two stages}
\begin{figure}[t]
\begin{center}
\includegraphics[width=0.45\textwidth]{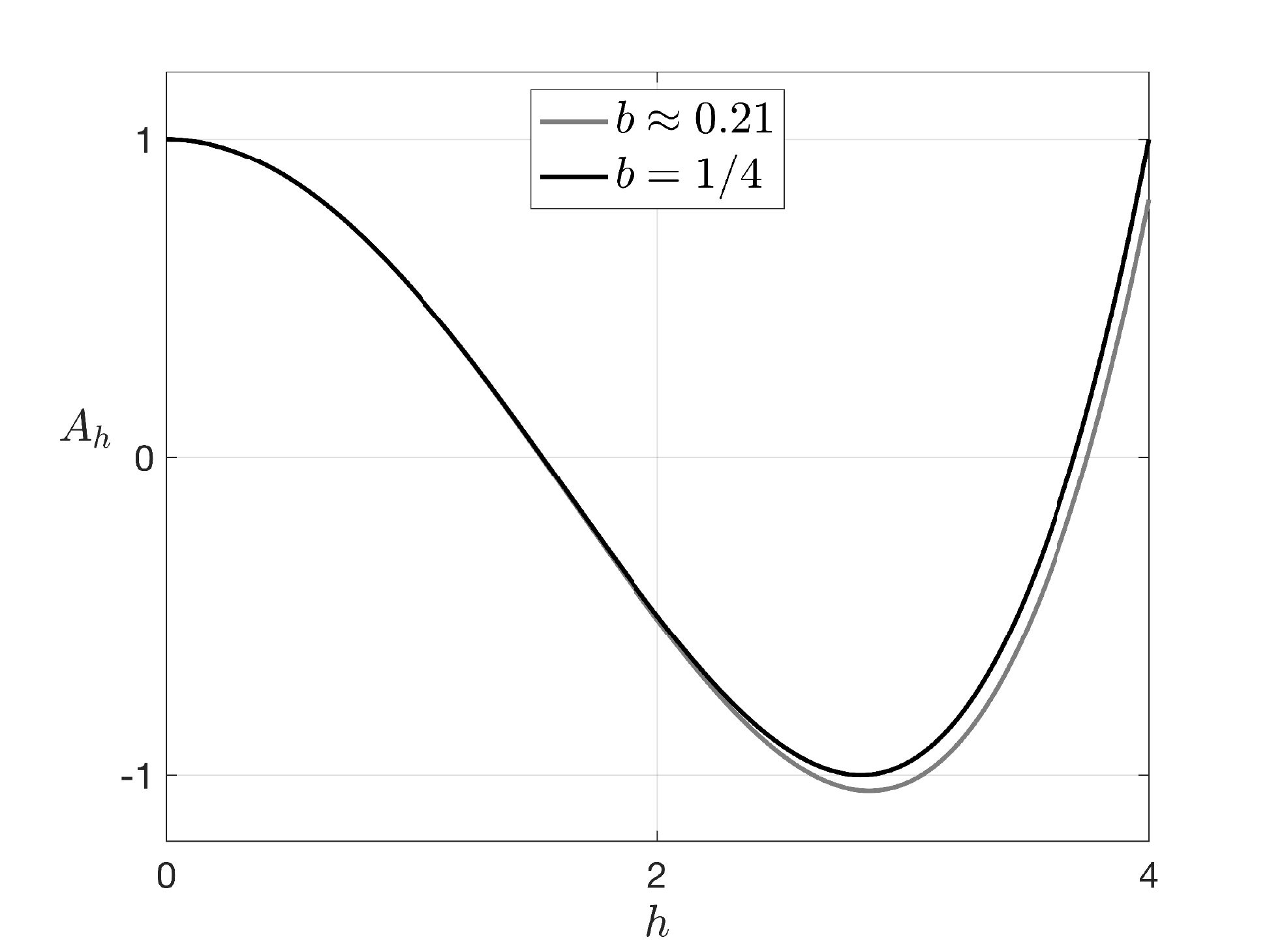}
\includegraphics[width=0.45\textwidth]{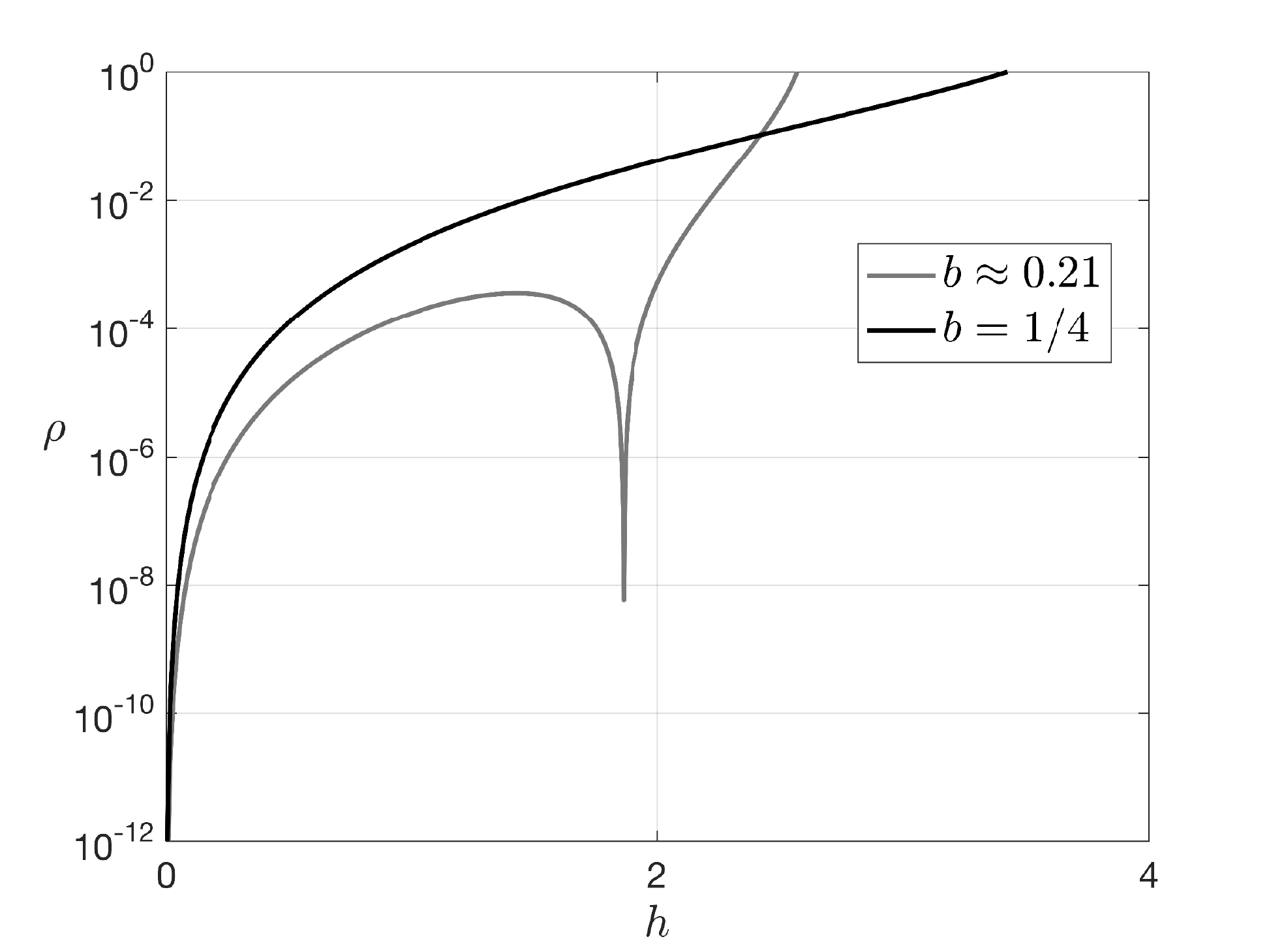}
\end{center}
\caption{ \small Two-stage  palindromic integrators
 \eqref{eq:twostagefamily} corresponding to  \(b=1/4\)   or \(b\) given by \eqref{eq:nuestro}. Left panel:
stability polynomial \(A_h\). For \(b=1/4\) (which provides two steps of velocity Verlet of step length \(\tau/2\)) stabilty is lost
at \(h=4\) and there is a double root of the equation \(A_h=-1\). The method \eqref{eq:nuestro} has a
significantly shorter stability interval because under perturbation the double root of  \(A_h=-1\) splits
 into two real roots. Right panel: the quantity \(\rho=\rho(h)\) that governs the expected energy error for
 Gaussian problems (see \eqref{eq:multivariate}). When the methods are operated with \(0<h<2\), \ie\ \(0<\tau<2/\omega_{\rm max}\), the choice
\eqref{eq:nuestro} yields values of \(\rho\) much smaller than those corresponding to \(b=1/4\).
 }
 \label{fig:two_stage}
\end{figure}

We begin by studying the one-parameter family \eqref{eq:twostagefamily} of methods with two stages (\ie\
essentially two force evaluations per step). For the choice \(b= 1/4\), one step of length \(\tau\) of the
two-stage method is equivalent to two steps of length \(\tau/2\) of velocity Verlet (see equation
\eqref{eq:opt} with \(N=2\)). The stability polynomial \(A_h\)  was described in terms of the Chebyshev
polynomial \(T_2\) in Section~\ref{ss:optimal} and its graph may be seen in the left panel of
Figure~\ref{fig:two_stage}. Stability is lost at \(h =4\) when \(A_h = 1\); at \(h=2\sqrt{2}\approx 2.82\),
there is a double root of the equation \(A_h=-1\)  (corresponding to the double root at \(\zeta = 0\) of the
equation \(T_2(\zeta) = 2\zeta^2-1 = -1\)). Small perturbations of \(b=1/4\) turn the double root of the
equation \(A_h=-1\) into two real roots in the neigbourhood of \(h=2\sqrt{2}\); after such perturbation the
length of the stability interval drops from 4 to \(\approx 2\sqrt{2}\). Accordingly, in problems where \(d\)
is small enough for standard velocity Verlet  to work well with \(\tau\omega_{\rm max}\) in the interval
\((\sqrt{2},2)\), no two-stage formula  with \(b\neq 1/4\) can improve upon Verlet. However if, when applying
Verlet, \(\tau\omega_{\rm max}\) has to be chosen below \(\sqrt{2}\) on accuracy grounds, then, as we will see
now, efficiency may be improved by choosing \(b\) in \eqref{eq:twostagefamily} different from \(1/4\).

The paper \cite{BlCaSa2014} suggests the following procedure to find the value of \(b\). Numerical experiments
in that reference show that,  for Gaussian problems with \(d\leq 1000\), (one-stage) velocity Verlet achieves
acceptance rates \(\geq 20\:\%\) when \(\tau\omega_{\rm max}\leq 1\) (\ie\ when \(\tau\) is less than a half
of the maximum allowed by stability). It is then assumed that two-stage integrators should be demanded to
perform well when \(\tau\omega_{\rm max}\leq 2\). On the other hand, in view of \eqref{eq:multivariate}, for
Gaussian models performance is governed by the quantity \(\rho\)  defined in Proposition~\ref{prop:average},
which for the one-parameter family \eqref{eq:twostagefamily} is found to be \[ \rho(h;b) = \frac{h^4
\big(2b^2(1/2-b)h^2+4b^2-6
   b+1\big)^2}
   {8 \big(2-b h^2\big)
   \big(2-(1/2-b) h^2\big)
   \big(1-b(1/2-b) h^2\big)}.
\]
Then \(b\) is chosen by minimizing
\[
\| \rho\|_{(2)} =
\max_{0\leq h\leq 2} \rho(h;b);
\]
which yields \(b = 0.21178\dots\) To avoid cumbersome decimal expressions, \cite{BlCaSa2014} instead uses the
approximate value
 \begin{equation}\label{eq:nuestro}
   b = \frac{3-\sqrt{3}}{6} \approx 0.21132,
   \end{equation}
which gives \(\|\rho\|_{(2)}\approx 5\times 10^{-4}\). For comparison \(b=1/4\) has a substantially larger
\(\|\rho\|_{(2)}\approx 4\times 10^{-2}\). Numerical experiments reported in \cite{BlCaSa2014} confirm that,
when \(d\) is not small the method \eqref{eq:nuestro} is a clear improvement on Verlet in an example where the
target is not Gaussian. See also \cite{MaKlSk2017}.

\subsubsection{Three stages}
\begin{figure}[t]
\begin{center}
\includegraphics[width=0.45\textwidth]{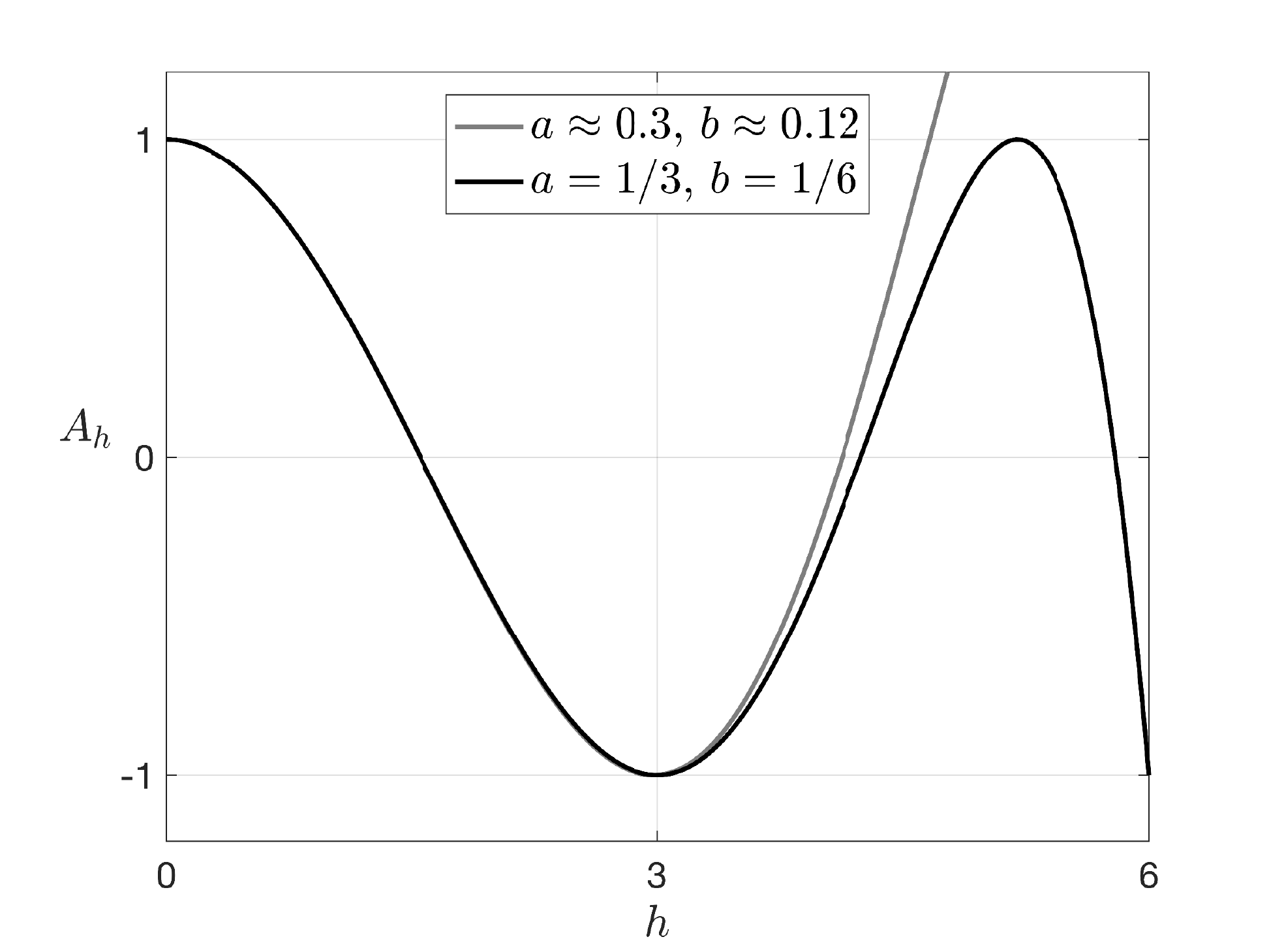}
\includegraphics[width=0.45\textwidth]{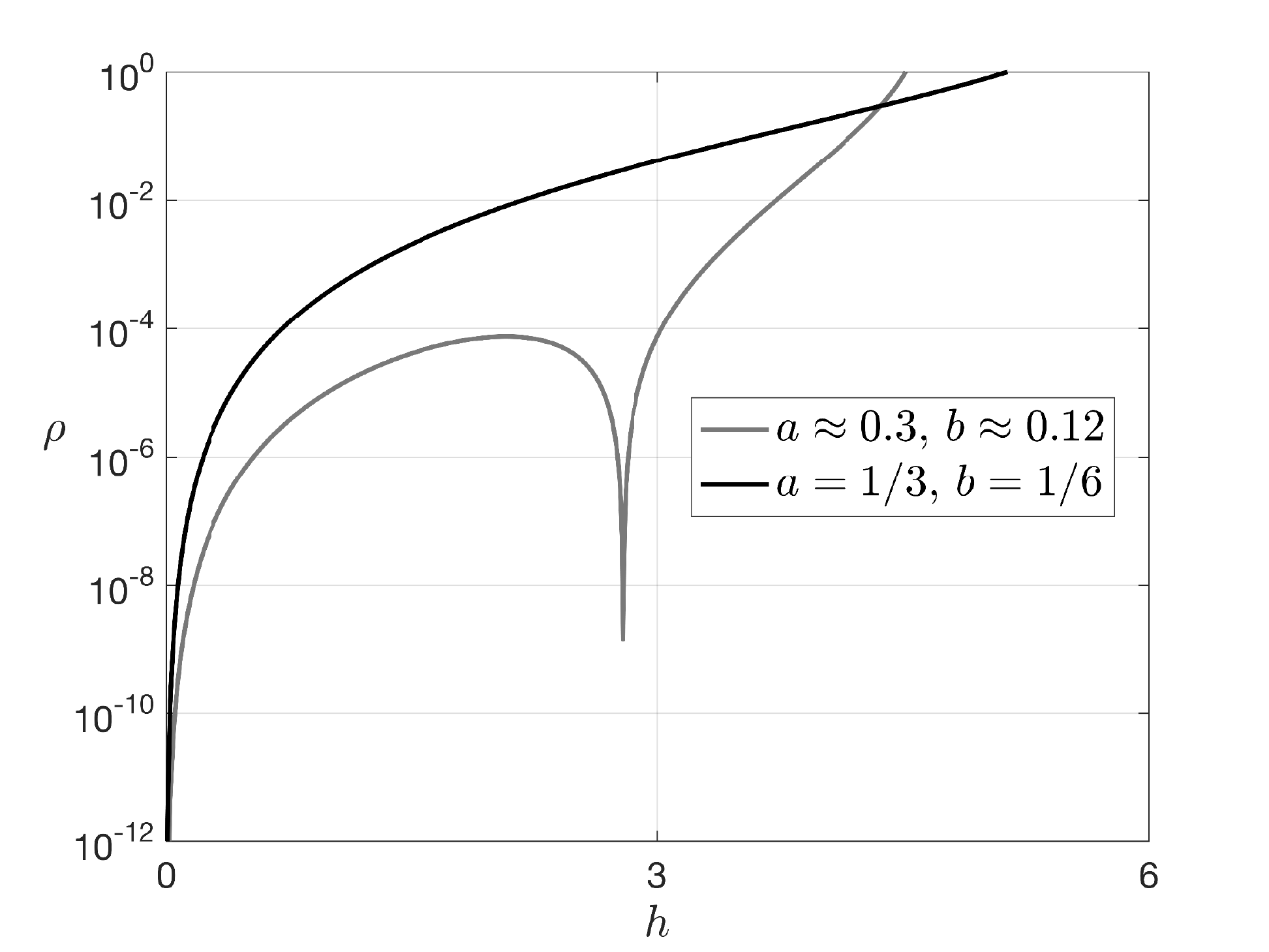}
\end{center}
\caption{
\small Three-stage  palindromic integrators
 \eqref{eq:threestagefamily} for  \(a=1/3\),
  \(b=1/6\)   or \(a\), \(b\) given by \eqref{eq:minrhotres}.
Left panel:
stability polynomial \(A_h\). Right panel: the quantity \(\rho=\rho(h)\) that governs the
 expected energy error for
 Gaussian problems (see \eqref{eq:multivariate}). When the methods are operated with \(0<h<3\), the choice
\eqref{eq:minrhotres} yields values of \(\rho\) much smaller than those corresponding to
 \(a=1/3\), \(b=1/6\).
 }
 \label{fig:three_stage}
\end{figure}
A similar study may be undertaken for the three-stage family \eqref{eq:threestagefamily} with two parameters
\(a\) and \(b\). The choice \(a =1/3\), \(b=1/6\) yields a method consisting of the concatenation of three
successive steps of length \(\tau/3\) of velocity Verlet (see equation \eqref{eq:opt} with \(N=3\)). The
graphs of the stability polynomial \(A_h\) may be seen in the left panel of Figure~\ref{fig:three_stage}.
Stability is lost at \(h =6\) when \(A_h = -1\). The double root of \(T_3(\zeta) = 4\zeta^3-4\zeta = 1\) at
\(\zeta=-1/2\) gives rise to a double root of \(A_h = 1\) at \(h = 3\sqrt{3}\approx 5.19\). The double root of
\(T_3(\zeta) = - 1\) gives rise to a double root of \(A_h = -1\) at \(h=3\). Perturbing the values \(a =1/3\),
\(b=1/6\) leads in general to methods with (short) stability intervals of length \(\approx 3\), because
typically the double root of \(A_h = -1\) splits into two single roots. However it is possible to perturb
\(a\) and \(b\) in such a way that, after perturbation, \(A_h=-1\) has a double root close to \(h=3\) as
analyzed in \cite{CaSa2017}. For such particular perturbations, stability is lost by crossing the line
\(A_h=1\). This is what happens for the choice
\begin{equation}\label{eq:minrhotres}
a = 0.29619504261126,\quad b =  0.11888010966548,
\end{equation}
found in \cite{BlCaSa2014} by minimizing
\[
\| \rho\|_{(3)} =
\max_{0\leq h\leq 3} \rho(h;b).
\]
Note that  the range of the maximum is now \({0\leq h\leq 3}\) to account  for the fact that, when using three
force evaluations per step, we aim at integrations with \(\tau\omega_{\rm max}\leq 3\) to be competitive with
Verlet. The choice \eqref{eq:minrhotres} results in \(\|\rho\|_{(3)} \approx 7\times 10^{-5}\) and a stability
interval of length \(\approx 4.67\dots\) The paper \cite{BlCaSa2014} reports experiments that show the
superiority of \eqref{eq:minrhotres} over standard velocity Verlet and also describes the construction of
optimized four-stage integrators.

\subsubsection{The AIA approach}
Let us go back to  the one-parameter family \eqref{eq:twostagefamily} of methods with two stages. The choice
\eqref{eq:nuestro} is based on the assumption that with \(b=1/4\) the integrator (equivalent to velocity
Verlet) would be used with \(\tau\omega_{max}\) in the interval \((0,2)\). There is of course a degree of
arbitrariness in the choice of this interval. When an interval \((0,c)\) with \(c\) slightly above \(2\) is
considered, then minimization of
\begin{equation}\label{eq:max}
\max_{0\leq h\leq c} \rho(h;b)
\end{equation}
results in a value of \(b\) slightly above that corresponding to \((0,2)\). This improves the length of the
stability interval, but, with the notation in Theorem~\ref{th:modinfsplit},  increases the quantity
\(C_{2,1}^2+ C_{2,2}^2\), so that the integrator becomes less accurate  in the limit \(h\to 0\) (note that
\(C_{1,1}=0\) because we are dealing with methods of order 2). As \(c\) is increased further, the value of
\(b\) that minimizes \eqref{eq:max} increases towards  \(b=1/4\); the stability interval improves and accuracy
worsens.
 For \(c\geq 2\sqrt 2\) the optimization procedure leads to \(b=1/4\) because, as pointed out earlier, then
  \(b=1/4\) is the only value for which the maximum in \eqref{eq:max} is finite. On the other hand,
  as \(c\) decreases from \(2\), the optimal value of \(b\) decreases, accuracy in the limit  \(h\to 0\)
   improves and the stability interval becomes shorter.
 In the limit \(c\downarrow 0\), \(b\) approaches
\(b \approx 0.1932\), the value that minimizes \(C_{2,1}^2+ C_{2,2}^2\) \cite{Mc1995}.

In the Adaptive Integration Approach (AIA) suggested by \cite{FeAkSa16,AkFeRaSa2017}
 for molecular dynamics problems, the value of \(b\) in \eqref{eq:twostagefamily} is chosen by minimizing
 \eqref{eq:max} with the parameter \(c\) adapted
  to the problem
 being solved, rather than being kept at the somehow arbitrary value \(c=2\). The procedure is the following.
 In molecular dynamics packages such as GROMACS \cite{Heetal2008}, the user
 is prompted to specify her choice of the value of \(\tau\); of course  smaller values of \(\tau\)  lead to
  more expensive simulations. The package estimates
the values of the frequencies of  all harmonic forces present in the problem and will not run if \(\tau
\tilde\omega_{\rm max}\) is close to the Verlet stability limit (\(\tilde\omega_{\rm max}\) is the maximum of
the estimated frequencies).  AIA sets \(c = \sqrt{2}\tilde\omega_{max} \tau\) (\(\sqrt{2}\) is a safety
factor) so that in \eqref{eq:max} \(h\) ranges in the shortest interval that contains all products
\(\sqrt{2}\tau\tilde \omega_i\), where \(\tilde \omega_i\) are the estimated frequencies. (If \(c\geq 4\), the
user is demanded to reduce \(\tau\).) In this way whenever the user is prepared to operate with a value of
\(\tau\) that is \lq\lq small\rq\rq\ for the problem being tackled, AIA chooses a more accurate integrator; as
\(\tau\) increases, AIA will pick up a value of \(b\) leading to a more stable, less accurate integrator. This
strategy has been successfully implemented in an in-house version of GROMACS. The  computational overheads are
negligible as they only stem from finding the value of \(b\) for the problem at hand and the value of \(\tau\)
chosen by the user. Numerical experiments show that AIA is a clear improvement on Verlet.

\vspace{2mm}

\section{HMC in high dimension. Tuning the step size}
\label{sec:highD_hmc} We now follow \cite{BePiRoSaSt2013} and study the behaviour of HMC as the dimensionality
increases. As a byproduct we obtain a general rule for tuning the value of \(h\) when running HMC with a
chosen value of the duration \(\lambda\) of the integration legs. The analysis uses a model situation similar
to that employed in \cite{RoGeGi1997,RoRo1998} to analyze other sampling techniques. High dimensionality in a
different setting is the object of the next section.

\subsection{The set up}

We consider a high-dimensional Hamiltonian system in \((\mathbb{R}^{2d})^m\), \(m\gg 1\), obtained by
juxtaposing without coupling \(m\) copies of a fixed Hamiltonian system in \(\mathbb{R}^{2d}\).  If we write
 a
point $(q,p) \in (\mathbb{R}^{2d})^m$ in components as \[ q= (q_1, \dots, q_m) \in (\mathbb{R}^d)^m \quad
\text{and} \quad p= (p_1, \dots, p_m) \in (\mathbb{R}^d)^m,
\]
then the Hamiltonian of the system in \((\mathbb{R}^{2d})^m\) is
\[ H_m(q,p) = \sum_{j=1}^m H(q_j,p_j),
\]
with
\[
H(q_j,p_j) = \frac{1}{2}  p_j^T M^{-1} p_j + \mathcal{U}(q_j) - \log(Z).
\]
Here $\mathcal{U}: \mathbb{R}^d \to \mathbb{R}$ is the potential energy function for each component and \(Z\)
is the normalizing factor for the one-component Boltzmann-Gibbs distribution, so that \(\exp(-H(q_j,p_j))\) is
a (normalized) probability density function in  \(\mathbb{R}^{2d}\).\footnote{The presence of \(Z\) in the
expression for \(H\) simplifies several formulas below but has no consequence in the HMC algorithm itself (the
constant \(\log(Z)\) does not change the Hamiltonian dynamics and drops from the expression of the acceptance
probability).} In this way \(\exp(-H_m(q,p))\) is the (normalized) probability density function of the
Boltzmann-Gibbs distribution in \((\mathbb{R}^{2d})^m\); clearly the \(2m\) random vectors \(q_j\), \(p_j\)
are stochastically independent for this distribution.

From a sampling viewpoint, the target \(\Pi\) is the distribution in \((\mathbb{R}^d)^m\) with non-normalized
density \( \exp(-\sum_{j=1}^m \mathcal{U}(q_j))\); under this target the \(q_j\) are \emph{independent and
identically distributed.} We sample from \(\Pi\) by using the HMC Algorithm \ref{algo:numerical_hmc} evolving
\((q,p)\) by integrating numerically the dynamics associated with \(H_m\). Since there is no coupling between
components, in Step 2 of the algorithm each pair \((q_j,p_j)\) moves independently of all others. In
particular, the step size stability restriction for the whole system will coincide with the restriction for
the one-component Hamiltonian and therefore will be independent of \(m\). Similarly, in Step 1 randomizing the
momentum \(p\) in \( (\mathbb{R}^{d})^m \) is equivalent to randomizing each \(p_j\). However, in Step 3, the
different components come together: acceptance depends on the error in \(H_m\) and this is obtained by
\emph{adding} the errors in the energies of the individual components. As a consequence of
Theorem~\ref{thm:mean_energy_error}, for a fixed value of \(h\) and a fixed integration interval \(0\leq t\leq
\lambda\), the mean energy error in one component is positive and therefore the mean energy error for \(H_m\)
will grow  linearly with \(m\); as a result, the acceptance rate will decrease as the dimensionality
increases. Therefore, for \(m\) large with \(\lambda\) fixed, the value of \(h\) will have to be decreased to
ensure a satisfactory acceptance rate; note that this restriction on \(h\) is due to accuracy considerations
and has no relation with the stability limit of the integrator being used, which, as we just pointed out, is
independent of \(m\) in our setting.

\subsection{Decreasing \(h\) as the dimensionality increases}
In what follows we fix the duration \(\lambda\) of the integration leg, assume that \(\lambda/h\) is an
integer and study the way in which \(h\) has to be decreased if we wish to ensure that the acceptance rate
remains bounded away from \(0\) as \(m\uparrow \infty\). We denote by \(\Delta(q_j,p_j;h)\) the one component
energy error at \(t=\lambda\) for an integration started from the initial state \((q_j,p_j)\); this initial
state is regarded as a random variable distributed according to the Boltzmann-Gibbs associated with \(H\).
Thus \((q,p)\) will be distributed according to the Boltzmann-Gibbs distribution with density \(\exp(-H_m)\);
in other words the chain is assumed to be at stationarity.
\subsubsection{The mean energy error for one component}
\label{sec:onecomponent}
We
shall   be concerned with the first and second moments
\begin{eqnarray*}
\mu(h) &=& \int_{\mathbb{R}^{2d}} \Delta(q_j,p_j;h) e^{-H(q_j,p_j)} dq_j
dp_j,\\
s^2(h) &=& \int_{\mathbb{R}^{2d}} \Delta(q_j,p_j;h)^2 e^{-H(q_j,p_j)} dq_j
dp_j,
\end{eqnarray*}
and the corresponding variance \(\sigma^2(h) = s^2(h)-\mu(h)^2\).

The following important result rounds out  Theorem~\ref{thm:mean_energy_error}. We now assume that
\(\Delta(q_j,p_j;h)\)  behaves asymptotically as \(h^\nu a(p_j,q_j)\) for a suitable function \(a\) (cf.\
Remark \ref{rem:asymptotic} where global errors rather than energy errors were considered). In this way,
smaller values of \(|a|\) correspond to more accurate integrators and/or \lq\lq easier\rq\rq\  Hamiltonians.
The theorem below ensures that the expectation \(\mu(h)\) of \(\Delta\) is \(\approx (\Sigma/2) h^{2\nu}\),
where we emphasize that the exponent of \(h\) is twice the order of the method and that the proportionality
constant \(\Sigma\) is the average of \(a^2\). The hypotheses in the theorem will hold in practice under
reasonable hypotheses on the integrator and the target distribution; the reader is referred to
\cite{BePiRoSaSt2013} for a complete study in the case of the velocity Verlet scheme.

\begin{theorem}\label{th:onecomponent}Assume that:
\begin{itemize}
\item There exist functions \(a(p_j,q_j)\) and \(r(q_j,p_j;h)\) such that
\[
\Delta(q_j,p_j;h) = h^\nu a(p_j,q_j)+ h^\nu r(q_j,p_j;h),
\]
with \(\lim_{h\rightarrow 0} r(q_j,p_j;h) = 0\) for each \((q_j,p_j)\).
\item There exists a real-valued function \(D(q_j,p_j)\), integrable with respect to the Boltzmann-Gibbs
    distribution, such that, for a suitable \(h_0>0\),
\[
\sup_{0<h<h_0} \frac{\Delta(q_j,p_j;h)^2}{h^{2\nu}}\leq D(q_j,p_j).
\]
\end{itemize}

Then
\[
\lim_{h\rightarrow 0} \frac{\mu(h)}{h^{2\nu}} = \frac{\Sigma}{2},\qquad
\lim_{h\rightarrow 0} \frac{\sigma^2(h)}{h^{2\nu}} = \Sigma,
\]
with
\[
\Sigma = \int_{\mathbb{R}^{2d}} a(q_j,p_j)^2 e^{-H(q_j,p_j)} dq_jdp_j.
\]
\end{theorem}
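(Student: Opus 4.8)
The plan is to start from the identity already established in the proof of Theorem~\ref{thm:mean_energy_error}: applying Proposition~\ref{prop:natesh} to the one-component map $\Psi_\lambda$ (which, under the standing assumptions of this section, is volume-preserving and reversible with respect to the momentum flip) with $g(x)=x$ gives
\[
\mu(h) = \frac12\int_{\mathbb{R}^{2d}} \Delta(q_j,p_j;h)\big(1-e^{-\Delta(q_j,p_j;h)}\big)\,e^{-H(q_j,p_j)}\,dq_j\,dp_j .
\]
Writing $\Delta(1-e^{-\Delta})=\Delta^2\phi(\Delta)$ with $\phi(x)=(1-e^{-x})/x$ and $\phi(0)=1$, the first hypothesis forces $\Delta(q_j,p_j;h)\to 0$ as $h\to 0$, so the integrand divided by $h^{2\nu}$ tends pointwise to $a(q_j,p_j)^2 e^{-H(q_j,p_j)}$. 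The whole proof is then a matter of justifying the interchange of this limit with the integral. The same idea, applied to $s^2(h)=\int\Delta^2 e^{-H}$, immediately yields $s^2(h)/h^{2\nu}\to\Sigma$ by the \emph{ordinary} dominated convergence theorem, since there the integrand is bounded by $D(q_j,p_j)e^{-H(q_j,p_j)}$, which is integrable by the second hypothesis. Once that is in hand, the bound $0\le\mu(h)\le s^2(h)$ from Theorem~\ref{thm:mean_energy_error} gives $\mu(h)=\mathcal{O}(h^{2\nu})$, hence $\mu(h)^2/h^{2\nu}\to 0$, and from $\sigma^2(h)=s^2(h)-\mu(h)^2$ one gets $\sigma^2(h)/h^{2\nu}\to\Sigma$ for free.

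The delicate point is the limit for $\mu(h)$, because the naive dominator $D\,\phi(\Delta)e^{-H}$ is useless: $\phi(x)$ grows like $e^{|x|}/|x|$ as $x\to-\infty$, which is not controlled by hypotheses bearing only on $\Delta^2/h^{2\nu}$. I would instead use the \emph{generalized} dominated convergence theorem. Using the elementary inequality $|1-e^{-\Delta}|=|e^{-\Delta}-1|\le|\Delta|(e^{-\Delta}+1)$ (already invoked in the proof of Theorem~\ref{thm:mean_energy_error}) together with $e^{-\Delta(q,p)}e^{-H(q,p)}=e^{-H(\Psi_\lambda(q,p))}$, one obtains the pointwise bound
\[
\left|\frac{\Delta(1-e^{-\Delta})}{h^{2\nu}}\right| e^{-H} \;\le\; G_h \;:=\; \frac{\Delta^2}{h^{2\nu}}\,e^{-H(\Psi_\lambda)} + \frac{\Delta^2}{h^{2\nu}}\,e^{-H} .
\]
This $G_h$ converges pointwise to $2a^2 e^{-H}$: the first hypothesis gives $\Delta^2/h^{2\nu}\to a^2$, while $\Psi_\lambda(q_j,p_j)\to\varphi_\lambda(q_j,p_j)$ as $h\to 0$ (convergence of the integrator, Theorem~\ref{th:convergence}, using that $\lambda/h$ is an integer) together with energy conservation for the exact flow (Theorem~\ref{th:consenergy}) give $e^{-H(\Psi_\lambda)}\to e^{-H}$. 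Moreover, by Proposition~\ref{prop:natesh} with $g(x)=x^2$ one has $\int \frac{\Delta^2}{h^{2\nu}}e^{-H(\Psi_\lambda)}=\int\frac{\Delta^2}{h^{2\nu}}e^{-H}$, so $\int G_h = 2\int\frac{\Delta^2}{h^{2\nu}}e^{-H}\to 2\Sigma=\int 2a^2 e^{-H}$. Having a pointwise-convergent sequence of dominators whose integrals converge to the integral of the limit, the generalized dominated convergence theorem applies and gives
\[
\frac{2\mu(h)}{h^{2\nu}} \;=\; \int \frac{\Delta(1-e^{-\Delta})}{h^{2\nu}}\,e^{-H}\,dq_j\,dp_j \;\longrightarrow\; \int a^2 e^{-H}\,dq_j\,dp_j \;=\; \Sigma ,
\]
which is the first asserted limit; the $s^2$ and $\sigma^2$ statements then follow as explained above.

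I expect the main obstacle to be precisely this: realizing that ordinary dominated convergence fails for $\mu(h)$ because of the exponential blow-up of $\phi$ on the negative axis, and replacing it by the variable dominator $G_h$ whose control relies on \emph{both} geometric properties at once — reversibility (through Proposition~\ref{prop:natesh}, to rewrite $\int\Delta^2 e^{-H(\Psi_\lambda)}$ as $\int\Delta^2 e^{-H}$) and the convergence $\Psi_\lambda\to\varphi_\lambda$ combined with exact energy conservation of the true flow. Everything else — the $s^2$ limit, the $\mathcal{O}(h^{2\nu})$ bound on $\mu$, and the assembly of $\sigma^2$ — is routine once this step is secured.
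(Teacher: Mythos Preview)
Your proof is correct and shares all the essential ingredients with the paper's argument: ordinary dominated convergence for $s^2(h)/h^{2\nu}\to\Sigma$, the bound $0\le\mu(h)\le s^2(h)$ from \eqref{eq:mean_energy_error} to kill $\mu(h)^2/h^{2\nu}$, the identity \eqref{eq:aux20jul}, and the even case of Proposition~\ref{prop:natesh} to rewrite $\int\Delta^2 e^{-H\circ\Psi_\lambda}$ as $\int\Delta^2 e^{-H}$. The only difference lies in how the $\mu$-limit is extracted. The paper subtracts first, writing
\[
\frac{2\mu(h)-\sigma^2(h)}{h^{2\nu}}-\frac{\mu(h)^2}{h^{2\nu}} = -\int_{\mathbb{R}^{2d}}\frac{\Delta}{h^\nu}\,\frac{e^{-\Delta}-1+\Delta}{h^\nu}\,e^{-H}\,dq_j\,dp_j,
\]
observes that the integrand tends to $0$ pointwise (since $e^{-\Delta}-1+\Delta=\mathcal O(\Delta^2)$), and then invokes a dominated convergence argument whose details are deferred to \cite{BePiRoSaSt2013}. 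You instead attack $2\mu(h)/h^{2\nu}$ directly via the generalized dominated convergence theorem with the varying dominator $G_h$. Both routes face the same obstruction (the blow-up of $e^{-\Delta}$ when $\Delta$ is large and negative) and both resolve it through Proposition~\ref{prop:natesh}; your version has the merit of being fully self-contained. One minor simplification: you do not need Theorem~\ref{th:convergence} and Theorem~\ref{th:consenergy} to get $e^{-H(\Psi_\lambda)}\to e^{-H}$ --- this is immediate from $e^{-H(\Psi_\lambda)}=e^{-H}e^{-\Delta}$ together with $\Delta\to 0$, which already follows from the first hypothesis.
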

\begin{proof}
We first establish the second limit. For fixed \((q_j,p_j)\), the first hypothesis implies that
\(\Delta(q_j,p_j;h)/h^{2\nu}\rightarrow a(q_j,p_j)\) and then, by dominated convergence,
\[
\lim_{h\rightarrow 0} \frac{s^2(h)}{h^{2\nu}} = \int_{\mathbb{R}^{2d}} a(q_j,p_j)^2 e^{-H(q_j,p_j)} dq_jdp_j =\Sigma.
\]
In addition, the bound \eqref{eq:mean_energy_error} shows that
\[
\lim_{h\rightarrow 0} \frac{\mu(h)^2}{h^{2\nu}}=0
\]
and the limit for \(\sigma^2(h)/h^{2\nu}\) follows.

From \eqref{eq:aux20jul}, with the shorthand \(\Delta = \Delta(q_j,p_j;h)\),
\begin{eqnarray*}
&&\frac{2\mu(h)-\sigma^2(h)}{h^{2\nu}}- \frac{\mu(h)^2}{h^{2\nu}}\\&&\qquad\qquad =
 - \int_{\mathbb{R}^{2d}} \frac{\Delta}{h^\nu}\,
 \frac{\exp(-\Delta)-1+\Delta}{h^\nu}\,
 e^{-H(q_j,p_j)} dq_jdp_j .
\end{eqnarray*}
We note that the second fraction in the integral approaches \(0\) as \(h\rightarrow 0\) for fixed
\((q_j,p_j)\), because the numerator is \(\mathcal{O}(\Delta^2)\).
 A dominated convergence argument
(see \cite{BePiRoSaSt2013} for details) shows that the integral also approaches 0 and the proof is complete.
\end{proof}

\subsubsection{The mean energy error for \(m\) components}
For the Hamiltonian \(H_m\) the energy error \(\Delta_m(q,p;h)\) is given by the sum
\(\sum_{j=1}^m\Delta(q_j,p_j;h)\). Under our hypotheses, the random variables being added are independent and
identically distributed and therefore
 \(\Delta_m(q,p;h)\) has expectation \(m\mu(h)\approx m \Sigma h^{2\nu}/2\) and
 variance \(m\sigma^2(h) \approx m \Sigma h^{2\nu}\); thus to ensure that \(\Delta_m(q,p;h)\)
 has a distributional limit as \(m\uparrow \infty\), it is reasonable
 to impose a relation
 \begin{equation}\label{eq:hn}
 h = \ell m^{-1/(2\nu)},
 \end{equation}
 with \(\ell>0\) a constant. Under this relation,
 a central limit theorem (see \cite{BePiRoSaSt2013} for details) ensures that,
 as \(m\uparrow \infty\), the distribution of the random
 variable
  \(\Delta_m(q,p;h)\) converges   to the distribution of a random variable
  \(\Delta_\infty \sim N(\ell^{2\nu}\Sigma/2,\ell^{2\nu}\Sigma)\).
  It then follows that,   the expectation of the acceptance probability \( \min\{1,\exp(-\Delta_m)\}\)
  converges to
 \[
 \E \Big(\min \left\{1, e^{-\Delta_\infty}\right\}\Big).
 \]
 The last expectation may be
 found analytically and turns out to be:
 \begin{equation}\label{eq:Phi}
 A(\ell) = 2\Phi(-\ell^\nu \sqrt{\Sigma}/2),
 \end{equation}
where \(\Phi\) is the standard normal cumulative distribution function
\[
\Phi(x) = \frac{1}{\sqrt{2\pi}}\int_{-\infty}^x \exp(- \frac{\xi^2}{2})\,d\xi.
\]

We then conclude:
\begin{theorem}\label{thm:optimaltuning}
Assume that the hypotheses of Theorem~\ref{th:onecomponent} are fulfilled. If  in the scenario above the
number of copies \(m\) and the stepsize \(h\) are related as in \eqref{eq:hn}, then, at stationarity, the
expectation of the acceptance probability of the HMC algorithm converges to \eqref{eq:Phi} as \(m\uparrow
\infty\).
\end{theorem}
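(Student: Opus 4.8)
The plan is to exploit the product structure of $H_m$. When $(q,p)$ is drawn from the Boltzmann--Gibbs distribution with (normalized) density $\exp(-H_m)$, the energy error $\Delta_m(q,p;h)=\sum_{j=1}^m\Delta(q_j,p_j;h)$ is a sum of $m$ independent, identically distributed random variables, because the $2m$ blocks $(q_j,p_j)$ are independent and the integrator acts on each block separately. Moreover, since the momentum refreshment in Step~1 replaces $p$ by a draw $\xi\sim\mathcal N(0,M)$ independent of $q$, and since the $q$-marginal of $\exp(-H_m)$ is the target $\Pi$, at stationarity the initial state $(q,\xi)$ fed to the integrator is itself Boltzmann--Gibbs distributed. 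Hence the expectation of the acceptance probability of Algorithm~\ref{algo:numerical_hmc} equals
\[
\int_{(\mathbb R^{2d})^m}\min\{1,e^{-\Delta_m(q,p;h)}\}\,e^{-H_m(q,p)}\,dq\,dp=\E\big(\min\{1,e^{-\Delta_m}\}\big),
\]
and the statement reduces to identifying the limiting law of $\Delta_m$ and passing to the limit inside this bounded expectation.

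First I would record, from Theorem~\ref{th:onecomponent}, the moment asymptotics of a single summand: $\mu(h)=\tfrac{\Sigma}{2}h^{2\nu}+o(h^{2\nu})$ and $\sigma^2(h)=\Sigma h^{2\nu}+o(h^{2\nu})$ as $h\to0$. Under the scaling $h=\ell m^{-1/(2\nu)}$ one has $h^{2\nu}=\ell^{2\nu}/m$, so $\E(\Delta_m)=m\mu(h)\to\mu_\infty:=\ell^{2\nu}\Sigma/2$ and $\mathrm{Var}(\Delta_m)=m\sigma^2(h)\to\sigma_\infty^2:=\ell^{2\nu}\Sigma$ as $m\uparrow\infty$. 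Because the summands form a triangular array (they depend on $m$ through $h$) whose common law degenerates, I would then establish a Lindeberg (or Lyapunov) condition to get a Gaussian limit; this is precisely where the second hypothesis of Theorem~\ref{th:onecomponent}, the uniform bound $\Delta(q_j,p_j;h)^2/h^{2\nu}\le D(q_j,p_j)$ with $D$ integrable, enters, together with the first-moment estimate $\mu(h)^2/h^{2\nu}\to0$ coming from the upper bound in Theorem~\ref{thm:mean_energy_error}; the technical details follow \cite{BePiRoSaSt2013}. The outcome is $\Delta_m\overset{d}{\to}\Delta_\infty$ with $\Delta_\infty\sim\mathcal N(\mu_\infty,\sigma_\infty^2)$, noting the forced relation $\sigma_\infty^2=2\mu_\infty$.

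Since $x\mapsto\min\{1,e^{-x}\}$ is bounded and continuous, the continuous mapping theorem together with bounded convergence gives $\E(\min\{1,e^{-\Delta_m}\})\to\E(\min\{1,e^{-\Delta_\infty}\})$. It remains to evaluate $\E(\min\{1,e^{-Y}\})$ for $Y\sim\mathcal N(\mu_\infty,\sigma_\infty^2)$. Splitting at $Y=0$ and completing the square in $\int_0^\infty e^{-y}\,\phi(y)\,dy$, where $\phi$ is the $\mathcal N(\mu_\infty,\sigma_\infty^2)$ density, yields
\[
\E(\min\{1,e^{-Y}\})=\Phi(-\mu_\infty/\sigma_\infty)+e^{-\mu_\infty+\sigma_\infty^2/2}\,\Phi(\mu_\infty/\sigma_\infty-\sigma_\infty).
\]
The relation $\sigma_\infty^2=2\mu_\infty$ collapses the exponential prefactor to $1$ and makes $\mu_\infty/\sigma_\infty-\sigma_\infty=-\mu_\infty/\sigma_\infty=-\sqrt{\mu_\infty/2}$, so $\E(\min\{1,e^{-Y}\})=2\Phi(-\sqrt{\mu_\infty/2})=2\Phi(-\ell^\nu\sqrt{\Sigma}/2)=A(\ell)$, which is the claim. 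I expect the main obstacle to be the central limit step for the degenerating triangular array; the moment bookkeeping and the closed-form Gaussian integral are routine by comparison.
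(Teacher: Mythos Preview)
Your argument is correct and follows essentially the same route as the paper: the product structure gives an i.i.d.\ decomposition of $\Delta_m$, the scaling \eqref{eq:hn} together with Theorem~\ref{th:onecomponent} fixes the limiting mean and variance, a triangular-array CLT (with technical details deferred to \cite{BePiRoSaSt2013}) yields the Gaussian limit, and bounded convergence passes the limit through $\min\{1,e^{-x}\}$. You actually supply more than the paper does, since the text merely asserts that the Gaussian expectation evaluates to $2\Phi(-\ell^\nu\sqrt{\Sigma}/2)$, whereas you carry out the computation and make explicit the key simplification $\sigma_\infty^2=2\mu_\infty$.
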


For Verlet, with \(\nu=2\), the scaling \eqref{eq:hn} entails that halving \(h\) compensates for a
multiplication of  \(m\) by a factor of \(16\). This scaling in high dimension is very favourable when
compared with the situation for other sampling techniques \cite{RoGeGi1997,RoRo1998}.
\subsection{Optimal tuning}
 In Theorem~\ref{th:onecomponent}, a lower value of
\(\Sigma\), i.e.\ a lower mean value of the function \(a^2\), indicates a more accurate integrator and/or an
\lq\lq easier\rq\rq\ Hamiltonian. Correspondingly, formula \eqref{eq:Phi} is such that lowering the value of
\(\Sigma\) increases the expected acceptance probability \(A\) (\(\Phi(x)\) is an increasing function of
\(x\)). In practice the function \(a\) and the constant \(\Sigma\) are unknown and this would seem to imply
that \eqref{eq:Phi} is of no practical value. We shall show now that, on the contrary, that relation provides
a basis for tuning the parameter \(h\) in the HMC algorithm.

 Increasing  the value of \(\ell\) in \eqref{eq:hn} leads to a larger step size and therefore lowers the computational
 cost of each integration leg over the fixed interval \([0,\lambda]\) but typically increases the energy error. Note in
 this connection that in \eqref{eq:Phi}, \(A\) decreases as \(\ell\) increases. What is the best choice of \(\ell\)? It is argued in \cite{BePiRoSaSt2013} that the function \(E(\ell) =
\ell A(\ell)\) is a sensible indicator of the efficiency of the algorithm, as its reciprocal measures the
amount of work necessary to generate an accepted proposal. With this metric, \(\ell\) should be determined to
maximize \(\ell A(\ell)\): a direct maximization is not feasible because, as we just noted, we cannot compute
\(A(\ell)\). This difficulty may be circumvented by treating \(A\) as an independent variable and expressing
\(\ell\) as a function of \(A\): then
\[
E(A) = \frac{2^{1/\nu}}{\Sigma^{1/(2\nu)}} A \left(\Phi^{-1}(1-A/2)\right)^{1/\nu}.
\]
Clearly the value of \(A\) that maximizes \(E\) is independent of \(\Sigma\) and is therefore
\emph{independent  of the target}. In addition the optimal \(A\) is the same for all integrators of sharing
the same value of \(\nu\). For the case \(\nu=2\), it is found numerically that
\[
A \left(\Phi^{-1}(1-A/2)\right)^{1/2}
\]
is maximized when
\[ A \approx 0.651
\]

This analysis suggests that, for an integrator of order \(\nu =2\) and once the integration interval has been
chosen, if \(m\) is large, \(h\) should be selected in such a way that, in the simulations, the acceptance
rate is observed to be close to \(65\%\). If the observed acceptance rate is larger, one would do better with
a larger value of \(h\). That would imply more rejections, but the waste caused by the rejections would be
offset by the larger number of proposals that may be generated with a fixed budget of force evaluations.
Conversely, acceptance rates significantly below \(65\%\) indicate that one would do better by working more to
generate each single proposal. While our analysis has been performed in the case where the target is a product
of \(m\) independent identical distributions, practical experience shows that this rule works more generally
\cite{BePiRoSaSt2013}. \vspace{2mm}

\section{HMC for path sampling. Sampling from a perturbed Gaussian distribution}
\label{sec:path}

 So far the samples generated by the HMC algorithm have been vectors \(q\in\mathbb{R}^d\). In
some applications the samples needed are \emph{paths}. In this section we discuss the use of
 HMC in those situations. As it will be clear, the material is directly relevant for the problem of sampling
 from  targets that are perturbations of Gaussian distributions.

 \subsection{A model problem}

In order to keep the presentation as simple as possible, we shall initially limit our attention to a model
situation; the general case is discussed at the end of the section.

We consider paths \(u(s)\), where \(u\) is a real-valued function of the variable \(s\in[0,S]\). The paths are
constrained by the homogeneous Dirichlet conditions \(u(0)=u(S) = 0\) and, formally, their distribution is governed by the \lq\lq potential energy\rq\rq\
functional
\begin{equation}\label{eq:pathpotential}
\mathcal{U}(u)  = \int_0^S \left[ \frac{1}{2}  (\partial_s u(s))^2 + g(s,u(s)) \right] ds.
\end{equation}
If the function \(u(s)\) is smooth, then after integration by parts,
\begin{equation}\label{eq:pathpotentialbis}
\mathcal{U}(u) = \int_0^S \left[ -\frac{1}{2} u(s)\partial_{ss}u(s)  + g(s,u(s)) \right] ds.
\end{equation}

Distributions of this form arise when studying \textit{diffusion bridge} problems
\cite{ReVa2005,BeRoStVo2008,HaStVo2009}.

\begin{example}\label{ex:ohbridge}
Fix a time\footnote{In applications, the variable \(s\) typically corresponds to \emph{physical} time, as distinct from the
\emph{artificial} time \(t\) to be used later in the Hamiltonian dynamics that evolves the paths when
obtaining samples.} horizon $S>0$ and consider the process $\mathsf{X}: [0, S] \to \mathbb{R}$ that solves the
Ornstein-Uhlenbeck equation
\[
d \mathsf{X}(s) = -  \mathsf{X}(s) ds +  d \mathsf{B}(s),
\]
conditioned on both initial and final conditions \(\mathsf{X}(0) = 0\),   \(\mathsf{X}(S) = 0\).
Here $ \mathsf{B}$ is a standard Brownian motion.
The law of this diffusion bridge is a probability measure \(\Pi\) on paths \(u\) satisfying the boundary conditions that is
associated to the  functional:
\begin{equation} \label{eq:potential_of_diffusion_bridge}  \frac{1}{2}  \int_0^S \left[ (\partial_s
u(s))^2 +u(s)^2 \right] ds.
\end{equation}
\end{example}
\begin{remark}
The case where the process is conditioned on
\(\mathsf{X}(0) = x^-\), \(\mathsf{X}(S) = x^+\)
may be reduced to the case with homogeneous boundary conditions,
by writing \(u(s) = \bar u(s) +\ell(s)\),
where \(\ell(s)\) is  a smooth function of \(s\) with \(\ell(0) = x^-\), \(\ell(S) = x^+\). After this transformation, the new paths \(\bar
u(s)\) satisfy homogeneous boundary conditions and their distribution corresponds to a functional of the form
\eqref{eq:pathpotential}.
\end{remark}

A precise mathematical description of the meaning of \eqref{eq:pathpotential} and of the associated
probability distribution on paths \(\Pi\) will be given later. For the time being, we note that to study the
infinite-dimensional problem  on a computer it is necessary to introduce a finite-dimensional discretized
version, and we  turn to presenting a way of performing the discretization.

We use  a uniform grid consisting of $d+2$ grid points
 \[
\{ s_j = j \Delta s~ \mid~  j=0, ... , d+1 \},\qquad \Delta s = S/(d+1).
\]
The space of paths is then replaced by the finite-dimensional state space $\mathbb{R}^d$; the \(j\)-th
component \(\boldsymbol{u}_j\) of an element $\boldsymbol{u} \in \mathbb{R}^d$ is seen as an approximation to \(u(s_j)\),
\(j=1, ... , d\). The functional \eqref{eq:pathpotential} is discretized as (cf.\ \eqref{eq:pathpotentialbis})
\begin{equation} \label{eq:truncated_potential_energy}
\mathcal{U}_d(\boldsymbol{u}) = \Delta s  \left( - \frac{1}{2} \boldsymbol{u}^T \boldsymbol{L} \boldsymbol{u} + G_d(\boldsymbol{u}) \right) \;, \quad G_d(\boldsymbol{u}) =   \sum_{j=1}^d g( u_j) \;,
\end{equation}
where the matrix $\boldsymbol{L}$ corresponds the standard central difference approximation to \(\partial_{ss}\) with
homogeneous
Dirichlet boundary conditions:
\begin{equation} \label{eq:discrete_laplacian}
\boldsymbol{L} = \frac{1}{\Delta s^2} \begin{bmatrix}
-2 & 1 &  & \\
1 & \ddots & \ddots &    \\
& \ddots & \ddots &  1 \\
&  & 1 &  -2 \end{bmatrix}.
\end{equation}
Note that, if the vector \(\boldsymbol{u}\) contains the grid values of a smooth path \(u\), then
\(\mathcal{U}_d(\boldsymbol{u})\rightarrow \mathcal{U}(u)\) as \(\Delta s\rightarrow 0\).

Our task is then to sample from the target \(\Pi_d\) in \(\mathbb{R}^d\) with non-normalized density \(
\exp(-\mathcal{U}_d(\boldsymbol{u}))\). Special attention has to be paid to the increase in dimension \(d\)
 as the discretization becomes more accurate.

\subsection{Preconditioned HMC for path sampling}
\label{sec:preconditioned}
We now set up an HMC algorithm, to be called \emph{Preconditioned HMC} (PHMC), to sample from our target
\(\Pi_d\).

\subsubsection{Algorithm description}
 As expected, we introduce an auxiliary variable \(\boldsymbol{p}\in\mathbb{R}^d\) and a
Boltz\-mann-Gibbs distribution \(\Pi_{BGd}\) in \(\mathbb{R}^d\times\mathbb{R}^d\) with non-normalized density
 \(
\exp(-\mathcal{H}_d(\boldsymbol{u},\boldsymbol{p}))\). The Hamiltonian \(\mathcal{H}_d\) is given by \( \Delta
s H_d(\boldsymbol{u},\boldsymbol{p})\), with ($\boldsymbol{M}$ is a mass matrix)
\[
H_d( \boldsymbol{u}, \boldsymbol{p} ) = \frac{1}{2} \boldsymbol{p}^T \boldsymbol{M}^{-1}
\boldsymbol{p}  - \frac{1}{2} \boldsymbol{u}^T \boldsymbol{L} \boldsymbol{u} + G_d(\boldsymbol{u}).
\]
Of course, the target \(\Pi_d\) is
the \(\boldsymbol{u}\)-marginal of  \(\Pi_{BGd}\), while the
\(\boldsymbol{p}\)-marginal is Gaussian  \(N(\boldsymbol{0}, (\Delta s)^{-1} \boldsymbol{M})\).

The pair \(( \boldsymbol{u}, \boldsymbol{p} )\) is evolved by means of the Hamiltonian dynamics asso\-ciated to
\(H_d\):
\begin{equation} \label{eq:semidiscrete_hamiltonian_dynamics}
\begin{bmatrix}
\boldsymbol{\dot u}(t) \\
 \boldsymbol{\dot p}(t)  \end{bmatrix} = \begin{bmatrix} \boldsymbol{M}^{-1} \boldsymbol{p}(t)  \\
  \boldsymbol{L} \boldsymbol{u}(t) - \nabla G_d(  \boldsymbol{u}(t)  ) \end{bmatrix}.
\end{equation}Clearly, this dynamics preserves the value of \(\mathcal{H}_d\) and therefore
 the  distribution \(\Pi_{BGd}\).
\begin{remark}Instead of \eqref{eq:semidiscrete_hamiltonian_dynamics}, one could use the dynamics corresponding
to the Hamiltonian \(\mathcal{H}_d\) that features in  the non-normalized density
\(\exp(-\mathcal{H}_d(\boldsymbol{u},\boldsymbol{p}))\) of \(\Pi_{BGd}\); that alternative dynamics differs from that of \eqref{eq:semidiscrete_hamiltonian_dynamics} by a change in the scale of \(t\). However
\eqref{eq:semidiscrete_hamiltonian_dynamics} is more natural in our context, where there is an
infinite-dimensional problem in the background. We illustrate this as follows. Assume that \(\boldsymbol{M}\)
is taken to be the identity and \(g=0\). Then \eqref{eq:semidiscrete_hamiltonian_dynamics}, after eliminating
\(\boldsymbol{p}\), implies \( \boldsymbol{\ddot u}= \boldsymbol{L}\boldsymbol{u}\), so that \(\boldsymbol{u}(t)\) satisfies the
standard semidiscrete wave equation. This matches the fact that as \(\Delta s \rightarrow 0\),
\(\mathcal{H}_d\) approaches
\[
\int_0^S  \frac{1}{2}  \Big(p(s)^2+(\partial_s u(s))^2 \Big) ds,
\]
which provides the Hamiltonian functional (total energy) for the Hamiltonian partial differential
 equations \(\partial_t u = p\), \(\partial_t
p =
\partial_{ss}u\), that, after elimination of \(p\), yield the wave equation \(\partial_{tt} u = \partial_{ss}u\).
\end{remark}

A challenge with numerically solving \eqref{eq:semidiscrete_hamiltonian_dynamics} using an explicit
 symplectic
integrator (like Verlet) is that the spectral radius of $\boldsymbol{L}$ grows with $d$. Consequently, as the
number of grid points increases, the dynamics may become highly oscillatory due to the presence of fast
frequencies, and stably approximating this type of dynamics may be  difficult \cite{CaSa2009}. For example,
numerical stability of a Verlet integrator applied to \eqref{eq:semidiscrete_hamiltonian_dynamics} with
$G_d=0$ and $\boldsymbol{M}=\boldsymbol{I}$ requires that its time step size $h$ be inversely proportional to
$d$.

To avoid this type of restrictive dependence, we \emph{precondition} the dynamics  by choosing the mass matrix
$\boldsymbol{M} = -\boldsymbol{L}$:
\begin{equation} \label{eq:preconditioned_semidiscrete}
\begin{bmatrix}
\boldsymbol{\dot u}(t) \\
 \boldsymbol{\dot p}(t)  \end{bmatrix} = \begin{bmatrix} -\boldsymbol{L}^{-1} \boldsymbol{p}(t)  \\
  \boldsymbol{L} \boldsymbol{u}(t) - \nabla G_d(  \boldsymbol{u}(t)  ) \end{bmatrix}.
\end{equation}
In the particular case where \(g\) vanishes,  we have \(\boldsymbol{\ddot u}= -\boldsymbol{ u}\); all the
\(d\) frequencies of the preconditioned problem are \(1\) and Verlet has a stepsize restriction \(h<2\),
\emph{independently} of \(d\).

\begin{figure}
\begin{center}
\includegraphics[width=0.33\textwidth]{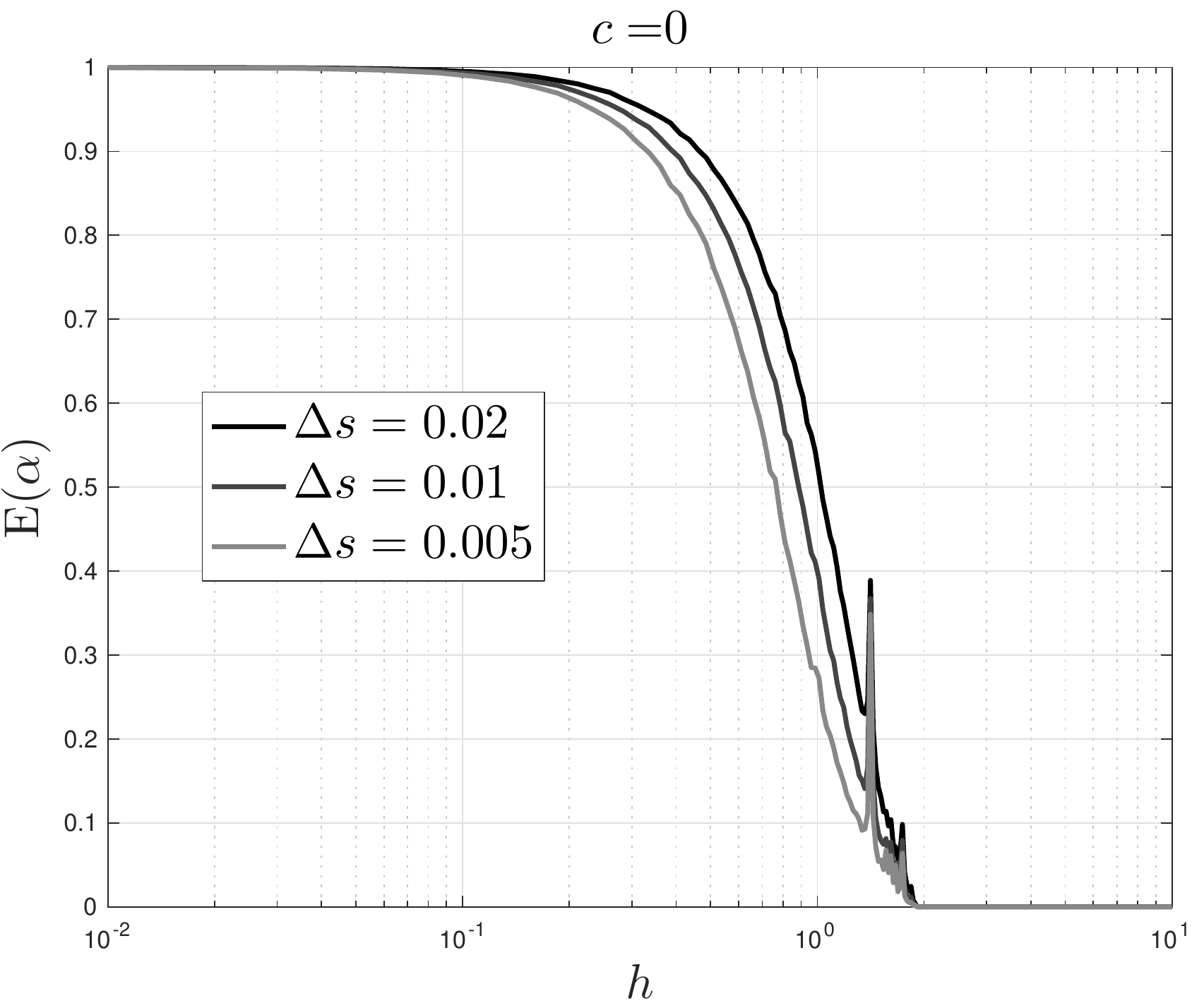}   \hspace{0.1in}
\includegraphics[width=0.33\textwidth]{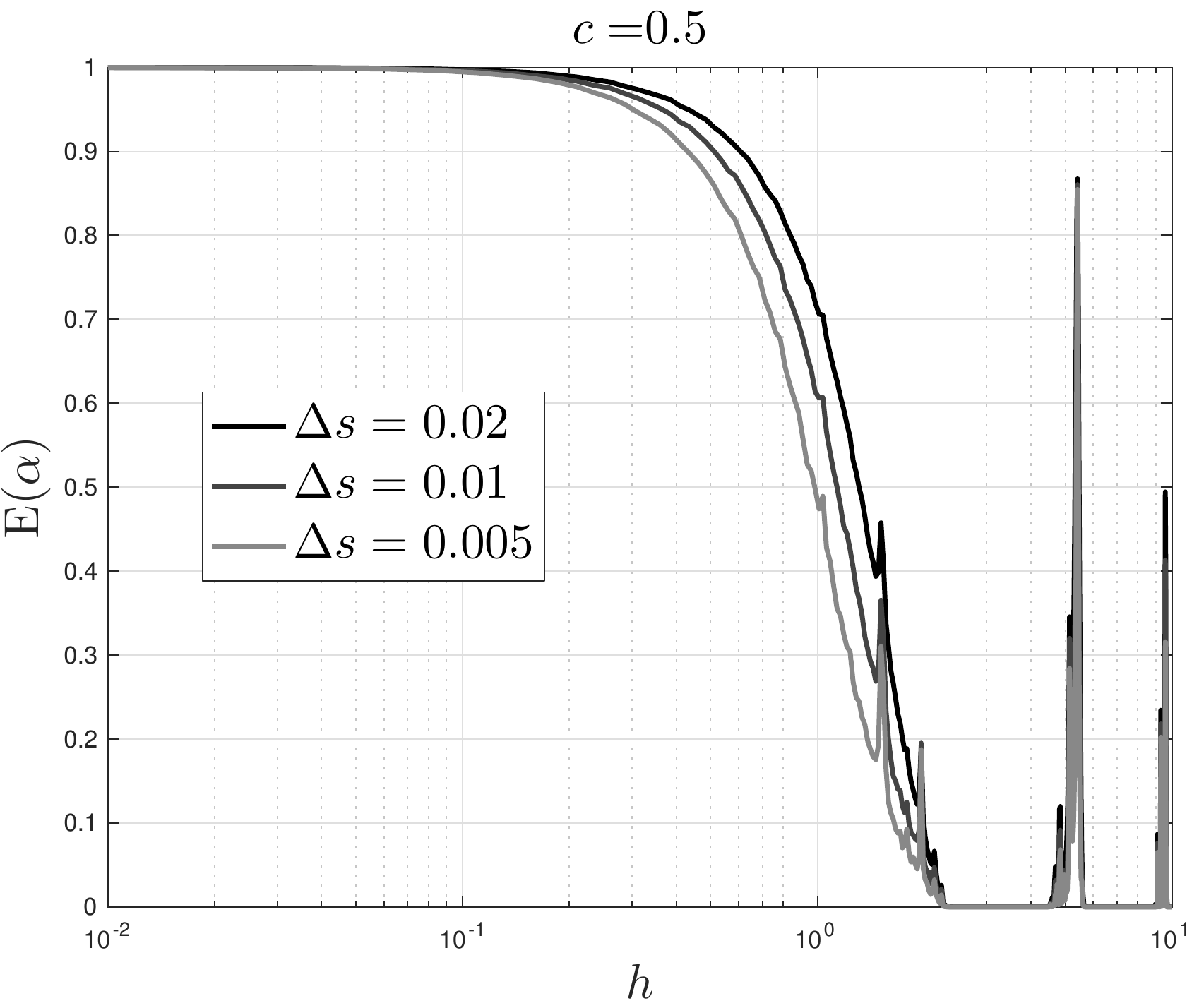} \hspace{0.1in}
\includegraphics[width=0.33\textwidth]{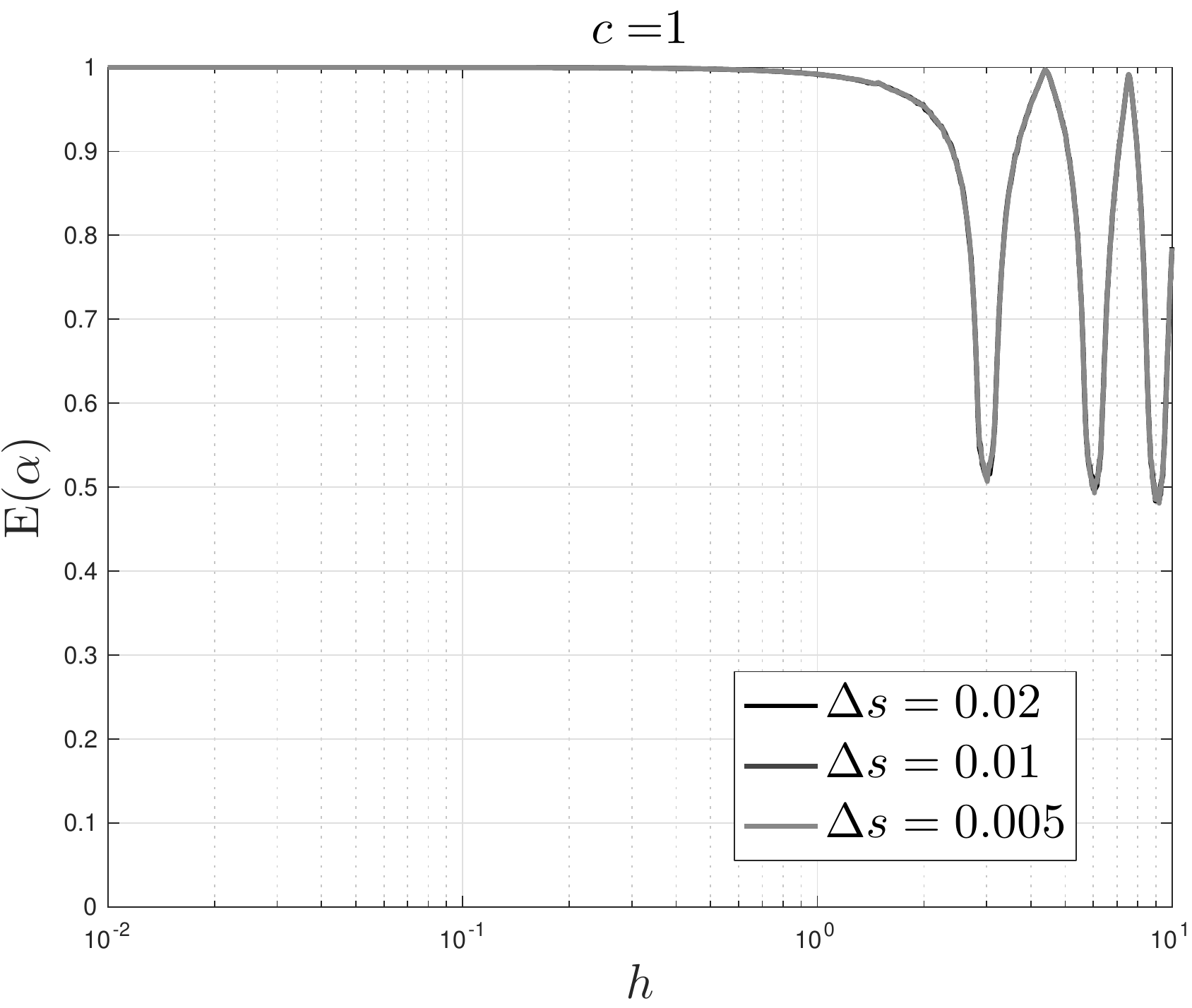}
\end{center}
\caption{\small   Ornstein-Uhlenbeck Bridge Example:
Mean acceptance probability
as a function of $h$ for the PRHMC algorithm
 for three choices of the splitting parameter
$c$
and
the three values of $\Delta s$.  The number of time-steps \(n\) at each integration leg is random with a geometric distribution chosen
in such a way that the average length \(\lambda=nh\) of the integration interval is 20. The number of samples is $10^4$.
For \(c=1\) the acceptance probability does not decrease as the grid is refined.
}
  \label{fig:ou_phmc_mean_ap}
\end{figure}

\begin{remark}\label{rem:massmatrix}Consider for a moment,
a dynamics of the form \eqref{eq:semidiscrete_hamiltonian_dynamics} where \(\boldsymbol{L}\) is an arbitrary
negative-definite matrix and \(\nabla G_d(\boldsymbol{u})\) is small with respect to
\(\boldsymbol{L}\boldsymbol{u}\). From a sampling viewpoint, this situation would arise if standard HMC is
applied
 to sampling from a perturbation of the centered Gaussian distribution with covariance
  matrix \(-\boldsymbol{L}^{-1}\). The preconditioning $\boldsymbol{M} = -\boldsymbol{L}$ makes sense
  in that setting. Large eigenvalues of \(-\boldsymbol{L}\) correspond to directions in state space with
  small variances/large forces; the mass in those direction is then chosen to be large so as to ensure
  small displacements and avoid fast frequencies. This idea may be extended:  general targets may be locally approximated by a state-dependent Gaussian model and one may then choose a state-dependent  \(M\) as the inverse of the covariance matrix of
the Gaussian approximation \cite{GiCa2011}. Unfortunately the state dependence of the mass matrix introduces
additional terms in Hamilton's equations, which are not any longer of the form \eqref{eq:newton2}, with the
unwelcome consequence that \emph{explicit} volume-preserving reversible integrators do not exist.
\end{remark}

In the PHMC algorithm, the system \eqref{eq:preconditioned_semidiscrete} is integrated by means of a
symplectic, reversible scheme with second order of accuracy. We use the Strang splitting \eqref{eq:vv} with
\begin{equation} \tag{A}
\begin{bmatrix}
\boldsymbol{\dot u}(t) \\
 \boldsymbol{\dot p}(t)  \end{bmatrix} = \begin{bmatrix} -\boldsymbol{L}^{-1} \boldsymbol{p}(t)  \\  c^2 \boldsymbol{L} \boldsymbol{u}(t) \end{bmatrix}
\end{equation}
and
\begin{equation} \tag{B}
    \begin{bmatrix}
    \boldsymbol{\dot u}(t) \\
    \boldsymbol{\dot p}(t)  \end{bmatrix}
    = \begin{bmatrix} \boldsymbol{0}  \\  (1-c^2) \boldsymbol{L} \boldsymbol{u}(t)
    - \nabla G_d(  \boldsymbol{u}(t)  ) \end{bmatrix}
\end{equation}
where $c \in [0,1]$ is a parameter. The choice \(c=0\) leads to the velocity Verlet method.

Thus, in PHMC, a transition of the HMC Markov chain starts by drawing a fresh momentum from the marginal
distribution \(N(\boldsymbol{0}, (\Delta s)^{-1} \boldsymbol{M}) = N(\boldsymbol{0}, -(\Delta s)^{-1}
\boldsymbol{L})\), takes  \(m = \lfloor \lambda/h\rfloor\) steps of the splitting integrator and
accepts/rejects with acceptance probability \(\min\{1, e^{-\Delta \mathcal{H}_d}\}\) (see
Algorithm~\ref{algo:numerical_hmc}). A variant, that we call PRMHC, with randomized duration as in
Algorithm~\ref{algo:numerical_rhmc} is clearly possible; in that variant the number of steps is chosen to be
geometrically distributed with mean \(\lambda/h\).

\begin{remark}\label{rem:pv} For implementation purposes, it is advisable to use the variable
\(\boldsymbol{v}=-\boldsymbol{L}^{-1}\boldsymbol{p}\) rather than \(\boldsymbol{p}\). In the variables \(
(\boldsymbol{u},\boldsymbol{v}) \), the split system (A) takes the trivial form \(\boldsymbol{\dot u}(t) =
\boldsymbol{ v}(t)\), \(\boldsymbol{\dot v}(t) = -c^2\boldsymbol{u}(t)\). Before beginning an integration leg,
the fresh value of \(\boldsymbol{v}\) is drawn from the corresponding distribution \(N(\boldsymbol{0}, (\Delta
s)^{-1}\boldsymbol{L}^ {-1})\). When computing the acceptance probability, the Hamiltonian \(\mathcal{H}_d\)
is correspondingly expressed in terms of \((\boldsymbol{u},\boldsymbol{v}) \).
\end{remark}

\subsubsection{Numerical illustration}
\begin{figure}
\begin{center}
\includegraphics[width=0.65\textwidth]{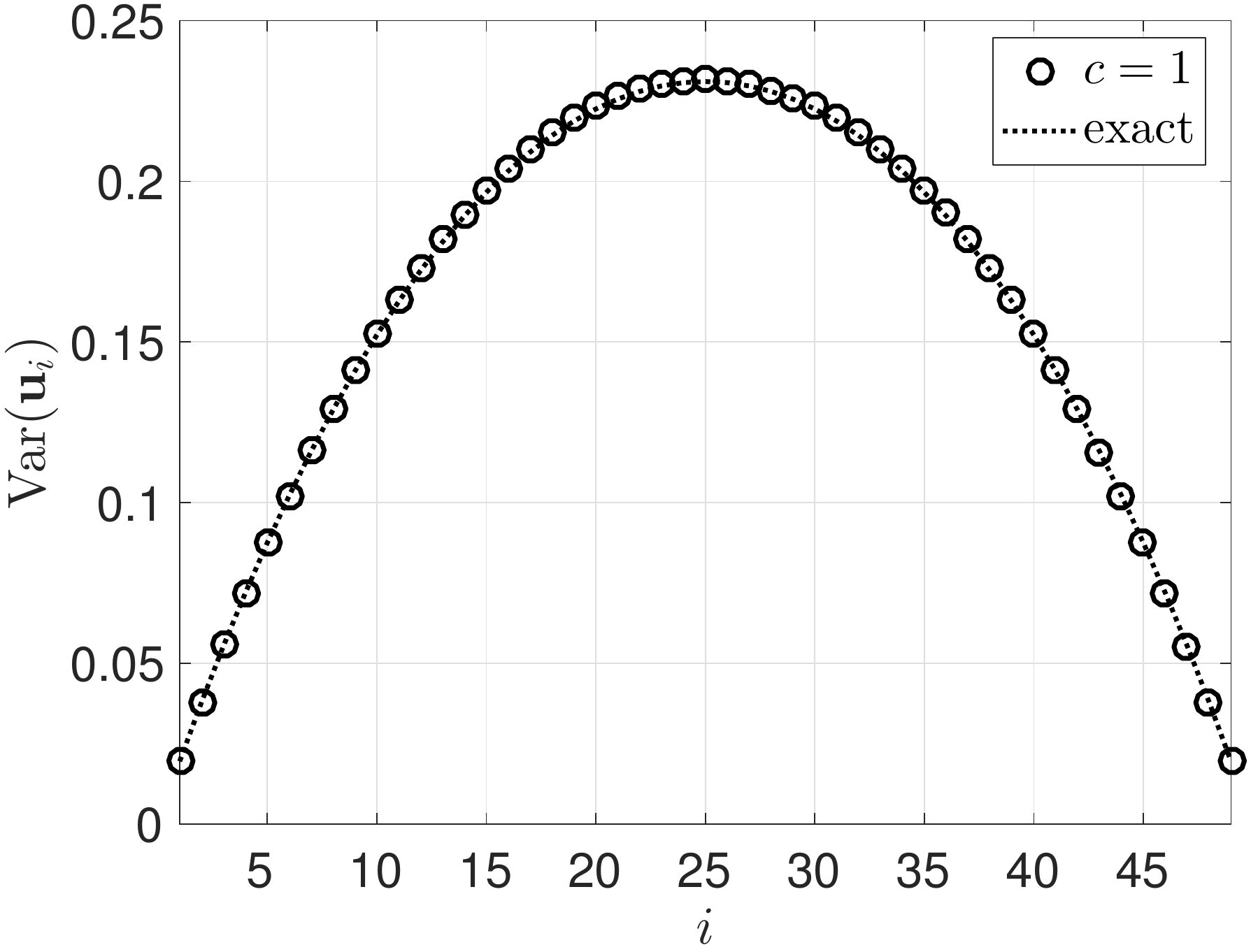}
\end{center}
\caption{\small Ornstein-Uhlenbeck Bridge Example.
This figure assesses the sampling accuracy of the PRHMC algorithm.
The plot graphs, for the distribution \(\Pi_{d}\), the exact and empirical values of the variance of
 $\boldsymbol{u}_i$ when $\Delta s = 0.02$ corresponding to $d=49$ interior grid points
  (the horizontal axis labels the components).   The time step size is $h=2.0$.
  The number of time-steps \(n\) at each integration leg is random with a geometric distribution chosen
in such a way that the average length \(\lambda=nh\) of the integration interval is 20. The number of samples is $10^6$.
  The acceptance rate was $95\%$. When measured in the \(L^2\) norm of \(\mathbb{R}^{49}\),
  the relative error of the vector of empirical variances is \(0.36\%\). The values \(c=0\), \(c=0.5\) were also tested
  but the acceptance rates were virtually zero.
  }
  \label{fig:ou_phmc_errors}
\end{figure}

We have implemented the randomized duration PRHMC algorithm in the particular case of the Orstein-Uhlenbeck
bridge in Example~\ref{ex:ohbridge}, with \(S=1\) and homogeneous Dirichlet boundary conditions (see
Section~\ref{sec:randomizing} for the case of constant-duration PHMC). Note that due to the linearity of the
Orstein-Uhlenbeck process, the target \(\Pi_d\) is Gaussian and there is no need to use MCMC techniques to
sample from it; however using this target provides a convenient test problem. Figure~\ref{fig:ou_phmc_mean_ap}
plots the mean acceptance probability of PRHMC as a function of $h$ for three values of the parameter $c$  in
the integrator and three choices of \(\Delta s\). Clearly \(c=1\) provides the best performance. Note in
particular  that for \(c=1\)  the acceptance probability is independent of \(\Delta s\); for the other values
of \(c\) and fixed \(h\) the acceptance probability decreases with \(\Delta s\).
 Figure~\ref{fig:ou_phmc_errors} shows  that the use of \(c=1\) leads to accurate sampling even with \(h\) is large.  The theory in what follows clarifies these numerical results.

\subsubsection{Analysis}
The results that follow are concerned with the application of PHMC and PRHMC
to target associated with \eqref{eq:pathpotential} in the particular case of the Orstein-Uhlenbeck bridge \eqref{eq:potential_of_diffusion_bridge}. The proofs are given in Section~\ref{sec:proofsbridge}.

In the following stability theorem we require the lowest eigenvalue
\begin{equation}\label{eq:omega1}
\omega_1^2 = \frac{4}{\Delta s^2} \sin^2 \left( \frac{ \pi}{2 (d+1)} \right)
\end{equation}
of \(-\boldsymbol{L}\). It is trivial to show that \(2/S \leq \omega_1\); therefore
the stability requirements  does not become more stringent as \(\Delta s\rightarrow 0\). This is a consequence of the preconditioning of the dynamics.

\begin{theorem}\label{th:stabPHMC}
If $h>0$ satisfies \begin{equation} \label{eq:stability_requirement_ND}
\begin{cases}  c h +2 \arctan\left( \dfrac{h ( 1+(1-c^2) \omega_1^2  )}{2 c \omega_1^2} \right) < \pi & \text{if $c \in (0,1]$}, \\
h< \dfrac{2 \omega_1}{\sqrt{1+\omega_1^2}} & \text{if $c= 0$}, \end{cases}
\end{equation} then the splitting integrator used in the PHMC and PRMHC  algorithms is stable.
\end{theorem}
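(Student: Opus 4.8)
The plan is to use the fact that for the Ornstein--Uhlenbeck bridge the nonlinearity is quadratic, $g(s,u)=\tfrac12u^2$, so that $\nabla G_d(\boldsymbol u)=\boldsymbol u$ and the whole dynamics \eqref{eq:preconditioned_semidiscrete}, together with both split systems (A) and (B) used in PHMC/PRHMC, is \emph{linear} and built only out of the symmetric matrix $\boldsymbol L$. First I would diagonalize $\boldsymbol L=\boldsymbol Q\,\mathrm{diag}(-\omega_1^2,\dots,-\omega_d^2)\,\boldsymbol Q^T$ with $\boldsymbol Q$ orthogonal and $\omega_1^2$ the smallest eigenvalue of $-\boldsymbol L$ (formula \eqref{eq:omega1}). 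Since every flow entering the integrator is a function of $\boldsymbol L$ alone, decoupling and numerical integration commute, exactly as in the discussion following Proposition~\ref{prop:KM}, and the stability problem reduces to the scalar oscillator attached to each eigenvalue $\omega^2:=\omega_i^2$.

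For a fixed mode $\omega>0$ I would write down the one-step propagation matrix explicitly. In that mode, (A) reads $\dot u=\omega^{-2}p$, $\dot p=-c^2\omega^2u$, i.e.\ a harmonic oscillator of angular frequency $c$ --- independent of $\omega$, which is precisely the effect of the preconditioning $\boldsymbol M=-\boldsymbol L$ --- whose time-$h$ flow has $\cos(ch)$ on the diagonal; while (B) reads $\dot u=0$, $\dot p=-\kappa u$ with $\kappa=\kappa(\omega)=(1-c^2)\omega^2+1$, a shear. Composing $\varphi^{(B)}_{h/2}\circ\varphi^{(A)}_h\circ\varphi^{(B)}_{h/2}$ yields a $2\times2$ matrix $\tilde M_h$ of determinant $1$ (symplecticness of the composition) with
\[
A_h=A_h(\omega)=\cos(ch)-\beta\sin(ch),\qquad
\beta=\beta(\omega)=\frac{h\,\big(1+(1-c^2)\omega^2\big)}{2c\,\omega^2}
\]
when $c\in(0,1]$ (here $\beta=\kappa h/(2c\omega^2)$), and $A_h(\omega)=1-\dfrac{(\omega^2+1)h^2}{2\omega^2}$ when $c=0$. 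By the four-case analysis of Section~\ref{ss:hmp} applied to \eqref{eq:harmonicintegrator}--\eqref{eq:harmonicintegrator2}, the mode is stable exactly when $|A_h(\omega)|\le1$, and $|A_h(\omega)|<1$ forces a pair of complex-conjugate eigenvalues of unit modulus, hence bounded powers.

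For $c\in(0,1]$ I would rewrite $A_h=\sqrt{1+\beta^2}\,\cos\!\big(ch+\arctan\beta\big)$, so $|A_h|\le1$ is equivalent to $|\cos(ch+\arctan\beta)|\le\cos(\arctan\beta)$; since $\arctan\beta\in(0,\pi/2)$, the first (smallest-$h$) stability interval is exactly $ch+2\arctan\beta<\pi$. For $c=0$, $|A_h|<1$ is immediately $h<2\omega/\sqrt{\omega^2+1}$. The key final observation is monotonicity in $\omega$: $\beta(\omega)=\tfrac{h}{2c}\big((1-c^2)+\omega^{-2}\big)$ is \emph{decreasing} in $\omega$ because $0\le c\le1$ gives $1-c^2\ge0$, so $ch+2\arctan\beta(\omega)$ is decreasing in $\omega$; and $2\omega/\sqrt{\omega^2+1}=2/\sqrt{1+\omega^{-2}}$ is increasing in $\omega$. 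Thus in either case the constraint is strictest at the lowest frequency $\omega_1$, and \eqref{eq:stability_requirement_ND} is precisely that strictest constraint. Hence it implies $|A_h(\omega_i)|<1$ for all $i$, so all powers of $\tilde M_h$ (block diagonal in the eigenbasis of $\boldsymbol L$) stay bounded and the integrator is stable.

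I expect the only delicate point to be the periodicity bookkeeping in the $c\in(0,1]$ case: for $c>0$ there are infinitely many disjoint stability intervals in $h$, and one must argue both that \eqref{eq:stability_requirement_ND} identifies the first of them and that this first interval shrinks as $\omega$ decreases --- which is exactly where the monotonicity of $\beta$ in $\omega$, and therefore the hypothesis $c\le1$, is used. The scalar $2\times2$ flow computations and the trigonometric rewriting of $A_h$ are otherwise routine.
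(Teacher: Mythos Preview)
Your proposal is correct and follows essentially the same route as the paper: diagonalize $\boldsymbol L$ to decouple into scalar modes (Section~\ref{sec:proofsbridge}), compute the one-step matrix and its half-trace $A_h$ for each mode, use the trigonometric rewriting (the paper writes $\tfrac12\tr(\tilde M_{h,c})=\cos(ch+\arctan\beta)/\cos(\arctan\beta)$, which is identical to your $\sqrt{1+\beta^2}\cos(ch+\arctan\beta)$), and then argue that the constraint is monotone in $\omega$ so that the lowest frequency $\omega_1$ is binding. Your explicit monotonicity computation $\beta(\omega)=\tfrac{h}{2c}\big((1-c^2)+\omega^{-2}\big)$ is in fact a little more detailed than the paper's one-line remark that ``larger values of $\omega$ lead to less stringent stability requirements.''
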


We now turn to the mean energy error after an integration leg.

\begin{theorem} \label{thm:mean_DeltaN}
Choose  \(\eta\in(0,\pi)\) and restrict the attention to step sizes such that \(ch \leq \eta\) and the
stability requirement \eqref{eq:stability_requirement_ND} is satisfied.
\begin{itemize}
\item For \(c\in[0,1)\), there exist positive constants \(h_0=h_0(c,\eta)\) and \(C=C(c,\eta)\), such that
    for \(h<h_0\), the mean energy error after \(n\) time-steps has the bounds
\[
0 \le \E( \Delta \mathcal{H}_d)  \leq C dh^4.
\]

\item In the  case \(c=1\), there exist positive constants \(h_0=h_0(\eta)\) and
    \(C^\prime=C^\prime(\eta)\), such that for \(h<h_0\),
\[
0 \le \E( \Delta \mathcal{H}_d)  \leq C^\prime h^4.
\]
\end{itemize}

Here, the expected value is over  random initial conditions with non-normalized density
$e^{-\mathcal{H}_d(\boldsymbol{u}, \boldsymbol{p})}$.
\end{theorem}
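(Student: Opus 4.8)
The plan is to diagonalise and reduce to one degree of freedom. For the Ornstein--Uhlenbeck bridge \eqref{eq:potential_of_diffusion_bridge} we have $g(s,u)=\frac12 u^2$, so $G_d(\boldsymbol u)=\frac12\boldsymbol u^T\boldsymbol u$ and the preconditioned Hamiltonian $\mathcal H_d=\Delta s\,H_d$ is quadratic, of the form \eqref{eq:MKmodel} with mass matrix $\boldsymbol M=-\boldsymbol L$ and stiffness matrix $\boldsymbol K=\boldsymbol I-\boldsymbol L$, both symmetric positive definite. Introduce the orthonormal eigenbasis of $-\boldsymbol L$, with eigenvalues $\omega_1^2<\cdots<\omega_d^2$ ($\omega_1$ as in \eqref{eq:omega1}); then $\boldsymbol M^{-1}\boldsymbol K$ has eigenvalues $\Omega_j^2=1+\omega_j^{-2}$, and by Proposition~\ref{prop:KMsecond} the dynamics decouples into $d$ harmonic oscillators of frequencies $\Omega_j$. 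The two split fields (A) and (B), as well as $\boldsymbol M$, are simultaneously block-diagonalised by this change of variables, so the Strang integrator \eqref{eq:vv} --- which is palindromic, hence reversible (Theorem~\ref{th:revintegrator}) and symplectic, hence volume-preserving (Theorem~\ref{th:sympintegrator}) --- acts on the $j$-th mode through a $2\times2$ matrix $\tilde M_h^{(j)}$ of the form \eqref{eq:tildemh}, with its own $\theta_h^{(j)}$ and $\chi_h^{(j)}$. Using the multivariate extension of Proposition~\ref{prop:average} (the theorem following Proposition~\ref{prop:KMsecond}), and that $\sin^2(n\theta_h^{(j)})\le1$ covers both the fixed $n$ of PHMC and the random $n$ of PRHMC, one obtains at once
\[
0\le\E(\Delta\mathcal H_d)\le\sum_{j=1}^d\rho_j,\qquad \rho_j=\frac12\left(\chi_h^{(j)}-\frac{1}{\chi_h^{(j)}}\right)^{2},
\]
so the whole problem reduces to estimating $\chi_h^{(j)}-1$.

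Next I would compute $\tilde M_h^{(j)}$ in closed form: restricted to mode $j$, $\varphi_h^{(A)}$ is the exact flow over time $h$ of a harmonic oscillator of frequency $c$ (independent of $j$) and $\varphi_{h/2}^{(B)}$ is the shear produced by the kick, so multiplying the three $2\times2$ factors of \eqref{eq:vv} yields $A_h^{(j)}=\cos\theta_h^{(j)}$ and $\chi_h^{(j)}$ as elementary functions of $h$, $c$ and $\omega_j$. Crucially, when $c=1$ the frequency of the (A) part is exactly $1$ while the exact mode frequency is $\Omega_j=\sqrt{1+\omega_j^{-2}}$, so the kick (B) is a perturbation of relative size $\omega_j^{-2}$ of the exact mode dynamics --- whereas for $c<1$ the high modes have $\Omega_j\to1\neq c$ and the two split fields differ at order $1$. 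The stability conditions \eqref{eq:stability_requirement_ND} are exactly the requirement $|A_h^{(j)}|<1$ for every $j$; since $\Omega_1$ is the largest of the $\Omega_j$ and $2/S\le\omega_1$, they stay uniform in $d$.

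For $c\in[0,1)$ the integrator restricted to each mode is a fixed second-order symplectic reversible one-step method applied to an oscillator whose frequency $\Omega_j$ lies in the compact interval $[1,\sqrt{1+S^2/4}]$, and the hypotheses $ch\le\eta$ and \eqref{eq:stability_requirement_ND} keep $h$ below the stability limit by a fixed margin once $h<h_0(c,\eta)$. Hence $\chi_h^{(j)}=1+\mathcal O(h^2)$ with a constant uniform in $j$ and $d$, so $\rho_j\le Ch^4$ for every $j$ and $\E(\Delta\mathcal H_d)\le\sum_{j=1}^d\rho_j\le Cdh^4$. For $c=1$, a Taylor expansion of $\chi_h^{(j)}$ --- uniform in $h$ for $h\le\eta<\pi$ and under \eqref{eq:stability_requirement_ND} --- gives $\chi_h^{(j)}-1=\mathcal O(h^2\omega_j^{-2})$, whence $\rho_j=\mathcal O(h^4\omega_j^{-4})$. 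Since the discrete-Laplacian eigenvalues obey $\omega_j^2\ge4j^2/S^2$, one has $\sum_{j\ge1}\omega_j^{-4}<\infty$, a quantity depending only on $S$, and summing yields $\E(\Delta\mathcal H_d)\le C'h^4$ independently of $d$. The lower bound $\E(\Delta\mathcal H_d)\ge0$ is Theorem~\ref{thm:mean_energy_error} (or, mode by mode, Proposition~\ref{prop:average}).

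The main obstacle is the $c=1$ estimate: one must show that $\chi_h^{(j)}-1$ decays at least like $\omega_j^{-2}$, so that $\sum_j\rho_j$ converges, with a constant uniform both as $\omega_j\to\infty$ and over the admissible range of $h$. This demands a perturbative analysis of the $2\times2$ map $\tilde M_h^{(j)}$ that is controlled in the regime $\omega_j$ large --- rather than the usual $h\to0$ expansion --- while simultaneously keeping track of the distance to the stability boundary encoded in \eqref{eq:stability_requirement_ND}; the bookkeeping is where the real work lies. By contrast the case $c<1$ is routine once the uniformity of the second-order error over the compact frequency set $\{\Omega_j\}$ has been pinned down.
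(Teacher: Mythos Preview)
Your proposal is correct and follows essentially the same route as the paper: diagonalise in the eigenbasis of $-\boldsymbol{L}$, reduce to $d$ independent one-degree-of-freedom problems, bound $\E(\Delta\mathcal{H}_d)\le\sum_j\rho_j$, and then show $\rho_j=O(h^4)$ uniformly for $c<1$ while $\rho_j=O(h^4\omega_j^{-4})$ for $c=1$ so that the sum converges via $\omega_j^2\ge 4j^2/S^2$. The only substantive difference is that the paper dispatches the step you flag as ``where the real work lies'' not by a perturbative expansion of $\chi_h^{(j)}$ but by deriving an explicit closed-form identity (Proposition~\ref{prop:chi2}),
\[
\rho(c,\omega,h)=\frac{h^4 r^2}{2(1+\omega^{-2})(1+\omega^{-2}-h^2 r)},\qquad r=\tfrac14(1-c^2+\omega^{-2})^2+c^2(1-c^2+\omega^{-2})R(ch),
\]
from which the required decay is read off directly: for $c=1$ one has $r(1,\omega,h)=\tfrac14\omega^{-4}+\omega^{-2}R(h)=O(\omega^{-2})$, hence $\rho=O(h^4\omega^{-4})$, while for $c<1$ the factor $(1-c^2)$ survives and $r$ is merely bounded, giving $\rho=O(h^4)$ per mode.
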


The exponent of \(h\) in the bounds should not be surprising by now. We emphasize that the bounds are
\emph{independent} of the number of steps  in the integration leg. They therefore apply to the PHMC case where
the duration of the integration leg is fixed  and  to cases where that duration is randomized as in the PRHMC
algorithm in the experiments above.  For \(c\neq 1\) the energy error grows linearly with the number of
degrees of freedom. This is the behaviour we found in the scenario studied in Section~\ref{sec:highD_hmc}, but
we note that here the different components \(\boldsymbol{u}_j\) \emph{are not independent nor are they
identically distributed} (this point is discussed further in Section~\ref{sec:proofsbridge}). On the other
hand, for \(c=1\) the energy error bound is \emph{uniform} in \(\Delta s\).

Even though full details will not be given, we mention that the bounds
 in Theorem~\ref{thm:mean_DeltaN} may be used to derive results on the acceptance rate,
 in analogy to what we did in Section~\ref{sec:highD_hmc}. If \(c=1\), the mean acceptance
  rate for fixed \(h\) is independent of \(\Delta s\). However for \(c\neq 1\), \(h\) has to be scaled as \((\Delta s)^{1/4}\) to ensure that the mean acceptance rate is bounded away from 0 as \(\Delta s\rightarrow 0\). This agrees
  with the experiments in Figure~\ref{fig:ou_phmc_mean_ap}. Note that the scaling \(h\sim (\Delta s)^{1/4}\) arises from \emph{accuracy} considerations rather than from \emph{stability} restrictions.

\subsection{Hilbert space HMC}
The  non-normalized density  \(\propto \exp(-\mathcal{U}_d(\boldsymbol{u}))\) of the target \(\Pi_d\) associated with
\(\mathcal{U}_d(\boldsymbol{u})\) in \eqref{eq:truncated_potential_energy} may be factored as
\begin{equation}\label{eq:factorhilbert}
\exp(-\Delta s G_d(\boldsymbol{u}))\times
 \exp\Big(  - \frac{\Delta s}{2} \boldsymbol{u}^T \boldsymbol{L} \boldsymbol{u}\Big).
\end{equation}
Thus \(\Pi_d\) has non-normalized density
\begin{equation}\label{eq:densitywrtgauss}
\exp(-\Delta s G_d(\boldsymbol{u}))
\end{equation}
with respect to the Gaussian distribution \(\Pi_d^0\) in \(\mathbb{R}^d\) with mean \(\boldsymbol{0}\) and
covariance matrix \((\Delta s)^{-1}\boldsymbol{L}^{-1}\). This observation is useful because as \(d\uparrow
\infty\),  \(\Pi_d^0\) approaches the centered Gaussian distribution \(\Pi^0\) in the Hilbert space
\(L^2(0,S)\) with covariance operator \((-\partial_{ss})^{-1}\), where the differential operator
\(-\partial_{ss}\) has homogeneous boundary conditions \cite{DaZa2014}. It is well known that \(\Pi^0\) is the
distribution of the \emph{Brownian bridge in \([0,S]\)} with homogeneous boundary conditions. On the other
hand the first factor in \eqref{eq:factorhilbert} is a discretization of
\[
\exp\left(-\int_0^S g(s,u(s))ds\right).
\]
In this way the path distribution \(\Pi\) may be described as the measure whose density with respect to the
Gaussian \(\Pi^0\) is given by the last display. There is an important difference between the finite and infinite
dimensional cases. The finite dimensional \(\Pi_d\) may described in two equivalent ways: (i) as having
non-normalized density \(\exp(-\mathcal{U}_d(\boldsymbol{u}))\) with respect to the standard Lebesgue measure \(dq\) or (ii)
as having non-normalized density \eqref{eq:densitywrtgauss} with respect to the Gaussian \(\Pi_d^0\). The
infinite-dimensional \(\Pi\) cannot be defined by a density with respect to the standard Lebesgue
measure in \(L^2\), simply because that measure does not exist. Thus, necessarily \(\Pi\) has to be defined by its density with respect to \(\Pi^0\).

These considerations lead to studying the general problem of  sampling from a target \(\Pi\) defined by a density
\(\exp(-\Phi(u))\)
with respect to a given centered Gaussian reference measure \(\Pi_0\) in a Hilbert space. The PHMC and PRHMC
algorithms described above in the restricted scenario of \eqref{eq:pathpotential} may be extended without difficulty
 to that general problem.  In the extension,
the matrix \(\boldsymbol{L}\) in \eqref{eq:discrete_laplacian} is replaced by a discretization of the inverse of
 the covariance operator of \(\Pi_0\) and the function \(G_d\) arises from discretizing \(\Phi\).
Again the key point in the preconditioned algorithms is to choose the mass matrix \(\boldsymbol{M}\) to coincide with \(-\boldsymbol{L}\); this ensures that if \(\Phi=0\) the dynamics is given by \(\boldsymbol{\ddot u}= -\boldsymbol{ u}\) where all frequencies are \(1\).

The general problem just described has been addressed by \cite{BePiSaSt2011} who introduced an HMC algorithm
that is formulated in \emph{the Hilbert space itself}. Of course in practice that algorithm can only be
implemented after a suitable discretization and, once the discretization has been performed,
 it \emph{coincides with PHMC with} \(c=1\) implemented as in Remark~\ref{rem:pv}.
We emphasize that in PHMC  we first discretized the target and then formulated the sampling method. In contrast,
\cite{BePiSaSt2011} proceed in the reverse order: discretization comes \emph{after} formulating the sampling
method. The route in \cite{BePiSaSt2011}, which requires nontrivial use of functional analytic
techniques, is mathematically more sophisticated than the approach we have followed here. What is then the
advantage of formulating the algorithm in the infinite-dimensional scenario?
 A sampling method that works in the Hilbert space itself may be expected to work uniformly well
 as the dimension \(d\) of the discretization tends to \(\infty\). This is what happens to the
 algorithm in \cite{BePiSaSt2011} in view of Theorem~\ref{thm:mean_DeltaN}. On the other hand,
 for \(c\neq 1\),  PHMC cannot arise from discretizing a Hilbert space algorithm, because we know that
  its performance becomes worse and worse as \(d\uparrow\infty\) with \(h\) fixed.

To finish this section, we remark that for MALA preconditioned and
    non-preconditioned versions are available \cite{BeRoStVo2008},
    and analogous to the latter, there is a non-preconditioned version of HMC \cite{Bo2017}.

\vspace{2mm}

\section{Supplementary material}
\label{sec:supplemenary} The paper concludes with material that complements a number of points considered in
the preceding sections.

\subsection{Finding the modified equation of a splitting integrator}
\label{sec:bch}

We now show how to obtain the expansion in Theorem~\ref{th:modinfsplit}. Full details will be given for the
particular case of the Lie-Trotter splitting algorithm \eqref{eq:lt}; but the method is the same for more
involved integrators. There are two steps. In the first, flows are represented as exponentials. In the second,
exponentials are combined by means of the Baker-Campbell-Hausdorff (BCH) formula.

\subsubsection{Lie derivatives}

Associated with the vector field \(f(x)\) in the differential system \eqref{eq:ode} with flow \(\varphi_t\),
there is a \emph{Lie derivative} \(D_f\). This is the first-order differential operator that maps each smooth
function \(\chi: \mathbb{R}^D\to \mathbb{R}\) into a new function \(D_f\chi: \mathbb{R}^D\to \mathbb{R}\)
defined as follows:
\[
(D_f\chi)(x) = \sum_{i=1}^D f^i(x) \frac{\partial}{\partial x^i}\chi(x)
\]
(superscripts denote components of vectors). The chain rule leads to the formula
\[
\left. \frac{d}{dt} \chi\big(\varphi_t(x)\big)\right|_{t=0} = (D_f\chi)(x),
\]
which shows the meaning of  \((D_f\chi)(x)\) as a rate of change of \(\chi\) along the solution \(t\mapsto
\varphi_t(x)\) of \eqref{eq:ode}. By successively applying this formula to the functions \(D_f\chi\), \(
D_f(D_f\chi) \), \dots , we find
\[
\left. \frac{d^k}{dt^k} \chi\big(\varphi_t(x)\big)\right|_{t=0} = (D_f^k\chi)(x), \qquad k = 2,3,\dots,
\]
where \(D^k_f\) is the  \(k\)-th order differential operator defined inductively as
\[
(D^k_f\chi)(x) = (D_f(D_f^{k-1}\chi))(x),\qquad k = 2, 3, \dots
\]
Therefore the Taylor expansion of \(\chi(\varphi_t(x))\) at \(t=0\) reads
\[
\chi\big(\varphi_t(x)\big) =\sum_{k=0}^\infty \frac{t^k}{k!} (D^k_f\chi)(x),
\]
or
\[ \chi\big(\varphi_t(x)\big) = \big(\exp(tD_f)\chi\big)(x) \]
 a formula that may be used to retrieve, at least formally,  \(\varphi_t\):
its application with \(\chi\) equal to the coordinate function \(\chi(x) = x^i\), \(i = 1,\dots, D\), yields
the \(i\)-th component of the vector \(\varphi_t(x)\). In conclusion, the equality
\begin{equation}\label{eq:flowasexponential}
\chi\circ \varphi_ t = \exp(tD_f) \chi
\end{equation}
may be understood as a representation of the flow of a differential system as the exponential of the Lie
operator of its vector field.

If \(f^{(A)}\) and \(f^{(B)}\) are vector fields, then the compositions  \(D_{f^{(A)}}D_{f^{(B)}}\) and
\(D_{f^{(B)}}D_{f^{(A)}}\) are second-order differential operators. However, it is easily checked that
\(D_{f^{(A)}}D_{f^{(B)}}-D_{f^{(B)}}D_{f^{{(A)}}}\) is a first-order differential operator. In fact the
following result holds \cite[Section 39]{arnold}.

\begin{proposition}\label{prop:commutatorvslie}
\(D_{f^{(A)}}D_{f^{(B)}}-D_{f^{(B)}}D_{f^{{(A)}}}\) is  the Lie operator corresponding to
the vector field \([f^{(A)},f^{(B)}]\)  \ie\ to the Lie bracket of \(f^{(A)}\) and \(f^{(B)}\) defined in
\eqref{eq:liebracket}.
\end{proposition}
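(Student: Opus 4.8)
The plan is a direct computation with the coordinate expressions of the Lie operators, exploiting the symmetry of second partial derivatives. First I would write, for an arbitrary smooth $\chi:\mathbb{R}^D\to\mathbb{R}$, the iterated operator $D_{f^{(A)}}(D_{f^{(B)}}\chi)$ by applying the definition $(D_f\chi)(x)=\sum_i f^i(x)\,\partial\chi/\partial x^i(x)$ twice and using the product rule. This produces two groups of terms: one involving first derivatives of the components of $f^{(B)}$ multiplied by first derivatives of $\chi$, and one involving the Hessian of $\chi$ with coefficient $(f^{(A)})^i (f^{(B)})^j$. I would then write the analogous expansion for $D_{f^{(B)}}(D_{f^{(A)}}\chi)$, obtained simply by interchanging the roles of $A$ and $B$.

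The key step is to subtract the two expansions. Since $\chi$ is smooth, $\partial^2\chi/\partial x^i\partial x^j$ is symmetric in $i$ and $j$, so the purely second-order contributions $\sum_{i,j}(f^{(A)})^i(f^{(B)})^j\,\partial^2\chi/\partial x^i\partial x^j$ and $\sum_{i,j}(f^{(B)})^i(f^{(A)})^j\,\partial^2\chi/\partial x^i\partial x^j$ coincide and cancel. What survives is a first-order operator whose action on $\chi$ is
\[
\sum_{j=1}^D\Big(\sum_{i=1}^D (f^{(A)})^i\,\frac{\partial (f^{(B)})^j}{\partial x^i}-\sum_{i=1}^D (f^{(B)})^i\,\frac{\partial (f^{(A)})^j}{\partial x^i}\Big)\frac{\partial \chi}{\partial x^j}.
\]

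It then remains to recognize the coefficient of $\partial\chi/\partial x^j$: the sum $\sum_i (f^{(A)})^i\,\partial (f^{(B)})^j/\partial x^i$ is exactly the $j$-th component of $(f^{(B)})'(x)f^{(A)}(x)$, and likewise for the other term, so the coefficient vector is $(f^{(B)})'(x)f^{(A)}(x)-(f^{(A)})'(x)f^{(B)}(x)$, which is precisely $[f^{(A)},f^{(B)}](x)$ by the definition \eqref{eq:liebracket}. Hence the displayed operator equals $D_{[f^{(A)},f^{(B)}]}$, as claimed. The only thing to watch is the index bookkeeping and keeping the sign convention of \eqref{eq:liebracket} straight; there is no genuine analytic obstacle, since the smoothness of $f$ (hence of its components $f^i$) and of $\chi$ that makes the product rule and the equality of mixed partials legitimate has been assumed throughout.
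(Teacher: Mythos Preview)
Your proof is correct and is exactly the standard direct computation. The paper does not actually prove this proposition: it merely remarks that ``it is easily checked'' that the commutator is first order and then states the result with a citation to Arnold, so your argument supplies precisely the details the paper omits.
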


If now \(\varphi_t^{(A)}\), \(\varphi_s^{(B)}\) are the flows corresponding to the vector fields \(f^{(A)}\)
and \(f^{(B)}\) respectively, two applications of  \eqref{eq:flowasexponential} give
\begin{align}\label{eq:chifg}
\chi\circ\Big(\varphi_s^{{(B)}}\circ\varphi_t^{(A)}\Big) &=
\Big(\chi\circ\varphi_s^{{(B)}}\Big)
\circ \varphi_t^{(A)}  = \exp(tD_{f^{(A)}}) \Big(\chi\circ\varphi_s^{{(B)}}\Big)\\\nonumber
& = \exp(tD_{f^{(A)}}) \Big(\exp(sD_{f^{(B)}}) \chi\Big)\\
& = \Big(\exp(tD_{f^{(A)}}) \exp(sD_{f^{(B)}})\Big) \chi. \nonumber
\end{align}

Thus the operator \(\exp(tD_{f^{(A)}}) \exp(sD_{f^{(B)}})\) represents the composition
\(\varphi_s^{{(B)}}\circ\varphi_t^{(A)}\) in analogy with  \eqref{eq:flowasexponential}. Note that the
\(A\)-flow acts \emph{first} in the composition \(\varphi_s^{{(B)}}\circ\varphi_t^{(A)}\) while
\(\exp(tD_{f^{(A)}})\) acts \emph{second} in the product of operators \(\exp(tD_{f^{(A)}})
\exp(sD_{f^{(B)}})\). Our task now is to write \(\exp(tD_{f^{(A)}}) \exp(sD_{f^{(B)}})\) as a single
exponential.

\subsubsection{The Baker-Campbell-Hausdorff formula}

Assume for the time being that \(X\) and \(Y\) are square matrices of the same dimension. It is well known
that the product \(\exp(X)\exp(Y)\) of their exponentials only coincides with \(\exp(X+Y)\) if \(X\) and \(Y\)
commute. In fact, by multiplying out
\[
\exp(X) = I + X + \frac{1}{2} X^2+\frac{1}{6} X^3+\cdots
\]
and
\[
\exp(Y) = I + Y + \frac{1}{2} Y^2+\frac{1}{6} Y^3+\cdots
\]
we find
\begin{eqnarray*}
\exp(X)\exp(Y) &=& I+X+Y +\frac{1}{2} X^2+XY+\frac{1}{2} Y^2\\
&&\frac{1}{6} X^3+\frac{1}{2}X^2Y+\frac{1}{2}XY^2+\frac{1}{6} X^3+\cdots
\end{eqnarray*}
The products in the right-hand side are all of the form \(X^kY^{\ell}\), while the expansion of \(\exp(X+Y)\)
gives rise to products like \(YX\), \(Y^2X\), \(YX^2\), \(XYX\), \(YXY\), etc. The BCH formula
\cite{SaCa1994,HaLuWa2010} writes \(\exp(X)\exp(Y)\) as the exponential \(\exp(Z)\) of a matrix
\begin{eqnarray*}
Z& =& X+Y + \frac{1}{2} [X,Y]+\frac{1}{12} [X,[X,Y]]+\frac{1}{12}[Y,[Y,X]]\\
&& +\frac{1}{24} [X,[Y,[Y,X]]] -\frac{1}{720}[Y,[Y,[Y,[Y,X]]]]+\cdots
\end{eqnarray*}
where \([\cdot,\cdot]\) denotes the  commutator, e.g.\ \([X,Y] = XY-YX\), etc. The recipe to write down the
terms in the right-hand side is of no consequence for our purposes; what is remarkable is that this right-hand
side is a combination of \(X\), \(Y\) and \emph{iterated commutators}.

Now the BCH is valid, at least formally (\ie\ disregarding the convergence of the series involved), if \(X\)
and \(Y\), instead of matrices, are elements of any associative, non-commutative algebra. In particular it may
be applied to the case where \(X\) and \(Y\) are first-order differential operators as those considered above.
Going back to \eqref{eq:chifg} and recalling that the commutator of the Lie derivatives corresponds to the Lie
bracket of the vector fields (Proposition~\ref{prop:commutatorvslie}), the BCH formula with \(s=t=h\) then
yields
\[
\varphi_h^{{(B)}}\circ\varphi_h^{(A)} = \exp(h D_{\tilde f_h^\infty}),
\]
where \(f_h^\infty\) is the vector field
\[
f_h^\infty = f^{(A)}+f^{(B)}+\frac{h}{2} [f^{(A)},f^{(B)}] +\frac{h^2}{12}[f^{(A)}, [f^{(A)},f^{(B)}]]+\cdots
\]
Now a comparison with \eqref{eq:flowasexponential} shows that \(\varphi_h^{{(B)}}\circ\varphi_h^{(A)}\) is
formally the \(h\)-flow of \(f_h^\infty\), or, in other words, that \(f_h^\infty\) is the modified
vector-field for the Lie-Trotter splitting algorithm \eqref{eq:lt}.

For Strang's method and for more involved splitting integrators the meth\-odology is the same: each of the
individual flows whose composition yields \(\psi_h\) is written as an exponential and then the exponentials
are combined via the BCH formula. For an integrator where \(\psi_h\) is the composition of \(m\) flows,
\(m-1\) applications of the BCH are required. The task may be demanding due to the combinatorial intricacies
of the BCH formula.

\begin{remark}As noted in Section~\ref{ss:geometricintegration}, finding the modified vector field as above
and then resorting to Theorem~\ref{th:ordercond} is the most common way of investigating the consistency of
splitting algorithms.  An alternative direct technique, not based on modified equations, was suggested by
\cite{MuSa1999}. More recently, \emph{word series} \cite{AlSa2016,MuSa2017} have been introduced as a simpler
means to deal with this kind of question. A survey of the combinatorial techniques used to analyze integrators
is provided by \cite{SaMu2015}.
\end{remark}

\subsection{Distributions with light tails}
\label{sec:qcubed}

\begin{figure}
\begin{center}
\includegraphics[width=0.45\textwidth]{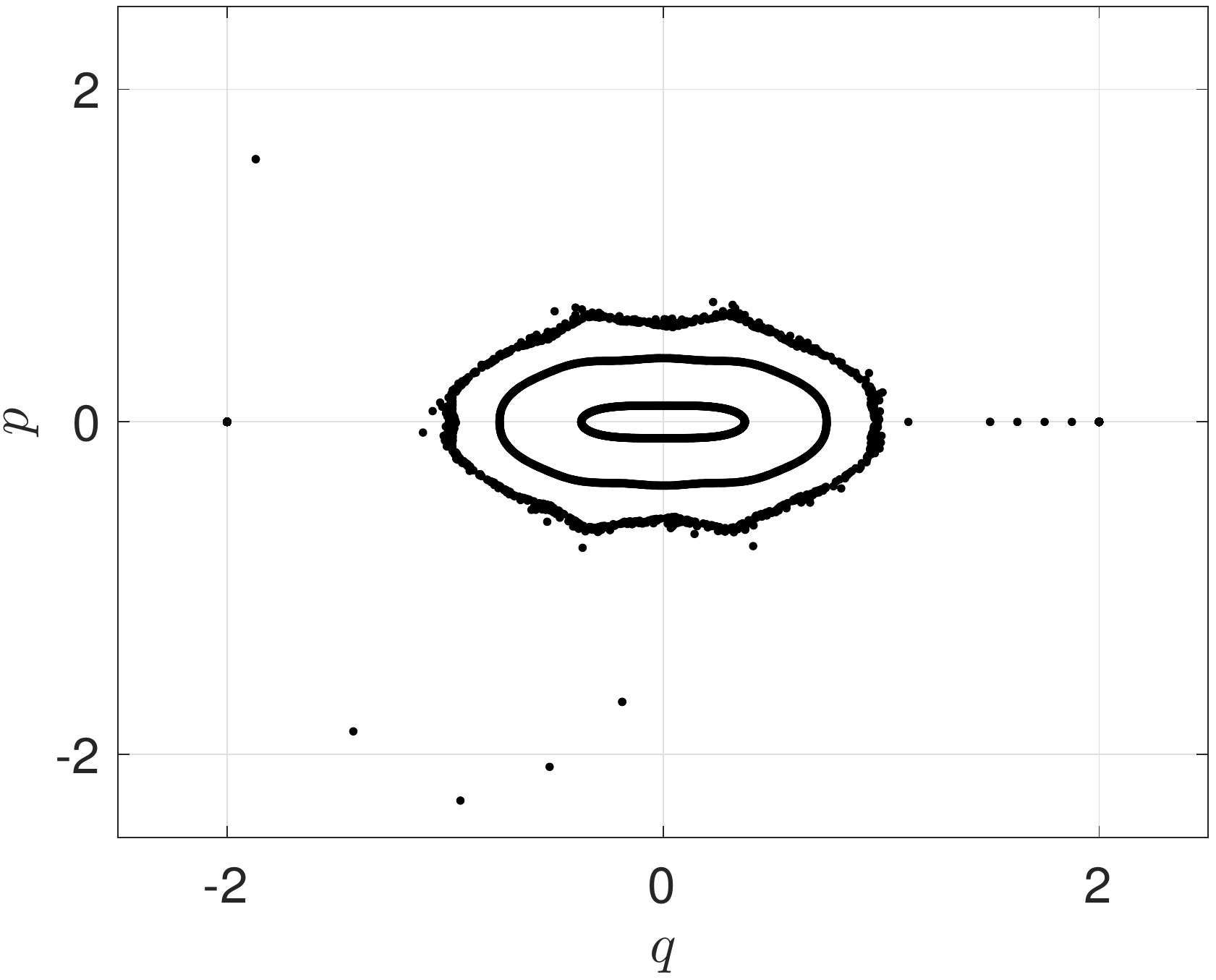}   \hspace{0.1in}
\includegraphics[width=0.45\textwidth]{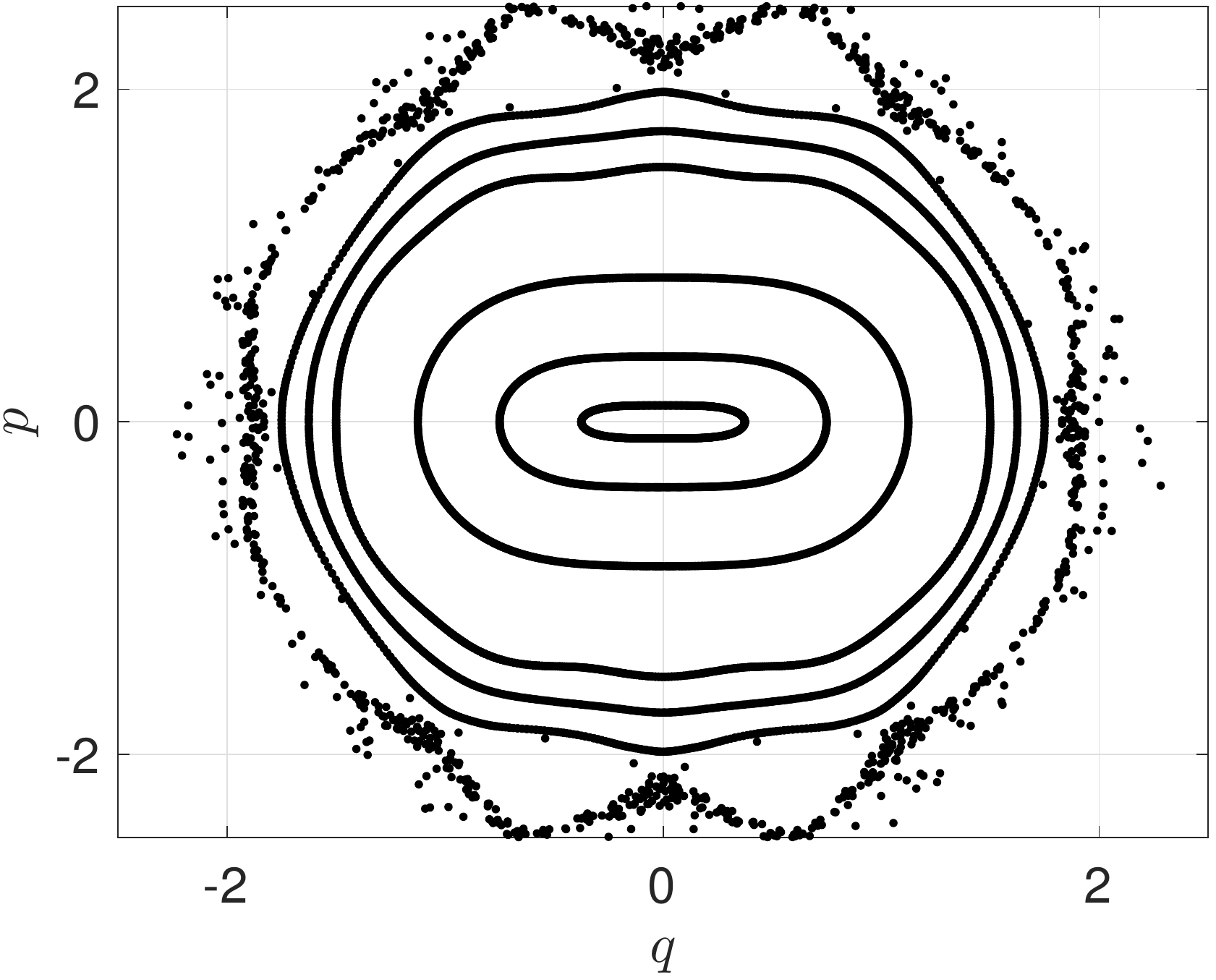}
\end{center}
\caption{\small  Verlet scheme for time step $h = 1$
(left panel) and $h=1/2$ (right panel) applied to $H(q,p) = (1/2) p^2 + (1/4) q^4$.
 The numerical solutions mimic the behaviour of the exact solution
 \emph{only} for initial conditions close to the origin. Decreasing \(h\) enlarges the region where the
 integrator performs satisfactorily, but does not eliminate the problem.
}
 \label{fig:conditional_stability}
\end{figure}

The numerical integration of Hamiltonian systems with a quadratic Hamiltonian \eqref{eq:MKmodel} (which
corresponds to HMC sampling of Gaussian distributions) was discussed in Section~\ref{sec:integrators}. For
distributions whose tails are lighter than those of a Gaussian, the potential energy \(\mathcal{U}(q)\) grows
 faster than quadratially as \(|q|\rightarrow \infty\) and this causes difficulties when integrating
numerically the Hamiltonian dynamics.

We illustrate this for the univariate distribution with non-normalized density \(\exp(-q^3)\).
Figure~\ref{fig:conditional_stability} shows, for two values of \(h\), nine velocity Verlet trajectories
corresponding to initial conditions with \(p=0\) and \(q\in(0,2]\). While, for smaller values of the initial
\(q\) the numerical trajectory mimics the   behaviour of the true solution, large initial values of \(q\) lead
to trajectories that quickly escape to infinity. Reducing the value of \(h\) enlarges the size of the domain
where Verlet performs satisfactorily. However no matter how small \(h\) is chosen, there will be an outer
region where the performance is bad. This is easily understood. The kicks of the Verlet algorithm update \(p\)
by using the formula \(p\mapsto p-(h/2)q^3\). For this update to be a reasonable approximation to the true
dynamics, it is obvious that the magnitude of \((h/2)q^3\) should be small when compared with the magnitude of
\(p\); a requirement that does not hold for any fixed \(h\) and \(p\) if \(|q|\) is sufficiently large. The
problem is the same for other explicit integrators. For implicit integrators, such as the midpoint rule, the
solution of the algebraic equations to be solved at each step also demands smaller values of \(h\) for larger
values of \(q\) as discussed in \cite[Section 3.3.3]{SaCa1994}.

The conclusion is that, when using an integrator with fixed \(h\), one's  attention has to be restricted to a
bounded subset \(D\) of the phase space \(\mathbb{R}^{2d}\) chosen to guarantee that the complement
\(\mathbb{R}^{2d}\backslash D\) has negligible probability with respect to \(\PiBG\). After fixing \(D\), a
suitably small value of \(h\) has to be chosen. While the situation is well understood for the sampling
algorithm MALA \cite{BoHa2013}, the corresponding analysis for HMC has not yet been carried out.

\subsection{Randomizing the time step and the duration}
\label{sec:randomizing}

\begin{figure}
\begin{center}
\includegraphics[width=0.33\textwidth]{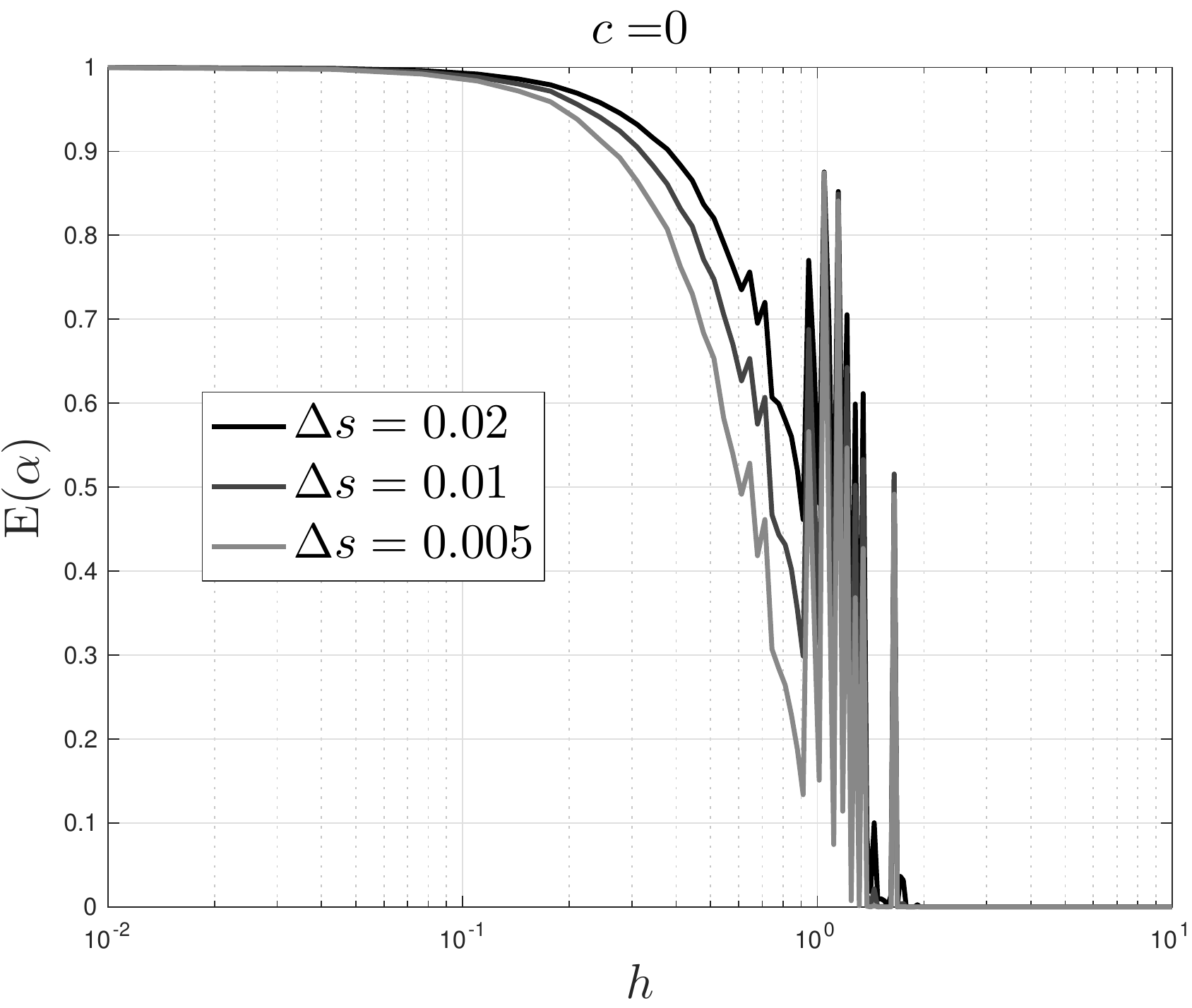}   \hspace{0.1in}
\includegraphics[width=0.33\textwidth]{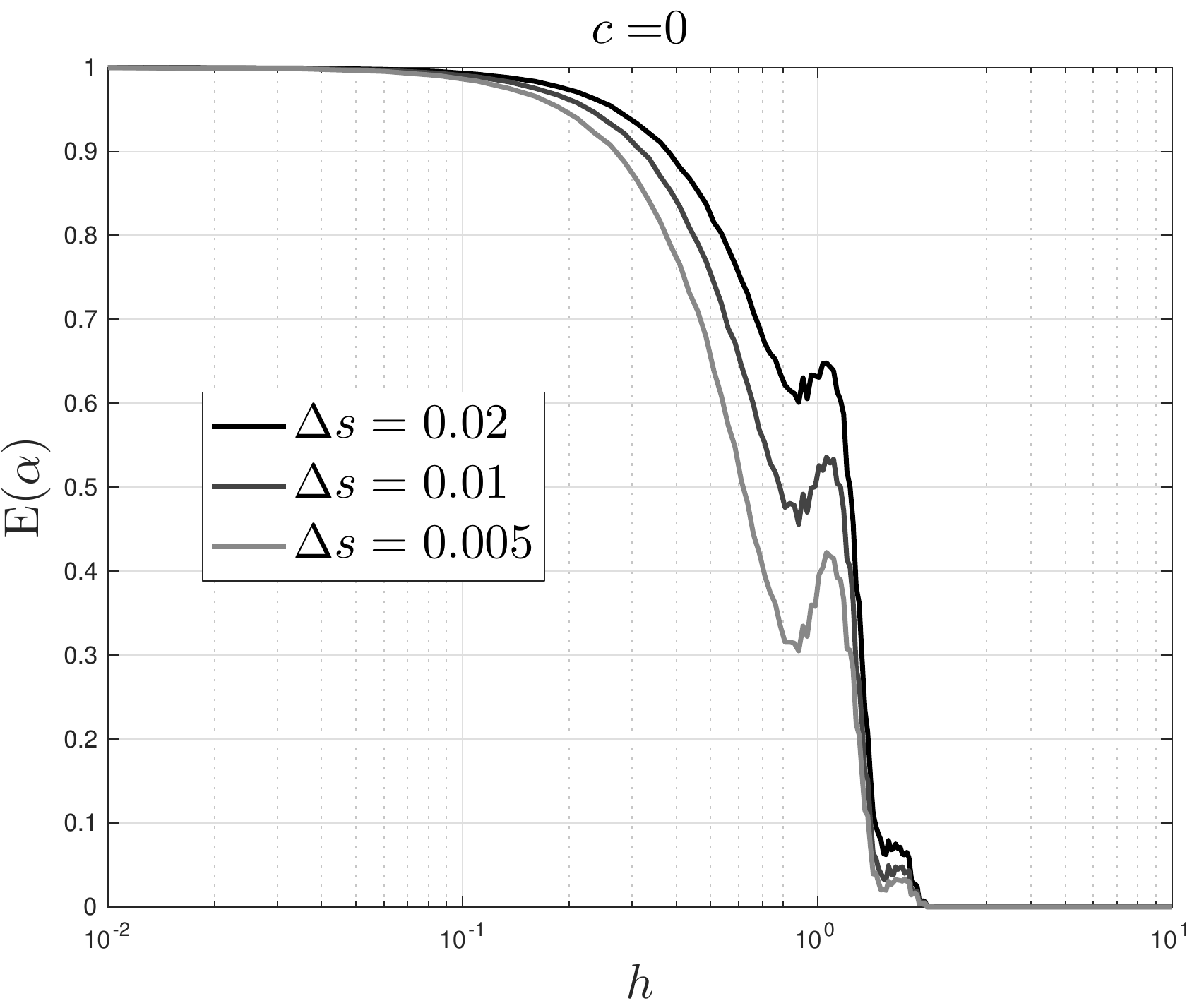}  \hspace{0.1in}
\includegraphics[width=0.33\textwidth]{prhmc_mean_ap_c_0.pdf}
\end{center}
\caption{\small  {\bf Time-Step or Duration Randomization.} Ornstein-Uhlenbeck Bridge Example.
Mean acceptance probability as a function of $h$ for PHMC operated with constant step sizes
(first panel), PHMC operated with randomized step sizes of average \(h\) (second panel),
and PRHMC (third panel)  for $c=0$ and three values of $\Delta s$. In all three cases the duration parameter
 is $\lambda = 20$. The number of samples is $10^4$.   The PRHMC algorithm is the least susceptible to periodicities
 in the underlying Hamiltonian dynamics.
}
  \label{fig:randomizations}
\end{figure}

The Exact RHMC algorithm, where the length of the integration leg is a random variable with exponential
distribution was discussed at the end of Section~\ref{sec:montecarlomethods}, along with its numerical
counterpart. We also discussed there an alternative procedure, based  on randomizing the step length. We now
illustrate the performance of these randomized algorithms in the case of the Ornstein-Uhlenbeck bridge example
considered in Section~\ref{sec:path}. With the notation employed there, we set \(c=0\), which leads to the
standard Verlet integrator. We have implemented three algorithms: (i) PHMC operated with (constant) step sizes
of length \(h\), (ii) PHMC operated with step sizes of length \(\Delta t\) uniformly distributed in the
interval \([0.9h, 1.1h]\) and (iii) PRHMC. The results are given in Figure~\ref{fig:randomizations}. (The
third panel here  is the same as the first panel in Figure~\ref{fig:ou_phmc_mean_ap}.) As \(h\) varies, the
behaviour of the un-randomized algorithm (i) is very irregular due to resonances between the step size and the
periods of the dynamics (see Section~\ref{ss:markov}). Randomizing \(h\) regularizes the behaviour, but not as
successfully as PRMHC does. This is consistent with the study carried out in \S4 and \S5 of \cite{BoSa2016}.

The results of experiments with \(c=0.5\) and \(c=1\), not reported here, show the same patterns.

\subsection{PHMC for the Orstein-Uhlenbeck bridge: proofs}
\label{sec:proofsbridge}
We now prove Theorems ~\ref{th:stabPHMC} and \ref{thm:mean_DeltaN}.

\subsubsection{One degree of freedom}

We begin by studying the application of the integrator used in PHMC to  a one-degree-of-freedom,
 linear analog of the dynamics in \eqref{eq:preconditioned_semidiscrete} given by the Hamiltonian function
\[
H(q,p) = \frac{1}{2} \omega^{-2} p^2 + \frac{1}{2} (\omega^2 + 1) q^2.
\]
If \(\omega \gg 1\) this corresponds to the motion of a particle with large mass \(\omega^2\) attached to a
stiff spring of constant \(1+\omega^2\). The solutions are periodic with a frequency \(1+1/\omega^2\) that
decreases as \(\omega\) increases.

 The one-step propagation matrix (cf.\ \eqref{eq:harmonicintegrator}) is
 given explicitly by \[
\tilde M_{h,c} = \begin{bmatrix} \cos(c h) - \dfrac{h \tilde \omega^2}{2 c \omega^2} \sin(c h) & \dfrac{\sin(c h)}{c \omega^2} \\[6pt]
 - h \tilde \omega^2 \cos(c h) - \dfrac{4 c^2 \omega^4 - h^2 \tilde \omega^4}{4 c \omega^2} \sin(c h)  & \cos(c h) - \dfrac{h \tilde \omega^2}{2 c \omega^2} \sin(c h) \end{bmatrix},
\] with  $\tilde \omega^2 = 1+(1-c^2) \omega^2$,  if $c \in (0,1]$, or
\[
\tilde M_{h,c} = \begin{bmatrix} 1 - \dfrac{h^2}{2 \omega^2} (1+\omega^2)  & \dfrac{h}{\omega^2} \\[6pt]
\dfrac{h (1+\omega^2) (h^2 - (4-h^2) \omega^2)}{4 \omega^2} &  1 - \dfrac{h^2}{2 \omega^2} (1+\omega^2),
\end{bmatrix}
\]  if $c=0$.

The following result is concerned with the stability of the integrator.
Note that the stability restriction on \(h\) weakens as \(\omega\) gets larger.

\begin{proposition} \label{prop:Psic_Stability_1D}
If $\omega >0$,  $c \in [0,1]$, and  $h>0$ satisfies \begin{equation} \label{eq:stability_requirement_1D}
\begin{cases}  c h +2 \arctan\left( \dfrac{h ( 1+(1-c^2) \omega^2  )}{2 c \omega^2} \right) < \pi & \text{if $c \in (0,1]$} \;, \\
h< \dfrac{2 \omega}{\sqrt{1+\omega^2}} & \text{if $c= 0$}\;, \end{cases}
\end{equation} then the splitting integrator is stable.
\end{proposition}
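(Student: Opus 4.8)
The plan is to reduce the statement to a bound on the single scalar $A_h$, the common value of the $(1,1)$ and $(2,2)$ entries of the propagation matrix $\tilde M_{h,c}$, and then to invoke the stability dichotomy for $2\times 2$ propagation matrices recorded in Section~\ref{ss:hmp}. The split Hamiltonians of the systems $(A)$ and $(B)$ are both quadratic and even in $p$, so the Strang integrator is symplectic (Theorem~\ref{th:sympintegrator}), hence $\det\tilde M_{h,c}=1$, and it is reversible with respect to the momentum flip (Proposition~\ref{prop:mflip} and Theorem~\ref{th:revintegrator}), hence the $(1,1)$ and $(2,2)$ entries coincide. Consequently the characteristic polynomial of $\tilde M_{h,c}$ is $\lambda^2-2A_h\lambda+1$, and the powers $\tilde M_{h,c}^n$ stay bounded as $n\to\infty$ as soon as $|A_h|<1$ (the eigenvalues are then a complex conjugate pair of unit modulus). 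So it suffices to show that each of the two hypotheses on $h$ forces $|A_h|<1$.

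For $c\in(0,1]$, the displayed matrix gives
\[
A_h=\cos(ch)-\kappa\sin(ch),\qquad
\kappa:=\frac{h\tilde\omega^2}{2c\omega^2}>0,\quad \tilde\omega^2=1+(1-c^2)\omega^2 .
\]
I would rewrite this in amplitude--phase form: setting $\phi:=\arctan\kappa\in(0,\pi/2)$, so that $\cos\phi=(1+\kappa^2)^{-1/2}$ and $\sin\phi=\kappa(1+\kappa^2)^{-1/2}$, one obtains
\[
A_h=\sqrt{1+\kappa^2}\,\cos(ch+\phi).
\]
The hypothesis $ch+2\arctan\kappa<\pi$ is precisely $ch+\phi<\pi-\phi$; together with $ch+\phi>\phi>0$ it places the angle $ch+\phi$ strictly inside $(\phi,\pi-\phi)\subset(0,\pi)$. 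Since the cosine is strictly decreasing on $(0,\pi)$, this yields $-\cos\phi<\cos(ch+\phi)<\cos\phi$, that is $|\cos(ch+\phi)|<\cos\phi$, whence $|A_h|<\sqrt{1+\kappa^2}\,\cos\phi=1$.

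For $c=0$ the matrix gives $A_h=1-\tfrac{h^2}{2\omega^2}(1+\omega^2)$. Here $A_h<1$ holds automatically, while $A_h>-1$ is equivalent to $h^2<4\omega^2/(1+\omega^2)$, i.e.\ to the stated bound $h<2\omega/\sqrt{1+\omega^2}$; so again $|A_h|<1$.

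The computations are routine; the point that deserves attention is that the strict inequalities in the hypotheses are exactly what is needed, since they give $|A_h|<1$ rather than merely $|A_h|\le 1$. This is essential because when $|A_h|=1$ the powers $\tilde M_{h,c}^n$ remain bounded only in the degenerate case $\tilde M_{h,c}=\pm I$, which does not occur here as the off-diagonal entry ($\sin(ch)/(c\omega^2)$ when $c>0$, and $h/\omega^2$ when $c=0$) is nonzero for $h>0$. A minor auxiliary remark is that the hypothesis for $c\in(0,1]$ already forces $ch<\pi$ because $\arctan\kappa>0$, which is what legitimises invoking the monotonicity of the cosine on the single arch $(0,\pi)$; no separate case distinction is required.
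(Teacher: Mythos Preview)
Your proof is correct and follows essentially the same route as the paper. Both arguments reduce to the condition $|A_h|<1$ via symplecticness, rewrite $A_h=\cos(ch)-\kappa\sin(ch)$ in amplitude--phase form (the paper writes $A_h=\cos(ch+\phi)/\cos\phi$, which is the same expression since $\cos\phi=(1+\kappa^2)^{-1/2}$), and then read off that stability fails precisely when $ch+2\phi=\pi$; your version is in fact more explicit than the paper's about why $|A_h|<1$ holds throughout the interval and about the borderline case.
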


\begin{proof} Since the integrator is symplectic \(\det(\tilde M_{h,c}) = 1\) and stability is ensured if the magnitude  of the
trace is \(<2\). The case \(c=0\) is trivial. For the case $c \in (0,1]$, by means of elementary trigonometric
formulas we find
 \[
\frac{1}{2} \tr( \tilde M_{h,c} )  = \dfrac{\cos(c h + \arctan(\dfrac{h \tilde \omega^2}{2 c \omega^2} ) )}{\cos(\arctan(\dfrac{h \tilde \omega^2}{2 c \omega^2} ) )}.
\]  As \(h\) increases from 0, stability is lost when \(h\) is such that the sum of the arguments in the cosine functions reaches  \(\pi\), so that the numerator and the denominator are opposite.
\end{proof}

As in Section~\ref{ss:hmp}, for stable integrations, \(\tilde M_{h,c}\) may be written in the form \eqref{eq:tildemh}, with
$\cos(\theta_h) = \tr(\tilde M_{h,c})/2$ and
\[
\begin{aligned}
 \chi_h =  \begin{cases} \dfrac{\sin(ch)}{c \omega^2 \sin(\theta_h) } & \text{if $c \in (0,1]$}. \\
 \dfrac{h}{\omega^2 \sin(\theta_h)} & \text{if $c=0$}. \end{cases}
 \end{aligned}
\]

For the error \(\Delta\) in \(H\) after an integration leg, by proceeding
as in the proof of Proposition~\ref{prop:average}, we may show:
\begin{proposition} \label{prop:Psic_Mean_Energy_Error_1D}
For  any positive integer \(n\), $\omega >0$,  $c \in [0,1]$,  \(\beta>0\) and  $h>0$ satisfying the stability
requirement \eqref{eq:stability_requirement_1D}, the energy error after \(n\) steps satisfies  \[ \E ( \Delta )
= \beta^{-1}\sin^2(n \theta_h)\rho(c,\omega,h),
\]
where the expected value is over random initial conditions with non-normalized density $e^{-\beta H(q,p)}$ and
\[
\rho(c,\omega,h) = \frac{1}{2} \left( \widehat \chi_h^2+\frac{1}{\widehat \chi_h^2}-2\right)
= \frac{1}{2} \left(\widehat \chi_h   -\frac{1}{\widehat \chi_h}\right)^2 \geq 0
\]
with
 \[
\widehat \chi_h^2 = (\omega^2 + \omega^4) \chi_h^2 .
\]
Accordingly
\[
0\leq \E ( \Delta )\leq \rho(c,\omega,h).
\]
\end{proposition}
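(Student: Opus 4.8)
The plan is to reduce the statement to a direct rerun of the computation behind Proposition~\ref{prop:average}, the only genuinely new feature being that here $q$ and $p$ enter $H$ with unequal weights. First I would pass to the variables $Q = (1+\omega^2)^{1/2}\,q$ and $P = \omega^{-1}\,p$, in which the Hamiltonian becomes the standard $H = \tfrac12\bigl(P^2+Q^2\bigr)$, so that under the non‑normalized density $e^{-\beta H}$ the pair $(Q_0,P_0)$ is $N(0,\beta^{-1}I_2)$; in particular $\E(Q_0^2)=\E(P_0^2)=\beta^{-1}$ and $\E(Q_0P_0)=0$.

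Next, since $h$ satisfies the stability requirement \eqref{eq:stability_requirement_1D}, Proposition~\ref{prop:Psic_Stability_1D} lets us write the one‑step matrix $\tilde M_{h,c}$ in the form \eqref{eq:tildemh} with the $\theta_h$ and $\chi_h$ recorded just before the present proposition, and hence $\tilde M_{h,c}^n$ has the form \eqref{eq:tildemhdos}. The change of variables above amounts to conjugating $\tilde M_{h,c}^n$ by the diagonal matrix $\operatorname{diag}\bigl((1+\omega^2)^{1/2},\,\omega^{-1}\bigr)$; this leaves the two diagonal entries $\cos(n\theta_h)$ untouched and multiplies the upper‑right and lower‑left entries by $\omega(1+\omega^2)^{1/2}$ and its reciprocal respectively. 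Thus in the $(Q,P)$ coordinates the evolution over $n$ steps is again a matrix of the shape \eqref{eq:tildemhdos}, with $\chi_h$ replaced by $\widehat\chi_h:=\omega(1+\omega^2)^{1/2}\chi_h$, i.e.\ with $\widehat\chi_h^2=(\omega^2+\omega^4)\chi_h^2$, precisely the quantity in the statement.

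With this normalization in hand I would repeat the one‑line argument in the proof of Proposition~\ref{prop:average}: writing $\gamma=\cos(n\theta_h)$, $\sigma=\sin(n\theta_h)$ and expanding $2\Delta=(Q_n^2+P_n^2)-(Q_0^2+P_0^2)$ as a quadratic form in $(Q_0,P_0)$, the coefficient of $Q_0P_0$ is a multiple of $\gamma\sigma$ and vanishes in expectation, while the $Q_0^2$ and $P_0^2$ coefficients are $\sigma^2(\widehat\chi_h^{-2}-1)$ and $\sigma^2(\widehat\chi_h^2-1)$. Taking expectations and using $\E(Q_0^2)=\E(P_0^2)=\beta^{-1}$ gives
\[
2\,\E(\Delta)=\beta^{-1}\sin^2(n\theta_h)\bigl(\widehat\chi_h^{2}+\widehat\chi_h^{-2}-2\bigr)=\beta^{-1}\sin^2(n\theta_h)\bigl(\widehat\chi_h-\widehat\chi_h^{-1}\bigr)^2,
\]
which is the asserted identity with $\rho(c,\omega,h)=\tfrac12(\widehat\chi_h-\widehat\chi_h^{-1})^2\ge 0$; the two‑sided bound then follows at once from $0\le\sin^2(n\theta_h)\le 1$.

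I do not expect a real obstacle: everything is linear and explicit. The one place requiring care — and the step I would double‑check — is the bookkeeping in the diagonal conjugation, namely that the scaling factors $(1+\omega^2)^{1/2}$ and $\omega^{-1}$ combine to give exactly $\widehat\chi_h^2=(\omega^2+\omega^4)\chi_h^2$, since it is here that the unequal weights of $q$ and $p$ in $H$ are absorbed. As a cross‑check one can avoid the change of variables entirely: expand $2\Delta=\omega^{-2}(p_n^2-p_0^2)+(1+\omega^2)(q_n^2-q_0^2)$ directly in $(q_0,p_0)$, use $\gamma^2-1=-\sigma^2$ to simplify the diagonal coefficients, and substitute $\E(q_0^2)=[\beta(1+\omega^2)]^{-1}$, $\E(p_0^2)=\omega^2/\beta$, $\E(q_0p_0)=0$; the same $\rho$ emerges.
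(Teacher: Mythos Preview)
Your proposal is correct and is exactly the approach the paper indicates: the paper does not spell out a proof but simply says the result follows ``by proceeding as in the proof of Proposition~\ref{prop:average}''. Your diagonal change of variables $Q=(1+\omega^2)^{1/2}q$, $P=\omega^{-1}p$ makes this reduction explicit and cleanly identifies $\widehat\chi_h$ as the conjugated off-diagonal factor, after which the computation is literally that of Proposition~\ref{prop:average} with $\beta^{-1}$ in place of $1$; the bookkeeping you flag is fine.
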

\medskip

Our next result gives a representation for  \(\rho\). The proof is a tedious exercise in algebra and will not be given.

\begin{proposition} \label{prop:chi2}
For $\omega >0$,  $c \in [0,1]$,  and  $h>0$  satisfying the stability requirement
\eqref{eq:stability_requirement_1D}, we have
\begin{equation}\label{eq:rhosupp}
\rho(c,\omega,h) = \frac{1}{2}\frac{h^4 r^2}{(1+\omega^{-2})(1+\omega^{-2}-h^2 r)},
\end{equation}
where \(r\) is the function
\[
r(c,\omega,h) = \frac{1}{4}(1-c^2+\omega^{-2})^2+c^2(1-c^2+\omega^{-2}) R(ch),
\]
with
\[
R(ch) = \frac{1-ch \cot(ch)}{(ch)^2}.
\]
\end{proposition}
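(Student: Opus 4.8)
The plan is to reduce $\rho$ to a rational--trigonometric expression in the entries of the one-step propagation matrix $\tilde M_{h,c}$ and then match it with the stated closed form. Since the splitting integrator is symplectic, $\det\tilde M_{h,c}=1$, and since it is reversible its diagonal entries coincide, $A_h=D_h=\cos\theta_h$; together these give $\sin^2\theta_h=1-\cos^2\theta_h=-B_hC_h$. Combined with the definition $\chi_h=B_h/\sin\theta_h$ from \eqref{eq:chi}, this yields the compact identity $\chi_h^2=-B_h/C_h$, valid for every stable $h$ (for which $B_hC_h<0$). Hence $\widehat\chi_h^2=(\omega^2+\omega^4)\chi_h^2=-\omega^2(1+\omega^2)B_h/C_h$, and after clearing the common factor $4c\omega^2$ from the entries of $\tilde M_{h,c}$ this becomes a ratio $N/D$ with, for $c\in(0,1]$,
\[ N=4\omega^2(1+\omega^2)\sin(ch),\qquad D=4c\omega^2h\tilde\omega^2\cos(ch)+(4c^2\omega^4-h^2\tilde\omega^4)\sin(ch), \]
where $\tilde\omega^2=1+(1-c^2)\omega^2$. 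Then, using $\rho=\tfrac12\big(\widehat\chi_h^2+\widehat\chi_h^{-2}-2\big)$ from Proposition~\ref{prop:Psic_Mean_Energy_Error_1D}, one has $\rho=\tfrac12\,(N-D)^2/(ND)$.

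The core of the argument is the simplification of $N-D$. Using the identity $4\omega^2(1+\omega^2)-4c^2\omega^4=4\omega^2\tilde\omega^2$ (immediate from the definition of $\tilde\omega^2$), I would obtain
\[ N-D=\tilde\omega^2\sin(ch)\Big(4\omega^2+h^2\tilde\omega^2-4\omega^2\,ch\cot(ch)\Big). \]
Separately, substituting $1-c^2+\omega^{-2}=\tilde\omega^2/\omega^2$ and $R(ch)=(1-ch\cot(ch))/(ch)^2$ into the definition of $r$ and factoring gives $r=\dfrac{\tilde\omega^2}{4\omega^4h^2}\big(4\omega^2+h^2\tilde\omega^2-4\omega^2\,ch\cot(ch)\big)$, so the bracket in the last display equals $4\omega^4h^2r/\tilde\omega^2$. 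Therefore $N-D=4\omega^4h^2r\,\sin(ch)$, and hence also $D=N-(N-D)=4\omega^2\sin(ch)\big((1+\omega^2)-\omega^2h^2r\big)$. Plugging these two expressions into $\rho=\tfrac12(N-D)^2/(ND)$, the factors $\sin^2(ch)$ cancel, and dividing numerator and denominator by $\omega^4$ produces exactly \eqref{eq:rhosupp}.

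It remains to treat $c=0$, which is cleanest to obtain as the limit $c\to0^+$ of the above: the entries of $\tilde M_{h,c}$ converge to those listed for $c=0$, the definition of $r$ degenerates to $r=\tfrac14(1+\omega^{-2})^2=(1+\omega^2)^2/(4\omega^4)$, and one checks directly (now with $\chi_h=h/(\omega^2\sin\theta_h)$) that $\widehat\chi_h^2=4\omega^2/\big(4\omega^2-h^2(1+\omega^2)\big)$, i.e.\ $N=4\omega^2$, $D=4\omega^2-h^2(1+\omega^2)$, $N-D=h^2(1+\omega^2)=4\omega^4h^2r/(1+\omega^2)$; substituting once more into $\rho=\tfrac12(N-D)^2/(ND)$ yields \eqref{eq:rhosupp} again. (Alternatively one repeats the $c\in(0,1]$ computation verbatim in this case, where it is in fact shorter.) The only genuine difficulty is the algebraic bookkeeping in the middle step — in particular recognizing that the cotangent term packaged inside $N-D$ is precisely $4\omega^4h^2r/\tilde\omega^2$, which hinges on factoring $r$ the right way using $1-c^2+\omega^{-2}=\tilde\omega^2/\omega^2$; everything else is mechanical.
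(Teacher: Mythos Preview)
Your proof is correct. The paper itself omits the proof entirely, stating only that it ``is a tedious exercise in algebra and will not be given,'' so there is nothing to compare against; your route---writing $\widehat\chi_h^2=N/D$ via $\chi_h^2=-B_h/C_h$, simplifying $N-D$ by spotting $4\omega^2(1+\omega^2)-4c^2\omega^4=4\omega^2\tilde\omega^2$ and $1-c^2+\omega^{-2}=\tilde\omega^2/\omega^2$, and then collapsing $\rho=\tfrac12(N-D)^2/(ND)$---is exactly the kind of direct algebraic verification the paper alludes to, and the $c=0$ case is handled cleanly either by limit or by the shorter direct computation you sketch.
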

\medskip

 The function
\(R(\zeta)\) has the  finite limit \(1/3\) as \(\zeta\rightarrow 0\). If \(c>0\), \(R(ch)\) is positive and
increases monotonically in the interval \(0<ch<\pi\), with \(R(ch)\uparrow \infty\) as \(ch\uparrow \pi\). As
a consequence, \(r\) is an increasing function of \(h\in(0,\pi)\); the term \(1+\omega^{-2}-h^2r\)  in
\eqref{eq:rhosupp} approaches \(0\) as \(h\) approaches the upper limit of the stability interval. The
preceding propositions imply that the mean energy error has an \(\mathcal{O}(h^4)\) bound. By now this result
is hardly a surprise, because we are dealing with an integrator of order \(\nu= 2\).

\subsubsection{\(d\) degrees of freedom}

We now address the application of PHMC to the target associated with \eqref{eq:pathpotential} when \(g(u) = (1/2)u^2\).
Theorems~\ref{th:stabPHMC} and \ref{thm:mean_DeltaN} are proved by using
in \(\mathbb{R}^d\) an orthonormal basis of eigenvectors
of \(-\boldsymbol{L}\) (cf.\ the proof of \eqref{eq:multivariate}).
In the basis of eigenvectors the different components are stochastically
independent but not identically distributed.

For Theorem~\ref{th:stabPHMC} it is enough to recall that in Proposition~\ref{prop:Psic_Stability_1D} larger
values of \(\omega\) lead to less stringent stability requirements. Therefore stability is limited by the
smallest eigenvalue of \(-\boldsymbol{L}\) given in \eqref{eq:omega1}.

Turning to the proof of  Theorem~\ref{thm:mean_DeltaN}, by using the spectral decomposition and
Proposition~\ref{prop:Psic_Mean_Energy_Error_1D} with \(\beta =\Delta s\), we may write
\[
0\leq \E(\Delta\mathcal{H}_d) \leq \sum_{j=1}^d \rho(c,\omega_j,h),
\]
with \(\omega_j^2\) equal to the \(j\)-th eigenvalue of \(-\boldsymbol{L}\)
\[\omega_j^2 = \frac{4}{\Delta s^2}
\sin^2 \left( \frac{ j\pi}{2 (d+1)} \right)\geq \frac{4j^2}{S^2}.\] We now use
 \eqref{eq:rhosupp} to obtain the bound
\[
0\leq \E(\Delta\mathcal{H}_d) \leq \frac{h^4}{2}
\sum_{j=1}^d \frac{r(c,\omega_j,h)^2}{1-h^2r(c,\omega_j,h)},
\]
and, since \(r\) is a decreasing function of \(\omega\),
\[
0\leq \E(\Delta\mathcal{H}_d) \leq d\frac{h^4}{2}
\frac{r(c,\omega_1,h)^2}{1-h^2r(c,\omega_1,h)}.
\]
The first bound in the theorem now follows easily: for \(h\) bounded away from \(\pi\), \(r\) is bounded,
and then \(h\) may be reduced further to ensure that \(1-h^2r(c,\omega_1,h)\) is bounded away from 0.

When \(c=1\),
\[
r(1,\omega_j,h)= \frac{1}{4}\omega_j^{-4}+\omega_j^{-2}R(h)\leq \frac{S^4}{64j^4}+\frac{S^2}{4j^2}R(h),
\]
and therefore for \(h\) bounded away from \(\pi\), \(r(1,\omega_j,h)=\mathcal{O}(j^{-2})\). Thus, if we write,
\[
0\leq \E(\Delta\mathcal{H}_d) \leq \frac{h^4}{2}
\sum_{j=1}^d \frac{r(1,\omega_j,h)^2}{1-h^2r(1,\omega_1,h)},
\]
the sum in the right-hand side may be bounded  independently of \(d\).

\subsection{Convergence}
\label{sec:convergence}

Here is a brief discussion of the sampling properties of HMC.

 Intuitively speaking, the point of using HMC is that its Hamiltonian dynamics breaks
the random walk behaviour that impairs the convergence of simple MCMC methods like the random walk Metropolis
or the MALA algorithm \cite{Gu1998,DiHoNe2000}.  This slow convergence can be understood by using a diffusion
approximation of these simple MCMC methods.  In particular,  the processes associated to simple MCMC methods
are often well approximated (in a weak sense) by overdamped Langevin dynamics \cite[see Theorem 5.1 for a
precise mathematical statement]{BoDoVa2014}, also called Brownian dynamics.  This may be seen as a consequence
of the detailed balance property of the algorithms. As a result, the corresponding paths meander around state
space and explore the target distribution mainly by diffusion. Qualitatively speaking, this is random walk
behavior. The basic idea in HMC is to replace the first-order Langevin dynamics in order to avoid this random
walk behaviour.

Unfortunately, beyond this intuition, the improvement in convergence afforded by HMC  is not well understood.
Under certain conditions on the time-step and target distribution, it has been known for a while that
numerical HMC converges to the right distribution asymptotically \cite{Sc1999,St2007,CaLeSt2007}. More
recently, the piecewise deterministic process associated with the exact RHMC Algorithm~\ref{algo:exact_rhmc}
was shown to be geometrically ergodic; i.e., its transition probabilities converge to the target distribution
in the total variation metric at an exponential rate \cite[see Theorem 3.9]{BoSa2016}.  The proof of geometric
ergodicity of RHMC relies on Harris theorem, which requires a (local) version of Doeblin's condition: a
minorization condition for the transition probabilities  at a finite time and in a compact set. Unfortunately,
given the complicated form of these transition probabilities, the minorization condition involves non-explicit
constants, and in particular, the dependence of the resulting convergence rate on the mean duration parameter
is not clear.

Using a coupling approach, a contraction rate for exact RHMC in a Wasserstein distance has been derived
\cite{BoEb2017}.  The dependence of the contraction rate on the mean duration parameter is explicit.  It
follows from this rate that, if the mean duration parameter is proportional to the standard deviation of the
target distribution, then the algorithm converges at a kinetic speed (as opposed to a diffusive speed).  The
main tools used in the proof are a Markovian coupling and a coupling distance that are both tailored to the
structure of exact RHMC.    The coupling is based on the framework introduced in \cite{Eb2016A}, and is
related to a recently developed coupling for second-order Langevin dynamics \cite{Eb2016B,EbGuZi2016}.

\subsection{Related Algorithms}

We finally discuss generalizations and variants of the basic HMC Algorithm~\ref{algo:numerical_hmc}. The
corresponding literature is extensive and rapidly growing;  we limit ourselves to a few of the many available
techniques.


\subsubsection{Generalized HMC}

Generalized HMC (GHMC) allows the possibility of partial randomization of momentum at every step of the HMC
algorithm \cite{Ho1991,KePe2001}.  Specifically, in GHMC, one introduces a  so-called Horowitz angle $\phi \in
(0, \pi/2]$  and replaces the randomized momentum $\xi_0$ in Step 1 of Algorithm~\ref{algo:numerical_hmc} with
$\cos(\phi) p_0 + \sin(\phi) \xi_0$.   This partial momentum randomization preserves the $p$-marginal of
$\PiBG$, and hence, GHMC leaves $\PiBG$ invariant.   When $\phi=\pi/2$, the initial momentum $p_0$ is fully
randomized and we recover HMC.  If $\phi \in (0, \pi/2)$, then $p_0$ is only partially randomized; this may be
of interest in molecular dynamics applications because then the samples may give more information on the
Hamiltonian dynamics than when each integration leg starts with a completely new momentum. The case $\phi=0$
is excluded, since then there is no momentum randomization, and consequently, the GHMC chain is confined to a
single level set of $H$, and in general, is not ergodic with respect to $\PiBG$. Since the momentum
randomizations are the source of dissipation in HMC \cite{BoSa2016}, partial momentum randomization makes the
algorithm less dissipative and less diffusive, as discussed in \S 5.5.3 of \cite{Ne2011}. One can similarly
introduce a partial momentum randomization to the randomized HMC Algorithm~\ref{algo:numerical_rhmc}.


\subsubsection{Extra Chance HMC}

As we pointed out before, rejections increase correlations along the HMC chain and waste computational effort.
Extra chance HMC (XHMC) \cite{CaSa2015} provides a simple way for HMC to get extra chances to obtain
acceptance, and hence, delay or even avert rejection as in \cite{Mi2001,GrMi2001}.  To describe the XHMC
algorithm, it suffices to describe its accept/reject mechanism, which we state in terms of a symplectic and
reversible integrator $\psi_h$, and its $n$-step map, $\Psi_{\lambda} = \psi_h^{n}$ where $n = \lfloor \lambda
/ h \rfloor$.   Given a user-prescribed number of extra chances $K$ and $(q,p) \in \mathbb{R}^{2 d}$, define
$\Gamma^0(q,p) = 0$, $\Gamma^{K+1}(q,p) = 1$, and \[ \Gamma^j(q,p) = \alpha(q,p) \vee \cdots \vee
\alpha^j(q,p) \;,  \qquad 1 \le j \le K \;,
\]
where $\alpha^j(q,p) = \min\left\{ 1,  e^{-\left(H(\Psi_{\lambda}^j(q,p)) - H(q,p) \right)} \right\}$.  Note
that $\Gamma^j$ is monotonically increasing with $j$. Instead of flipping a coin at every step as in
Algorithm~\ref{algo:numerical_hmc}, XHMC rolls a $(K+1)$-sided die with probabilities $\{ \Gamma^{j} -
\Gamma^{j-1} \}_{j=1}^{K+1}$.   Given the current state of the chain $(  q_0,  p_0 ) \in \mathbb{R}^{2d}$, the
transition in Step 3 of Algorithm~\ref{algo:numerical_hmc} is accordingly replaced by \[
(q_1, p_1) = \begin{cases}  \Psi_{\lambda}^j(q_0,\xi_0) & \text{if  $j$-th side of die comes up} \\
(q_0, -\xi_0) &  \text{if  $(K+1)$-th side of die comes up} \end{cases}
\] where $\xi_0 \sim \mathcal{N}(0,M)$. It is straightforward to show that XHMC leaves $\PiBG$
invariant \cite[Appendix A]{CaSa2015}.  Moreover, thanks to the possibility of extra chances,
the probability of rejection at every step drops from $\alpha(q_0,p_0)$ to $1 - \Gamma^K(q_0,p_0)$.
As its name suggests, the accept/reject mechanism in XHMC can be implemented
recursively, so that the proposal move $\Psi_{\lambda}^j(q_0,p_0)$ and its associated
 transition probability $ \Gamma^{j} - \Gamma^{j-1} $ are only
  computed if $\Psi_{\lambda}^{j-1}(q_0,p_0)$ is not accepted.  One can also easily
  incorporate a partial momentum randomization into XHMC.


\subsubsection{Tempering techniques}

Probability distributions with multiple modes arise frequently in practice.  Different modes may represent
metastable conformations of a large molecular system \cite{FeFrDo1990,HaOk1993,Ha1997,SuOk1999,Wa2003} or
important features of posterior distributions in Bayesian inference problems
\cite{GeTh1995,Ne1996,LiWo2001,JiWo2006,KoZhWo2006}. Unfortunately, HMC may not converge well in situations
where the potential energy has multiple minima separated by large barriers.    This is a well-known limitation
of HMC.  It may be due to the low probability of the momentum randomizations in HMC to produce an initial
condition whose kinetic energy is high enough so that the corresponding solution to Hamilton's equations
crosses the potential energy barriers. Also the momentum randomization may have difficulties to produce an
initial momentum \(\xi_0\) with the right direction for the solution to Hamilton's equation to exit the
current potential well. For such multi-modal target distributions, it is advantageous to use HMC in
conjunction with a tempering technique such as parallel tempering, simulated tempering, and tempered
transition methods \cite{Ge1991,MaPa1992,GeTh1995,Ne1996}.  Tempering methods typically introduce an
artificial inverse temperature parameter \(\beta\) and apply HMC to the Boltzmann-Gibbs distribution at
multiple values of this parameter. The basic idea is that at sufficiently high temperature, the tempered
Boltzmann-Gibbs distribution becomes flat enough that barrier crossings in the Hamiltonian integration legs
become more likely.  Of course, the states of the tempered chains cannot be directly used to compute sample
averages with respect to $\PiBG$, which emphasizes that the computational cost of these tempering techniques
is by no means negligible.


\subsubsection{No-U-Turns}

A practical difficulty with HMC is tuning the durations of the Hamiltonian integration legs.   The No-U-Turn
Sampler (NUTS) adaptively chooses this parameter by increasing the length of the numerical orbit until the
distance between the endpoints of the orbit stops increasing.  Unfortunately, this procedure breaks
reversibility, since, roughly speaking, if this criterion is applied in the reverse direction starting from
the end of a trajectory, one does not expect to land at the start of the trajectory.  Since reversibility is a
key ingredient to the accept/reject mechanism in Algorithm~\ref{algo:metropolized_reversible_map}, and the
proof of the associated Theorem~\ref{thm:numerical_hmc}, NUTS uses a different approach: it applies the
criterion to the discrete trajectory obtained by integrating the Hamiltonian dynamics in the forward and
backward directions. Starting with a single step, the number of forward and backward integration steps is
doubled until a U-turn first occurs.  NUTS then ends the simulation and samples from all of these points in a
way that ensures that the algorithm leaves $\PiBG$ invariant.  The procedure does not use the
Metropolis-Hastings method, but a variant of the slice sampling method \cite{Ne2003}.  Since this sampling
step involves all of the states produced by this procedure, the algorithm is thought to have some of the
appealing properties of the windowed HMC method, which employs an accept/reject mechanism based on windows of
states \cite{Ne1994,Ne2011}.

\subsubsection{Shadow HMC}

Given a symplectic and reversible integrator $\psi_h$ for the Hamiltonian dynamics, shadow HMC is an
importance sampling technique that replaces the Boltzmann-Gibbs density $\exp(-H)$ with a shadow density
$\exp(-\tilde H_h^{[\mu]})$ where $\tilde H_h^{[\mu]}$ is the modified (or shadow) Hamiltonian of $\psi_h$ up
to order $\mu$ \cite{IzHa2004,SwHaSkIz2009,AkRe2008}.  In so doing, shadow HMC samples from a different target
density and the samples have to be reweighed  in order to compute averages with respect to the target
distribution.  This method is not to be confused with the so-called surrogate transition method \cite{Li2008},
since in shadow HMC the target density is altered. The rationale for shadow HMC is that $\exp(-\tilde H_h)$ is
easy to sample from, in the sense that $\psi_h$ preserves $\tilde H_h$ much more accurately than $H$, and
hence, for a given duration parameter, shadow HMC increases the acceptance probability without  having to
reduce the time step $h$. There is also a shadow generalized HMC method, which similarly replaces the target
density in GHMC with $\exp(-\tilde H_h)$. Several aspects of shadow HMC have not been sufficiently analyzed
from a mathematical viewpoint. For instance the integrability of $\exp(-\tilde H_h^{[\mu]})$ has not been
studied in depth. Moreover, shadow HMC requires estimating  the modified Hamiltonian to order $\mu$, which is
nontrivial since it requires computing (or estimating) higher derivatives of $\mathcal{U}$
\cite{skeel2001practical}. The momentum randomization step in Algorithm~\ref{algo:numerical_hmc} must be
modified, since the $p$-marginal of $\exp(-\tilde H_h^{[\mu]})$ is no longer a Gaussian distribution.

\vspace{2mm}
\section*{Acknowledgements}
This work was supported in part by a Catalyst grant to N.B-R. from the Provost's Fund for Research at Rutgers
University--€"Camden under Project No.~205536. J.M.S. has been supported by project MTM2016-77660-P(AEI/FEDER,
UE) funded by MINECO (Spain).

\vspace{2mm}
\bibliographystyle{actaagsm}
\bibliography{BoSaActaN2017}
\label{lastpage}
\end{document}